\documentclass[a4paper,12pt,reqno]{amsart}

\usepackage[T1]{fontenc}
\usepackage[utf8]{inputenc}
\usepackage{lmodern}
\usepackage{amsmath,amssymb,mathtools}
\usepackage{microtype}
\usepackage{enumitem}
\usepackage{geometry}
\usepackage{hyperref}
\usepackage[nameinlink,capitalize]{cleveref}
\usepackage{amsfonts}
\usepackage{array}
\usepackage{url}
\usepackage{graphicx}
\usepackage{float}
\usepackage{multirow,bigdelim}
\usepackage{lipsum}% http://ctan.org/pkg/lipsum
\usepackage{chngcntr}% http://ctan.org/pkg/lipsum

\hypersetup{
  colorlinks=true,
  linkcolor=blue,
  citecolor=blue,
  urlcolor=blue,
  pdftitle={Construction of Finite Hilbert--P\'olya Matrices from Weil's Explicit Formula},
  pdfauthor={Yaoming Shi}
}
\numberwithin{equation}{section}
\newtheorem{theorem}{Theorem}[section]
\newtheorem{proposition}[theorem]{Proposition}
\newtheorem{lemma}[theorem]{Lemma}
\newtheorem{corollary}[theorem]{Corollary}
\theoremstyle{definition}
\newtheorem{definition}[theorem]{Definition}
\theoremstyle{remark}
\newtheorem{remark}[theorem]{Remark}

\newcommand{\RePart}{\operatorname{Re}}
\newcommand{\ImPart}{\operatorname{Im}}
\newcommand{\PF}{\operatorname{PF}}

\newcommand{\dd}{\,\mathrm{d}}
\newcommand{\norm}[1]{\left\lVert #1\right\rVert}
\newcommand{\abs}[1]{\left\lvert #1\right\rvert}

\title[Finite Hilbert--P\'olya matrices from Weil's explicit formula]{Construction of Finite Hilbert--P\'olya Matrices from Weil's Explicit Formula}
\author[Yaoming Shi]{Yaoming Shi}
\address{California, United States}
\email{ymshi@protonmail.com}
\date{Version 12 of \today}
\subjclass[2020]{Primary 11M26; Secondary 11M36, 15A18, 15A22, 47A10}
\keywords{Riemann hypothesis, Hilbert--P\'olya program, Weil's explicit formula,
Prime--Weil matrices, Loewner-type divided differences, Hermitian definite
matrix pencils, spectral approximation, zeta zeros}

\begin{document}

\begin{abstract}
Starting from the Riemann--$\Xi$ specialization of Weil's explicit formula, we
construct finite real-symmetric Prime--Weil matrices $(\mathbf S)$ from pole,
archimedean, and finite prime-power data.  This construction is a
finite-dimensional arithmetic model within the Hilbert--P\'olya program, which
seeks a self-adjoint spectral realization of the nontrivial zeta-zero
parameters.  Their off-diagonal entries form a
Loewner-type divided-difference matrix with a rank-two displacement identity.
We formulate the spectral quotient as a Hermitian definite generalized
eigenproblem on the fixed zero-mean contrast space.  This realization is
invariant under positive affine rescaling, avoids ground-vector normalization
and an ill-conditioned oblique projector, and preserves the finite quotient
spectrum.

For a dimension-matched zero-side matrix built from $N$ distinct positive
ordinates $\gamma_k$, rational interpolation gives the exact contrast-pencil
spectrum $\{\pm\gamma_1,\ldots,\pm\gamma_N\}$ and positive-parity square
spectrum $\{\gamma_1^2,\ldots,\gamma_N^2\}$.  Since the ordinates are inputs,
this is a reconstruction theorem.  Assuming RH, the same interpolation vector
proves $\lambda_{\min}(\mathbf S)\to0$.  At $N=L=13$, Lemke's
ground-state quotient and the contrast pencil agree numerically and reproduce
the first three zeta ordinates to the reported precision.  The remaining
problem is a relative prime-to-zero perturbation theorem with uniform control
of the compressed metric.  No proof of RH is claimed.
\end{abstract}

\maketitle
\tableofcontents

\section{Introduction}\label{sec:introduction}

\subsection{The Hilbert--P\'olya objective and Weil's quadratic form}

For a nontrivial zero $\rho$ of the Riemann zeta function, introduce the
spectral parameter
\begin{equation}\label{eq:intro-spectral-parameter}
  z_\rho:=\frac{\rho-\tfrac12}{i}.
\end{equation}
The Riemann hypothesis is equivalent to the assertion that every $z_\rho$ is
real.  The Hilbert--P\'olya program therefore seeks a natural self-adjoint
operator whose spectrum consists of these parameters, with the correct
multiplicities.  A finite list of fitted eigenvalues is not enough: one must
construct the operator from arithmetic data, prove self-adjointness in a
specified Hilbert structure, recover the complete zero set, and control the
limit strongly enough to exclude spectral pollution.

Several complementary directions frame this objective.  Montgomery's
pair-correlation work and Odlyzko's computations motivate the random-matrix
and quantum-chaos picture \cite{Montgomery1973,Odlyzko1987}; Berry and
Keating relate the mean zero-counting law to the dilation Hamiltonian
$H=xp$ \cite{BerryKeating1999SIAM}; de Branges places the problem in
Hilbert spaces of entire functions and canonical systems
\cite{deBranges1986}; and the regularized-determinant and noncommutative
trace-formula programs of Deninger and Connes seek broader arithmetic
spectral realizations \cite{Deninger1992,Connes1999}.  For a
physics-oriented overview of these connections, see \cite{Schumayer2011}.
The present approach belongs to the explicit-formula branch of this
literature: it starts from Weil's arithmetic quadratic form and asks whether
its finite prime-built compressions admit a controlled self-adjoint spectral
limit.

Weil's explicit formula \cite{Weil1952} is a natural starting point because it identifies the
same quadratic form from the zero side and from the local arithmetic side.  In
the Riemann case, schematically,
\begin{equation}\label{eq:intro-explicit-formula-schematic}
  \sum_\rho^{*}H(z_\rho)
  =\mathcal C_{\infty}(F)
   -\sum_p\sum_{r\geq1}\frac{\log p}{p^{r/2}}
    \bigl(F(r\log p)+F(-r\log p)\bigr),
\end{equation}
where $H$ is the Fourier transform of the additive test function $F$,
$\mathcal C_{\infty}$ contains the pole and archimedean terms, and the star
denotes symmetric zero summation when needed.  For autocorrelation test
functions, the transform is a squared modulus on the critical line.  Under
RH, the zero-side quadratic form is consequently positive.  The present paper
compresses a carefully chosen family of these forms into finite matrices and
then asks how to pass from the resulting metric to a spectral operator.

\subsection{Finite Prime--Weil matrices}

Fix $L>0$ and $N\geq0$, and set
\begin{equation}\label{eq:intro-frequency-nodes}
  \nu_{n,L}:=\frac{2\pi n}{L},
  \qquad -N\leq n\leq N.
\end{equation}
Real polarizations of one-sided logarithmic Fourier autocorrelations produce
a $(2N+1)\times(2N+1)$ real-symmetric matrix
$\mathbf S_{N,L}$.  Its entries are evaluated from the pole,
archimedean, and prime-power sides of Weil's formula.  Compact support in the
logarithmic variable makes the prime-power sum finite.  The same matrix has a
zero-side expansion into rank-two atoms, but the two descriptions play
different logical roles: the arithmetic formula defines the proposed input,
while the zero formula supplies positivity under RH, exact finite models, and
comparison estimates.

The matrix has a divided-difference structure.  With
$\mathbf D_{L,N}=\operatorname{diag}(\nu_{n,L})$, its commutator has rank at
most two.  This low displacement rank is the algebraic mechanism behind the
quotient construction.

\subsection{A scale-invariant quotient pencil}

The principal revision of the present version is that the quotient is no
longer implemented by first normalizing a ground eigenvector and then forming
an oblique projection.  Let
\[
  \epsilon_{N,L}:=\lambda_{\min}(\mathbf S_{N,L}),
  \qquad
  \mathbf W_{N,L}:=\mathbf S_{N,L}-\epsilon_{N,L}\mathbf I.
\]
Let $\boldsymbol\delta_{N,L}$ be the Fourier evaluation vector at the origin,
and let $\mathbf C_{N,L}$ have Euclidean-orthonormal columns spanning the
fixed contrast space
\[
  \ker\boldsymbol\delta_{N,L}^{*}
  =\left\{\mathbf x:\sum_{n=-N}^{N}x_n=0\right\}.
\]
The quotient metric and differentiation form are represented directly by
\begin{equation}\label{eq:intro-contrast-pencil}
  \mathbf G_{N,L}:=\mathbf C_{N,L}^{*}\mathbf W_{N,L}\mathbf C_{N,L},
  \qquad
  \mathbf K_{N,L}:=\mathbf C_{N,L}^{*}\mathbf W_{N,L}
                    \mathbf D_{L,N}\mathbf C_{N,L}.
\end{equation}
The rank-two commutator implies that $\mathbf K_{N,L}$ is Hermitian.  If the
least eigenvalue is simple and $\mathbf G_{N,L}\succ0$, the generalized
problem
\begin{equation}\label{eq:intro-generalized-pencil}
  \mathbf K_{N,L}\mathbf y
  =\mu\,\mathbf G_{N,L}\mathbf y
\end{equation}
is Hermitian definite and therefore has real spectrum.  Reflection splits the
contrast space into equal even and odd sectors, makes the whitened operator
off diagonal, and forces opposite-sign eigenvalue pairing.  The corresponding
positive-parity square retains one copy of each squared positive eigenvalue.

This formulation has concrete advantages over Lemke's ground-state
oblique-projection construction \cite{Lemke2026}.  It uses a fixed contrast
space, does not require a ground eigenvector, never divides by a potentially
tiny overlap $\boldsymbol\delta^{*}\mathbf e$, and never explicitly forms a
projector whose norm may diverge.  It is also exactly invariant under
$\mathbf S\mapsto a\mathbf S+b\mathbf I$ with $a>0$.  The improvement is not
merely cosmetic: it removes an avoidable source of numerical instability and
reduces the computation to a standard Hermitian definite pencil.  It does not,
however, eliminate genuine degeneration of the compressed metric; that
intrinsic conditioning is measured directly by the smallest eigenvalue and
condition number of $\mathbf G_{N,L}$.

The finite real-spectrum construction that initiated this part of the
Hilbert--P\'olya--Weil program was communicated by S\"oren Lemke
\cite{Lemke2026}.  Starting from the
arithmetic Weil matrix, Lemke subtracts its simple least eigenvalue, normalizes
the associated even eigenvector against the evaluation functional, forms an
oblique projection along that ground direction, and applies the diagonal
frequency matrix to the resulting quotient.  For $N=L=13$, his implementation
gave the first three positive quotient eigenvalues
\begin{equation}\label{eq:intro-Lemke-N13-values}
\begin{split}
  14.1347251417346937904572519851,\qquad
  21.0220396387715551447475167681,\\
  25.0108575801628431246104531742,
\end{split}
\end{equation}
matching the first three zeta ordinates with absolute errors approximately
$1.54\times10^{-27}$, $1.52\times10^{-16}$, and
$1.72\times10^{-11}$, respectively.  Lemke's calculation supplied the original
finite numerical evidence and the quotient mechanism.  The contrast-pencil
formulation below realizes the same quotient spectrum on fixed coordinates,
without the small-overlap division or explicit oblique projector.

A different finite-matrix use of Weil's form was developed by Alp\"oge and
Furman \cite{AlpogeFurman2026}.  Their high-energy matrix is used through
rank, trace, Hilbert--Schmidt norm, and inertia to count simple critical-line
zeros; its eigenvalues are not intended to be individual zero ordinates.  The
present low-frequency matrix is used as a metric in the Hermitian pencil
\cref{eq:intro-generalized-pencil}.  The two constructions therefore compress
the same explicit-formula quadratic form for different purposes.

\subsection{Exact zero-side reconstruction}

Suppose the dimension-matched zero-side matrix is formed from exactly $N$
distinct positive real ordinates
\begin{equation}\label{eq:intro-positive-ordinates}
  0<\gamma_1<\cdots<\gamma_N.
\end{equation}
Its Gram representation is positive semidefinite and has a one-dimensional
nullspace.  Rational interpolation determines that null vector explicitly.
Restricting the metric and differentiation form to the zero-mean contrast
space gives a positive-definite pencil satisfying
\begin{equation}\label{eq:intro-zero-side-characteristic-polynomial}
  \det\!\left(z\mathbf G_{N,L}^{(0)}-\mathbf K_{N,L}^{(0)}\right)
  =\det(\mathbf G_{N,L}^{(0)})
   \prod_{k=1}^{N}(z^2-\gamma_k^2).
\end{equation}
Hence its generalized spectrum is
$\{\pm\gamma_1,\ldots,\pm\gamma_N\}$ and its positive-parity square has
spectrum
\begin{equation}\label{eq:intro-zero-side-square-spectrum}
  \{\gamma_1^2,\ldots,\gamma_N^2\}.
\end{equation}
The null direction is absent because the pencil is already written on the
$2N$-dimensional contrast space.  In particular, the state with squared
energy $\gamma_1^2$ is a desired spectral state and is not deflated.

This theorem is an exact consistency test, not an independent derivation of
the zeros: the $\gamma_k$ are inputs.  Hypothetical off-critical-line pairs
$\gamma_k\pm i\eta_k$ produce the quartet
\begin{equation}\label{eq:intro-off-line-quartet}
  \{\pm(\gamma_k+i\eta_k),\ \pm(\gamma_k-i\eta_k)\}
\end{equation}
in the corresponding indefinite interpolation pencil.  Shifting the paired
matrix by its algebraically smallest eigenvalue instead gives a positive
pencil with a real sign-paired spectrum, but that is a different spectral
problem and does not replace the quartet by its radial modulus.

\subsection{Arithmetic approximation and the remaining transfer problem}

The paper proves quantitative simultaneous prime-power and zero-tail bounds
for fixed test functions and uniformly norm-bounded families.  It also gives
closed digamma--polygamma--Lerch evaluations of the archimedean integrals and
compensated formulas that expose the cancellation between large pole and
prime terms.  Because the logarithmic Fourier windows vary with $N$ and $L$,
these fixed-function estimates alone do not settle the moving-dimension
limit.

Under RH, the exact finite-zero interpolation vector is a trial state for the
full arithmetic matrix.  The first $N$ zero atoms annihilate it, and only the
positive zero tail remains.  This proves
\[
  \lambda_{\min}(\mathbf S_{N,N})\longrightarrow0
\]
and gives a much stronger product upper bound.  The new contrast formulation
shows why the vanishing eigenvalue need not be divided out numerically: after
a common gap normalization, it enters only through the compressed Hermitian
pencil.

It is useful to separate the established conclusions from the limiting
conjecture.
\begin{enumerate}
  \item The explicit-formula entries, divided-difference identities,
        rank-two commutator, Hermiticity of the contrast pencil, affine-scale
        invariance, and parity reduction are exact finite-dimensional facts.
  \item With real ordinates inserted as data, the zero-side contrast pencil
        reconstructs those ordinates exactly.
  \item Under RH, the least eigenvalue of the full arithmetic matrix tends to
        zero along $L=N$.
  \item Prime-side computations reproduce low zero ordinates to high
        precision for the tested parameters, but this remains numerical
        evidence rather than a convergence theorem.
\end{enumerate}

The central unresolved task is no longer the algebraic removal of a small
ground state.  It is a relative perturbation theorem comparing the entire
arithmetic pencil with the exact finite-zero pencil in the quotient-energy
norm.  Such a theorem must control the positive zero tail relative to the
compressed metric, preserve separated generalized eigenvalues and
multiplicities, and lead to a limiting operator or determinant without
spectral pollution.  These requirements formulate the principal obstacle in
the Hilbert--P\'olya--Weil program developed here.

\subsection{Organization of the paper}

After the Riemann--$\Xi$ specialization and simultaneous truncation theorem,
the paper constructs the finite Prime--Weil matrices and proves their
divided-difference and rank-two displacement identities.  The next section
develops the scale-invariant contrast pencil, its parity reduction, explicit
entries, conditioning bounds, unshifted regularization, and the relative
prime-to-zero transfer target.  The dimension-matched zero-side pencil is then
solved exactly.  Subsequent sections prove the conditional vanishing of the
least arithmetic eigenvalue, analyze off-critical-line quartets, realize each
transform-side atom as an operator on the periodic Fourier space, and study a
positive shifted pencil for off-line data.  The final part gives the revised
pencil-based numerical protocol, followed by the conclusion and sample test
functions.

\section{Specialization to the Riemann \texorpdfstring{$\Xi$}{Xi}-function and simultaneous truncation}\label{sec:Xi-truncation}

This section is a supplement to Weil's explicit formula.  It proves a
quantitative simultaneous truncation result.  The truncation theorem is not
stated in Weil's 1952 paper; it follows from his explicit formula, elementary
estimates for the von Mangoldt function, and the classical zero-counting
estimate for $\zeta$.

\subsection{The exact \texorpdfstring{$\Xi$}{Xi}-specialization}
Define the Riemann $\xi$ function and the Riemann $\Xi$-function by
\begin{equation}\label{eq:xi-Xi-definitions-2}
	\xi(s)
	:=\frac12s(s-1)\pi^{-s/2}\Gamma\!\left(\frac{s}{2}\right)\zeta(s),
	\qquad
	\Xi(z):=\xi\!\left(\frac12+iz\right).
\end{equation}
The Riemann hypothesis (RH) is the assertion that every zero of
$\Xi(z)$ is real.

Suppose $H$ is holomorphic in the strip
$|\ImPart z|\leq \frac12+\delta$ and satisfies
$H(z)=O((1+|z|)^{-2-b})$ there for some $b>0$, and let 
\begin{equation}
	F(u)=\tfrac1{2\pi}\int_{-\infty}^\infty H(r)e^{-iur}\mathrm{d}r.
\end{equation}
Then we have the following duality between primes and zeros of Riemann $\Xi$ function:
\begin{align}\label{eq:Xi-Weil-explicit}
  \sum_{\gamma}^{*}H(\gamma)
  ={}&H\!\left(\frac{i}{2}\right)
      +H\!\left(-\frac{i}{2}\right)
      -F(0)\log(2\pi)
      -\mathcal A_\infty(F)
  \notag\\
  &-\sum_p\sum_{m=1}^{\infty}
    \frac{\log p}{p^{m/2}}
    \bigl[F(m\log p)+F(-m\log p)\bigr].
\end{align}
where
\begin{equation}\label{eq:A-infinity-zeta}
	\mathcal A_\infty(F)
	:=\PF\!\int_{-\infty}^{\infty}
	F(x)\frac{e^{\abs{x}/2}}{\abs{e^x-e^{-x}}}\dd x.
\end{equation}
Here and below, the sum over $\gamma$ is over the zeros of the Riemann
$\Xi$-function, with multiplicity, and the star denotes the symmetric limit
with respect to $\abs{\gamma}$.

Let $\Lambda(n)$ denote the von Mangoldt function.  Since
$\Lambda(p^m)=\log p$ and $\Lambda(n)=0$ otherwise, the prime-power term can
also be written as
\begin{equation}\label{eq:von-Mangoldt-prime-side}
  \mathcal P(F)
  :=\sum_{n=2}^{\infty}
    \frac{\Lambda(n)}{\sqrt n}
    \bigl[F(\log n)+F(-\log n)\bigr].
\end{equation}
Accordingly, if
\begin{equation}\label{eq:C-Xi-functional}
  \mathcal C_\Xi(F)
  :=H\!\left(\frac{i}{2}\right)
    +H\!\left(-\frac{i}{2}\right)
    -F(0)\log(2\pi)
    -\mathcal A_\infty(F),
\end{equation}
then \cref{eq:Xi-Weil-explicit} is the compact identity
\begin{equation}\label{eq:Xi-explicit-compact}
  \mathcal Z(F):=\sum_{\rho}^{*}H(z_\rho)
  =\mathcal C_\Xi(F)-\mathcal P(F).
\end{equation}

\begin{remark}[Even test functions]\label{rem:even-Xi-formula}
If $F$ is even, then $H$ is even,
$H(i/2)=H(-i/2)$, and
\cref{eq:Xi-Weil-explicit} becomes
\begin{align}\label{eq:Xi-Weil-even}
  \sum_{\gamma}^{*}H(\gamma)
  ={}&2H\!\left(\frac{i}{2}\right)
      -F(0)\log(2\pi)
      -\mathcal A_\infty(F)
  \notag\\
  &-2\sum_{n=2}^{\infty}
    \frac{\Lambda(n)}{\sqrt n}F(\log n).
\end{align}
This is the natural form when one works directly with the even entire
function $\Xi$.
\end{remark}

\subsection{Equivalent spectral-density form of the archimedean term}
\label{subsec:archimedean-spectral-density}

The physical-space finite-part term in \cref{eq:A-infinity-zeta} is often
written instead as a spectral integral involving the logarithmic derivative
of the gamma factor.  For even $F\in C_c^2(\mathbb R)$, define
\begin{equation}\label{eq:archimedean-spectral-density}
  \mu_\infty(t)
  :=\frac1{2\pi}\RePart\frac{\Gamma'}{\Gamma}
       \!\left(\frac14+\frac{it}{2}\right)
    -\frac{\log\pi}{2\pi}.
\end{equation}
For even $F$, the transform $H(t)=\int_{\mathbb R}F(x)e^{itx}\dd x$ also
agrees with the opposite-sign Fourier convention.

\begin{proposition}[Archimedean Fourier equivalence]
\label{prop:archimedean-Fourier-equivalence}
For every even $F\in C_c^2(\mathbb R)$,
\begin{equation}\label{eq:archimedean-Fourier-equivalence}
  \boxed{
  \int_{-\infty}^{\infty}H(t)\mu_\infty(t)\dd t
  =-F(0)\log(2\pi)-\mathcal A_\infty(F).}
\end{equation}
Equivalently,
\begin{equation}\label{eq:archimedean-digamma-equivalence}
  \frac1{2\pi}\int_{-\infty}^{\infty}H(t)
  \RePart\frac{\Gamma'}{\Gamma}
       \!\left(\frac14+\frac{it}{2}\right)\dd t
  =-F(0)\log2-\mathcal A_\infty(F).
\end{equation}
Moreover,
\begin{equation}\label{eq:archimedean-even-half-line}
  \mathcal A_\infty(F)
  =2\PF\!\int_0^\infty
    F(x)\frac{e^{x/2}}{e^x-e^{-x}}\dd x.
\end{equation}
\end{proposition}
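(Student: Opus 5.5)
The plan is to prove \cref{eq:archimedean-digamma-equivalence} first and then obtain \cref{eq:archimedean-Fourier-equivalence} from it. Since $F$ is even, the transform $H$ is even and $\int_{\mathbb R}H(t)\dd t=2\pi F(0)$ by Fourier inversion. Hence the constant $-\frac{\log\pi}{2\pi}$ in $\mu_\infty$ contributes $-F(0)\log\pi$. Adding this to $-F(0)\log 2$ gives $-F(0)\log(2\pi)$, so the two boxed forms are equivalent. The half-line formula \cref{eq:archimedean-even-half-line} follows immediately: $F$ is even and the kernel $e^{\abs{x}/2}/\abs{e^x-e^{-x}}$ is even. The finite part is taken symmetrically at $x=0$, where the kernel behaves like $1/(2\abs{x})$. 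So we define
\[
\PF\!\int_0^\infty g(x)\dd x=\lim_{\varepsilon\to0}\Bigl(\int_\varepsilon^\infty g(x)\dd x+\tfrac12F(0)\log\varepsilon\Bigr),
\]
and then $\mathcal A_\infty(F)=2\,\PF\!\int_0^\infty$.

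For \cref{eq:archimedean-digamma-equivalence} I would use Gauss's integral representation of the digamma function:
\[
\psi(s)=\int_0^\infty\Bigl(\frac{e^{-u}}{u}-\frac{e^{-su}}{1-e^{-u}}\Bigr)\dd u,\qquad \RePart s>0 .
\]
At $s=\tfrac14+\tfrac{it}{2}$ we substitute $u=2x$ and take real parts, which turns $e^{-isx\cdot}$ into $\cos(tx)$:
\[
\RePart\psi\!\left(\tfrac14+\tfrac{it}{2}\right)=\int_0^\infty\Bigl(\frac{e^{-2x}}{x}-\frac{2e^{-x/2}\cos(tx)}{1-e^{-2x}}\Bigr)\dd x .
\]
Note that $\frac{e^{-x/2}}{1-e^{-2x}}=\frac{e^{x/2}}{e^{x}-e^{-x}}$, which is exactly the kernel of $\mathcal A_\infty$. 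Next we multiply by $H(t)/(2\pi)$ and integrate in $t$. Since $F\in C_c^2$, we have $H(t)=O((1+\abs t)^{-2})$, so $H\in L^1$. We also use $\frac1{2\pi}\int H(t)\cos(tx)\dd t=F(x)$, since $F$ is even. Formally the result is
\[
\int_0^\infty\Bigl(F(0)\frac{e^{-2x}}{x}-\frac{2F(x)e^{x/2}}{e^x-e^{-x}}\Bigr)\dd x .
\]

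The main obstacle is justifying the interchange of the $t$- and $x$-integrals. The two terms of the $x$-integrand are separately non-integrable at $0$, and only their difference is integrable. To handle this, I would truncate to $x\ge\varepsilon$. There Fubini is legitimate, because the kernel is bounded and exponentially decaying and $H\in L^1$. The expression for $\RePart\psi$ with the $x\ge\varepsilon$ part removed is $O(\varepsilon(1+\abs t))$ uniformly, since $\cos(tx)-1=O(\abs{tx})$ and the integrand is $O(1+\abs{t})$ near $0$. However, $\abs{t}H(t)$ may fail to be integrable. So I would instead bound the removed piece by $\min(C,\ \varepsilon(1+\abs t))$ for small $\varepsilon$, together with a $\log\abs t$-type bound. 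This gives dominated convergence as $\varepsilon\to0$, using $H=O((1+\abs t)^{-2})$.

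Finally we evaluate the truncated pieces. We have $F(0)\int_\varepsilon^\infty e^{-2x}x^{-1}\dd x=F(0)(-\log\varepsilon-\log2-\gamma_E)+o(1)$. Combining this with the finite-part definition leaves $-\mathcal A_\infty(F)$ plus the constant $-F(0)(\log2+\gamma_E)$. The stray Euler constant must be reconciled, and I expect this bookkeeping to be the delicate point. Using Gauss's formula with $e^{-u}/u$ produces exactly $-\gamma_E$ relative to the normalization $e^{-2x}/x$ after the substitution. The remaining constant must therefore be checked against the precise convention for the finite part at $0$. I would verify this by testing both sides on a Gaussian-type limit, or by recomputing the constant via $\psi(\tfrac14)=-\gamma_E-\tfrac{\pi}{2}-3\log2$. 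If a residual $\gamma_E$ term survives, I would adjust by using instead the representation $\psi(s)=\log s'$-free form $\psi(s)=-\gamma_E+\int_0^\infty\frac{e^{-u}-e^{-su}}{1-e^{-u}}\dd u$. With that form the constant terms combine directly to $-F(0)\log2$.
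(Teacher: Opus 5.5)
Your reduction of \cref{eq:archimedean-Fourier-equivalence} to \cref{eq:archimedean-digamma-equivalence} via $\int_{\mathbb R}H=2\pi F(0)$ is sound, as is the half-line formula. The analytic core also holds up: Gauss's representation, Fubini on $x\geq\varepsilon$, and dominated convergence for the removed piece with a $C(1+\log(1+\abs t))$ majorant against $H=O((1+\abs t)^{-2})$. This is essentially the paper's ``direct distributional check''.

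The gap is the constant. With the sharp-cutoff finite part you fix at the start, your computation is correct and yields
\[
  \frac1{2\pi}\int_{\mathbb R}H(t)\RePart\frac{\Gamma'}{\Gamma}\Bigl(\frac14+\frac{it}{2}\Bigr)\dd t
  =-F(0)\bigl(\log2+\gamma_{\mathrm E}\bigr)-\mathcal A_\infty(F).
\]
So under that convention the statement is false by $\gamma_{\mathrm E}F(0)$.

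Your proposed repair cannot work. The representation $\psi(s)=-\gamma_{\mathrm E}+\int_0^\infty\frac{e^{-u}-e^{-su}}{1-e^{-u}}\dd u$ is the same function, and it carries an explicit $-\gamma_{\mathrm E}$. Its companion term $2F(0)\int_\varepsilon^\infty\frac{\dd x}{e^{2x}-1}=-F(0)\log(2\varepsilon)+o(1)$ contains no compensating Euler constant, so you recover exactly the same value. A constant intrinsic to $\psi$ cannot be moved by changing its integral representation; it can only be absorbed into the definition of $\PF$.

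That is the missing idea: the identity pins down the finite-part normalization, and the paper's normalization is not the sharp cutoff. It is made explicit in \cref{eq:arch-A-finite-part-partial-sum}, and in the constant of \cref{eq:arch-A-closed-evaluation}, as the Abel-type regularization
\[
  \PF\!\int_0^\infty g(x)K(x)\dd x
  =\lim_{J\to\infty}\Bigl\{\int_0^\infty g(x)K(x)\bigl(1-e^{-2Jx}\bigr)\dd x-\frac{g(0)}{2}\log(2J)\Bigr\},
\]
with $K(x)=e^{x/2}/(e^x-e^{-x})$ as in \cref{eq:arch-kernel-geometric-expansion}. Because $\int_0^X\frac{1-e^{-u}}{u}\dd u=\log X+\gamma_{\mathrm E}+o(1)$, this equals your sharp-cutoff value plus $\frac{g(0)}{2}\gamma_{\mathrm E}$. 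Doubling over the two half-lines supplies exactly the missing $+\gamma_{\mathrm E}F(0)$.

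You can fix the argument in either of two ways:
\begin{enumerate}
  \item Define the finite part with $\log\varepsilon$ replaced by $\log\varepsilon+\gamma_{\mathrm E}$.
  \item More cleanly, use $\psi(s)=\lim_{J\to\infty}\bigl(\log J-\sum_{k<J}(k+s)^{-1}\bigr)$. Since $\RePart\,(k+\frac14+\frac{it}{2})^{-1}=\frac{2\alpha_k}{\alpha_k^2+t^2}$ is the transform of $e^{-\alpha_k\abs x}$ with $\alpha_k=2k+\frac12$, pairing with $H$ gives
  \[
    \lim_{J\to\infty}\Bigl\{F(0)\log J-2\int_0^\infty F K\bigl(1-e^{-2Jx}\bigr)\dd x\Bigr\}=-F(0)\log2-\mathcal A_\infty(F).
  \]
  This lands directly in the paper's normalization, with no Euler constant at any stage. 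The interchange of $\lim_J$ and $\int\dd t$ is justified by a $C\log(2+\abs t)$ majorant.
\end{enumerate}

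For comparison, the paper does not derive the boxed identity from scratch. It obtains it by comparing the finite-part and spectral-density forms of Weil's formula, and uses the digamma pairing only as a check in which this same convention absorbs the $-\gamma_{\mathrm E}$ of \cref{eq:archimedean-digamma-kernel}.
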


\begin{proof}
The spectral-density form of Weil's explicit formula has the same zero,
pole, and prime-power terms as \cref{eq:Xi-Weil-even}, while its
archimedean contribution is the left-hand side of
\cref{eq:archimedean-Fourier-equivalence}; see, for example,
\cite{AlpogeFurman2026}.  Comparing the two normalizations gives
\cref{eq:archimedean-Fourier-equivalence}.  Since Fourier inversion gives
$\int_{\mathbb R}H(t)\dd t=2\pi F(0)$, substituting
\cref{eq:archimedean-spectral-density} yields
\cref{eq:archimedean-digamma-equivalence}.  Finally,
\cref{eq:archimedean-even-half-line} follows from evenness.

For a direct distributional check, the digamma integral representation gives
\begin{equation}\label{eq:archimedean-digamma-kernel}
  \RePart\frac{\Gamma'}{\Gamma}
  \!\left(\frac14+\frac{it}{2}\right)
  =-\gamma_{\mathrm E}
   +2\int_0^\infty
      \frac{e^{-2x}-e^{-x/2}\cos(tx)}{1-e^{-2x}}\dd x.
\end{equation}
The two terms in the numerator must be paired before the $x$-integration near
zero.  Pairing \cref{eq:archimedean-digamma-kernel} with $H$, applying Fourier
inversion, and using the finite-part convention fixed by
\cref{eq:A-infinity-zeta} gives precisely
$-F(0)\log2-\mathcal A_\infty(F)$.  The subtraction
$-(\log\pi)/(2\pi)$ in \cref{eq:archimedean-spectral-density} supplies the
remaining term $-F(0)\log\pi$.
\end{proof}

\begin{remark}[The local constant is essential]
The bare kernel integral and the bare digamma integral are not separately
ordinary Fourier transforms of one another at the origin.  Their exact
equivalence is the regularized distributional identity
\cref{eq:archimedean-Fourier-equivalence}; omitting either the finite-part
prescription or the local term $-F(0)\log(2\pi)$ changes the normalization by
a multiple of the Dirac mass at zero.
\end{remark}

\begin{theorem}[Weil \cite{Weil1952}]
RH holds if and only if
\[
  \sum_{\gamma}^{*}H(\gamma)\geq0
\]
for every admissible even function $H$ of the form
\[
  H(r)=H_0(r)\overline{H_0(\overline r)}.
\]
\end{theorem}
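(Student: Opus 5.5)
Both directions go through the explicit formula \cref{eq:Xi-Weil-explicit}, which lets us treat $\sum^{*}_\gamma H(\gamma)$ as a quadratic form in $H_0$.

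\textbf{The easy direction: RH implies positivity.} Assume every zero $\gamma$ of $\Xi$ is real. Then for real $r$ the admissible product satisfies
\[
  H(\gamma)=H_0(\gamma)\overline{H_0(\gamma)}=\abs{H_0(\gamma)}^2\ge0 .
\]
The symmetric sum $\sum^{*}_\gamma H(\gamma)$ is absolutely convergent. This follows from the decay $H(z)=O((1+|z|)^{-2-b})$ together with the zero-counting bound $N(T)\ll T\log T$. A series of nonnegative terms is nonnegative, so this direction is immediate.

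\textbf{The hard direction: positivity implies RH.} I argue by contraposition. Suppose some zero $\gamma_0=a+ib$ has $b\neq0$. Because $\Xi$ is even and real on $\mathbb R$, the zeros come in quartets $\pm a\pm ib$. The quartet lies in the strip $|b|<\tfrac12$, since the zeros of $\zeta$ satisfy $0<\RePart\rho<1$.

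For a pair of zeros $\gamma,\overline\gamma$, the contribution of $H$ is the sesquilinear quantity
\[
  H_0(\gamma)\overline{H_0(\overline\gamma)}+H_0(\overline\gamma)\overline{H_0(\gamma)} .
\]
This need not be positive. Choose $H_0$ so that $H_0(\gamma_0)=1$, $H_0(\overline\gamma_0)=-1$, with suitable values at $-\gamma_0$ and $-\overline\gamma_0$ as well so that $H$ stays even. Then the quartet contributes $-4$ (up to a positive constant), which is strictly negative.

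It remains to keep the other zeros and the mass near the real axis from swamping this negative contribution. I would take $H_0=P\cdot g_\varepsilon$, where:
\begin{itemize}
  \item $P$ is a fixed polynomial (or Lagrange-type interpolant) realizing the prescribed values on the quartet;
  \item $g_\varepsilon$ is a holomorphic bump that concentrates near the quartet, for instance a Gaussian $e^{-\varepsilon^{-1}(z-a)^2}$ symmetrized in $z\mapsto -z$.
\end{itemize}
Alternatively, I would use Weil's own device: choose $F_0$ with compact support and modulate its transform so that $\abs{H_0}$ is large at $\gamma_0$ relative to its size on other zeros.

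Two regimes then need to be controlled. At real zeros the contribution is $\abs{H_0}^2\ge0$, and it must be made small relative to the quartet term. At other off-line zeros the terms can have either sign, so their total must also be small. Both are handled by the decay of the Gaussian away from $\pm a$ and by the local finiteness of zeros. One also has to check that the resulting $H$ is admissible: holomorphic in $|\ImPart z|\le\tfrac12+\delta$ with $O((1+|z|)^{-2-b})$ decay, which a Gaussian times a polynomial satisfies. With the error terms controlled, $\sum^{*}H(\gamma)<0$, a contradiction.

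\textbf{Main obstacle.} The delicate step is the quantitative separation in the hard direction. Zeros near the real axis with $\RePart$ close to $a$ may sit arbitrarily close to the quartet, and the Gaussian's width must be tuned against the imaginary offset $b$. Concretely, the ratio $\abs{H_0(\gamma_0)}/\sup_{\text{other zeros}}\abs{H_0}$ must be made large while $H_0$ stays bounded in the admissible strip.

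The first thing I would try is to choose $H_0$ vanishing at all finitely many zeros within distance $1$ of $\gamma_0$, other than the quartet itself, by multiplying by a polynomial factor. The remaining tail can then be handled by the Gaussian decay combined with the counting estimate. Since this is Weil's classical criterion, one could alternatively cite \cite{Weil1952} for this step, and the paper evidently intends the theorem as a quoted result.
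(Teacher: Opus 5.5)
The paper offers no argument for this statement; it simply quotes Weil. Your contrapositive construction is therefore a genuinely different, self-contained route. It is sound in the admissibility class the paper fixes ($H$ holomorphic in $\abs{\ImPart z}\le\frac12+\delta$ with $O((1+\abs{z})^{-2-b})$ decay), precisely because that class admits polynomial--Gaussian factors.

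Despite your opening sentence, neither direction actually uses the explicit formula. You only need three facts about the zeros: they lie in $\abs{\ImPart z}<\frac12$, they are symmetric under $z\mapsto-z$ and $z\mapsto\overline z$, and they satisfy $N(T)\ll T\log T$.

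The citation buys Weil's generality with no verification. Your argument buys transparency, but it does not transfer verbatim to compactly supported test functions such as the paper's windowed Fourier kernels. Their transforms have exponential type and cannot carry a Gaussian factor, so an additional approximation step would be needed there.

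When you write it out, correct the description of the mechanism. For $z=x+iy$ one has $\abs{e^{-(z-a)^2/\varepsilon}}=e^{(y^2-(x-a)^2)/\varepsilon}$. So the Gaussian is not a bump that is small away from the quartet; it is exponentially \emph{large} at the quartet, equal to $e^{b^2/\varepsilon}$ at $a\pm ib$ and at most $1$ on $\mathbb R$. Three consequences follow. Real zeros are negligible automatically. No tuning of the width against $b$ is required; just let $\varepsilon\to0$. And your condition that $H_0$ stay bounded in the strip is neither true nor needed.

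The only dangerous zeros $a'+ib'$ (with $\RePart z'\ge0$; the rest follow by evenness) are those with $b'^2-(a'-a)^2\ge b^2$. These satisfy $(a'-a)^2+(b'-b)^2\le2b'^2-2bb'<1$. Hence your polynomial vanishing at the zeros in $\abs{z-\gamma_0}<1$ outside the quartet removes all of them. Take it even, $Q(z)=\prod(z^2-w^2)$. Then fit the even factor $P$ afterwards so that $PQ$ equals $1$ at $\pm\gamma_0$ and $-1$ at $\pm\overline{\gamma_0}$, which is possible because $ab\neq0$.

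Put $E(z'):=b'^2-(a'-a)^2-b^2$. Every surviving zero has $E(z')\le-\delta$ for a fixed $\delta>0$: finitely many of them have $\abs{a'-a}<1$, and the rest have $E\le-\frac34$. Its contribution relative to $e^{2b^2/\varepsilon}$ is $O\bigl((1+\abs{z'})^ke^{2E(z')/\varepsilon}\bigr)$. Dominated convergence together with $N(T)\ll T\log T$ then gives $\sum^*_\gamma H(\gamma)=-(4m_0+o(1))e^{2b^2/\varepsilon}<0$, where $m_0$ is the multiplicity of $\gamma_0$.

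Finally, the quartet picture needs $a\neq0$. For a purely imaginary $\gamma_0=ib$, an even $H_0$ gives $H(ib)=\abs{H_0(ib)}^2\ge0$. Either use an odd $H_0$, in which case $H$ is still even and $H(ib)=-\abs{H_0(ib)}^2$, or invoke the classical fact that $\zeta$ has no zeros in $(0,1)$.
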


\subsection{Prime-power and zero truncations}

For an integer $T\geq3$ and a real number $T_1\geq2$, define
\begin{align}\label{eq:truncated-Xi-sums}
  \mathcal P_T(F)
  &:=\sum_{\substack{p,m:\\ p^m\leq T}}
     \frac{\log p}{p^{m/2}}
     \bigl[F(m\log p)+F(-m\log p)\bigr]
  \notag\\
  &=\sum_{2\leq n\leq T}
     \frac{\Lambda(n)}{\sqrt n}
     \bigl[F(\log n)+F(-\log n)\bigr],
  \\
  \mathcal Z_{T_1}(F)
  &:=\sum_{\substack{\rho=\beta+i\gamma\\ \abs{\gamma}<T_1}}
     H(z_\rho).
\end{align}
Define the truncated discrepancy by
\begin{equation}\label{eq:truncated-discrepancy}
  \mathcal E_{T,T_1}(F)
  :=\mathcal Z_{T_1}(F)-\mathcal C_\Xi(F)+\mathcal P_T(F).
\end{equation}
Then the exact finite formula is
\begin{equation}\label{eq:truncated-Xi-formula}
  \mathcal Z_{T_1}(F)
  =\mathcal C_\Xi(F)-\mathcal P_T(F)
   +\mathcal E_{T,T_1}(F).
\end{equation}
The issue is therefore whether $\mathcal E_{T,T_1}(F)\to0$ as
$T,T_1\to\infty$, and in what sense this convergence is uniform.

After a single test function $F$ has been fixed, the explicit formula is a
scalar identity and there is no remaining free spectral variable.  Thus a
uniform-convergence statement must refer either to a class of test functions
or to a parameterized family of such functions.  Weil's original hypotheses
ensure that $\mathcal P_T(F)\to\mathcal P(F)$ absolutely and that
$\mathcal Z_{T_1}(F)\to\mathcal Z(F)$ in the symmetric sense used to define
the starred sum.  For an absolute, quantitative, and uniform zero-tail
estimate, it is useful to impose two weighted derivatives.

\subsection{The weighted local Sobolev test class}\label{subsec:weighted-Sobolev-class}

\begin{definition}[The space $W_{\mathrm{loc}}^{2,1}(\mathbb R)$]
A locally integrable function $F:\mathbb R\to\mathbb C$ belongs to
$W_{\mathrm{loc}}^{2,1}(\mathbb R)$ if its first and second distributional
derivatives are represented by locally integrable functions.  Equivalently,
\begin{equation}\label{eq:Wloc21-definition}
  W_{\mathrm{loc}}^{2,1}(\mathbb R)
  :=\left\{
       F\in L_{\mathrm{loc}}^1(\mathbb R):
       DF,D^2F\in L_{\mathrm{loc}}^1(\mathbb R)
     \right\}.
\end{equation}
More explicitly, there exist $G_1,G_2\in L_{\mathrm{loc}}^1(\mathbb R)$
such that
\begin{align}\label{eq:Wloc21-distributional-identities}
  \int_{\mathbb R}F(x)\varphi'(x)\dd x
  &=-\int_{\mathbb R}G_1(x)\varphi(x)\dd x,
  \\
  \int_{\mathbb R}G_1(x)\varphi'(x)\dd x
  &=-\int_{\mathbb R}G_2(x)\varphi(x)\dd x
\end{align}
for every $\varphi\in C_c^\infty(\mathbb R)$.  In this case $DF=G_1$ and
$D^2F=G_2$ almost everywhere.  Equivalently,
\begin{equation}\label{eq:Wloc21-bounded-interval}
  F\big|_I\in W^{2,1}(I)
  \qquad\text{for every bounded interval }I\subset\mathbb R,
\end{equation}
and hence
\[
  \int_I\bigl(\abs{F}+\abs{F'}+\abs{F''}\bigr)\dd x<\infty.
\]
In one dimension, after choosing the canonical Sobolev representative, this
is equivalent to
\begin{equation}\label{eq:Wloc21-AC-characterization}
  F\in C^1(\mathbb R),
  \qquad
  F'\in AC_{\mathrm{loc}}(\mathbb R),
\end{equation}
so $F''$ exists almost everywhere and is locally integrable.
\end{definition}

The word \emph{local} in \cref{eq:Wloc21-definition} imposes no decay or
integrability condition at infinity.  Such conditions must be stated
separately; see, for example, \cite{AdamsFournier2003} for the standard
Sobolev theory.

Fix $b>0$.  Let $\mathcal A_b^2$ be the class of functions that
satisfy Weil's standard piecewise-$C^1$ midpoint convention and the decay
$F(x),F'(x)=O(e^{-(\frac12+b)\abs{x}})$ as $\abs{x}\to\infty$,
belong to $W_{\mathrm{loc}}^{2,1}(\mathbb R)$, and satisfy
\begin{align}\label{eq:Mb-J2-definitions}
  M_b(F)
  &:=\sup_{x\in\mathbb R}
     e^{(\frac12+b)\abs{x}}\abs{F(x)}<\infty,
  \\
  J_2(F)
  &:=\int_{-\infty}^{\infty}e^{\abs{x}/2}
     \left(
       \abs{F(x)}+\abs{F'(x)}+\abs{F''(x)}
     \right)\dd x<\infty.
\end{align}
Set
\begin{equation}\label{eq:Ab2-norm}
  \norm{F}_{\mathcal A_b^2}:=M_b(F)+J_2(F).
\end{equation}
The canonical representative in
\cref{eq:Wloc21-AC-characterization} is used in the supremum defining
$M_b(F)$.  Weil's original hypotheses ensure that the untruncated explicit
formula is applicable, while the two quantities in
\cref{eq:Mb-J2-definitions} provide the quantitative bounds below.  This
formulation avoids invoking a separate density extension of Weil's formula to
an abstract Sobolev completion.

\subsection{Uniform convergence of the simultaneous truncations}

\begin{theorem}[Uniform simultaneous truncation]
\label{thm:uniform-Xi-truncation}
Let $b>0$.  There is an absolute constant $C_0>0$ such that, for every
$F\in\mathcal A_b^2$, every integer $T\geq3$, and every $T_1\geq2$,
\begin{align}\label{eq:uniform-truncation-bound}
  \abs{\mathcal E_{T,T_1}(F)}
  \leq{}&
  2M_b(F)T^{-b}
  \left(\frac{\log T}{b}+\frac{1}{b^2}\right)
  \notag\\
  &+C_0J_2(F)
  \frac{\log(T_1+2)+1}{T_1}.
\end{align}
Consequently,
\begin{equation}\label{eq:uniform-truncation-limit}
  \mathcal Z_{T_1}(F)-\mathcal C_\Xi(F)+\mathcal P_T(F)
  \longrightarrow0
  \qquad (T,T_1\to\infty),
\end{equation}
and the convergence is uniform on every norm-bounded subset of
$\mathcal A_b^2$.  The two cutoffs may tend to infinity independently; no
coupling between $T$ and $T_1$ is required.
\end{theorem}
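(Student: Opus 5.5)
The plan is to use the untruncated explicit formula \cref{eq:Xi-explicit-compact} to rewrite the discrepancy as the difference of two tails, and then bound each tail separately. Since $F\in\mathcal A_b^2$ satisfies Weil's hypotheses, $\mathcal Z(F)=\mathcal C_\Xi(F)-\mathcal P(F)$. Subtracting this from \cref{eq:truncated-discrepancy} gives
\[
  \mathcal E_{T,T_1}(F)
  =\bigl(\mathcal Z_{T_1}(F)-\mathcal Z(F)\bigr)
   -\bigl(\mathcal P(F)-\mathcal P_T(F)\bigr).
\]
The first bracket is minus the zero tail over $\abs{\gamma}\geq T_1$, and the second is the prime tail over $n>T$. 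The two estimates involve different quantities: the prime tail uses only $M_b(F)$ and the zero tail only $J_2(F)$. Hence no coupling between $T$ and $T_1$ arises, and uniformity on norm-bounded sets is immediate once each tail bound is linear in the relevant norm.

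\emph{Prime tail.} I would use $\abs{F(\pm\log n)}\leq M_b(F)n^{-1/2-b}$ to get
\[
  \abs{\mathcal P(F)-\mathcal P_T(F)}\leq 2M_b(F)\sum_{n>T}\Lambda(n)n^{-1-b}.
\]
I would then write the sum as a Stieltjes integral against $\psi(x)$, integrate by parts, and use an elementary Chebyshev bound $\psi(x)\leq cx$. This gives a bound of the form $\int_T^\infty x^{-1-b}\log x\,\dd x$ up to the constant. Evaluating,
\[
  \int_T^\infty x^{-1-b}\log x\,\dd x=T^{-b}\Bigl(\frac{\log T}{b}+\frac1{b^2}\Bigr).
\]
To get exactly the displayed constant $2$, one wants $\sum_{n>T}\Lambda(n)n^{-1-b}\leq\int_T^\infty x^{-1-b}\log x\,\dd x$ (or a comparable bound). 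Achieving this requires some care with the Chebyshev constant and the boundary term. If the sharp constant proves awkward, I would absorb the discrepancy by bounding $\Lambda(n)\leq\log n$ and comparing the sum over all $n>T$ with the integral, using monotonicity of $x^{-1-b}\log x$ for $x\geq3$. That route gives the stated form with a slightly adjusted treatment of the first term.

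\emph{Zero tail.} Since $\mathcal Z$ is summed over $z_\rho=\gamma-i(\beta-\tfrac12)$ with $\abs{\beta-\tfrac12}<\tfrac12$, I need a decay bound for $H$ in the strip $\abs{\ImPart z}\leq\tfrac12$. I would integrate by parts twice in $H(z)=\int F(x)e^{ixz}\dd x$. This is legitimate because $F\in C^1$, $F'\in AC_{\mathrm{loc}}$, and the boundary terms vanish thanks to the decay $e^{-(1/2+b)\abs{x}}$ of $F$ and $F'$. Since $\abs{e^{ixz}}\leq e^{\abs{x}/2}$ in the strip, this yields
\[
  \abs{H(z)}\leq \frac{J_2(F)}{\max(1,\abs{z})^2},
\]
where the trivial bound covers small $\abs z$. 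I would then apply the zero-counting estimate $N(t+1)-N(t)\ll\log(t+2)$, or Riemann--von Mangoldt in Stieltjes form, to obtain
\[
  \sum_{\abs{\gamma}\geq T_1}\abs{H(z_\rho)}
  \leq J_2(F)\int_{T_1}^\infty t^{-2}\dd N(t)
  \ll J_2(F)\frac{\log(T_1+2)+1}{T_1}.
\]
This also shows that the zero sum converges absolutely, so the symmetric star limit agrees with the ordinary limit and the subtraction above is valid.

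I expect the main obstacle to be the bookkeeping rather than any genuine difficulty. Two points need care: justifying the integrations by parts under only $W^{2,1}_{\mathrm{loc}}$ regularity (with Weil's midpoint convention compatible with $C^1$), and pinning the explicit constant $2$ in the prime-tail term. The zero-tail constant $C_0$ is allowed to be absolute, so there the classical $O(\log t)$ short-interval zero count suffices.
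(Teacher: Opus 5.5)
Your proposal is correct and is essentially the paper's proof. You use the same split of $\mathcal E_{T,T_1}(F)$ into a prime tail and a zero tail, and the zero tail is bounded in the same way: two integrations by parts, then $N(t)\ll t\log(t+2)$ and Stieltjes summation. Your direct use of $\abs{e^{ixz}}\leq e^{\abs{x}/2}$ in the strip is a harmless, slightly cleaner variant of the paper's conjugation by $e^{a_\rho x}$.

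For the prime tail, your fallback is exactly the paper's argument: bound $\Lambda(n)\leq\log n$ and apply the integral test. This is valid because $(\log t)t^{-1-b}$ is decreasing on $[3,\infty)$ and $T$ is an integer, and it yields the constant $2$ with no adjustment. The Chebyshev route via $\psi(x)\leq cx$ instead gives roughly $c(1+b)b^{-1}T^{-b}$, which does not imply the displayed bound when $b$ is large and $T$ is small, so the fallback is the one to use.
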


\begin{proof}
We estimate the prime-power and zero tails separately.

For the prime-power tail, the definition of $M_b(F)$ gives, with $n=p^m$,
\[
  \abs{F(\log n)}+\abs{F(-\log n)}
  \leq2M_b(F)n^{-1/2-b}.
\]
Therefore
\begin{align*}
  \abs{\mathcal P(F)-\mathcal P_T(F)}
  &\leq2M_b(F)
    \sum_{n>T}\frac{\Lambda(n)}{n^{1+b}}
  \\
  &\leq2M_b(F)
    \sum_{n>T}\frac{\log n}{n^{1+b}}.
\end{align*}
For $T\geq3$, the function $t\mapsto(\log t)t^{-1-b}$ is decreasing.
Since $T$ is an integer, the integral test yields
\begin{align}\label{eq:prime-tail-bound}
  \abs{\mathcal P(F)-\mathcal P_T(F)}
  &\leq2M_b(F)
    \int_T^\infty\frac{\log t}{t^{1+b}}\dd t
  \notag\\
  &=2M_b(F)T^{-b}
    \left(\frac{\log T}{b}+\frac{1}{b^2}\right).
\end{align}
In particular, the prime-power series is absolutely convergent.

For the zero tail, let $\rho=\beta+i\gamma$ be a nontrivial zero and put
\[
  a_\rho:=\beta-\frac12,
  \qquad
  G_\rho(x):=F(x)e^{a_\rho x}.
\]
Since $0<\beta<1$, one has $\abs{a_\rho}\leq\frac12$.  The definition of
$J_2(F)$ implies that $G_\rho\in W^{2,1}(\mathbb R)$, uniformly with respect
to $\rho$, and that $G_\rho$ and $G_\rho'$ vanish at both ends of the real
line.  Hence two integrations by parts are justified and, for $\gamma\neq0$,
\begin{align*}
  \Phi(\rho)
  &=\int_{-\infty}^{\infty}G_\rho(x)e^{i\gamma x}\dd x
  \\
  &=-\frac{1}{\gamma^2}
    \int_{-\infty}^{\infty}G_\rho''(x)e^{i\gamma x}\dd x.
\end{align*}
Moreover,
\[
  G_\rho''(x)
  =e^{a_\rho x}
   \bigl(F''(x)+2a_\rho F'(x)+a_\rho^2F(x)\bigr),
\]
so
\begin{equation}\label{eq:Phi-zero-decay}
  \abs{\Phi(\rho)}
  \leq\frac{1}{\gamma^2}
  \int_{-\infty}^{\infty}e^{\abs{x}/2}
  \left(
    \abs{F''(x)}+\abs{F'(x)}+\frac14\abs{F(x)}
  \right)\dd x
  \leq\frac{J_2(F)}{\gamma^2}.
\end{equation}

Let $N(t)$ count the zeros $\rho$ with $0<\gamma\leq t$, including
multiplicity.  The Riemann--von Mangoldt estimate implies
\begin{equation}\label{eq:N-upper-bound}
  N(t)\leq C_1t\log(t+2)
  \qquad (t\geq2)
\end{equation}
for an absolute constant $C_1$; see, for example,
\cite{HasanalizadeShenWong2022}.  Stieltjes integration by parts gives
\begin{align*}
  \sum_{\gamma\geq T_1}\frac{1}{\gamma^2}
  &=\int_{T_1^-}^{\infty}t^{-2}\dd N(t)
  \\
  &=-\frac{N(T_1^-)}{T_1^2}
    +2\int_{T_1}^{\infty}\frac{N(t)}{t^3}\dd t
  \\
  &\leq C_2\frac{\log(T_1+2)+1}{T_1}.
\end{align*}
The zeros with negative ordinates satisfy the same estimate by the functional
equation and complex conjugation.  Combining this observation with
\cref{eq:Phi-zero-decay}, we obtain
\begin{equation}\label{eq:zero-tail-bound}
  \sum_{\abs{\gamma}\geq T_1}\abs{\Phi(\rho)}
  \leq C_0J_2(F)
  \frac{\log(T_1+2)+1}{T_1}.
\end{equation}
Thus the zero sum is absolutely convergent under the strengthened hypotheses
of this subsection.

Finally, \cref{eq:Xi-explicit-compact,eq:truncated-discrepancy} imply
\begin{align*}
  \mathcal E_{T,T_1}(F)
  ={}&-\bigl(\mathcal P(F)-\mathcal P_T(F)\bigr)
  \\
  &-\bigl(\mathcal Z(F)-\mathcal Z_{T_1}(F)\bigr).
\end{align*}
The triangle inequality together with
\cref{eq:prime-tail-bound,eq:zero-tail-bound} proves
\cref{eq:uniform-truncation-bound}.  If
$\norm{F}_{\mathcal A_b^2}\leq M$, its right-hand side is bounded by $M$
times a quantity depending only on $(b,T,T_1)$ and tending to zero.  This
proves the asserted uniform convergence.
\end{proof}

\begin{corollary}[Single-parameter uniform truncation]
\label{cor:parameter-uniformity}
There is an absolute constant $C_3>0$ such that, for every
$F\in\mathcal A_1^2$ and every integer $T\geq3$,
\begin{align}\label{eq:single-parameter-truncation-bound}
  \abs{\mathcal E_{T,T}(F)}
  &\leq
  2M_1(F)\frac{\log T+1}{T}
  +C_0J_2(F)\frac{\log(T+2)+1}{T}
  \notag\\
  &\leq
  C_3\norm{F}_{\mathcal A_1^2}
  \frac{\log(T+2)+1}{T}.
\end{align}
Consequently,
\begin{equation}\label{eq:single-parameter-truncation-limit}
  \mathcal Z_T(F)-\mathcal C_\Xi(F)+\mathcal P_T(F)
  \longrightarrow0
  \qquad (T\to\infty),
\end{equation}
uniformly on every norm-bounded subset of $\mathcal A_1^2$.  More generally,
if $\Omega$ is any parameter set and
$\{F_\tau:\tau\in\Omega\}\subset\mathcal A_1^2$ satisfies
\[
  \sup_{\tau\in\Omega}
  \norm{F_\tau}_{\mathcal A_1^2}<\infty,
\]
then the convergence in \cref{eq:single-parameter-truncation-limit} is uniform
in $\tau\in\Omega$.
\end{corollary}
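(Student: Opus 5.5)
The corollary is the specialization $b=1$, $T_1=T$ of \cref{thm:uniform-Xi-truncation}, so the plan is simply to substitute and then simplify the two terms. With $b=1$, the prime-power tail term in \cref{eq:uniform-truncation-bound} becomes
\[
  2M_1(F)T^{-1}\bigl(\log T+1\bigr),
\]
and with $T_1=T$ the zero-tail term becomes $C_0J_2(F)\bigl(\log(T+2)+1\bigr)/T$. This yields the first line of \cref{eq:single-parameter-truncation-bound} directly. The integrality hypothesis $T\geq3$ and the requirement $T_1\geq2$ are both satisfied when $T_1=T\geq3$.

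For the second line, I would use the elementary inequality $\log T+1\leq\log(T+2)+1$. This bounds the sum by
\[
  \bigl(2M_1(F)+C_0J_2(F)\bigr)\frac{\log(T+2)+1}{T}.
\]
Setting $C_3:=\max(2,C_0)$ and recalling $\norm{F}_{\mathcal A_1^2}=M_1(F)+J_2(F)$ from \cref{eq:Ab2-norm} gives the claimed bound. The constant $C_3$ is absolute because $C_0$ is absolute in \cref{thm:uniform-Xi-truncation}; that constant came only from the zero-counting estimate \cref{eq:N-upper-bound} and did not depend on $b$.

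The limit \cref{eq:single-parameter-truncation-limit} then follows from $(\log(T+2)+1)/T\to0$. For a family $\{F_\tau\}$ with $\sup_\tau\norm{F_\tau}_{\mathcal A_1^2}=:M<\infty$, the bound gives
\[
  \sup_\tau\abs{\mathcal E_{T,T}(F_\tau)}\leq C_3M\frac{\log(T+2)+1}{T}\to0.
\]
This is the asserted uniformity, and norm-bounded subsets are the special case where the family is indexed by the subset itself.

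There is no real obstacle here. The only point needing care is checking that $C_0$ in the theorem really is independent of $b$, so that fixing $b=1$ introduces no hidden dependence. Inspecting the proof of \cref{thm:uniform-Xi-truncation} confirms this: the bound \cref{eq:Phi-zero-decay} uses only $\abs{a_\rho}\leq\frac12$, and $C_2$ and $C_0$ come from the Riemann--von Mangoldt bound.
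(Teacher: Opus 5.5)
Your proposal is correct and is exactly the argument the paper intends: the paper states the corollary without a separate proof, as the specialization $b=1$, $T_1=T$ of \cref{thm:uniform-Xi-truncation}, and then $\log T+1\leq\log(T+2)+1$ together with $C_3:=\max\{2,C_0\}$ gives the second line and the uniformity statements. Your check that $C_0$ does not depend on $b$ is valid but not needed, since $b=1$ is fixed once and for all.
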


\begin{remark}[The diagonal truncated explicit formula]
\label{rem:diagonal-truncated-explicit-formula}
For $F\in\mathcal A_1^2$, put
$\mathcal R_T(F):=\mathcal E_{T,T}(F)$.  Then
\begin{align}\label{eq:Xi-Weil-explicit-diagonal}
  \sum_{\substack{\rho=\beta+i\gamma\\\abs{\gamma}<T}}
  H(z_\rho)
  ={}&H\!\left(\frac{i}{2}\right)
      +H\!\left(-\frac{i}{2}\right)
      -F(0)\log(2\pi)
      -\mathcal A_\infty(F)
  \notag\\
  &-\sum_{\substack{p,m\geq1\\p^m\leq T}}
    \frac{\log p}{p^{m/2}}
    \bigl[F(m\log p)+F(-m\log p)\bigr]
    +\mathcal R_T(F),
\end{align}
where
\begin{equation}\label{eq:diagonal-remainder-bound}
  \abs{\mathcal R_T(F)}
  \leq C_3\norm{F}_{\mathcal A_1^2}
       \frac{\log(T+2)+1}{T}.
\end{equation}
Thus, for fixed $F$, the remainder is
$O_F((\log(T+2)+1)/T)$.  No truncation is made in the pole terms or in
$\mathcal A_\infty(F)$; these are fixed components of
$\mathcal C_\Xi(F)$.
\end{remark}

\begin{remark}[Compactly supported test functions]
\label{rem:compact-support-truncation}
If $\operatorname{supp}F\subseteq[-R,R]$, then the prime-power truncation is
already exact for $T\geq e^R$, because every remaining term contains
$F(\pm\log n)=0$.  If in addition $F\in C_c^r(\mathbb R)$ with $r\geq2$,
then $r$ integrations by parts give
$\abs{\Phi(\rho)}\ll_{F,r}\abs{\gamma}^{-r}$, and the same zero-counting
argument yields the sharper tail
\[
  \sum_{\abs{\gamma}\geq T_1}\abs{\Phi(\rho)}
  \ll_{F,r}\frac{\log(T_1+2)}{T_1^{r-1}}.
\]
Thus smooth compact support makes the arithmetic side finite and gives
arbitrarily rapid polynomial convergence on the zero side.
\end{remark}

\section{Finite Prime--Weil matrices from logarithmic Fourier autocorrelations}
\label{sec:finite-prime-Weil-matrix}

We now apply the specialized explicit formula to a concrete family of
one-sided logarithmic Fourier modes.  Their real polarizations produce a
real-symmetric Weil matrix with a divided-difference structure, an exactly
finite prime-power side, and displacement rank at most two.  These identities
supply the arithmetic matrix used in the scale-invariant quotient construction
of \cref{sec:scale-invariant-quotient}.

Retain $L=\log T$, $I_N=\{-N,\ldots,N\}$, and
$\nu_{n,L}=2\pi n/L$, and define
\begin{equation}\label{eq:one-sided-Fourier-mode}
  f_{n,L}^{+}(x)
  :=\frac1L e^{i\nu_{n,L}x}\mathbf 1_{[0,L]}(x),
  \qquad n\in I_N.
\end{equation}
The endpoint values are immaterial for the convolution integrals below.  With
the involution and convolution from
\cref{eq:sample-convolution-definition}, set
\begin{align}\label{eq:Gmn-one-sided-definition}
  C_{mn,L}
  &:=f_{m,L}^{+}*\widetilde{f_{n,L}^{+}},
  \\
  G_{mn,L}
  &:=\RePart C_{mn,L}
   =\frac12\left(
       f_{m,L}^{+}*\widetilde{f_{n,L}^{+}}
       +f_{-m,L}^{+}*\widetilde{f_{-n,L}^{+}}
     \right).
\end{align}
Thus $G_{mn,L}$ is the real polarization of the Weil autocorrelation pairing
associated with the pair $(f_{m,L}^{+},f_{n,L}^{+})$.

\subsection{Physical-space formula and divided differences}

Since
\[
  \widetilde{f_{n,L}^{+}}(x)
  =\frac1L e^{i\nu_{n,L}x}\mathbf 1_{[-L,0]}(x),
\]
the overlap in the convolution has length $L-\abs{x}$ when
$\abs{x}\leq L$ and is empty otherwise.  Direct integration gives the
following real even kernels.  Define
\begin{align}\label{eq:An-Bn-test-functions}
  A_{n,L}(x)
  &:=\frac{(L-\abs{x})_+}{L^2}
      \cos(\nu_{n,L}x),
  \\
  B_{n,L}(x)
  &:=-\frac1{2\pi L}
      \sin(\nu_{n,L}\abs{x})\mathbf 1_{[-L,L]}(x),
\end{align}
where $(u)_+:=\max\{u,0\}$.  Then
\begin{equation}\label{eq:Gmn-physical-divided-difference}
  G_{mn,L}(x)
  =
  \begin{cases}
    A_{n,L}(x),&m=n,\\[2mm]
    \dfrac{B_{m,L}(x)-B_{n,L}(x)}{m-n},&m\neq n.
  \end{cases}
\end{equation}
Equivalently, for $m\neq n$,
\begin{equation}\label{eq:Gmn-physical-sine-form}
  G_{mn,L}(x)
  =\frac{
      \sin(\nu_{m,L}\abs{x})
      -\sin(\nu_{n,L}\abs{x})
    }
    {2\pi L(n-m)}\mathbf 1_{[-L,L]}(x).
\end{equation}
In particular,
\begin{equation}\label{eq:Gmn-test-level-commutator}
  (m-n)G_{mn,L}=B_{m,L}-B_{n,L},
  \qquad m\neq n,
\end{equation}
and
\begin{equation}\label{eq:An-Bn-parity}
  A_{-n,L}=A_{n,L},
  \qquad
  B_{-n,L}=-B_{n,L},
  \qquad
  B_{0,L}=0.
\end{equation}
The matrix-valued test function
\begin{equation}\label{eq:G-one-sided-matrix-definition}
  \mathbf G_{N,L}(x)
  :=\bigl(G_{mn,L}(x)\bigr)_{m,n\in I_N}
\end{equation}
is therefore real symmetric and centrosymmetric for every fixed $x$.

\subsection{Explicit transforms and transform-side spectrum}

The one-sided mode has the elementary entire transform
\begin{equation}\label{eq:one-sided-Fourier-transform}
  U_{n,L}(z)
  :=\int_{\mathbb R}f_{n,L}^{+}(x)e^{izx}\dd x
  =\frac{2e^{iLz/2}\sin(Lz/2)}
         {L(z+\nu_{n,L})},
\end{equation}
where the apparent singularity is removable.  The convolution theorem and
\cref{eq:Gmn-one-sided-definition} give
\begin{align}\label{eq:Gmn-transform}
  H_{mn,L}^{G}(z)
  &:=\int_{\mathbb R}G_{mn,L}(x)e^{izx}\dd x
  \\
  &=\frac{2\sin^2(Lz/2)}{L^2}
    \left(
      \frac1{(z+\nu_{m,L})(z+\nu_{n,L})}
      +\frac1{(z-\nu_{m,L})(z-\nu_{n,L})}
    \right)
  \notag\\
  &=\frac{4\sin^2(Lz/2)
          \bigl(z^2+\nu_{m,L}\nu_{n,L}\bigr)}
         {L^2
          \bigl(z^2-\nu_{m,L}^2\bigr)
          \bigl(z^2-\nu_{n,L}^2\bigr)}.
\end{align}
All apparent poles in \cref{eq:Gmn-transform} are removable.  The transforms
of the diagonal and divided-difference generators in
\cref{eq:An-Bn-test-functions} are
\begin{align}\label{eq:An-Bn-transforms}
  H_{n,L}^{A}(z)
  &:=\widehat{A_{n,L}}(z)
   =\frac{4\sin^2(Lz/2)
          \bigl(z^2+\nu_{n,L}^2\bigr)}
         {L^2\bigl(z^2-\nu_{n,L}^2\bigr)^2},
  \\
  H_{n,L}^{B}(z)
  &:=\widehat{B_{n,L}}(z)
   =\frac{4n\sin^2(Lz/2)}
         {L^2\bigl(z^2-\nu_{n,L}^2\bigr)}.
\end{align}
Consequently,
\begin{equation}\label{eq:Gmn-transform-divided-difference}
  H_{mn,L}^{G}(z)
  =
  \begin{cases}
    H_{n,L}^{A}(z),&m=n,\\[2mm]
    \dfrac{H_{m,L}^{B}(z)-H_{n,L}^{B}(z)}{m-n},&m\neq n.
  \end{cases}
\end{equation}
For each fixed $L,m,n$, one has
\begin{equation}\label{eq:Gmn-transform-decay}
  H_{mn,L}^{G}(z)
  =O_{L,m,n}\!\left((1+\abs{\RePart z})^{-2}\right),
  \qquad \abs{\ImPart z}\leq\frac12.
\end{equation}
Hence the zero series associated with $G_{mn,L}$ is absolutely convergent.
The kernels are continuous and piecewise smooth, but they have corners at
$x=0$ and at the ends of their support.  In general,
\begin{equation}\label{eq:Gmn-not-Wloc21}
  G_{mn,L}\notin W_{\mathrm{loc}}^{2,1}(\mathbb R),
\end{equation}
so \cref{cor:parameter-uniformity} does not apply directly; the decay in
\cref{eq:Gmn-transform-decay} supplies the needed zero convergence instead.

For real $t$, introduce the vectors
\begin{equation}\label{eq:Gmn-transform-vectors}
  \mathbf v_L^{\pm}(t)
  :=\left(
      \frac{\sqrt2\sin(Lt/2)}
           {L(t\mathbin{\pm}\nu_{n,L})}
    \right)_{n\in I_N},
\end{equation}
again with removable values understood by continuity.  Then
\begin{equation}\label{eq:Gmn-transform-Gram-form}
  \mathbf H_L^{G}(t)
  :=\bigl(H_{mn,L}^{G}(t)\bigr)_{m,n\in I_N}
  =\mathbf v_L^{+}(t)\bigl(\mathbf v_L^{+}(t)\bigr)^{\mathsf T}
   +\mathbf v_L^{-}(t)\bigl(\mathbf v_L^{-}(t)\bigr)^{\mathsf T}.
\end{equation}
Thus $\mathbf H_L^{G}(t)$ is positive semidefinite and has rank at most two.
Symmetry of $I_N$ gives
$\norm{\mathbf v_L^{+}(t)}_2=\norm{\mathbf v_L^{-}(t)}_2$.  If
\begin{equation}\label{eq:Gmn-transform-d-c}
  d_L(t):=\norm{\mathbf v_L^{+}(t)}_2^2,
  \qquad
  c_L(t):=\mathbf v_L^{+}(t)^{\mathsf T}\mathbf v_L^{-}(t),
\end{equation}
then its two possibly nonzero eigenvalues are
\begin{equation}\label{eq:Gmn-transform-eigenvalues}
  \lambda_{1,2}^{G}(t)=d_L(t)\mathbin{\pm}\abs{c_L(t)}\geq0,
\end{equation}
while the remaining $M-2$ eigenvalues vanish.  Nonnegativity follows also
from $\abs{c_L(t)}\leq d_L(t)$.

\subsection{The divided-difference Weil matrix}

The full explicit-formula functional
$\mathcal Z=\mathcal C_\Xi-\mathcal P$ from
\cref{eq:Xi-explicit-compact} acts entrywise on these kernels.  Define the
full zero-side matrix and its generating sequences by
\begin{align}\label{eq:S-a-b-Weil-definitions}
  S_{mn,L}
  &:=\mathcal Z(G_{mn,L})
    =\sum_{\rho}H_{mn,L}^{G}(z_\rho),
  \\
  a_{n,L}&:=\mathcal Z(A_{n,L}),
  \\
  b_{n,L}&:=\mathcal Z(B_{n,L}).
\end{align}
The sums are absolutely convergent by
\cref{eq:Gmn-transform-decay}.  Applying the linear functional $\mathcal Z$
entrywise to \cref{eq:Gmn-physical-divided-difference} yields
\begin{equation}\label{eq:S-divided-difference-structure}
  S_{mn,L}
  =
  \begin{cases}
    a_{n,L},&m=n,\\[2mm]
    \dfrac{b_{m,L}-b_{n,L}}{m-n},&m\neq n.
  \end{cases}
\end{equation}
In particular,
\begin{equation}\label{eq:S-consistent-relations}
  (m-n)S_{mn,L}=b_{m,L}-b_{n,L},
  \qquad
  b_{m,L}=mS_{m0,L},
  \qquad b_{0,L}=0,
\end{equation}
where the second identity is understood for $m\neq0$ and is trivially
consistent at $m=0$.  Moreover,
\begin{equation}\label{eq:S-a-b-parity}
  a_{-n,L}=a_{n,L},
  \qquad
  b_{-n,L}=-b_{n,L},
  \qquad
  S_{-m,-n,L}=S_{mn,L}.
\end{equation}
Thus
\begin{equation}\label{eq:S-matrix-definition}
  \mathbf S_{N,L}
  :=\bigl(S_{mn,L}\bigr)_{m,n\in I_N}
\end{equation}
is real symmetric and centrosymmetric, and all its eigenvalues are real.

The all-ones matrix enters through a commutator identity rather than through
an entrywise constant off-diagonal formula.  Put
\begin{equation}\label{eq:S-D-B-J-definitions}
  \mathbf D_N:=\operatorname{diag}(n)_{n\in I_N},
  \qquad
  \mathbf B_{N,L}:=\operatorname{diag}(b_{n,L})_{n\in I_N},
  \qquad
  \mathbf J_M:=\mathbf 1_M\mathbf 1_M^{\mathsf T}.
\end{equation}
Then \cref{eq:S-consistent-relations} is equivalent to
\begin{equation}\label{eq:S-rank-two-commutator}
  \mathbf D_N\mathbf S_{N,L}-\mathbf S_{N,L}\mathbf D_N
  =\mathbf B_{N,L}\mathbf J_M-\mathbf J_M\mathbf B_{N,L}
  =\mathbf b_L\mathbf 1_M^{\mathsf T}
   -\mathbf 1_M\mathbf b_L^{\mathsf T},
\end{equation}
where $\mathbf b_L=(b_{n,L})_{n\in I_N}$.  Consequently,
\begin{equation}\label{eq:S-commutator-rank}
  \operatorname{rank}\!
  \left(
    \mathbf D_N\mathbf S_{N,L}-\mathbf S_{N,L}\mathbf D_N
  \right)
  \leq2.
\end{equation}
Thus $\mathbf S_{N,L}$ is a symmetric divided-difference, or Loewner-type,
matrix with separately specified diagonal $(a_{n,L})$ and displacement rank at
most two.

Under the Riemann hypothesis, $z_\rho=\gamma\in\mathbb R$.  Therefore
\cref{eq:Gmn-transform-Gram-form} gives
\begin{equation}\label{eq:S-PSD-under-RH}
  \mathbf S_{N,L}
  =\sum_{\rho}\mathbf H_L^{G}(\gamma)
  \succeq0.
\end{equation}

Here \(\mathbf S_{N,L}\succeq0\) denotes positive semidefiniteness in
the Loewner order; equivalently, for every $\mathbf u\in\mathbb R^{2N+1}$,
\begin{equation}\label{eq:S-quadratic-form-under-RH}
  \mathbf u^{\mathsf T}\mathbf S_{N,L}\mathbf u
  =\sum_{\rho}\left(
      \abs{\mathbf v_L^{+}(\gamma)^{\mathsf T}\mathbf u}^2
      +\abs{\mathbf v_L^{-}(\gamma)^{\mathsf T}\mathbf u}^2
    \right)
  \geq0.
\end{equation}
This does not assert that all individual entries of
\(\mathbf S_{N,L}\) are nonnegative.
Without the Riemann hypothesis the matrix remains real symmetric, but
positive semidefiniteness is not automatic.

\subsection[Explicit Weil formula for the matrix generators]{Explicit Weil formula for \texorpdfstring{$a_{n,L}$, $b_{n,L}$, and $S_{mn,L}$}{a, b, and S}}

The pole pairs obtained from \cref{eq:An-Bn-transforms} are
\begin{align}\label{eq:An-Bn-pole-pairs}
  \mathcal B_{n,L}^{A}
  &:=2H_{n,L}^{A}\!\left(\frac i2\right)
   =\frac{8\sinh^2(L/4)
          \bigl(\tfrac14-\nu_{n,L}^2\bigr)}
         {L^2\bigl(\nu_{n,L}^2+\tfrac14\bigr)^2},
  \\
  \mathcal B_{n,L}^{B}
  &:=2H_{n,L}^{B}\!\left(\frac i2\right)
   =\frac{8n\sinh^2(L/4)}
         {L^2\bigl(\nu_{n,L}^2+\tfrac14\bigr)}.
\end{align}
Since $A_{n,L}(0)=1/L$, $B_{n,L}(0)=0$, and both functions vanish at
$x=\pm L$, the even explicit formula \cref{eq:Xi-Weil-even} gives
\begin{align}\label{eq:an-explicit-Weil-formula}
  a_{n,L}
  ={}&\mathcal B_{n,L}^{A}
      -\frac{\log(2\pi)}{L}
  \notag\\
  &-\frac{2}{L^2}\PF\!\int_0^L
    (L-x)\cos(\nu_{n,L}x)
    \frac{e^{x/2}}{e^x-e^{-x}}\dd x
  \notag\\
  &-\frac{2}{L^2}
    \sum_{2\leq q\leq T}
    \frac{\Lambda(q)}{\sqrt q}
    (L-\log q)
    \cos\!\left(\nu_{n,L}\log q\right),
\end{align}
and
\begin{align}\label{eq:bn-explicit-Weil-formula}
  b_{n,L}
  ={}&\mathcal B_{n,L}^{B}
  \\
  &+\frac1{\pi L}\int_0^L
    \sin(\nu_{n,L}x)
    \frac{e^{x/2}}{e^x-e^{-x}}\dd x
  \notag\\
  &+\frac1{\pi L}
    \sum_{2\leq q\leq T}
    \frac{\Lambda(q)}{\sqrt q}
    \sin\!\left(\nu_{n,L}\log q\right).
  \notag
\end{align}
The integral in \cref{eq:bn-explicit-Weil-formula} is ordinary rather than a
finite part, because the sine factor cancels the singularity at $x=0$.  The
terms $q=T$ vanish in both formulae, even when $T$ is a prime power, so no
endpoint half weight is required.

Combining \cref{eq:S-divided-difference-structure,eq:bn-explicit-Weil-formula}
produces the direct off-diagonal identity
\begin{align}\label{eq:Smn-explicit-Weil-formula}
  S_{mn,L}
  ={}&\frac{\mathcal B_{m,L}^{B}-\mathcal B_{n,L}^{B}}{m-n}
  \notag\\
  &+\frac1{\pi L(m-n)}\int_0^L
    \left(
      \sin(\nu_{m,L}x)-\sin(\nu_{n,L}x)
    \right)
    \frac{e^{x/2}}{e^x-e^{-x}}\dd x
  \notag\\
  &+\frac1{\pi L(m-n)}
    \sum_{2\leq q\leq T}
    \frac{\Lambda(q)}{\sqrt q}
    \left[
      \sin\!\left(\nu_{m,L}\log q\right)
      -\sin\!\left(\nu_{n,L}\log q\right)
    \right],
  \qquad m\neq n.
\end{align}
Together, \cref{eq:an-explicit-Weil-formula,eq:Smn-explicit-Weil-formula}
are the complete entrywise Weil formula for the matrix $\mathbf S_{N,L}$.
They display separately the pole, archimedean, and finite prime-power
contributions and reproduce exactly the relations in
\cref{eq:S-consistent-relations}.

\subsection{Closed evaluation of the archimedean integrals and large-frequency asymptotics}
\label{subsec:closed-archimedean-evaluation}

The archimedean kernel has a geometric expansion that is especially useful
for computation.  For $x>0$, put
\begin{equation}\label{eq:arch-kernel-geometric-expansion}
  K(x)
  :=\frac{e^{x/2}}{e^x-e^{-x}}
  =\frac{e^{-x/2}}{1-e^{-2x}}
  =\sum_{k=0}^{\infty}e^{-\alpha_k x},
  \qquad
  \alpha_k:=2k+\frac12.
\end{equation}
The last equality is the geometric series with ratio $e^{-2x}$.  In
particular, $e^{-x/2}\sum_{k\geq0}(-2x)^k$ is not an expansion of $K(x)$.
The geometric series is not uniform at $x=0$, but the sine factor in the
$b$-integral gives absolute summability because
$\abs{\sin(\nu x)}\leq\abs{\nu}x$.  The $a$-integral is handled by the
finite-part subtraction already fixed in \cref{eq:A-infinity-zeta}.

Set
\begin{equation}\label{eq:arch-special-function-parameters}
  r_L:=e^{-2L},
  \qquad
  z_{n,L}:=\frac14+\frac{i\nu_{n,L}}2.
\end{equation}
Let
\begin{equation}\label{eq:digamma-trigamma-Lerch-definitions}
  \psi_0(z):=\frac{\Gamma'(z)}{\Gamma(z)},
  \qquad
  \psi_1(z):=\psi_0'(z),
  \qquad
  \operatorname{LerchPhi}(r,s,z)
  :=\sum_{k=0}^{\infty}\frac{r^k}{(k+z)^s},
  \quad \abs r<1,
\end{equation}
be the digamma, trigamma, and Lerch transcendent, respectively.

For the ordinary sine integral, termwise integration gives the exact formulas
\begin{align}\label{eq:arch-B-closed-evaluation}
  I_{n,L}^{B}
  &:={}
  \int_0^L\sin(\nu_{n,L}x)K(x)\dd x
  \notag\\
  &={}
  \nu_{n,L}\sum_{k=0}^{\infty}
  \frac{1-e^{-\alpha_kL}}
       {\alpha_k^2+\nu_{n,L}^2}
  \notag\\
  &={}
  \frac12\ImPart\!\left[
    \psi_0(z_{n,L})
    +e^{-L/2}\operatorname{LerchPhi}
       (r_L,1,z_{n,L})
  \right].
\end{align}
This formula also gives $I_{0,L}^{B}=0$.

For the finite-part cosine integral, one first uses
\begin{equation}\label{eq:arch-elementary-A-integral}
  \int_0^L(L-x)e^{-\alpha x}\cos(\nu_{n,L}x)\dd x
  =\frac{L\alpha}{\alpha^2+\nu_{n,L}^2}
   -\frac{(1-e^{-\alpha L})(\alpha^2-\nu_{n,L}^2)}
          {(\alpha^2+\nu_{n,L}^2)^2},
\end{equation}
where $e^{i\nu_{n,L}L}=1$.  The one-sided finite-part normalization in
\cref{eq:A-infinity-zeta} gives
\begin{align}\label{eq:arch-A-closed-evaluation}
  I_{n,L}^{A}
  &:={}
  \PF\!\int_0^L
  (L-x)\cos(\nu_{n,L}x)K(x)\dd x
  \notag\\
  &={}
  \frac L2\left(
    \gamma_{\mathrm E}+\frac\pi2+2\log2
  \right)
  \notag\\
  &\quad+
  \sum_{k=0}^{\infty}\left[
    \frac{L\alpha_k}{\alpha_k^2+\nu_{n,L}^2}
    -\frac{(1-e^{-\alpha_kL})(\alpha_k^2-\nu_{n,L}^2)}
           {(\alpha_k^2+\nu_{n,L}^2)^2}
    -\frac{L}{\alpha_k}
  \right]
  \notag\\
  &={}
  \RePart\!\left[
    -\frac L2\bigl(\psi_0(z_{n,L})+\log2\bigr)
    -\frac14\psi_1(z_{n,L})
    +\frac{e^{-L/2}}4
       \operatorname{LerchPhi}(r_L,2,z_{n,L})
  \right].
\end{align}
The series in the middle line converges after the explicit subtraction
$L/\alpha_k$.  The constant follows from
$\psi_0(1/4)=-\gamma_{\mathrm E}-\pi/2-3\log2$.
Equivalently,
\begin{equation}\label{eq:arch-A-finite-part-partial-sum}
  I_{n,L}^{A}
  =\lim_{J\to\infty}\left\{
    \sum_{k=0}^{J-1}
    \int_0^L(L-x)e^{-\alpha_kx}
       \cos(\nu_{n,L}x)\dd x
    -\frac L2\log(2J)
  \right\}.
\end{equation}

The Lerch correction is exponentially convergent.  If
\begin{equation}\label{eq:arch-truncated-Lerch-definition}
  \operatorname{LerchPhi}_{J}(r,s,z)
  :=\sum_{k=0}^{J-1}\frac{r^k}{(k+z)^s},
  \qquad J\geq1,
\end{equation}
then, because $\RePart z_{n,L}=1/4$,
\begin{equation}\label{eq:arch-Lerch-tail-bound}
  \abs{
    \operatorname{LerchPhi}(r_L,s,z_{n,L})
    -\operatorname{LerchPhi}_{J}(r_L,s,z_{n,L})
  }
  \leq
  \frac{r_L^J}{(J+\tfrac14)^s(1-r_L)},
  \qquad s=1,2.
\end{equation}
Consequently, truncating only the Lerch series after $J$ terms gives the
certified absolute-error bounds
\begin{align}\label{eq:arch-IAB-certified-errors}
  \abs{\Delta I_{n,L}^{B}}
  &\leq
  \frac{e^{-L/2}}2
  \frac{e^{-2LJ}}
       {(J+\tfrac14)(1-e^{-2L})},
  \\
  \abs{\Delta I_{n,L}^{A}}
  &\leq
  \frac{e^{-L/2}}4
  \frac{e^{-2LJ}}
       {(J+\tfrac14)^2(1-e^{-2L})}.
\end{align}
These bounds are uniform in $n$.  Thus, for a prescribed absolute tolerance
$\varepsilon_{\mathrm{arch}}$, the required number of terms is the smallest
$J$ for which the two right-hand sides in
\cref{eq:arch-IAB-certified-errors} do not exceed
$\varepsilon_{\mathrm{arch}}$.

\begin{proposition}[Fixed-$L$ large-frequency asymptotics]
\label{prop:large-frequency-a-b-asymptotics}
Fix $L>0$ and let $n\to+\infty$.  With
$\nu:=\nu_{n,L}=2\pi n/L$, one has
\begin{align}\label{eq:arch-IAB-large-frequency}
  I_{n,L}^{A}
  &=-\frac L2\log\nu
    +\frac{\tfrac L{48}+\tfrac14-K(L)}{\nu^2}
    +O_L(\nu^{-4}),
  \\
  I_{n,L}^{B}
  &=\frac\pi4
    +\frac{\tfrac14-K(L)}{\nu}
    +\frac{\tfrac1{16}+K''(L)}{\nu^3}
    +O_L(\nu^{-5}).
\end{align}
Here
\begin{equation}\label{eq:arch-K-second-derivative}
  K''(L)
  =K(L)\,
   \frac{1+22e^{-2L}+9e^{-4L}}
        {4(1-e^{-2L})^2}.
\end{equation}

Define the finite arithmetic trigonometric polynomials
\begin{align}\label{eq:large-frequency-prime-polynomials}
  \mathcal P_{n,L}^{A}
  &:={}
  \sum_{2\leq q\leq e^L}
  \frac{\Lambda(q)}{\sqrt q}(L-\log q)
  \cos\!\left(\nu_{n,L}\log q\right),
  \\
  \mathcal P_{n,L}^{B}
  &:={}
  \sum_{2\leq q\leq e^L}
  \frac{\Lambda(q)}{\sqrt q}
  \sin\!\left(\nu_{n,L}\log q\right).
\end{align}
Put
\begin{align}\label{eq:large-frequency-kappa-definitions}
  \kappa_A(L)
  &:={}
  8\sinh^2(L/4)+\frac L{24}+\frac12-2K(L),
  \\
  \kappa_{B,1}(L)
  &:={}
  4\sinh^2(L/4)+\frac14-K(L),
  \\
  \kappa_{B,3}(L)
  &:={}
  -\sinh^2(L/4)+\frac1{16}+K''(L).
\end{align}
Then the complete generating sequences satisfy
\begin{align}\label{eq:a-b-large-frequency-asymptotics}
  a_{n,L}
  &={}
  \frac1L\log\!\left(\frac nL\right)
  -\frac{2}{L^2}\mathcal P_{n,L}^{A}
  -\frac{\kappa_A(L)}{L^2\nu^2}
  +O_L(\nu^{-4}),
  \\
  b_{n,L}
  &={}
  \frac{\mathcal P_{n,L}^{B}}{\pi L}
  +\frac1{4L}
  +\frac{\kappa_{B,1}(L)}{\pi L\nu}
  +\frac{\kappa_{B,3}(L)}{\pi L\nu^3}
  +O_L(\nu^{-5}).
\end{align}
For $n\to-\infty$, use
$a_{-n,L}=a_{n,L}$ and $b_{-n,L}=-b_{n,L}$.
In particular,
\begin{equation}\label{eq:a-b-large-frequency-coarse}
  a_{n,L}=\frac1L\log\abs n+O_L(1),
  \qquad
  b_{n,L}=O_L(1).
\end{equation}
The finite sums $\mathcal P_{n,L}^{A,B}$ generally oscillate with $n$ and do
not converge.  Thus \cref{eq:a-b-large-frequency-asymptotics} is an
arithmetic-separated asymptotic expansion: it retains the finite prime-power
trigonometric polynomial exactly and expands only the smooth pole and
archimedean contributions.
\end{proposition}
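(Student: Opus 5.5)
The plan is to treat the archimedean integrals by integration by parts in $x$ on $[0,L]$, and then to assemble $a_{n,L}$ and $b_{n,L}$ from the explicit Weil formulas \cref{eq:an-explicit-Weil-formula,eq:bn-explicit-Weil-formula}. The prime-power sums are kept exactly as $\mathcal P^{A,B}_{n,L}$. For $I^B_{n,L}$, I will split $K(x)=\frac1{2x}+R(x)$, where $R$ is smooth on $[0,L]$; near zero, $K(x)=\frac{1}{2x}+\frac14+\frac{x}{48}\cdot(\text{const})+\dots$ from the geometric expansion \cref{eq:arch-kernel-geometric-expansion}. The singular part contributes $\frac12\int_0^L\sin(\nu x)\,dx/x=\frac12\operatorname{Si}(2\pi n)$. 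Since $\nu L=2\pi n$, the Si asymptotics give $\frac\pi4$ plus terms $-\frac{\cos(2\pi n)}{2\nu L}+\dots$, and these combine with the endpoint terms from $1/(2x)$ evaluated at $x=L$. It is cleaner not to split at all. Instead I will integrate by parts directly with $K$, using $\sin(\nu x)\,dx=-d\cos(\nu x)/\nu$ repeatedly, and treat the origin through the Laurent expansion of $K$ there.

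The standard device for this is the Mellin/Fourier asymptotic of $\int_0^L \sin(\nu x)K(x)\,dx$. The endpoint $L$ contributes an odd-power series in $1/\nu$ whose coefficients are $K(L),-K''(L),\dots$, because $\cos(\nu L)=1$ and $\sin(\nu L)=0$. The endpoint $0$ contributes $\frac\pi4$ (coming from the $1/(2x)$ singularity) together with $K_{\rm reg}(0)/\nu-K_{\rm reg}''(0)/\nu^3$. Here $K_{\rm reg}=K-\frac1{2x}$, which has $K_{\rm reg}(0)=\frac14$; I will check the value $\frac1{16}$ for the cubic term from the Taylor coefficients of $K_{\rm reg}$. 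The $I^A_{n,L}$ case goes the same way. There the cosine and the weight $(L-x)$ produce the logarithm $-\frac L2\log\nu$ from the $1/(2x)$ singularity under the finite-part convention. That constant should cancel against the constants in \cref{eq:arch-A-closed-evaluation}. As a cross-check, I will instead use the closed forms \cref{eq:arch-B-closed-evaluation,eq:arch-A-closed-evaluation} with the Stirling expansions of $\psi_0(\frac14+i\nu/2)$ and $\psi_1$. The Lerch term with $r_L^k$ fixed can be expanded as $\sum_k r_L^k(k+z)^{-s}$ in powers of $1/\nu$, and its resummed coefficients reproduce $K(L)$ and $K''(L)$. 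Formula \cref{eq:arch-K-second-derivative} follows by differentiating $K(x)=e^{-x/2}/(1-e^{-2x})$ twice.

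Next, for $a_{n,L}$ I will combine $\mathcal B^A_{n,L}$ with $-\frac{2}{L^2}I^A_{n,L}$. The pole pair expands as $-\frac{8\sinh^2(L/4)}{L^2\nu^2}+O(\nu^{-4})$. Using $\log\nu=\log(2\pi)+\log(n/L)$, the term $\frac1L\log\nu$ cancels the $-\frac{\log 2\pi}{L}$ and leaves $\frac1L\log(n/L)$. Collecting the $\nu^{-2}$ coefficients gives $\kappa_A$. For $b_{n,L}$, the pole pair $\frac{8n\sinh^2(L/4)}{L^2(\nu^2+1/4)}$ with $n=L\nu/(2\pi)$ expands as $\frac{4\sinh^2(L/4)}{\pi L}(\nu^{-1}-\frac14\nu^{-3}+\dots)$. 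Adding $\frac{1}{\pi L}I^B$ yields $\frac1{4L}$, $\kappa_{B,1}$ and $\kappa_{B,3}$.

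The main obstacle is bookkeeping the constants at the origin: the finite-part normalization in $I^A$ and the exact Taylor coefficients of $K-\frac1{2x}$ (for instance the $\frac L{48}$ and $\frac1{16}$ terms). For these I would rely on the digamma Stirling expansion as the authoritative check. The parity statements for $n\to-\infty$ follow from \cref{eq:S-a-b-parity}. The coarse bounds \cref{eq:a-b-large-frequency-coarse} follow because $|\mathcal P^{A,B}_{n,L}|$ are bounded by finite $n$-independent sums.
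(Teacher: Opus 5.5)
Your proposal is correct, and its main line differs from the paper's. The paper never works with the integrals directly. It inserts the Stirling series for $\psi_0,\psi_1$ at $z_{n,L}=\frac14+\frac{i\nu}{2}$ into the closed forms \cref{eq:arch-B-closed-evaluation,eq:arch-A-closed-evaluation} and expands the Lerch terms. In that route $K(L)$ and $K''(L)$ appear only after resumming $\sum_k e^{-\alpha_kL}$ and $\sum_k\alpha_k^2e^{-\alpha_kL}$, and the finite-part normalization is inherited automatically from the closed form.

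You instead split $K=\frac1{2x}+K_{\mathrm{reg}}$ and read the expansion off the Fourier integrals, using $\sin(\nu L)=0$ and $\cos(\nu L)=1$. This is more elementary: it needs only $\operatorname{Si}$, $\operatorname{Ci}$ asymptotics and integration by parts. It also explains each coefficient: $\frac\pi4$ and $\log\nu$ come from the $\frac1{2x}$ pole, $K(L)$ and $K''(L)$ are endpoint data at $x=L$, and $\frac14$, $\frac L{48}$, $\frac1{16}$ are Taylor data at $0$. Keep the split, since integrating by parts with $K$ itself produces divergent boundary terms at $0$. Three details need fixing.

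First, the upper endpoint enters with a minus sign. It contributes $-K(L)/\nu+K''(L)/\nu^3$ to $I^B_{n,L}$ and $-K(L)/\nu^2$ to $I^A_{n,L}$. The $\frac1{2x}$ part of these is supplied by the tails $\frac12\operatorname{Si}(2\pi n)-\frac\pi4=-\frac1{2L\nu}+\frac1{L^3\nu^3}+O(\nu^{-5})$ and $\frac L2\operatorname{Ci}(2\pi n)=-\frac1{2L\nu^2}+O(\nu^{-4})$.

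Second, the Laurent expansion is $K(x)=\frac1{2x}+\frac14-\frac x{48}-\frac{x^2}{32}+O(x^3)$. Hence $\frac L{48}=-LK_{\mathrm{reg}}'(0)$ and $\frac1{16}=-K_{\mathrm{reg}}''(0)$.

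Third, you need not defer to the digamma expansion for the constant in $I^A_{n,L}$. By \cref{eq:arch-A-finite-part-partial-sum}, summing $J$ geometric terms is the regulator $1-e^{-2Jx}$, so the paper's finite part equals the Hadamard finite part plus $\frac L2\gamma_{\mathrm E}$. The Hadamard part of $\frac L2\int_0^L\cos(\nu x)\frac{\dd x}{x}$ is $\frac L2\bigl(-\gamma_{\mathrm E}-\log\nu+\operatorname{Ci}(2\pi n)\bigr)$, so the Euler constants cancel exactly, as you anticipated.

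The assembly of $a_{n,L}$ and $b_{n,L}$, and the coarse bounds, then go through as you describe.
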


\begin{proof}
Insert the standard sectorial expansions
\[
  \psi_0(z)
  =\log z-\frac1{2z}-\frac1{12z^2}+O(z^{-4}),
  \qquad
  \psi_1(z)
  =\frac1z+\frac1{2z^2}+\frac1{6z^3}+O(z^{-5})
\]
into \cref{eq:arch-B-closed-evaluation,eq:arch-A-closed-evaluation}.  Expanding
the Lerch terms at large $z_{n,L}$ and using
$e^{-L/2}/(1-e^{-2L})=K(L)$ gives
\cref{eq:arch-IAB-large-frequency}.  Expanding the pole terms in
\cref{eq:An-Bn-pole-pairs} and substituting the two archimedean expansions
into \cref{eq:an-explicit-Weil-formula,eq:bn-explicit-Weil-formula} yields
\cref{eq:a-b-large-frequency-asymptotics}.  The parity statements follow from
\cref{eq:S-a-b-parity}.
\end{proof}

\subsection{Compensated pole--prime formulas for large \texorpdfstring{$L$}{L}}
\label{subsec:pole-prime-compensation}

The pole terms in \cref{eq:An-Bn-pole-pairs} contain
\[
  8\sinh^2(L/4)
  =2\bigl(e^{L/2}+e^{-L/2}-2\bigr),
\]
and therefore can become numerically large when $L$ grows.  The purpose of
this subsection is only to reorganize the exact formulas so that the explicit
pole term is combined analytically with the continuous prime main term before
the final matrix entries are assembled.  No new hypothesis or asymptotic
theorem is introduced.

Put $T=e^L$ and define the continuous prime main terms
\begin{align}\label{eq:continuous-prime-main-terms}
  \mathcal M_{n,L}^{A}
  &:={}
  \int_1^T x^{-1/2}(L-\log x)
    \cos\!\left(\nu_{n,L}\log x\right)\dd x
  \notag\\
  &=
  \bigl(e^{L/2}-1\bigr)
  \frac{\tfrac14-\nu_{n,L}^2}
       {(\nu_{n,L}^2+\tfrac14)^2}
  -\frac{L}{2(\nu_{n,L}^2+\tfrac14)},
  \\
  \mathcal M_{n,L}^{B}
  &:={}
  \int_1^T x^{-1/2}
    \sin\!\left(\nu_{n,L}\log x\right)\dd x
  \notag\\
  &=-
  \frac{\nu_{n,L}(e^{L/2}-1)}
       {\nu_{n,L}^2+\tfrac14}.
\end{align}
Let
\begin{align}\label{eq:prime-sum-remainders}
  \mathcal E_{n,L}^{A}
  &:={}
  \sum_{2\leq q\leq T}
  \frac{\Lambda(q)}{\sqrt q}(L-\log q)
  \cos\!\left(\nu_{n,L}\log q\right)
  -\mathcal M_{n,L}^{A},
  \\
  \mathcal E_{n,L}^{B}
  &:={}
  \sum_{2\leq q\leq T}
  \frac{\Lambda(q)}{\sqrt q}
  \sin\!\left(\nu_{n,L}\log q\right)
  -\mathcal M_{n,L}^{B}.
\end{align}
The compensated pole terms are
\begin{align}\label{eq:compensated-pole-terms}
  \mathcal C_{n,L}^{A}
  &:={}
  \mathcal B_{n,L}^{A}
  -\frac{2}{L^2}\mathcal M_{n,L}^{A}
  \notag\\
  &=
  \frac{2(e^{-L/2}-1)}{L^2}
  \frac{\tfrac14-\nu_{n,L}^2}
       {(\nu_{n,L}^2+\tfrac14)^2}
  +\frac{1}{L(\nu_{n,L}^2+\tfrac14)},
  \\
  \mathcal C_{n,L}^{B}
  &:={}
  \mathcal B_{n,L}^{B}
  +\frac{1}{\pi L}\mathcal M_{n,L}^{B}
  \notag\\
  &=
  \frac{\nu_{n,L}(e^{-L/2}-1)}
       {\pi L(\nu_{n,L}^2+\tfrac14)}.
\end{align}
Consequently, the exact formulas
\cref{eq:an-explicit-Weil-formula,eq:bn-explicit-Weil-formula} may be evaluated
in the equivalent compensated form
\begin{align}\label{eq:an-bn-compensated-Weil-formula}
  a_{n,L}
  ={}&
  \mathcal C_{n,L}^{A}
  -\frac{\log(2\pi)}{L}
  -\frac{2}{L^2}I_{n,L}^{A}
  -\frac{2}{L^2}\mathcal E_{n,L}^{A},
  \\
  b_{n,L}
  ={}&
  \mathcal C_{n,L}^{B}
  +\frac1{\pi L}I_{n,L}^{B}
  +\frac1{\pi L}\mathcal E_{n,L}^{B},
\end{align}
where $I_{n,L}^{A,B}$ are given exactly by
\cref{eq:arch-B-closed-evaluation,eq:arch-A-closed-evaluation}.  For $m\ne n$,
the corresponding off-diagonal entry is
\begin{align}\label{eq:Smn-compensated-Weil-formula}
  S_{mn,L}
  ={}&
  \frac{\mathcal C_{m,L}^{B}-\mathcal C_{n,L}^{B}}{m-n}
  +\frac{I_{m,L}^{B}-I_{n,L}^{B}}{\pi L(m-n)}
  \notag\\
  &+\frac{\mathcal E_{m,L}^{B}-\mathcal E_{n,L}^{B}}
          {\pi L(m-n)}.
\end{align}

\begin{remark}[Purpose of the compensated form]
\label{rem:purpose-pole-prime-compensation}
The original formulas display the pole and prime-power contributions
separately and are useful for conceptual checks.  The compensated formulas
remove the explicit pole--main-term cancellation from the final assembly.
Their purpose is numerical stability and transparent diagnostics when $L$ is
large; they do not change $a_{n,L}$, $b_{n,L}$, $S_{mn,L}$, or the zero-side
matrix.  The remainder $\mathcal E_{n,L}^{A,B}$ is still a difference between
a prime-power sum and its continuous main term, so sufficient working
precision remains necessary.  For moderate $L$, both evaluations may be
retained as an independent cross-check.
\end{remark}

\subsection{A single-parameter finite-zero approximation}

Although the kernels in this subsection do not lie in
$W_{\mathrm{loc}}^{2,1}(\mathbb R)$, their explicit transforms permit a
direct moving-family estimate.  Define
\begin{equation}\label{eq:S-finite-zero-matrix}
  \mathbf S_{N,L}^{(T)}
  :=\left(
      \sum_{\substack{\rho=\beta+i\gamma\\\abs{\gamma}<T}}
      H_{mn,L}^{G}(z_\rho)
    \right)_{m,n\in I_N},
  \qquad L=\log T.
\end{equation}
There is an absolute constant $C>0$ such that, uniformly for
$\abs{m},\abs{n}\leq N\leq\lfloor L\rfloor$,
\begin{equation}\label{eq:S-entrywise-zero-tail}
  \max_{m,n\in I_N}
  \abs{
    \left(\mathbf S_{N,L}^{(T)}-\mathbf S_{N,L}\right)_{mn}
  }
  \leq
  C\frac{\log(T+2)+1}{\sqrt T\,L^2}.
\end{equation}
Indeed, $\abs{\nu_{n,L}}\leq2\pi$ and
\cref{eq:Gmn-transform} give, for $\abs{\gamma}\geq T$ and
$\abs{\ImPart z_\rho}\leq1/2$,
\[
  \abs{H_{mn,L}^{G}(z_\rho)}
  \leq C\frac{T^{1/2}}{L^2\gamma^2},
\]
first for all sufficiently large $T$ and then, after increasing $C$, for
$T\geq3$.  The zero-counting estimate used in
\cref{eq:zero-tail-bound} proves \cref{eq:S-entrywise-zero-tail}.  Since
$M=2N+1\leq2L+1$,
\begin{equation}\label{eq:S-operator-zero-tail}
  \norm{
    \mathbf S_{N,L}^{(T)}-\mathbf S_{N,L}
  }_{\mathrm{op}}
  \leq\frac{C}{\sqrt T}.
\end{equation}
Consequently, Weyl's inequality yields
\begin{equation}\label{eq:S-eigenvalue-zero-tail}
  \max_{1\leq j\leq M}
  \abs{
    \lambda_j\!\left(\mathbf S_{N,L}^{(T)}\right)
    -\lambda_j\!\left(\mathbf S_{N,L}\right)
  }
  \leq\frac{C}{\sqrt T},
\end{equation}
when the real eigenvalues are arranged in nondecreasing order.  Under the
Riemann hypothesis, both matrices in \cref{eq:S-operator-zero-tail} are
positive semidefinite.

\section{Scale-invariant quotient by a Hermitian definite pencil}
\label{sec:scale-invariant-quotient}

The divided-difference identity in
\cref{eq:S-divided-difference-structure,eq:S-rank-two-commutator}
contains more information than is needed for Lemke's oblique projection
\cite{Lemke2026}.  It permits the quotient problem to be represented directly on a fixed
codimension-one contrast space by a symmetric generalized eigenvalue problem.
This formulation is homogeneous in the Weil metric, invariant under positive
affine rescaling of the Weil matrix, and does not require division by a small
ground-state overlap.

\subsection{Frequency operator, evaluation vector, and displacement identity}
\label{subsec:siq-displacement}

Put
\begin{equation}\label{eq:siq-frequency-evaluation}
  M:=2N+1,
  \qquad
  \mathbf D_{L,N}
  :=\operatorname{diag}(\nu_{n,L})_{n\in I_N},
  \qquad
  \boldsymbol\delta_{N,L}
  :=\frac1{\sqrt L}\mathbf1_M.
\end{equation}
Thus $\boldsymbol\delta_{N,L}$ is the coordinate vector of evaluation at
$x=0$ in the orthonormal Fourier basis
$L^{-1/2}e^{i\nu_{n,L}x}$.  Define also
\begin{equation}\label{eq:siq-beta-alpha}
  \boldsymbol\beta_{N,L}
  :=\sqrt L\,(b_{n,L})_{n\in I_N},
  \qquad
  \alpha_L:=\frac{2\pi}{L}.
\end{equation}
Then the rank-two commutator takes the normalized form
\begin{equation}\label{eq:siq-displacement-identity}
  [\mathbf D_{L,N},\mathbf S_{N,L}]
  =\alpha_L\left(
      \boldsymbol\beta_{N,L}\boldsymbol\delta_{N,L}^{\mathsf T}
      -\boldsymbol\delta_{N,L}\boldsymbol\beta_{N,L}^{\mathsf T}
    \right).
\end{equation}
The same identity holds with $\mathbf S_{N,L}$ replaced by
$\mathbf S_{N,L}-c\mathbf I_M$ for any scalar $c$.

Let
\begin{equation}\label{eq:siq-contrast-space}
  \mathcal V_{N,L}^{0}
  :=\ker\boldsymbol\delta_{N,L}^{\mathsf T}
  =\left\{\mathbf x\in\mathbb C^M:
      \sum_{n\in I_N}x_n=0\right\}.
\end{equation}
Choose an $M\times2N$ matrix $\mathbf C_{N,L}$ with Euclidean-orthonormal
columns spanning this space:
\begin{equation}\label{eq:siq-contrast-matrix}
  \mathbf C_{N,L}^{*}\mathbf C_{N,L}=\mathbf I_{2N},
  \qquad
  \mathbf C_{N,L}^{*}\boldsymbol\delta_{N,L}=0,
  \qquad
  \operatorname{ran}\mathbf C_{N,L}=\mathcal V_{N,L}^{0}.
\end{equation}
A Helmert contrast matrix is one explicit choice.  Any other basis of
$\mathcal V_{N,L}^{0}$ gives a congruent matrix pencil and therefore the same
generalized spectrum.

\subsection{The shifted contrast pencil}
\label{subsec:siq-shifted-pencil}

Arrange the eigenvalues of $\mathbf S_{N,L}$ as
\begin{equation}\label{eq:siq-ordered-eigenvalues}
  \lambda_1(N,L)\leq\lambda_2(N,L)\leq\cdots\leq\lambda_M(N,L),
\end{equation}
and put
\begin{equation}\label{eq:siq-shifted-metric}
  \epsilon_{N,L}:=\lambda_1(N,L),
  \qquad
  \mathbf W_{N,L}:=\mathbf S_{N,L}-\epsilon_{N,L}\mathbf I_M.
\end{equation}
Then $\mathbf W_{N,L}\succeq0$.  Define the two $2N\times2N$ compressed
matrices
\begin{align}\label{eq:siq-GK-definition}
  \mathbf G_{N,L}
  &:=\mathbf C_{N,L}^{*}\mathbf W_{N,L}\mathbf C_{N,L},
  \\
  \mathbf K_{N,L}
  &:=\mathbf C_{N,L}^{*}\mathbf W_{N,L}
      \mathbf D_{L,N}\mathbf C_{N,L}.
\end{align}
The first is the quotient Weil metric in contrast coordinates.  The second
is the corresponding differentiation form.

\begin{theorem}[Scale-invariant quotient pencil]
\label{thm:scale-invariant-quotient}
Assume that $\epsilon_{N,L}$ is a simple eigenvalue and that
\begin{equation}\label{eq:siq-positive-contrast-metric}
  \mathbf G_{N,L}\succ0.
\end{equation}
Then the following statements hold.

\begin{enumerate}[label=\textup{(\roman*)}]
  \item The matrix $\mathbf K_{N,L}$ is Hermitian; in the present real
        setting it is real symmetric.

  \item There is a unique operator
        $\mathcal T_{N,L}:\mathcal V_{N,L}^{0}\to\mathcal V_{N,L}^{0}$
        satisfying
        \begin{equation}\label{eq:siq-Riesz-definition}
          \mathbf x^{*}\mathbf W_{N,L}\mathcal T_{N,L}\mathbf y
          =\mathbf x^{*}\mathbf W_{N,L}
             \mathbf D_{L,N}\mathbf y
          \qquad
          (\mathbf x,\mathbf y\in\mathcal V_{N,L}^{0}).
        \end{equation}
        Its coordinate matrix in the columns of $\mathbf C_{N,L}$ is
        \begin{equation}\label{eq:siq-coordinate-operator}
          \mathbf A_{N,L}:=\mathbf G_{N,L}^{-1}\mathbf K_{N,L}.
        \end{equation}

  \item The operator $\mathcal T_{N,L}$ is self-adjoint for the positive
        inner product induced by $\mathbf W_{N,L}$.  Equivalently, its
        eigenvalues are the generalized eigenvalues of the Hermitian definite
        pencil
        \begin{equation}\label{eq:siq-generalized-eigenproblem}
          \boxed{
          \mathbf K_{N,L}\mathbf y
          =\mu\,\mathbf G_{N,L}\mathbf y,}
        \end{equation}
        and are all real.

  \item If
        $\mathbf G_{N,L}=(\mathbf L_{N,L}^{G})^{*}\mathbf L_{N,L}^{G}$
        is a Cholesky factorization, then
        \begin{equation}\label{eq:siq-whitened-operator}
          \widehat{\mathbf A}_{N,L}
          :=(\mathbf L_{N,L}^{G})^{-*}\mathbf K_{N,L}
            (\mathbf L_{N,L}^{G})^{-1}
        \end{equation}
        is Hermitian and has exactly the generalized spectrum in
        \cref{eq:siq-generalized-eigenproblem}.
\end{enumerate}
\end{theorem}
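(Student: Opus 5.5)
The whole argument rests on the normalized displacement identity \cref{eq:siq-displacement-identity}, applied to the shifted matrix $\mathbf W_{N,L}=\mathbf S_{N,L}-\epsilon_{N,L}\mathbf I_M$. As noted after that identity, the commutator is unchanged by subtracting a scalar multiple of the identity. The simplicity hypothesis on $\epsilon_{N,L}$ will not be used in the algebra. The positivity \cref{eq:siq-positive-contrast-metric} is the only analytic input: it makes $\mathbf W_{N,L}$ restricted to $\mathcal V_{N,L}^{0}$ a genuine inner product.

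For (i), I will compute $\mathbf K_{N,L}-\mathbf K_{N,L}^{*}$ directly. Since $\mathbf W_{N,L}$ is real symmetric and $\mathbf D_{L,N}$ is real diagonal, $\mathbf K_{N,L}^{*}=\mathbf C_{N,L}^{*}\mathbf D_{L,N}\mathbf W_{N,L}\mathbf C_{N,L}$. Hence
\[
  \mathbf K_{N,L}-\mathbf K_{N,L}^{*}
  =-\mathbf C_{N,L}^{*}[\mathbf D_{L,N},\mathbf W_{N,L}]\mathbf C_{N,L}
  =-\alpha_L\,\mathbf C_{N,L}^{*}\bigl(\boldsymbol\beta_{N,L}\boldsymbol\delta_{N,L}^{\mathsf T}-\boldsymbol\delta_{N,L}\boldsymbol\beta_{N,L}^{\mathsf T}\bigr)\mathbf C_{N,L}.
\]
Both terms on the right vanish: $\boldsymbol\delta_{N,L}$ is real, so \cref{eq:siq-contrast-matrix} gives $\boldsymbol\delta_{N,L}^{\mathsf T}\mathbf C_{N,L}=0$ and $\mathbf C_{N,L}^{*}\boldsymbol\delta_{N,L}=0$. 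If $\mathbf C_{N,L}$ is chosen real, for instance Helmert, then $\mathbf K_{N,L}$ is real symmetric. This cancellation of the rank-two commutator on the contrast space is the key step, and it is immediate once $\boldsymbol\delta_{N,L}$ is seen to annihilate the contrast space.

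For (ii), write $\mathbf x=\mathbf C_{N,L}\mathbf u$, $\mathbf y=\mathbf C_{N,L}\mathbf v$ and $\mathcal T_{N,L}\mathbf y=\mathbf C_{N,L}\mathbf A\mathbf v$. Then \cref{eq:siq-Riesz-definition} reads $\mathbf u^{*}\mathbf G_{N,L}\mathbf A\mathbf v=\mathbf u^{*}\mathbf K_{N,L}\mathbf v$ for all $\mathbf u,\mathbf v$. This is equivalent to $\mathbf G_{N,L}\mathbf A=\mathbf K_{N,L}$, and invertibility of $\mathbf G_{N,L}$ gives existence and uniqueness with $\mathbf A=\mathbf A_{N,L}$. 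Because $\mathbf C_{N,L}$ is injective, uniqueness in coordinates is uniqueness of the operator.

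For (iii), the form $\langle\mathbf x,\mathbf y\rangle_W:=\mathbf x^{*}\mathbf W_{N,L}\mathbf y$ is positive definite on $\mathcal V_{N,L}^{0}$ by \cref{eq:siq-positive-contrast-metric}. Then
\[
  \langle\mathbf x,\mathcal T_{N,L}\mathbf y\rangle_W=\mathbf u^{*}\mathbf K_{N,L}\mathbf v
  =\overline{\mathbf v^{*}\mathbf K_{N,L}\mathbf u}
  =\overline{\langle\mathbf y,\mathcal T_{N,L}\mathbf x\rangle_W}
  =\langle\mathcal T_{N,L}\mathbf x,\mathbf y\rangle_W,
\]
where the middle equality uses (i). So $\mathcal T_{N,L}$ is self-adjoint and has real spectrum. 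Since $\mathbf A_{N,L}\mathbf v=\mu\mathbf v$ is equivalent to $\mathbf K_{N,L}\mathbf v=\mu\mathbf G_{N,L}\mathbf v$, its eigenvalues are exactly the generalized eigenvalues of \cref{eq:siq-generalized-eigenproblem}. That pencil is Hermitian definite because $\mathbf K_{N,L}$ is Hermitian and $\mathbf G_{N,L}\succ0$.

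For (iv), $\widehat{\mathbf A}_{N,L}$ is a congruence of the Hermitian matrix $\mathbf K_{N,L}$, so it is Hermitian. It is also similar to $\mathbf A_{N,L}$:
\[
  \widehat{\mathbf A}_{N,L}=\mathbf L_{N,L}^{G}\,\mathbf G_{N,L}^{-1}\mathbf K_{N,L}\,(\mathbf L_{N,L}^{G})^{-1},
\]
using $\mathbf G_{N,L}^{-1}=(\mathbf L_{N,L}^{G})^{-1}(\mathbf L_{N,L}^{G})^{-*}$. Hence the spectra coincide, with multiplicities. The only point needing care is keeping the conjugation conventions consistent, since $\mathcal V_{N,L}^{0}$ is taken complex while the matrices are real.
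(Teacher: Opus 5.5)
Your proposal is correct and follows the paper's proof step for step. The displacement identity applied to $\mathbf W_{N,L}$ kills $\mathbf K_{N,L}-\mathbf K_{N,L}^{*}$ on the contrast space; $\mathbf G_{N,L}\succ0$ yields the unique $\mathcal T_{N,L}$ with $\mathbf G_{N,L}\mathbf A_{N,L}=\mathbf K_{N,L}$; Hermiticity of $\mathbf K_{N,L}$ gives self-adjointness; and Cholesky conjugation gives (iv). Your remark that simplicity is never used is also right, since $\mathbf G_{N,L}\succ0$ already forces $\ker\mathbf W_{N,L}$ to be one-dimensional: a kernel of dimension at least two would meet the codimension-one space $\mathcal V_{N,L}^{0}$.
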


\begin{proof}
Taking the adjoint in the second line of \cref{eq:siq-GK-definition} gives
\[
  \mathbf K_{N,L}^{*}
  =\mathbf C_{N,L}^{*}\mathbf D_{L,N}
    \mathbf W_{N,L}\mathbf C_{N,L}.
\]
By \cref{eq:siq-displacement-identity}, the difference
$\mathbf K_{N,L}-\mathbf K_{N,L}^{*}$ is a linear combination of terms
containing either
$\mathbf C_{N,L}^{*}\boldsymbol\delta_{N,L}$ or
$\boldsymbol\delta_{N,L}^{\mathsf T}\mathbf C_{N,L}$, and hence vanishes.
This proves (i).

The positive-definite matrix $\mathbf G_{N,L}$ identifies the dual of
$\mathcal V_{N,L}^{0}$ with the space itself.  Thus
\cref{eq:siq-Riesz-definition} determines a unique operator, and its matrix
satisfies $\mathbf G_{N,L}\mathbf A_{N,L}=\mathbf K_{N,L}$, proving (ii).
The Hermiticity of $\mathbf K_{N,L}$ gives
\[
  \langle\mathbf x,\mathcal T_{N,L}\mathbf y\rangle_W
  =\langle\mathcal T_{N,L}\mathbf x,\mathbf y\rangle_W,
\]
which proves (iii).  Conjugating the pencil by the Cholesky factor gives
\cref{eq:siq-whitened-operator}, proving (iv).
\end{proof}

\begin{remark}[Exact quotient interpretation]
\label{rem:siq-exact-quotient}
Let $\mathbf e_{N,L}$ span $\ker\mathbf W_{N,L}$.  Under the hypotheses of
\cref{thm:scale-invariant-quotient},
\begin{equation}\label{eq:siq-overlap-equivalence}
  \boldsymbol\delta_{N,L}^{*}\mathbf e_{N,L}\neq0,
\end{equation}
because \cref{eq:siq-positive-contrast-metric} is equivalent to
$\ker\mathbf W_{N,L}\cap\mathcal V_{N,L}^{0}=\{0\}$.  Consequently each
class in
$\mathbb C^M/\operatorname{span}\{\mathbf e_{N,L}\}$ has a unique
representative in $\mathcal V_{N,L}^{0}$.  For
$\mathbf y\in\mathcal V_{N,L}^{0}$, the vector
$\mathcal T_{N,L}\mathbf y$ is precisely the unique zero-mean representative
satisfying
\begin{equation}\label{eq:siq-geometric-quotient-condition}
  \mathbf D_{L,N}\mathbf y-\mathcal T_{N,L}\mathbf y
  \in\operatorname{span}\{\mathbf e_{N,L}\}.
\end{equation}
Thus the pencil gives the same quotient operator as a projection along the
null direction, but computes it without constructing that projection or
normalizing the ground vector.
\end{remark}

\subsection{Affine-scale invariance and stable deflation}
\label{subsec:siq-scale-invariance}

The quotient pencil is invariant under the transformation
\begin{equation}\label{eq:siq-affine-rescaling}
  \mathbf S_{N,L}\longmapsto
  a\mathbf S_{N,L}+b\mathbf I_M,
  \qquad a>0,
\end{equation}
because
\[
  \epsilon_{N,L}\longmapsto a\epsilon_{N,L}+b,
  \qquad
  \mathbf W_{N,L}\longmapsto a\mathbf W_{N,L},
\]
and both $\mathbf G_{N,L}$ and $\mathbf K_{N,L}$ are multiplied by the same
positive scalar $a$.  In particular, neither the generalized eigenvalues nor
the whitened matrix in \cref{eq:siq-whitened-operator} changes.

For numerical work, let
\begin{equation}\label{eq:siq-gap-normalization}
  g_{N,L}:=\lambda_2(N,L)-\lambda_1(N,L)>0,
  \qquad
  \widetilde{\mathbf W}_{N,L}
  :=\frac{\mathbf S_{N,L}-\lambda_1(N,L)\mathbf I_M}{g_{N,L}}.
\end{equation}
Its smallest positive eigenvalue is one.  Replacing
$\mathbf W_{N,L}$ by $\widetilde{\mathbf W}_{N,L}$ leaves the pencil
spectrum unchanged and makes the normalization invariant under every
positive affine rescaling in \cref{eq:siq-affine-rescaling}.  When the least
eigenvalue is extremely small, one may form the compressed matrices directly:
\begin{align}\label{eq:siq-compressed-without-full-W}
  \widetilde{\mathbf G}_{N,L}
  &=\frac{
      \mathbf C_{N,L}^{*}\mathbf S_{N,L}\mathbf C_{N,L}
      -\lambda_1(N,L)\mathbf I_{2N}}
      {g_{N,L}},
  \\
  \widetilde{\mathbf K}_{N,L}
  &=\frac{
      \mathbf C_{N,L}^{*}\mathbf S_{N,L}\mathbf D_{L,N}
        \mathbf C_{N,L}
      -\lambda_1(N,L)
       \mathbf C_{N,L}^{*}\mathbf D_{L,N}\mathbf C_{N,L}}
      {g_{N,L}}.
\end{align}
The second matrix should be symmetrized at working precision.  These formulas
avoid subtracting two nearly equal full matrices before compression.

\begin{remark}[Advantages over Lemke's ground-state quotient]
\label{rem:siq-advantages}
Lemke's construction \cite{Lemke2026} first computes a ground eigenvector
$\mathbf e$, normalizes it by $\boldsymbol\delta^{*}\mathbf e$, forms an
oblique projection along $\mathbf e$, and finally multiplies the frequency
matrix by that projection.  The present formulation gives the same quotient
operator in fixed contrast coordinates, but has four concrete computational
advantages.  It uses only the scalar least eigenvalue and a fixed contrast
basis; it never divides by $\boldsymbol\delta^{*}\mathbf e$; it never forms a
large-norm oblique projector; and it reduces the calculation to a Hermitian
definite pencil for which stable Cholesky or generalized-eigensystem routines
are standard.  It also makes the affine-scale invariance in
\cref{eq:siq-affine-rescaling} exact rather than heuristic.

This is a change of realization, not a claim that Lemke's finite quotient was
spectrally incorrect.  The two finite spectra are algebraically identical
under the hypotheses of \cref{thm:scale-invariant-quotient}; the improvement is
that avoidable coordinate amplification is removed.  Genuine geometric
degeneration of the quotient metric remains and is quantified in
\cref{prop:siq-intrinsic-conditioning} below.
\end{remark}

\subsection{Explicit matrix elements in a difference basis}
\label{subsec:siq-difference-basis}

For formulas, it is convenient to use the nonorthonormal basis
\begin{equation}\label{eq:siq-difference-basis}
  \mathbf q_n:=\mathbf u_n-\mathbf u_0,
  \qquad n\in I_N^{\times}:=I_N\setminus\{0\},
\end{equation}
where $\mathbf u_n$ is the standard coordinate vector at the node
$\nu_{n,L}$.  These vectors span $\mathcal V_{N,L}^{0}$.  Write
$\mathbf G_{N,L}^{\Delta}$ and $\mathbf K_{N,L}^{\Delta}$ for the two
matrices in this basis.  For $m,n\in I_N^{\times}$,
\begin{align}\label{eq:siq-difference-G-entry}
  (\mathbf G_{N,L}^{\Delta})_{mn}
  ={}&S_{mn,L}-S_{m0,L}-S_{0n,L}+S_{00,L}
      -\epsilon_{N,L}(\delta_{mn}+1),
  \\
  (\mathbf K_{N,L}^{\Delta})_{mn}
  ={}&\nu_{n,L}
      \left(S_{mn,L}-S_{0n,L}
            -\epsilon_{N,L}\delta_{mn}\right).
  \label{eq:siq-difference-K-entry}
\end{align}
The symmetry of \cref{eq:siq-difference-K-entry} is a nontrivial consequence
of the divided-difference identity.  Whenever
$\mathbf G_{N,L}^{\Delta}$ is invertible, the explicit quotient coordinate
matrix is
\begin{equation}\label{eq:siq-difference-coordinate-matrix}
  \mathbf A_{N,L}^{\Delta}
  :=(\mathbf G_{N,L}^{\Delta})^{-1}
     \mathbf K_{N,L}^{\Delta}.
\end{equation}
For computation, the generalized symmetric problem should be solved directly
rather than by explicitly forming the inverse in
\cref{eq:siq-difference-coordinate-matrix}.

At the level of one transform-side atom, subtraction of the zero row and
column gives
\begin{align}\label{eq:siq-atom-difference-G}
  \overline H_{mn,L}^{G}(z)
  &:={H}_{mn,L}^{G}(z)-{H}_{m0,L}^{G}(z)
       -{H}_{0n,L}^{G}(z)+{H}_{00,L}^{G}(z)
  \\
  &=\frac{\nu_{m,L}\nu_{n,L}}{z^2}H_{mn,L}^{G}(z)
  \notag\\
  &=\frac{
      4\sin^2(Lz/2)\nu_{m,L}\nu_{n,L}
      (z^2+\nu_{m,L}\nu_{n,L})}
     {L^2z^2
      (z^2-\nu_{m,L}^2)(z^2-\nu_{n,L}^2)},
\end{align}
with all apparent singularities interpreted by continuity.  Similarly,
\begin{equation}\label{eq:siq-atom-difference-K}
  \overline K_{mn,L}^{G}(z)
  =\frac{
      4\sin^2(Lz/2)\nu_{m,L}\nu_{n,L}
      (\nu_{m,L}+\nu_{n,L})}
     {L^2
      (z^2-\nu_{m,L}^2)(z^2-\nu_{n,L}^2)}.
\end{equation}
These formulas give explicit zero-side entries for the quotient metric and
its differentiation form.

\subsection{Reflection reduction and the positive square}
\label{subsec:siq-parity-reduction}

Let $\mathbf R_N\mathbf u_n=\mathbf u_{-n}$.  By
\cref{eq:S-a-b-parity},
\begin{equation}\label{eq:siq-reflection-relations}
  \mathbf R_N\mathbf W_{N,L}\mathbf R_N=\mathbf W_{N,L},
  \qquad
  \mathbf R_N\mathbf D_{L,N}\mathbf R_N=-\mathbf D_{L,N},
  \qquad
  \mathbf R_N\boldsymbol\delta_{N,L}=\boldsymbol\delta_{N,L}.
\end{equation}
Hence $\mathcal V_{N,L}^{0}$ splits into even and odd subspaces, each of
dimension $N$.  Choose Euclidean-orthonormal contrast matrices
$\mathbf C_{N,L}^{+}$ and $\mathbf C_{N,L}^{-}$ for these two subspaces and
put
\begin{align}\label{eq:siq-parity-G-B}
  \mathbf G_{N,L}^{\pm}
  &:= (\mathbf C_{N,L}^{\pm})^{*}
      \mathbf W_{N,L}\mathbf C_{N,L}^{\pm},
  \\
  \mathbf B_{N,L}
  &:= (\mathbf C_{N,L}^{-})^{*}
      \mathbf W_{N,L}\mathbf D_{L,N}
      \mathbf C_{N,L}^{+}.
\end{align}
In the parity-adapted basis, the pencil has the form
\begin{equation}\label{eq:siq-parity-pencil-block}
  \mathbf G_{N,L}
  =\begin{pmatrix}
      \mathbf G_{N,L}^{+}&0\\
      0&\mathbf G_{N,L}^{-}
    \end{pmatrix},
  \qquad
  \mathbf K_{N,L}
  =\begin{pmatrix}
      0&\mathbf B_{N,L}^{*}\\
      \mathbf B_{N,L}&0
    \end{pmatrix}.
\end{equation}
Let $\mathbf L_{\pm}$ be Cholesky factors of
$\mathbf G_{N,L}^{\pm}$ and define
\begin{equation}\label{eq:siq-parity-whitened-block}
  \mathbf A_{N,L}^{+-}
  :=\mathbf L_-^{-*}\mathbf B_{N,L}\mathbf L_+^{-1}.
\end{equation}
Then the whitened quotient operator is
\begin{equation}\label{eq:siq-whitened-parity-operator}
  \widehat{\mathbf A}_{N,L}
  =\begin{pmatrix}
      0&(\mathbf A_{N,L}^{+-})^{*}\\
      \mathbf A_{N,L}^{+-}&0
    \end{pmatrix}.
\end{equation}
Its nonzero eigenvalues occur in opposite-sign pairs and are the signed
singular values of $\mathbf A_{N,L}^{+-}$.  The positive-parity squared
operator is therefore
\begin{equation}\label{eq:siq-positive-square}
  \boxed{
  \mathbf S_{N,L}^{+,\mathrm{siq}}
  :=(\mathbf A_{N,L}^{+-})^{*}\mathbf A_{N,L}^{+-}\succeq0.}
\end{equation}
If $\mathbf A_{N,L}^{+-}$ is invertible and its singular values are
$0<\mu_{1,N,L}\leq\cdots\leq\mu_{N,N,L}$, then
\begin{align}\label{eq:siq-sign-paired-spectrum}
  \operatorname{spec}(\widehat{\mathbf A}_{N,L})
  &={}\{-\mu_{N,N,L},\ldots,-\mu_{1,N,L},
         \mu_{1,N,L},\ldots,\mu_{N,N,L}\},
  \\
  \operatorname{spec}(\mathbf S_{N,L}^{+,\mathrm{siq}})
  &={}\{\mu_{1,N,L}^2,\ldots,\mu_{N,N,L}^2\}.
\end{align}
No additional removal of the state $\mu_{1,N,L}^2$ is part of the quotient
construction.

\subsection{Intrinsic conditioning}
\label{subsec:siq-conditioning}

The contrast pencil avoids a large oblique projector, but the geometry of a
nearly tangent null direction cannot be removed by a change of coordinates.
The following estimate makes the distinction precise.

\begin{proposition}[Projector growth versus metric degeneration]
\label{prop:siq-intrinsic-conditioning}
Assume that $\epsilon_{N,L}$ is simple.  Let
$\widehat{\mathbf e}_{N,L}$ be a Euclidean unit vector spanning
$\ker\mathbf W_{N,L}$, let
\begin{equation}\label{eq:siq-angle-parameter}
  \widehat{\boldsymbol\delta}_{N,L}
  :=\frac{\boldsymbol\delta_{N,L}}
          {\norm{\boldsymbol\delta_{N,L}}_2},
  \qquad
  q_{N,L}
  :=\abs{
      \widehat{\boldsymbol\delta}_{N,L}^{*}
      \widehat{\mathbf e}_{N,L}},
\end{equation}
and let
\begin{equation}\label{eq:siq-positive-gap-and-top}
  g_{N,L}:=\lambda_2(N,L)-\lambda_1(N,L),
  \qquad
  M_{N,L}:=\norm{\mathbf W_{N,L}}_{\mathrm{op}}.
\end{equation}
Then
\begin{equation}\label{eq:siq-conditioning-bounds}
  g_{N,L}q_{N,L}^{2}
  \leq\lambda_{\min}(\mathbf G_{N,L})
  \leq M_{N,L}q_{N,L}^{2}.
\end{equation}
The norm of the corresponding oblique projection along
$\widehat{\mathbf e}_{N,L}$ onto
$\ker\widehat{\boldsymbol\delta}_{N,L}^{*}$ would be exactly
\begin{equation}\label{eq:siq-oblique-norm-comparison}
  q_{N,L}^{-1}.
\end{equation}
\end{proposition}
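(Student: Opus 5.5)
Write $\mathbf e:=\widehat{\mathbf e}_{N,L}$, $\mathbf d:=\widehat{\boldsymbol\delta}_{N,L}$, $q:=q_{N,L}$ and $\mathbf P:=\mathbf C_{N,L}\mathbf C_{N,L}^{*}=\mathbf I-\mathbf d\mathbf d^{*}$.

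Since $\mathbf G_{N,L}$ is the compression of $\mathbf W_{N,L}$ to $\mathcal V_{N,L}^{0}=\mathbf d^{\perp}$ with orthonormal columns, the Rayleigh characterization gives
\[
  \lambda_{\min}(\mathbf G_{N,L})=\min\{\mathbf x^{*}\mathbf W_{N,L}\mathbf x:\ \mathbf x\perp\mathbf d,\ \norm{\mathbf x}_2=1\}.
\]
Simplicity of $\epsilon_{N,L}$ gives the spectral facts we need: $\mathbf W_{N,L}\succeq0$, $\ker\mathbf W_{N,L}=\operatorname{span}\{\mathbf e\}$, and on $\mathbf e^{\perp}$ the form is bounded between $g_{N,L}$ and $M_{N,L}$. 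Writing $\mathbf x=\langle\mathbf e,\mathbf x\rangle\mathbf e+\mathbf x_\perp$, we therefore have
\[
  g_{N,L}\norm{\mathbf x_\perp}^2\le\mathbf x^{*}\mathbf W_{N,L}\mathbf x\le M_{N,L}\norm{\mathbf x_\perp}^2 .
\]
So both inequalities in \cref{eq:siq-conditioning-bounds} reduce to the identity
\[
  \min\{\norm{\mathbf x_\perp}^2:\ \mathbf x\perp\mathbf d,\ \norm{\mathbf x}=1\}=q^2 .
\]

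\textbf{The geometric identity.} For a unit vector $\mathbf x$, $\norm{\mathbf x_\perp}^2=1-\abs{\langle\mathbf e,\mathbf x\rangle}^2$. Maximizing $\abs{\langle\mathbf e,\mathbf x\rangle}$ over unit $\mathbf x\perp\mathbf d$ gives $\norm{\mathbf P\mathbf e}=\sqrt{1-q^2}$, attained at $\mathbf x=\mathbf P\mathbf e/\norm{\mathbf P\mathbf e}$. Hence the minimum equals $q^2$. The degenerate case $\mathbf P\mathbf e=0$ (so $q=1$) is trivial, since then $\mathbf e\parallel\mathbf d$ and every $\mathbf x\perp\mathbf d$ lies in $\mathbf e^{\perp}$.

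\textbf{The two bounds.} The lower bound follows by inserting this identity into the left inequality of the sandwich. For the upper bound we use the extremal vector $\mathbf x_0=\mathbf P\mathbf e/\norm{\mathbf P\mathbf e}$ as a test vector. Its component orthogonal to $\mathbf e$ has norm exactly $q$, so $\mathbf x_0^{*}\mathbf W_{N,L}\mathbf x_0\le M_{N,L}q^2$. One can also verify this directly: $\mathbf W_{N,L}\mathbf P\mathbf e=-\langle\mathbf d,\mathbf e\rangle\mathbf W_{N,L}\mathbf d$ since $\mathbf W_{N,L}\mathbf e=0$, which gives
\[
  \mathbf x_0^{*}\mathbf W_{N,L}\mathbf x_0=\frac{q^2\,\mathbf d^{*}\mathbf W_{N,L}\mathbf d}{1-q^2}.
\]
This shows the consistency of the two routes. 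I will present the decomposition route, because it avoids dividing by $1-q^2$.

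\textbf{Oblique projector norm.} The projection along $\mathbf e$ onto $\mathbf d^{\perp}$ is
\[
  \Pi=\mathbf I-\frac{\mathbf e\mathbf d^{*}}{\mathbf d^{*}\mathbf e},
\]
which is well defined because $q>0$. This positivity follows from the lower bound together with \cref{rem:siq-exact-quotient}, or directly from the hypothesis $\mathbf G_{N,L}\succ0$. For a nontrivial idempotent, the standard formula $\norm{\Pi}=\norm{\mathbf I-\Pi}$ holds. Since
\[
  \norm{\mathbf I-\Pi}=\frac{\norm{\mathbf e}\,\norm{\mathbf d}}{\abs{\mathbf d^{*}\mathbf e}}=q^{-1},
\]
this gives \cref{eq:siq-oblique-norm-comparison}. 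Alternatively, one can compute the norm as the secant of the angle between $\mathbf e$ and $\mathbf d$.

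I expect the main obstacle to be the minimization identity in the complex setting. The delicate point there is handling phases, so that $\abs{\langle\mathbf e,\mathbf P\mathbf e\rangle}=\norm{\mathbf P\mathbf e}^2=1-q^2$; the rest of the argument is bookkeeping.
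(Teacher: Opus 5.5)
Your proof is correct and follows the paper's own argument: you split $\mathbf x=\langle\mathbf e,\mathbf x\rangle\mathbf e+\mathbf x_\perp$ with $\|\mathbf x_\perp\|^2\ge q^2$ on $\mathcal V_{N,L}^0$, you use the normalized orthogonal projection of $\widehat{\mathbf e}_{N,L}$ onto $\mathcal V_{N,L}^0$ as the test vector for the upper bound, and you apply the standard rank-one identity $\|\Pi\|=\|\mathbf I-\Pi\|=q^{-1}$. One small slip: $q>0$ does not follow from the lower bound but from the \emph{upper} bound combined with $\mathbf G_{N,L}\succ0$, and that positivity is not a hypothesis of this proposition, which is why the paper says the projector norm ``would be'' $q^{-1}$, i.e.\ whenever that projector exists.
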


\begin{proof}
For a unit vector $\mathbf x\in\mathcal V_{N,L}^{0}$, write
$\mathbf x=a\widehat{\mathbf e}_{N,L}+\mathbf y$ with
$\mathbf y\perp\widehat{\mathbf e}_{N,L}$.  The angle between
$\widehat{\mathbf e}_{N,L}$ and $\mathcal V_{N,L}^{0}$ gives
$\norm{\mathbf y}_2^2\geq q_{N,L}^2$.  Hence
$\mathbf x^{*}\mathbf W_{N,L}\mathbf x
 \geq g_{N,L}q_{N,L}^2$.
For the reverse bound, take the normalized orthogonal projection of
$\widehat{\mathbf e}_{N,L}$ onto $\mathcal V_{N,L}^{0}$; its component
orthogonal to $\widehat{\mathbf e}_{N,L}$ has norm $q_{N,L}$.  The Rayleigh
quotient is therefore at most $M_{N,L}q_{N,L}^2$.  Formula
\cref{eq:siq-oblique-norm-comparison} is the standard norm formula for a
rank-one oblique projection.
\end{proof}

Thus a small overlap no longer appears as an explicit divisor in the
algorithm; instead it appears honestly as a small eigenvalue of the quotient
metric.  The latter is visible through
$\kappa(\mathbf G_{N,L})$ and can be handled by sufficient working precision,
preconditioning, and relative perturbation estimates.

Lemke's ground-vector calculation makes the numerical advantage visible \cite{Lemke2026}.
For $N=L$ let $\widehat{\mathbf e}$ be the Euclidean unit least eigenvector
and put
$ q=|\widehat{\boldsymbol\delta}^{*}\widehat{\mathbf e}|$.
The exact norm of Lemke's oblique projector is $q^{-1}$.  The arithmetic
data give:
\begin{center}
\small
\textbf{Numerical diagnostic: vanishing least eigenvalue and growth of the
Lemke oblique projector.}\par\smallskip
\begin{tabular}{c|c|c|c|c}
$N=L$ & $\lambda_1$ & $\lambda_2-\lambda_1$ &
$|\boldsymbol\delta^{*}\widehat{\mathbf e}|$ & $q^{-1}$\\ \hline
$2$  & $1.1682\times10^{-8}$  & $2.1418\times10^{-6}$  & $4.5305\times10^{-4}$  & $3.4900\times10^{3}$\\
$5$  & $2.6600\times10^{-24}$ & $2.8724\times10^{-21}$ & $1.1438\times10^{-11}$ & $1.2968\times10^{11}$\\
$7$  & $2.0168\times10^{-35}$ & $5.7728\times10^{-32}$ & $3.8868\times10^{-17}$ & $3.7662\times10^{16}$\\
$11$ & $1.0679\times10^{-58}$ & $7.7043\times10^{-55}$ & $1.3366\times10^{-28}$ & $1.0818\times10^{28}$\\
$13$ & $7.2193\times10^{-71}$ & $5.7976\times10^{-67}$ & $1.0838\times10^{-34}$ & $1.3297\times10^{34}$
\end{tabular}
\end{center}
The first three numerical columns describe the same arithmetic matrix used by
the new method.  The last column is not a condition imposed by the contrast
pencil; it records the avoidable amplification that occurs in Lemke's explicit
oblique-projection coordinates.  By
\cref{rem:siq-exact-quotient}, the generalized eigenvalues of the contrast
pencil are algebraically identical to those of Lemke's quotient
representation; only the coordinates and numerical conditioning have changed.

\subsection{The \texorpdfstring{$N=L=13$}{N=L=13} comparison}
\label{subsec:siq-N13-Lemke-contrast-comparison}

The accompanying Wolfram Language program
\path{weil_v11_N13_Lemke_vs_scale_invariant_quotient.wl} retains Lemke's
construction as a benchmark and then recomputes the same quotient through the
scale-invariant
contrast pencil.  The arithmetic input is identical in both calculations:
$N=L=13$, $T=e^{13}$, $37308$ prime powers, and $240$-digit working
precision.  An independent reproduction with $180$-digit arithmetic gave the
comparison in \cref{tab:N13-Lemke-contrast-comparison}.

\begin{table}[H]
\centering
\caption{First three positive quotient eigenvalues at $N=L=13$.  The final two
columns use the unrounded values.}
\label{tab:N13-Lemke-contrast-comparison}
\resizebox{\textwidth}{!}{%
\begin{tabular}{c|c|c|c|c|c}
$j$ & $\gamma_j$ & Lemke quotient $\mu_j^{\mathrm L}$ &
contrast pencil $\mu_j^{\mathrm C}$ &
$|\mu_j^{\mathrm L}-\mu_j^{\mathrm C}|$ &
$|\mu_j^{\mathrm C}-\gamma_j|$ \\ \hline
$1$ & $14.134725141734693790$ & $14.134725141734693790$ &
$14.134725141734693790$ & $1.66\times10^{-73}$ & $1.54\times10^{-27}$ \\
$2$ & $21.022039638771554993$ & $21.022039638771555145$ &
$21.022039638771555145$ & $8.27\times10^{-68}$ & $1.52\times10^{-16}$ \\
$3$ & $25.010857580145688763$ & $25.010857580162843125$ &
$25.010857580162843125$ & $5.23\times10^{-63}$ & $1.72\times10^{-11}$
\end{tabular}}
\end{table}

The two quotient realizations therefore agree to at least sixty-two decimal
places in this calculation.  Their common discrepancy from the exact zeta
ordinates is inherited from the finite arithmetic matrix, not from the choice
of quotient coordinates.  The numerical advantage of the contrast method is
that this agreement is obtained without forming the projector whose estimated
norm is $1.33\times10^{34}$ at these parameters.  Thus the experiment
separates two issues cleanly: Lemke's quotient mechanism is spectrally sound at
finite dimension, while the contrast pencil is a more stable realization of
that mechanism.  Convergence to all zeta ordinates remains the independent
prime-to-zero transfer problem.

\subsection{A ground-state-free unshifted regularization}
\label{subsec:siq-unshifted-pencil}

There is also a useful finite arithmetic regularization that avoids even the
least-eigenvalue subtraction.  Define
\begin{align}\label{eq:siq-unshifted-GK}
  \mathbf G_{N,L}^{\mathrm{un}}
  &:=\mathbf C_{N,L}^{*}\mathbf S_{N,L}\mathbf C_{N,L},
  \\
  \mathbf K_{N,L}^{\mathrm{un}}
  &:=\mathbf C_{N,L}^{*}\mathbf S_{N,L}
       \mathbf D_{L,N}\mathbf C_{N,L}.
\end{align}
The displacement identity again implies that
$\mathbf K_{N,L}^{\mathrm{un}}$ is Hermitian.  If
$\mathbf G_{N,L}^{\mathrm{un}}\succ0$, then
\begin{equation}\label{eq:siq-unshifted-generalized-problem}
  \mathbf K_{N,L}^{\mathrm{un}}\mathbf y
  =\mu\,\mathbf G_{N,L}^{\mathrm{un}}\mathbf y
\end{equation}
is a Hermitian definite pencil with real spectrum.  It requires neither a
ground eigenvector nor $\lambda_1(N,L)$.  When the metric has an exact null
state of energy zero, as in the dimension-matched critical-line zero-side
model, \cref{eq:siq-unshifted-GK} coincides with the shifted quotient pencil.
For the full arithmetic matrix, it is a different finite-$N$ regularization,
with
\begin{align}\label{eq:siq-shifted-unshifted-difference}
  \mathbf G_{N,L}
  &=\mathbf G_{N,L}^{\mathrm{un}}
    -\epsilon_{N,L}\mathbf I_{2N},
  \\
  \mathbf K_{N,L}
  &=\mathbf K_{N,L}^{\mathrm{un}}
    -\epsilon_{N,L}
      \mathbf C_{N,L}^{*}\mathbf D_{L,N}\mathbf C_{N,L}.
\end{align}
A proof that the two pencils have the same low-energy limit would follow from
relative estimates strong enough to make the perturbations in
\cref{eq:siq-shifted-unshifted-difference} negligible in the unshifted metric.

\subsection{The scale-invariant prime-to-zero transfer problem}
\label{subsec:siq-transfer-problem}

Under RH, decompose the full matrix into the first-$N$ zero block and its
positive tail,
\begin{equation}\label{eq:siq-zero-block-tail}
  \mathbf S_{N,L}
  =\mathbf S_{N,L}^{[N]}+\mathbf R_{N,L},
  \qquad
  \mathbf R_{N,L}\succeq0,
\end{equation}
as in \cref{eq:least-eigenvalue-tail-decomposition}.  Let
\begin{align}\label{eq:siq-reference-pencil}
  \mathbf G_{N,L}^{(0)}
  &:=\mathbf C_{N,L}^{*}\mathbf S_{N,L}^{[N]}
      \mathbf C_{N,L},
  \\
  \mathbf K_{N,L}^{(0)}
  &:=\mathbf C_{N,L}^{*}\mathbf S_{N,L}^{[N]}
      \mathbf D_{L,N}\mathbf C_{N,L}.
\end{align}
The exact zero-side theorem in
\cref{sec:exact-zero-side-spectrum} identifies the generalized eigenvalues of
this pencil with $\pm\gamma_1,\ldots,\pm\gamma_N$.

For the shifted arithmetic pencil,
\begin{align}\label{eq:siq-pencil-perturbations}
  \Delta\mathbf G_{N,L}
  &:=\mathbf C_{N,L}^{*}
      (\mathbf R_{N,L}-\epsilon_{N,L}\mathbf I_M)
      \mathbf C_{N,L},
  \\
  \Delta\mathbf K_{N,L}
  &:=\mathbf C_{N,L}^{*}
      (\mathbf R_{N,L}-\epsilon_{N,L}\mathbf I_M)
      \mathbf D_{L,N}\mathbf C_{N,L}.
\end{align}
A natural scale-free target is, for every fixed $R>0$,
\begin{equation}\label{eq:siq-relative-transfer-target}
  \boxed{
  \sup_{|z|\leq R}
  \left\|
    (\mathbf G_{N,L}^{(0)})^{-1/2}
    \bigl(
      \Delta\mathbf K_{N,L}
      -z\Delta\mathbf G_{N,L}
    \bigr)
    (\mathbf G_{N,L}^{(0)})^{-1/2}
  \right\|_{\mathrm{op}}
  \longrightarrow0.}
\end{equation}
This criterion is invariant under changes of contrast basis and under common
positive rescaling of the metric.  It is stronger than a small Rayleigh
quotient in one trial direction: it controls the entire tail relative to the
finite-zero quotient energy.  Establishing
\cref{eq:siq-relative-transfer-target}, together with separation of the
reference generalized eigenvalues, would permit standard perturbation theory
for Hermitian matrix pencils to transfer each fixed low zero ordinate from the
finite zero side to the arithmetic side.  This is the central analytic
obstacle left by the present finite-dimensional construction.

\section{Exact spectral reconstruction from a finite zero-side matrix}
\label{sec:exact-zero-side-spectrum}

This section analyzes the matrix obtained by inserting finitely many
critical-line zero ordinates directly into the transform matrix
\cref{eq:Gmn-transform-Gram-form}.  In the dimension-matched case, the
zero-side Weil matrix has one interpolation null direction.  Restricting its
metric and differentiation form to the fixed zero-mean contrast space removes
that direction without an oblique projection.  The resulting Hermitian pencil
has the exact generalized spectrum
$\{\pm\gamma_1,\ldots,\pm\gamma_N\}$, and its positive-parity square has
spectrum $\{\gamma_1^2,\ldots,\gamma_N^2\}$.

Throughout this section let $L>0$, put $T=e^L$, and write
\begin{equation}\label{eq:zero-side-frequency-nodes}
  \nu_{n,L}:=\frac{2\pi n}{L},
  \qquad
  I_N:=\{-N,-N+1,\ldots,N\}.
\end{equation}
Assume that the retained positive critical-line ordinates are
\begin{equation}\label{eq:zero-side-retained-ordinates}
  0<\gamma_1<\gamma_2<\cdots<\gamma_K<T.
\end{equation}
For the algebraic statements, the $\gamma_k$ may simply be regarded as
distinct positive real numbers.  Define
\begin{equation}\label{eq:zero-side-finite-matrix-definition}
  \mathbf S_{N,L}^{(T)}
  :=2\sum_{k=1}^{K}\mathbf H_L^G(\gamma_k),
  \qquad
  \mathbf H_L^G(t)
  :=\bigl(H_{mn,L}^G(t)\bigr)_{m,n\in I_N}.
\end{equation}
The factor $2$ combines the ordinates $\gamma_k$ and $-\gamma_k$.

\subsection{Rank, nullity, and the interpolation null state}

The Gram representation gives
\begin{equation}\label{eq:zero-side-Gram-sum}
  \mathbf S_{N,L}^{(T)}
  =2\sum_{k=1}^{K}\left(
      \mathbf v_L^+(\gamma_k)
      \bigl(\mathbf v_L^+(\gamma_k)\bigr)^{\mathsf T}
      +\mathbf v_L^-(\gamma_k)
      \bigl(\mathbf v_L^-(\gamma_k)\bigr)^{\mathsf T}
    \right)
  \succeq0.
\end{equation}

\begin{proposition}[Rank and forced nullity]
\label{prop:zero-side-rank-nullity}
For arbitrary $K$,
\begin{equation}\label{eq:zero-side-rank-bound}
  \operatorname{rank}\mathbf S_{N,L}^{(T)}\leq2K,
  \qquad
  \dim\ker\mathbf S_{N,L}^{(T)}\geq2N+1-2K.
\end{equation}
Suppose $K=N$ and
\begin{equation}\label{eq:zero-side-nonresonance}
  \sin\!\left(\frac{L\gamma_k}{2}\right)\neq0,
  \qquad 1\leq k\leq N.
\end{equation}
Then
\begin{equation}\label{eq:zero-side-rank-exact}
  \operatorname{rank}\mathbf S_{N,L}^{(T)}=2N,
  \qquad
  \dim\ker\mathbf S_{N,L}^{(T)}=1.
\end{equation}
\end{proposition}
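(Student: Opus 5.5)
The rank bound is immediate from the Gram representation \cref{eq:zero-side-Gram-sum}: $\mathbf S_{N,L}^{(T)}$ is a sum of $2K$ rank-one positive semidefinite matrices, so its range lies in $\operatorname{span}\{\mathbf v_L^{\pm}(\gamma_k)\}$. Hence the rank is at most $2K$, and rank--nullity in dimension $M=2N+1$ gives the nullity bound. For $K=N$ this already yields $\dim\ker\geq1$. Since a sum of positive semidefinite rank-one terms has kernel equal to the common orthogonal complement of the generating vectors, it remains only to show that the $2N$ vectors $\mathbf v_L^{+}(\gamma_k)$, $\mathbf v_L^{-}(\gamma_k)$, $1\leq k\leq N$, are linearly independent in $\mathbb R^{2N+1}$.

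To prove independence, I would use the nonresonance hypothesis \cref{eq:zero-side-nonresonance} to divide out the common nonzero scalar $\sqrt2\sin(L\gamma_k/2)/L$ in each vector. This also guarantees that no removable-singularity convention is needed: $\gamma_k=\pm\nu_{n,L}$ would force $\sin(L\gamma_k/2)=\sin(\pi n)=0$. After rescaling, the vectors become Cauchy-type vectors
\[
  \left(\frac{1}{\nu_{n,L}-t_j}\right)_{n\in I_N},
\]
with $2N$ distinct points $t_j\in\{\pm\gamma_1,\ldots,\pm\gamma_N\}$. These points are distinct because the $\gamma_k$ are distinct and positive, and none of them coincides with a node. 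Note that $\mathbf v^{+}(\gamma)$ corresponds to $t=-\gamma$, up to sign.

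A dependence $\sum_j c_j(\nu_n-t_j)^{-1}=0$ for all $n\in I_N$ means that the rational function $r(z)=\sum_j c_j/(z-t_j)$ vanishes at the $2N+1$ distinct nodes $\nu_{n,L}$. Writing $r=p/q$ with $q(z)=\prod_j(z-t_j)$, the numerator $p$ has degree at most $2N-1$. Since $q(\nu_n)\neq0$, $p$ has $2N+1$ zeros, so $p\equiv0$. Then the residues $c_j=p(t_j)/q'(t_j)$ all vanish. Alternatively, one can take a $2N\times2N$ submatrix and observe that it is a Cauchy matrix with nonzero determinant.

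The only delicate point I anticipate is the bookkeeping of signs and the exclusion of coincidences, namely $t_j$ equal to a node or $t_i=t_j$. Both are handled by nonresonance and positivity of the distinct ordinates. This gives rank exactly $2N$ and a one-dimensional kernel. The kernel is spanned by the vector of interpolation weights orthogonal to all $2N$ Cauchy vectors, which is the rational-interpolation null state used later.
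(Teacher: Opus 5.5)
Your proof is correct and follows the paper's route. The paper also gets the rank bound from the Gram representation, and for $K=N$ it uses nonresonance to strip the scalar factors and obtain $2N$ Cauchy columns at the distinct points $\pm\gamma_k$; it deduces independence from a nonvanishing $2N\times 2N$ Cauchy determinant, for which your polynomial-degree argument is an equivalent substitute. You also make explicit two points the paper leaves implicit: that $\ker\mathbf S_{N,L}^{(T)}$ is the common orthogonal complement of the Gram vectors, and that nonresonance rules out coincidences $\pm\gamma_k=\nu_{n,L}$.
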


\begin{proof}
Each summand in \cref{eq:zero-side-Gram-sum} has rank at most two.  When
$K=N$, remove the nonzero scalar factors from the $2N$ Gram vectors.  The
resulting columns are Cauchy columns associated with the distinct points
$\pm\gamma_k$ and the distinct nodes $\nu_{n,L}$.  Every square minor formed
from $2N$ rows is a nonzero Cauchy determinant, so the columns are linearly
independent.  This proves \cref{eq:zero-side-rank-exact}.
\end{proof}

\subsection{Rational interpolation and the exact contrast pencil}

Assume henceforth that $K=N$ and that
\cref{eq:zero-side-nonresonance} holds.  Define the monic polynomials
\begin{equation}\label{eq:zero-side-PQ-polynomials}
  \mathsf P_{N,L}(z)
  :=\prod_{n=-N}^{N}(z-\nu_{n,L}),
  \qquad
  \mathsf Q_N(z)
  :=\prod_{k=1}^{N}(z^2-\gamma_k^2),
\end{equation}
and the barycentric coefficients
\begin{equation}\label{eq:zero-side-barycentric-coefficients}
  c_{n,N,L}
  :=\frac{\mathsf Q_N(\nu_{n,L})}
          {\mathsf P_{N,L}'(\nu_{n,L})},
  \qquad n\in I_N.
\end{equation}
Put
\begin{equation}\label{eq:zero-side-ground-vector-general}
  e_{n,N,L}:=\sqrt L\,c_{n,N,L},
  \qquad
  \mathbf e_{N,L}:=(e_{n,N,L})_{n\in I_N},
  \qquad
  \boldsymbol\delta_{N,L}:=L^{-1/2}\mathbf1_{2N+1}.
\end{equation}

Let $\mathbf C_{N,L}$ be any contrast matrix satisfying
\cref{eq:siq-contrast-matrix}, and define
\begin{align}\label{eq:zero-side-GK-pencil}
  \mathbf G_{N,L}^{(0)}
  &:=\mathbf C_{N,L}^{*}\mathbf S_{N,L}^{(T)}
      \mathbf C_{N,L},
  \\
  \mathbf K_{N,L}^{(0)}
  &:=\mathbf C_{N,L}^{*}\mathbf S_{N,L}^{(T)}
      \mathbf D_{L,N}\mathbf C_{N,L}.
\end{align}

\begin{theorem}[Exact finite zero-side reconstruction]
\label{thm:exact-zero-side-reconstruction}
Under the preceding assumptions:

\begin{enumerate}[label=\textup{(\roman*)}]
  \item The vector $\mathbf e_{N,L}$ is even and satisfies
        \begin{equation}\label{eq:zero-side-ground-normalization-nullity}
          \boldsymbol\delta_{N,L}^{\mathsf T}\mathbf e_{N,L}=1,
          \qquad
          \mathbf S_{N,L}^{(T)}\mathbf e_{N,L}=0,
          \qquad
          \ker\mathbf S_{N,L}^{(T)}
          =\operatorname{span}\{\mathbf e_{N,L}\}.
        \end{equation}

  \item The contrast metric is positive definite,
        $\mathbf G_{N,L}^{(0)}\succ0$, the matrix
        $\mathbf K_{N,L}^{(0)}$ is real symmetric, and
        \begin{equation}\label{eq:zero-side-pencil-characteristic-polynomial}
          \boxed{
          \det\!\left(
            z\mathbf G_{N,L}^{(0)}-\mathbf K_{N,L}^{(0)}
          \right)
          =\det(\mathbf G_{N,L}^{(0)})\mathsf Q_N(z).}
        \end{equation}
        Consequently, the generalized spectrum is
        \begin{equation}\label{eq:zero-side-quotient-spectrum}
          \operatorname{spec}
          (\mathbf K_{N,L}^{(0)},\mathbf G_{N,L}^{(0)})
          =\{-\gamma_N,\ldots,-\gamma_1,
              \gamma_1,\ldots,\gamma_N\}.
        \end{equation}

  \item Under the parity reduction of
        \cref{subsec:siq-parity-reduction}, the positive-parity square obeys
        \begin{equation}\label{eq:zero-side-positive-square-spectrum}
          \operatorname{spec}
          (\mathbf S_{N,L}^{+,\mathrm{siq}})
          =\{\gamma_1^2,\gamma_2^2,\ldots,\gamma_N^2\}.
        \end{equation}
        All these squared ordinates, including $\gamma_1^2$, are retained;
        no further deflation is part of the exact reconstruction.
\end{enumerate}
\end{theorem}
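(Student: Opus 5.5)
The plan is to make everything explicit through the rational function attached to a coefficient vector. For $\mathbf x\in\mathbb C^{2N+1}$ put $r_{\mathbf x}(z):=\sum_{n\in I_N}x_n/(z-\nu_{n,L})$. By \cref{eq:Gmn-transform-vectors}, $\mathbf v_L^{-}(t)^{\mathsf T}\mathbf x$ and $\mathbf v_L^{+}(t)^{\mathsf T}\mathbf x$ equal $\sqrt2\sin(Lt/2)L^{-1}$ times $r_{\mathbf x}(t)$ and $-r_{\mathbf x}(-t)$, respectively. Hence, by \cref{eq:zero-side-Gram-sum},
\[
  \mathbf x^{*}\mathbf S_{N,L}^{(T)}\mathbf x
  =\frac{4}{L^2}\sum_{k=1}^{N}\sin^2\!\left(\frac{L\gamma_k}{2}\right)
   \left(\abs{r_{\mathbf x}(\gamma_k)}^2+\abs{r_{\mathbf x}(-\gamma_k)}^2\right).
\]
Therefore $\mathbf S_{N,L}^{(T)}\mathbf x=0$ if and only if $r_{\mathbf x}$ vanishes at all $\pm\gamma_k$. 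The nonresonance hypothesis \cref{eq:zero-side-nonresonance} is equivalent to $\gamma_k\notin(2\pi/L)\mathbb Z$, so no $\pm\gamma_k$ is a node.

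\textbf{Step (i).} Since $\deg\mathsf Q_N=2N<\deg\mathsf P_{N,L}$, partial fractions give $r_{\mathbf c}(z)=\mathsf Q_N(z)/\mathsf P_{N,L}(z)$ for $\mathbf c=(c_{n,N,L})$. This vanishes at every $\pm\gamma_k$, so $\mathbf S^{(T)}_{N,L}\mathbf e_{N,L}=0$. Comparing the behaviour $z\,r_{\mathbf c}(z)\to1$ as $z\to\infty$ (both polynomials are monic) gives $\sum_n c_{n,N,L}=1$, hence $\boldsymbol\delta^{\mathsf T}\mathbf e=1$. For evenness, note that $\nu_{-n}=-\nu_n$, that $\mathsf Q_N$ is even, and that $\mathsf P_{N,L}$ is odd, so $\mathsf P_{N,L}'$ is even; thus $c_{-n}=c_n$. \cref{prop:zero-side-rank-nullity} gives nullity one, so the kernel is $\operatorname{span}\{\mathbf e_{N,L}\}$.

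\textbf{Step (ii).} Positivity of $\mathbf G^{(0)}_{N,L}$ follows because $\mathbf S^{(T)}_{N,L}\succeq0$ and its kernel meets $\mathcal V^0_{N,L}$ trivially, since $\boldsymbol\delta^{\mathsf T}\mathbf e\neq0$. For symmetry of $\mathbf K^{(0)}_{N,L}$, I will observe that each atom $\mathbf H_L^G(\gamma)$ already has the divided-difference form \cref{eq:Gmn-transform-divided-difference}. Hence the finite sum satisfies a rank-two commutator identity of the form \cref{eq:siq-displacement-identity} with some vector in place of $\boldsymbol\beta$, and the argument of \cref{thm:scale-invariant-quotient}(i) applies verbatim.

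For the spectrum I will exhibit eigenvectors directly rather than expand a determinant. The pencil equation $\mathbf K\mathbf y=\mu\mathbf G\mathbf y$ with $\mathbf x=\mathbf C\mathbf y$ reads $\mathbf C^{*}\mathbf S^{(T)}(\mathbf D-\mu)\mathbf x=0$. For $\mu=\pm\gamma_j$ set $x_n:=e_{n,N,L}/(\nu_{n,L}-\mu)$, which is well defined by nonresonance and is nonzero. Then $(\mathbf D-\mu)\mathbf x=\mathbf e$, so $\mathbf S^{(T)}(\mathbf D-\mu)\mathbf x=0$. Moreover $\sum_nx_n=-r_{\mathbf e}(\mu)=-\sqrt L\,\mathsf Q_N(\mu)/\mathsf P_{N,L}(\mu)=0$, so $\mathbf x\in\mathcal V^0_{N,L}$. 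This yields $2N$ distinct generalized eigenvalues $\pm\gamma_j$ for a $2N$-dimensional definite pencil. Since $\det(z\mathbf G-\mathbf K)$ has degree $2N$ with leading coefficient $\det\mathbf G$, it must equal $\det(\mathbf G^{(0)})\mathsf Q_N(z)$.

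\textbf{Step (iii).} The reflection relations \cref{eq:siq-reflection-relations} hold for $\mathbf S^{(T)}$ because $H^G_{-m,-n}=H^G_{mn}$. Hence the block form \cref{eq:siq-whitened-parity-operator} applies, and the spectrum of $\widehat{\mathbf A}$ consists of the signed singular values of $\mathbf A^{+-}$. Comparing with (ii), these singular values are exactly $\gamma_1,\dots,\gamma_N$. In particular $0$ is not an eigenvalue, so $\mathbf A^{+-}$ is invertible, and $(\mathbf A^{+-})^{*}\mathbf A^{+-}$ has spectrum $\{\gamma_k^2\}$ with nothing deflated.

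I expect the main care to be needed in the Hermiticity step for the finite zero-side sum, namely checking that the displacement vector is paired with $\boldsymbol\delta$ exactly as in \cref{eq:siq-displacement-identity}. The eigenvector construction itself is straightforward.
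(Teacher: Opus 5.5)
Your proof is correct, but for (ii) and (iii) it takes a different route from the paper. Part (i) is the paper's partial-fraction argument. You also verify the evenness of $\mathbf e_{N,L}$, the positivity of $\mathbf G^{(0)}_{N,L}$, and the symmetry of $\mathbf K^{(0)}_{N,L}$ via the atomwise divided-difference identity; the paper's proof leaves all three implicit.

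For the determinant identity in (ii), the paper works in the difference basis $\mathbf q_n=\mathbf u_n-\mathbf u_0$. It writes the quotient operator explicitly as $(\mathbf A^{(0)}_{N,L})_{mn}=\nu_{n,L}(\delta_{mn}-c_{m,N,L})$ and checks $\mathbf G^{\Delta,(0)}_{N,L}\mathbf A^{(0)}_{N,L}=\mathbf K^{\Delta,(0)}_{N,L}$ from the null relation. It then evaluates $\det(z\mathbf I_{2N}-\mathbf A^{(0)}_{N,L})=\mathsf Q_N(z)$ as a polynomial identity using the matrix determinant lemma. The vectors $\mathbf r(z)=(\mathbf D_{L,N}-z\mathbf I)^{-1}\mathbf e_{N,L}$ appear only afterwards, to build the parity intertwiners $\mathbf y_k^{\pm}$ for (iii).

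You use those same vectors from the outset as generalized eigenvectors. You then fix $\det(z\mathbf G-\mathbf K)$ by counting $2N$ distinct roots of a degree-$2N$ polynomial whose leading coefficient is $\det\mathbf G^{(0)}_{N,L}\neq0$. For (iii) you read the spectrum off the signed-singular-value structure of the whitened off-diagonal parity block, rather than from $\mathcal T_{N,L}\mathbf y_k^{\pm}=\gamma_k\mathbf y_k^{\mp}$.

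Your version is shorter, and one construction does all the work. It does rely on the $2N$ points $\pm\gamma_k$ being distinct, which holds here. The paper's version produces an explicit coordinate matrix for the quotient operator, which it reuses in the symbolic $N=3$ check and in the off-line quartet theorem. It also obtains the characteristic polynomial directly rather than by root counting.
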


\begin{proof}
The partial-fraction expansion
\begin{equation}\label{eq:zero-side-partial-fraction}
  \frac{\mathsf Q_N(z)}{\mathsf P_{N,L}(z)}
  =\sum_{n=-N}^{N}\frac{c_{n,N,L}}{z-\nu_{n,L}}
\end{equation}
has residue sum
\begin{equation}\label{eq:zero-side-residue-sum-one}
  \sum_{n=-N}^{N}c_{n,N,L}=1,
\end{equation}
because the two monic polynomials have degrees differing by one.  This proves
the normalization in \cref{eq:zero-side-ground-normalization-nullity}.
Evaluating \cref{eq:zero-side-partial-fraction} at $z=\pm\gamma_k$ gives
\begin{equation}\label{eq:zero-side-Cauchy-orthogonality}
  \sum_{n=-N}^{N}\frac{e_{n,N,L}}{\gamma_k-\nu_{n,L}}=0,
  \qquad
  \sum_{n=-N}^{N}\frac{e_{n,N,L}}{\gamma_k+\nu_{n,L}}=0.
\end{equation}
The Gram representation then gives
$\mathbf S_{N,L}^{(T)}\mathbf e_{N,L}=0$, and uniqueness follows from
\cref{prop:zero-side-rank-nullity}.

To identify the pencil, use the difference basis
$\mathbf q_n=\mathbf u_n-\mathbf u_0$, $n\in I_N^{\times}$.  In this basis
define
\begin{equation}\label{eq:zero-side-explicit-quotient-matrix}
  (\mathbf A_{N,L}^{(0)})_{mn}
  :=\nu_{n,L}\bigl(\delta_{mn}-c_{m,N,L}\bigr),
  \qquad m,n\in I_N^{\times}.
\end{equation}
For each basis vector,
\begin{equation}\label{eq:zero-side-D-minus-A-null}
  \mathbf D_{L,N}\mathbf q_n
  -\sum_{m\in I_N^{\times}}
    (\mathbf A_{N,L}^{(0)})_{mn}\mathbf q_m
  =\nu_{n,L}(c_{j,N,L})_{j\in I_N}
  =\frac{\nu_{n,L}}{\sqrt L}\mathbf e_{N,L}.
\end{equation}
Multiplication by $\mathbf S_{N,L}^{(T)}$ therefore shows that the difference
basis matrices satisfy
\begin{equation}\label{eq:zero-side-GA-K}
  \mathbf G_{N,L}^{\Delta,(0)}
  \mathbf A_{N,L}^{(0)}
  =\mathbf K_{N,L}^{\Delta,(0)}.
\end{equation}
The matrix determinant lemma and
\cref{eq:zero-side-partial-fraction,eq:zero-side-residue-sum-one} give
\begin{align*}
  \det(z\mathbf I_{2N}-\mathbf A_{N,L}^{(0)})
  &=\prod_{n\in I_N^{\times}}(z-\nu_{n,L})
    \left(
      1+\sum_{n\in I_N^{\times}}
      \frac{\nu_{n,L}c_{n,N,L}}{z-\nu_{n,L}}
    \right)
  \\
  &=\frac{\mathsf P_{N,L}(z)}{z}
    \left(
      z\frac{\mathsf Q_N(z)}{\mathsf P_{N,L}(z)}
    \right)
   =\mathsf Q_N(z).
\end{align*}
Changing from the difference basis to an orthonormal contrast basis changes
both $\mathbf G$ and $\mathbf K$ by congruence, proving
\cref{eq:zero-side-pencil-characteristic-polynomial} and
\cref{eq:zero-side-quotient-spectrum}.

For $z\in\{\pm\gamma_1,\ldots,\pm\gamma_N\}$, define
\begin{equation}\label{eq:zero-side-signed-eigenvector}
  r_n(z):=\frac{e_{n,N,L}}{\nu_{n,L}-z},
  \qquad
  \mathbf r(z):=(r_n(z))_{n\in I_N}.
\end{equation}
Equation \cref{eq:zero-side-partial-fraction} gives
$\boldsymbol\delta_{N,L}^{\mathsf T}\mathbf r(z)=0$, while
\begin{equation}\label{eq:zero-side-signed-eigen-equation}
  \mathbf D_{L,N}\mathbf r(z)
  =\mathbf e_{N,L}+z\mathbf r(z).
\end{equation}
Thus the contrast-space quotient operator sends $\mathbf r(z)$ to
$z\mathbf r(z)$.  The even and odd combinations
\begin{equation}\label{eq:zero-side-parity-eigenvectors}
  \mathbf y_k^+
  :=\mathbf r(\gamma_k)-\mathbf r(-\gamma_k),
  \qquad
  \mathbf y_k^-
  :=\mathbf r(\gamma_k)+\mathbf r(-\gamma_k)
\end{equation}
satisfy
\begin{equation}\label{eq:zero-side-parity-intertwining}
  \mathcal T_{N,L}\mathbf y_k^+
  =\gamma_k\mathbf y_k^-,
  \qquad
  \mathcal T_{N,L}\mathbf y_k^-
  =\gamma_k\mathbf y_k^+.
\end{equation}
This proves \cref{eq:zero-side-positive-square-spectrum}.
\end{proof}

The parity vectors have explicit components
\begin{equation}\label{eq:zero-side-parity-components}
  (y_k^+)_n
  =\frac{2\gamma_k e_{n,N,L}}
         {\nu_{n,L}^2-\gamma_k^2},
  \qquad
  (y_k^-)_n
  =\frac{2\nu_{n,L}e_{n,N,L}}
         {\nu_{n,L}^2-\gamma_k^2}.
\end{equation}

\subsection{Diagonal spectral coordinates and exact tail matrix elements}
\label{subsec:zero-side-tail-matrix-elements}

The vectors in \cref{eq:zero-side-signed-eigenvector} also diagonalize the
finite-zero metric by congruence and give explicit matrix elements of the
remaining zero tail.  Put
\begin{equation}\label{eq:zero-side-rational-transfer-function}
  \mathcal R_{N,L}(z)
  :=\frac{\mathsf Q_N(z)}{\mathsf P_{N,L}(z)}.
\end{equation}
Allow positive multiplicity weights $m_1,\ldots,m_N$ and write
\[
  \mathbf S_{N,L}^{[N]}
  :=2\sum_{j=1}^{N}m_j\mathbf H_L^G(\gamma_j).
\]
The null vector and quotient spectrum are unchanged by these positive weights.

\begin{proposition}[Spectral metric and zero tail]
\label{prop:zero-side-spectral-metric-tail}
Let
\[
  \mathcal Z_N:=\{\pm\gamma_1,\ldots,\pm\gamma_N\}.
\]
For $z,w\in\mathcal Z_N$, let $\mathbf r(z)$ be as in
\cref{eq:zero-side-signed-eigenvector}.  Then
\begin{equation}\label{eq:zero-side-spectral-metric-diagonal}
  \mathbf r(z)^{\mathsf T}\mathbf S_{N,L}^{[N]}\mathbf r(w)
  =\begin{cases}
     m_k\,\omega_{k,N,L},&z=w=\pm\gamma_k,\\
     0,&z\neq w,
   \end{cases}
\end{equation}
where
\begin{equation}\label{eq:zero-side-spectral-weight}
  \omega_{k,N,L}
  :=\frac4L\sin^2\!\left(\frac{L\gamma_k}{2}\right)
     \bigl(\mathcal R_{N,L}'(\gamma_k)\bigr)^2>0.
\end{equation}
Moreover,
\begin{equation}\label{eq:zero-side-spectral-Rprime}
  \mathcal R_{N,L}'(\gamma_k)
  =2\,
   \frac{\displaystyle
     \prod_{\substack{1\leq\ell\leq N\\ \ell\neq k}}
       (\gamma_k^2-\gamma_\ell^2)}
   {\displaystyle
     \prod_{n=1}^{N}(\gamma_k^2-\nu_{n,L}^2)}.
\end{equation}

Under RH, let
\[
  \mathbf R_{N,L}
  :=2\sum_{j>N}m_j\mathbf H_L^G(\gamma_j).
\]
For $z,w\in\mathcal Z_N$ one has the exact identity
\begin{align}\label{eq:zero-side-exact-tail-matrix}
  \mathbf r(z)^{\mathsf T}\mathbf R_{N,L}\mathbf r(w)
  ={}&\frac4L\sum_{j>N}m_j
     \sin^2\!\left(\frac{L\gamma_j}{2}\right)
     \mathcal R_{N,L}(\gamma_j)^2
  \notag\\
  &\times\left[
    \frac1{(\gamma_j-z)(\gamma_j-w)}
    +\frac1{(\gamma_j+z)(\gamma_j+w)}
  \right].
\end{align}
\end{proposition}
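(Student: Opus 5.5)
The whole proposition reduces to one scalar identity: the pairing of a Gram vector $\mathbf v_L^\pm(t)$ with $\mathbf r(z)$. Everything else is bookkeeping with the Gram representation \cref{eq:zero-side-Gram-sum}. For real $t$ and $z\in\mathcal Z_N$ I will compute
\[
  \mathbf v_L^{\pm}(t)^{\mathsf T}\mathbf r(z)
  =\frac{\sqrt2\sin(Lt/2)}{L}\sum_{n}\frac{e_{n,N,L}}{(t\pm\nu_{n,L})(\nu_{n,L}-z)} .
\]

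\textbf{Step 1: the scalar identity.} Write $e_n=\sqrt L\,c_n$ and apply partial fractions in $\nu_n$:
\[
  \frac{1}{(t+\nu)(\nu-z)}=\frac{1}{t+z}\Bigl(\frac1{\nu-z}-\frac1{\nu+t}\Bigr).
\]
Summing against $c_n$ and using \cref{eq:zero-side-partial-fraction} in the form $\sum_n c_n/(\nu_n-w)=-\mathcal R_{N,L}(w)$ gives
\[
  \sum_n \frac{c_n}{(t+\nu_n)(\nu_n-z)}=\frac{\mathcal R_{N,L}(-t)-\mathcal R_{N,L}(z)}{t+z}.
\]
Since $z\in\mathcal Z_N$ we have $\mathcal R_{N,L}(z)=0$. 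Also $\mathcal Q_N$ is even and $\mathsf P_{N,L}$ is odd, so $\mathcal R_{N,L}(-t)=-\mathcal R_{N,L}(t)$, and the sum equals $-\mathcal R_{N,L}(t)/(t+z)$. In the same way the minus vector gives $\mathcal R_{N,L}(t)/(t-z)$ up to sign. Hence
\[
  \mathbf v_L^{\pm}(t)^{\mathsf T}\mathbf r(z)=\mp\frac{\sqrt2\sin(Lt/2)}{\sqrt L}\,\frac{\mathcal R_{N,L}(t)}{t\pm z},
\]
with the singular case $t=\mp z$ read as a limit.

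\textbf{Step 2: the tail identity.} Insert Step 1 into
\[
  \mathbf r(z)^{\mathsf T}\mathbf R_{N,L}\mathbf r(w)=2\sum_{j>N}m_j\sum_\pm(\mathbf v^\pm(\gamma_j)^{\mathsf T}\mathbf r(z))(\mathbf v^\pm(\gamma_j)^{\mathsf T}\mathbf r(w)).
\]
The prefactor is $2\cdot 2\sin^2/L=4\sin^2/L$, and the $\pm$ terms produce exactly the bracket in \cref{eq:zero-side-exact-tail-matrix}. Absolute convergence of this sum uses RH, so that $\gamma_j$ is real, together with \cref{eq:Gmn-transform-decay}.

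\textbf{Step 3: the finite-zero block.} For $t=\gamma_j$ with $j\le N$ we have $\mathcal R_{N,L}(\gamma_j)=0$, so every Step 1 factor vanishes unless its denominator also vanishes.

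\begin{itemize}
\item If $t=z=\gamma_k$, the minus vector gives the limit $\mathcal R'_{N,L}(\gamma_k)$ by l'Hôpital.
\item If $t=-z$, so $z=-\gamma_k$ and $t=\gamma_k$, the plus vector gives $\mathcal R_{N,L}(t)/(t-\gamma_k)\to\mathcal R'_{N,L}(\gamma_k)$.
\end{itemize}

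So each $\mathbf r(z)$ pairs nontrivially with exactly one Gram vector, namely the one attached to $|z|$ with the matching sign. Distinct $z\neq w$ therefore never share a nonzero Gram vector: the pair $\pm\gamma_k$ uses the two different vectors $\mathbf v^-(\gamma_k)$ and $\mathbf v^+(\gamma_k)$. This gives the off-diagonal zeros. On the diagonal the value is
\[
  2m_k\cdot\frac{2\sin^2(L\gamma_k/2)}{L}\,\mathcal R'_{N,L}(\gamma_k)^2=m_k\,\omega_{k,N,L}.
\]

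\textbf{Step 4: positivity and the product formula.} Positivity of $\omega_{k,N,L}$ follows from nonresonance \cref{eq:zero-side-nonresonance} and the simplicity of the zero $\gamma_k$ of $\mathcal R_{N,L}$. For \cref{eq:zero-side-spectral-Rprime}, write $\mathsf P_{N,L}(z)=z\prod_{n=1}^N(z^2-\nu_n^2)$ and differentiate $\mathcal Q_N$ at $\gamma_k$, which gives the factor $2\gamma_k\prod_{\ell\ne k}(\gamma_k^2-\gamma_\ell^2)$. The $\gamma_k$ cancels against the $z$ in $\mathsf P_{N,L}$, leaving the stated expression.

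\textbf{Main obstacle.} The delicate point is the sign and limit bookkeeping in Step 1. I need to check that the removable singularities, including $\nu_0=0$ and the convention on $\mathbf v^\pm$, match the definition \cref{eq:zero-side-signed-eigenvector} consistently. The rest is mechanical.
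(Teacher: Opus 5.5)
Your proposal is correct and matches the paper's proof. The paper likewise reduces everything to the overlap formulas $\mathbf v_L^{\pm}(t)^{\mathsf T}\mathbf r(z)=\mp\sqrt{2/L}\,\sin(Lt/2)\,\mathcal R_{N,L}(t)/(t\pm z)$ obtained from the partial-fraction identity, and substitutes them into the Gram representation, using $\mathcal R_{N,L}(\pm\gamma_k)=0$ for the finite block and direct summation for the tail. Your explicit partial fractions in $\nu$, the oddness of $\mathcal R_{N,L}$, and the limits at $t=\mp z$ fill in details the paper leaves implicit.
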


\begin{proof}
The partial-fraction identity gives, for real $t$ and
$z\in\mathcal Z_N$,
\begin{align}\label{eq:zero-side-v-r-overlaps}
  \bigl(\mathbf v_L^{-}(t)\bigr)^{\mathsf T}\mathbf r(z)
  &=\sqrt{\frac2L}\sin\!\left(\frac{Lt}{2}\right)
    \frac{\mathcal R_{N,L}(t)}{t-z},
  \\
  \bigl(\mathbf v_L^{+}(t)\bigr)^{\mathsf T}\mathbf r(z)
  &=-\sqrt{\frac2L}\sin\!\left(\frac{Lt}{2}\right)
    \frac{\mathcal R_{N,L}(t)}{t+z}.
\end{align}
The apparent singularities at $t=\pm z$ are removable.  Since
$\mathcal R_{N,L}(\pm\gamma_k)=0$, substitution into the Gram representation
shows that distinct signed vectors are orthogonal and gives
\cref{eq:zero-side-spectral-metric-diagonal,eq:zero-side-spectral-weight}.
Differentiating $\mathsf Q_N/\mathsf P_{N,L}$ at $\gamma_k$ gives
\cref{eq:zero-side-spectral-Rprime}.  Substitution of
\cref{eq:zero-side-v-r-overlaps} into the positive tail sum gives
\cref{eq:zero-side-exact-tail-matrix}.
\end{proof}

If
$\nu_{N,L}<\gamma_1$, then for $t\geq\gamma_{N+1}$,
\[
  0<\mathcal R_{N,L}(t)
  =\frac1t\prod_{j=1}^{N}
     \frac{t^2-\gamma_j^2}{t^2-\nu_{j,L}^2}
  \leq\frac1t.
\]
Consequently, for fixed signed ordinates $z,w$,
\begin{equation}\label{eq:zero-side-tail-fourth-power}
  \abs{\mathbf r(z)^{\mathsf T}\mathbf R_{N,L}\mathbf r(w)}
  \ll_{z,w}\frac1L\sum_{j>N}\frac{m_j}{\gamma_j^4}.
\end{equation}
This fourth-power tail is stronger than the scalar second-power tail used to
bound the least matrix eigenvalue.  It is not yet a complete spectral-transfer
theorem: one must divide by the metric normalizations in
\cref{eq:zero-side-spectral-weight}, control couplings to moving high modes,
and handle possible near-resonance of
$\sin(L\gamma_k/2)$.  The exact formula
\cref{eq:zero-side-exact-tail-matrix} therefore identifies concrete analytic
quantities whose uniform control would imply the relative pencil estimate
\cref{eq:siq-relative-transfer-target}.

\subsection{The explicit seven-dimensional case}
\label{subsec:zero-side-N3-explicit}

Let $N=3$ and assume
\begin{equation}\label{eq:zero-side-N3-cutoff-condition}
  \gamma_3<T<\gamma_4,
  \qquad\text{equivalently}\qquad
  \log\gamma_3<L<\log\gamma_4.
\end{equation}
Using
$\gamma_3\approx25.0108575801$ and
$\gamma_4\approx30.4248761259$, this is approximately
\[
  3.21931<L<3.41526.
\]
Put $\alpha:=2\pi/L$.  Then
\begin{align}\label{eq:zero-side-N3-PQ}
  \mathsf P_{3,L}(z)
  &=z(z^2-\alpha^2)(z^2-4\alpha^2)(z^2-9\alpha^2),
  \\
  \mathsf Q_3(z)
  &=(z^2-\gamma_1^2)(z^2-\gamma_2^2)(z^2-\gamma_3^2).
\end{align}
The interpolation null vector is
\begin{equation}\label{eq:zero-side-N3-ground-vector}
  \mathbf e_{3,L}
  =\sqrt L
   \begin{pmatrix}
     c_3&c_2&c_1&c_0&c_1&c_2&c_3
   \end{pmatrix}^{\mathsf T},
\end{equation}
where
\begin{align}\label{eq:zero-side-N3-ground-coefficients}
  c_3
  &:=\frac{\prod_{j=1}^{3}(9\alpha^2-\gamma_j^2)}
           {720\alpha^6},
  &
  c_2
  &:=-\frac{\prod_{j=1}^{3}(4\alpha^2-\gamma_j^2)}
            {120\alpha^6},
  \\
  c_1
  &:=\frac{\prod_{j=1}^{3}(\alpha^2-\gamma_j^2)}
           {48\alpha^6},
  &
  c_0
  &:=\frac{\gamma_1^2\gamma_2^2\gamma_3^2}{36\alpha^6}.
\end{align}
They satisfy $2(c_3+c_2+c_1)+c_0=1$.  In the six-dimensional difference
basis, the explicit matrix in
\cref{eq:zero-side-explicit-quotient-matrix} satisfies
\begin{equation}\label{eq:zero-side-N3-characteristic-polynomial}
  \det(z\mathbf I_6-\mathbf A_{3,L}^{(0)})
  =(z^2-\gamma_1^2)(z^2-\gamma_2^2)(z^2-\gamma_3^2).
\end{equation}

\subsection{Symbolic Mathematica verification for
\texorpdfstring{$N=3$}{N=3}}
\label{subsec:zero-side-N3-symbolic}

The following code verifies the null relation and constructs the quotient
matrix directly in the difference basis, without a ground-state projection.

{\small
\begin{verbatim}
Clear["Global`*"];
a = 2 Pi/l;
nu = a Range[-3, 3];
g = {g1, g2, g3};

p[x_] := Times @@ (x - nu);
dn[j_] := Times @@ Delete[nu[[j]] - nu, j];
c = Table[(Times @@ (nu[[j]]^2 - g^2))/dn[j],
          {j, Length[nu]}];
e = Sqrt[l] c;
del = ConstantArray[1/Sqrt[l], Length[nu]];

vp[x_] := Sqrt[2] Sin[l x/2]/(l (x + nu));
vm[x_] := Sqrt[2] Sin[l x/2]/(l (x - nu));
h[x_] := Outer[Times, vp[x], vp[x]] +
         Outer[Times, vm[x], vm[x]];
s = 2 Total[h /@ g];

nz = Join[Range[3], Range[5, 7]];
aq = Table[
  nu[[nz[[j]]]] (KroneckerDelta[i, j] - c[[nz[[i]]]]),
  {i, 6}, {j, 6}];

FullSimplify[del.e]
FullSimplify[s.e]
Factor[CharacteristicPolynomial[aq, z]]
\end{verbatim}
}
The outputs are
\[
  1,
  \qquad
  \{0,0,0,0,0,0,0\},
  \qquad
  (z^2-g1^2)(z^2-g2^2)(z^2-g3^2).
\]

\subsection{Numerical verification for \texorpdfstring{$N=K=3$}{N=K=3}}
\label{subsec:zero-side-N3-Mathematica}

Take
\begin{equation}\label{eq:zero-side-N3-parameters}
  N=K=3,
  \qquad
  L=3.3,
  \qquad
  T=e^L\approx27.1126389206579.
\end{equation}
The following code constructs the zero-side pencil and solves the generalized
Hermitian eigenproblem directly.

{\small
\begin{verbatim}
Clear["Global`*"];
wp = 80; n1 = 3; l = 33/10;
t = N[Exp[l], wp];
g = N[Table[Im[ZetaZero[k]], {k, n1}], wp];
g4 = N[Im[ZetaZero[n1 + 1]], wp];
nu = N[2 Pi Range[-n1, n1]/l, wp];

vp[x_] := Sqrt[2] Sin[l x/2]/(l (x + nu));
vm[x_] := Sqrt[2] Sin[l x/2]/(l (x - nu));
h[x_] := Outer[Times, vp[x], vp[x]] +
         Outer[Times, vm[x], vm[x]];
s = 2 Total[h /@ g];

dn[j_] := Times @@ Delete[nu[[j]] - nu, j];
e = N[Sqrt[l] Table[
     (Times @@ (nu[[j]]^2 - g^2))/dn[j],
     {j, Length[nu]}], wp];
del = ConstantArray[1/Sqrt[l], Length[nu]];

(* Orthonormal zero-mean contrast basis. *)
c0 = Transpose[Orthogonalize[
      NullSpace[{ConstantArray[1, Length[nu]]}]]];
d = DiagonalMatrix[nu];
gq = N[Transpose[c0].s.c0, wp];
kq = N[(Transpose[c0].s.d.c0 +
        Transpose[c0].d.s.c0)/2, wp];

ev = Sort[Chop[Eigenvalues[{kq, gq}], 10^-60]];
tg = Sort[Join[-g, g]];

N[{t, g, ev, Sort[Select[ev, # > 0 &]^2], Sort[g^2],
   Abs[del.e - 1], Norm[s.e, Infinity],
   Max[Abs[ev - tg]]}, 16]
\end{verbatim}
}
The generalized eigenvalues are
\[
  \{-\gamma_3,-\gamma_2,-\gamma_1,
     \gamma_1,\gamma_2,\gamma_3\},
\]
and the positive squared values are
\[
  \{199.7904548323869,
    441.9261505740825,
    625.5429968943311\}
  =\{\gamma_1^2,\gamma_2^2,\gamma_3^2\}
\]
to the displayed precision.  No second deflation is performed.

\subsection{Why induction is unnecessary, and what remains open}

The identity
\[
  \mathsf Q_{N+1}(z)=\mathsf Q_N(z)(z^2-\gamma_{N+1}^2)
\]
permits an induction, but the direct partial-fraction proof already treats all
$N$.  Moreover, when $L$ changes, all nodes $\nu_{n,L}$ move, so consecutive
arithmetic matrices are not nested principal submatrices.

The exact zero-side theorem uses the ordinates as inputs.  A prime-side
limiting theorem must instead control the matrix-pencil chain
\begin{equation}\label{eq:zero-side-perturbation-chain}
  \mathbf S_{N,L}
  \longmapsto
  (\mathbf G_{N,L},\mathbf K_{N,L})
  \longmapsto
  \widehat{\mathbf A}_{N,L}
  \longmapsto
  \operatorname{spec}(\widehat{\mathbf A}_{N,L}).
\end{equation}
The correct scale-free target is the relative pencil estimate
\cref{eq:siq-relative-transfer-target}.  It must be supplemented by separation
of the desired generalized eigenvalues and control of the quotient metric.
The contrast formulation removes the artificial large norm of an oblique
projector, but it does not by itself prove these analytic estimates.

\section{Vanishing of the least arithmetic Weil eigenvalue under RH}
\label{sec:least-arithmetic-Weil-eigenvalue}

The exact zero-side null vector from
\cref{thm:exact-zero-side-reconstruction} also gives a trial state for the
full arithmetic matrix.  A distinction is essential: the finite matrix formed
from the first $N$ positive ordinates has an exact zero eigenvalue, whereas the
full matrix contains the remaining positive zero tail and need not have an
exact null vector at finite $N$.

Assume RH, enumerate the distinct positive ordinates by
\begin{equation}\label{eq:least-eigenvalue-positive-ordinates}
  0<\gamma_1<\gamma_2<\cdots,
\end{equation}
and let $m_j$ be the common multiplicity of the zeros at $\pm\gamma_j$.  Then
\cref{eq:S-PSD-under-RH} may be written
\begin{equation}\label{eq:least-eigenvalue-full-zero-sum}
  \mathbf S_{N,L}
  =2\sum_{j\geq1}m_j\mathbf H_L^G(\gamma_j).
\end{equation}
Define the first-$N$ block and the remaining tail by
\begin{equation}\label{eq:least-eigenvalue-tail-decomposition}
  \mathbf S_{N,L}^{[N]}
  :=2\sum_{j=1}^{N}m_j\mathbf H_L^G(\gamma_j),
  \qquad
  \mathbf R_{N,L}
  :=2\sum_{j>N}m_j\mathbf H_L^G(\gamma_j),
\end{equation}
so that
\begin{equation}\label{eq:least-eigenvalue-S-decomposition}
  \mathbf S_{N,L}
  =\mathbf S_{N,L}^{[N]}+\mathbf R_{N,L},
  \qquad
  \mathbf R_{N,L}\succeq0.
\end{equation}
Multiplicities do not change the interpolation null relation.  With
$\mathsf P_{N,L}$, $\mathsf Q_N$, $\mathbf e_{N,L}$, and
$\boldsymbol\delta_{N,L}$ as in
\cref{eq:zero-side-PQ-polynomials,eq:zero-side-ground-vector-general},
\begin{equation}\label{eq:least-eigenvalue-finite-null-relation}
  \mathbf S_{N,L}^{[N]}\mathbf e_{N,L}=0,
  \qquad
  \boldsymbol\delta_{N,L}^{\mathsf T}\mathbf e_{N,L}=1.
\end{equation}

\begin{theorem}[Vanishing of the least Weil eigenvalue under RH]
\label{thm:least-Weil-eigenvalue-vanishing}
Assume RH and let $L=L_N$ satisfy
\begin{equation}\label{eq:least-eigenvalue-node-condition}
  \nu_{N,L_N}=\frac{2\pi N}{L_N}<\gamma_1.
\end{equation}
Set
\begin{equation}\label{eq:least-eigenvalue-definition}
  \lambda_1(N,L):=\lambda_{\min}(\mathbf S_{N,L}).
\end{equation}
Then the exact interpolation trial state gives
\begin{align}
  0\leq\lambda_1(N,L)
  &\leq
  \frac{8}{L\norm{\mathbf e_{N,L}}_2^2}
  \sum_{j>N}m_j
  \sin^2\!\left(\frac{L\gamma_j}{2}\right)
  \left(
    \frac{\mathsf Q_N(\gamma_j)}
         {\mathsf P_{N,L}(\gamma_j)}
  \right)^2
  \label{eq:least-eigenvalue-exact-Rayleigh}
  \\
  &\leq
  \frac{8(2N+1)}{L^2}
  \sum_{j>N}\frac{m_j}{\gamma_j^2}.
  \label{eq:least-eigenvalue-crude-tail-bound}
\end{align}
Consequently,
\begin{equation}\label{eq:least-eigenvalue-general-limit}
  \lambda_1(N,L_N)\longrightarrow0
  \qquad(N\to\infty).
\end{equation}
In particular,
\begin{equation}\label{eq:least-eigenvalue-diagonal-limit}
  \boxed{\lambda_1(N,N)\longrightarrow0,}
\end{equation}
because $2\pi<\gamma_1$.
\end{theorem}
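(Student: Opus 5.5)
The plan is to use the interpolation vector $\mathbf e_{N,L}$ as a Rayleigh trial state. Under RH, \cref{eq:S-PSD-under-RH} gives $\lambda_1\geq0$. For the upper bound,
\[
  \lambda_1(N,L)\leq
  \frac{\mathbf e_{N,L}^{\mathsf T}\mathbf S_{N,L}\mathbf e_{N,L}}{\norm{\mathbf e_{N,L}}_2^2}.
\]
By \cref{eq:least-eigenvalue-finite-null-relation} the first block annihilates $\mathbf e_{N,L}$, so only $\mathbf e_{N,L}^{\mathsf T}\mathbf R_{N,L}\mathbf e_{N,L}$ survives. We compute it from the Gram form \cref{eq:Gmn-transform-Gram-form}. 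The partial-fraction identity \cref{eq:zero-side-partial-fraction}, evaluated at $t=\pm\gamma_j$, gives
\[
  \mathbf v_L^{-}(t)^{\mathsf T}\mathbf e_{N,L}
  =\sqrt{\tfrac2L}\,\sin(Lt/2)\,\mathcal R_{N,L}(t),
  \qquad
  \mathbf v_L^{+}(t)^{\mathsf T}\mathbf e_{N,L}
  =-\sqrt{\tfrac2L}\,\sin(Lt/2)\,\mathcal R_{N,L}(-t),
\]
where $\mathcal R_{N,L}=\mathsf Q_N/\mathsf P_{N,L}$. Since $\mathsf Q_N$ is even and $\mathsf P_{N,L}$ is odd, $\mathcal R_{N,L}$ is odd, so both squares equal $\frac2L\sin^2(L\gamma_j/2)\mathcal R_{N,L}(\gamma_j)^2$. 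Summing with the factor $2m_j$ yields exactly $\frac8L\sum_{j>N}m_j\sin^2(\cdot)\mathcal R_{N,L}(\gamma_j)^2$, which is \cref{eq:least-eigenvalue-exact-Rayleigh}. These overlaps are the analogue of \cref{eq:zero-side-v-r-overlaps} with $z$ replaced by the pole structure of $\mathbf e$ itself. This is the first step to verify carefully, including the $\sqrt L$ normalization: $e_n=\sqrt L\,c_n$ together with the prefactor $\sqrt2/L$ gives $\sqrt{2/L}$.

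For the crude bound \cref{eq:least-eigenvalue-crude-tail-bound}, two estimates are needed.

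\textbf{Step 1 (numerator).} Under \cref{eq:least-eigenvalue-node-condition}, every $t\geq\gamma_{N+1}>\gamma_j$ exceeds all nodes. Writing
\[
  \mathcal R_{N,L}(t)=\frac1t\prod_{j=1}^N\frac{t^2-\gamma_j^2}{t^2-\nu_j^2},
\]
each factor lies in $(0,1]$ because $\gamma_j>\nu_{N,L}\geq\nu_j$. Hence $0<\mathcal R_{N,L}(t)\leq1/t$, and with $\sin^2\leq1$ the numerator is at most $\frac8L\sum_{j>N}m_j\gamma_j^{-2}$.

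\textbf{Step 2 (denominator).} We need $\norm{\mathbf e_{N,L}}_2^2\geq L/(2N+1)$. This follows from Cauchy--Schwarz applied to $\boldsymbol\delta^{\mathsf T}\mathbf e=1$:
\[
  1\leq\norm{\boldsymbol\delta_{N,L}}_2^2\,\norm{\mathbf e_{N,L}}_2^2
  =\frac{2N+1}{L}\,\norm{\mathbf e_{N,L}}_2^2 .
\]
Combining Steps 1 and 2 gives the factor $8(2N+1)/L^2$.

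For the limit, take $L_N$ satisfying \cref{eq:least-eigenvalue-node-condition}, so $L_N>2\pi N/\gamma_1$ and $(2N+1)/L_N^2\ll 1/N$. The series $\sum_j m_j/\gamma_j^2$ converges by the zero-counting bound used in \cref{eq:zero-tail-bound}, so its tail beyond $N$ tends to $0$. The product therefore tends to $0$; indeed, boundedness of the tail sum already suffices, since $(2N+1)/L_N^2\to0$. For $L=N$ the condition reads $2\pi<\gamma_1\approx14.13$, which holds.

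The main obstacle is the sign and parity bookkeeping in the overlap computation: checking that the $\mathbf v^{+}$ overlap reproduces $\mathcal R_{N,L}(\gamma_j)^2$ rather than some other rational value, and that no contribution from the nodes is lost. Rather than re-derive it, I would check it directly against \cref{prop:zero-side-spectral-metric-tail}. Since $\mathbf e$ equals $\lim_{z\to\infty}(-z)\mathbf r(z)$ up to its normalization, the tail formula \cref{eq:zero-side-exact-tail-matrix}, taken as $z,w\to\infty$ after multiplying by $zw$, should reproduce the bracket $\to2$ and thus the factor $8/L$.
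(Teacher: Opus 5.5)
Your proposal is correct and follows the paper's proof essentially step for step. The shared steps are:
\begin{itemize}
  \item the Rayleigh trial vector $\mathbf e_{N,L}$, annihilated by the first-$N$ block;
  \item the partial-fraction overlaps $(\mathbf v_L^{\pm}(t))^{\mathsf T}\mathbf e_{N,L}=\sqrt{2/L}\,\sin(Lt/2)\,\mathcal R_{N,L}(t)$, where your oddness argument for $\mathcal R_{N,L}$ is exactly the paper's appeal to the symmetry of the nodes;
  \item the product bound $0<\mathcal R_{N,L}(t)\leq 1/t$ for $t\geq\gamma_{N+1}$;
  \item the Cauchy--Schwarz lower bound $\norm{\mathbf e_{N,L}}_2^2\geq L/(2N+1)$;
  \item $(2N+1)/L_N^2=O(1/N)$ together with summability of $\sum_j m_j/\gamma_j^2$.
\end{itemize}

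Your closing cross-check via $z,w\to\infty$ in the exact tail formula is only a heuristic consistency test and is not needed for the proof. That proposition is stated for $z,w\in\mathcal Z_N$; for general $z$ the overlap picks up an extra term proportional to $\mathcal R_{N,L}(z)$, which vanishes only in the limit.
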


\begin{proof}
By \cref{eq:least-eigenvalue-full-zero-sum}, the matrix is positive
semidefinite under RH.  The min--max principle and
\cref{eq:least-eigenvalue-finite-null-relation} therefore give
\begin{equation}\label{eq:least-eigenvalue-Rayleigh-start}
  0\leq\lambda_1(N,L)
  \leq
  \frac{\mathbf e_{N,L}^{\mathsf T}
        \mathbf R_{N,L}\mathbf e_{N,L}}
       {\norm{\mathbf e_{N,L}}_2^2}.
\end{equation}
The partial-fraction identity
\cref{eq:zero-side-partial-fraction} and symmetry of the nodes imply, for
real $t$,
\begin{equation}\label{eq:least-eigenvalue-vector-overlap}
  \bigl(\mathbf v_L^\pm(t)\bigr)^{\mathsf T}\mathbf e_{N,L}
  =\sqrt{\frac2L}\sin\!\left(\frac{Lt}{2}\right)
   \frac{\mathsf Q_N(t)}{\mathsf P_{N,L}(t)}.
\end{equation}
Substitution into the Gram formula
\cref{eq:Gmn-transform-Gram-form}, followed by summation over the tail in
\cref{eq:least-eigenvalue-tail-decomposition}, proves the exact bound
\cref{eq:least-eigenvalue-exact-Rayleigh}.

For $t\geq\gamma_{N+1}$, condition
\cref{eq:least-eigenvalue-node-condition} gives
$0\leq\nu_{j,L}<\gamma_j<t$ for $1\leq j\leq N$.  Hence
\begin{equation}\label{eq:least-eigenvalue-rational-product-bound}
  0<\frac{\mathsf Q_N(t)}{\mathsf P_{N,L}(t)}
  =\frac1t\prod_{j=1}^{N}
    \frac{t^2-\gamma_j^2}{t^2-\nu_{j,L}^2}
  \leq\frac1t.
\end{equation}
Moreover, Cauchy--Schwarz and
$\boldsymbol\delta_{N,L}^{\mathsf T}\mathbf e_{N,L}=1$ yield
\begin{equation}\label{eq:least-eigenvalue-e-norm-lower}
  \norm{\mathbf e_{N,L}}_2^2
  \geq\frac1{\norm{\boldsymbol\delta_{N,L}}_2^2}
  =\frac{L}{2N+1}.
\end{equation}
Combining
\cref{eq:least-eigenvalue-exact-Rayleigh,eq:least-eigenvalue-rational-product-bound,eq:least-eigenvalue-e-norm-lower}
proves \cref{eq:least-eigenvalue-crude-tail-bound}.

The Riemann--von Mangoldt estimate implies
\begin{equation}\label{eq:least-eigenvalue-summable-zero-tail}
  \sum_{j\geq1}\frac{m_j}{\gamma_j^2}<\infty.
\end{equation}
Thus the tail in \cref{eq:least-eigenvalue-crude-tail-bound} tends to zero.
Condition \cref{eq:least-eigenvalue-node-condition} also gives $L_N\gg N$,
so $(2N+1)/L_N^2=O(N^{-1})$.  This proves
\cref{eq:least-eigenvalue-general-limit}.  Taking $L_N=N$ gives
\cref{eq:least-eigenvalue-diagonal-limit}.
\end{proof}

\begin{remark}[Finite null state versus full arithmetic ground state]
\label{rem:least-eigenvalue-what-proved}
The theorem concerns the arithmetic matrix because the prime-side explicit
formula equals the full zero-side sum.  The zeros are used only to construct a
Rayleigh trial vector.  The vector is an exact null state of
$\mathbf S_{N,L}^{[N]}$, not of the full matrix $\mathbf S_{N,L}$; the latter
is lifted by the positive tail $\mathbf R_{N,L}$.  Thus
\cref{eq:least-eigenvalue-diagonal-limit} is a genuine asymptotic statement
about the full arithmetic matrix under RH, but it is not yet a convergence
theorem for its ground-state eigenvector or quotient spectrum.
\end{remark}

\begin{corollary}[Quantitative and product bounds]
\label{cor:least-eigenvalue-quantitative-product}
Under the hypotheses of
\cref{thm:least-Weil-eigenvalue-vanishing}, partial summation of the
Riemann--von Mangoldt formula gives
\begin{equation}\label{eq:least-eigenvalue-zero-tail-rate}
  \sum_{j>N}\frac{m_j}{\gamma_j^2}
  \ll\frac{\log(\gamma_{N+1}+2)}{\gamma_{N+1}}.
\end{equation}
Consequently,
\begin{equation}\label{eq:least-eigenvalue-gamma-rate}
  \lambda_1(N,N)
  \ll\frac1N
  \frac{\log(\gamma_{N+1}+2)}{\gamma_{N+1}}.
\end{equation}
If all zeros are simple, then
$\gamma_N\sim2\pi N/\log N$ and
\begin{equation}\label{eq:least-eigenvalue-polynomial-rate}
  \lambda_1(N,N)
  =O\!\left(\frac{(\log N)^2}{N^2}\right).
\end{equation}

The central coordinate of the interpolation vector is
\begin{equation}\label{eq:least-eigenvalue-central-coordinate}
  e_{0,N,L}
  =\sqrt L\,
   \frac{\prod_{j=1}^{N}\gamma_j^2}
        {(2\pi/L)^{2N}(N!)^2}.
\end{equation}
Using $\norm{\mathbf e_{N,L}}_2^2\geq e_{0,N,L}^2$ in the exact Rayleigh
bound gives
\begin{equation}\label{eq:least-eigenvalue-product-bound}
  \boxed{
  \lambda_1(N,L)
  \leq\frac8{L^2}
    \left[
      \prod_{j=1}^{N}
      \left(\frac{2\pi j}{L\gamma_j}\right)^4
    \right]
    \sum_{j>N}\frac{m_j}{\gamma_j^2}.}
\end{equation}
For $L=N$, this implies, without a simplicity assumption,
\begin{equation}\label{eq:least-eigenvalue-exponential-envelope}
  \lambda_1(N,N)
  \leq\frac8{N^2}
  \left(\frac{2\pi}{\gamma_1}\right)^{4N}
  \sum_{j>N}\frac{m_j}{\gamma_j^2}.
\end{equation}
If the zeros are simple, then more precisely
\begin{equation}\label{eq:least-eigenvalue-superexponential-log-bound}
  \log\lambda_1(N,N)
  \leq-4N\log\!\left(\frac{N}{\log N}\right)+O(N).
\end{equation}
\end{corollary}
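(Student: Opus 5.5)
The proof starts from \cref{thm:least-Weil-eigenvalue-vanishing}, specialized to $L=N$. That choice is admissible because $\nu_{N,N}=2\pi<\gamma_1$. Each claim is then obtained by feeding a sharper bound on the zero tail, or on $\norm{\mathbf e_{N,L}}_2$, into the exact Rayleigh bound \cref{eq:least-eigenvalue-exact-Rayleigh}.

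\textbf{Zero-tail rate.} For \cref{eq:least-eigenvalue-zero-tail-rate} I repeat the Stieltjes integration by parts from the proof of \cref{thm:uniform-Xi-truncation}, with $T_1=\gamma_{N+1}$ and the bound \cref{eq:N-upper-bound}. This gives $\sum_{j>N}m_j\gamma_j^{-2}\ll \gamma_{N+1}^{-1}\log(\gamma_{N+1}+2)$. Inserting it into the crude bound \cref{eq:least-eigenvalue-crude-tail-bound} with $L=N$ produces the factor $(2N+1)/N^2\ll 1/N$, which is \cref{eq:least-eigenvalue-gamma-rate}.

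If the zeros are simple, then $m_j=1$ and $j=N(\gamma_j)\sim(\gamma_j/2\pi)\log\gamma_j$. Inverting gives $\gamma_N\sim 2\pi N/\log N$. Hence $\log\gamma_{N+1}/\gamma_{N+1}\asymp(\log N)^2/N$, and \cref{eq:least-eigenvalue-polynomial-rate} follows.

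\textbf{Product bound.} The central coordinate comes directly from \cref{eq:zero-side-barycentric-coefficients} at $\nu_{0,L}=0$. There $\mathsf Q_N(0)=(-1)^N\prod_j\gamma_j^2$ and $\mathsf P_{N,L}'(0)=\prod_{n\neq0}(-\nu_{n,L})=(-1)^N(2\pi/L)^{2N}(N!)^2$, and these give \cref{eq:least-eigenvalue-central-coordinate}.

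In \cref{eq:least-eigenvalue-exact-Rayleigh} I then use $\norm{\mathbf e}_2^2\ge e_{0,N,L}^2$, together with $\sin^2\le1$ and the bound $\mathsf Q_N/\mathsf P_{N,L}\le 1/t$ from \cref{eq:least-eigenvalue-rational-product-bound}. The prefactor becomes $8/(L\,e_0^2)=\frac{8}{L^2}\bigl[(2\pi/L)^{2N}(N!)^2/\prod\gamma_j^2\bigr]^2$. Rewriting $(N!)^2(2\pi/L)^{2N}=\prod_j(2\pi j/L)^2$ gives exactly \cref{eq:least-eigenvalue-product-bound}.

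For $L=N$, each factor satisfies $2\pi j/(N\gamma_j)\le 2\pi/\gamma_j\le 2\pi/\gamma_1$, since $j\le N$ and $\gamma_j\ge\gamma_1$. This yields \cref{eq:least-eigenvalue-exponential-envelope} with no multiplicity hypothesis.

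\textbf{Superexponential bound.} This is the main obstacle, since it needs an asymptotic for $\sum_{j\le N}\log\gamma_j$ rather than a crude lower bound. I take logarithms in \cref{eq:least-eigenvalue-product-bound} at $L=N$ to get
\[
  \log\lambda_1(N,N)\le 4\sum_{j\le N}\log\frac{2\pi j}{N\gamma_j}+O(\log N).
\]
With simple zeros, $\gamma_j\gg j/\log(j+2)$ holds uniformly, from $N(T)\ll T\log T$. Hence $\log(2\pi j/\gamma_j)\le \log\log(j+2)+O(1)$, and summing gives $\le N\log\log N+O(N)$. The remaining term contributes $-4N\log N$ in total. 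Combining the two, the bound is $\le -4N\log N+4N\log\log N+O(N)=-4N\log(N/\log N)+O(N)$.

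The tail sum and the $8/N^2$ factor contribute only $O(\log N)$, which is absorbed into $O(N)$. The one point needing care is that the lower bound $\gamma_j\gg j/\log(j+2)$ holds for every $j$, including small $j$, and not just asymptotically. Simplicity of the zeros is what lets one index $j$ correspond to one counted zero in $N(T)$.
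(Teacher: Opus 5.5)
Your proposal is correct and follows the paper's proof step for step: partial summation for the zero tail, the crude bound at $L=N$, the sign cancellation in $\mathsf Q_N(0)/\mathsf P_{N,L}'(0)$, the estimate $\norm{\mathbf e_{N,L}}_2^2\ge e_{0,N,L}^2$ in the exact Rayleigh bound, and a logarithmic summation for the last claim. The only difference is that you use the uniform one-sided bound $\gamma_j\gg j/\log(j+2)$ where the paper uses the asymptotic $\gamma_j\sim2\pi j/\log j$, which is slightly cleaner. Your closing remark overstates the role of simplicity, though: this bound only needs $N(\gamma_j)\ge j$, which holds even when zeros are repeated.
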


\begin{proof}
Equation \cref{eq:least-eigenvalue-zero-tail-rate} follows by partial
summation from the Riemann--von Mangoldt formula.  Combining it with
\cref{eq:least-eigenvalue-crude-tail-bound} gives
\cref{eq:least-eigenvalue-gamma-rate}; inversion of the zero count under
simplicity gives \cref{eq:least-eigenvalue-polynomial-rate}.

At $n=0$, the signs in
$\mathsf Q_N(0)/\mathsf P_{N,L}'(0)$ cancel, which proves
\cref{eq:least-eigenvalue-central-coordinate}.  Combining this coordinate
with
\cref{eq:least-eigenvalue-exact-Rayleigh,eq:least-eigenvalue-rational-product-bound}
proves \cref{eq:least-eigenvalue-product-bound}.  Since
$\gamma_j\geq\gamma_1$ and $j/N\leq1$, the latter gives
\cref{eq:least-eigenvalue-exponential-envelope}.  Finally,
$\gamma_j\sim2\pi j/\log j$ and summation of the logarithm of the product give
\cref{eq:least-eigenvalue-superexponential-log-bound}.
\end{proof}

\subsection{Interpretation of the moving-dimension data and the remaining gap}
\label{subsec:least-eigenvalue-moving-data}

Normalize the exact finite-zero interpolation vector by
$\widehat{\mathbf e}_{N,L}:=\mathbf e_{N,L}/\norm{\mathbf e_{N,L}}_2$.
Then \cref{eq:least-eigenvalue-finite-null-relation} gives the exact overlap
identity
\begin{equation}\label{eq:least-eigenvalue-evaluation-overlap}
  \abs{
    \boldsymbol\delta_{N,L}^{\mathsf T}
    \widehat{\mathbf e}_{N,L}}
  =\frac1{\norm{\mathbf e_{N,L}}_2}.
\end{equation}
Thus growth of the interpolation-vector norm simultaneously makes the
Rayleigh quotient in \cref{eq:least-eigenvalue-exact-Rayleigh} small and makes
its normalized evaluation overlap small.  The joint decay displayed in the numerical diagnostic of
\cref{subsec:siq-conditioning} is therefore qualitatively consistent with the product geometry in
\cref{eq:least-eigenvalue-product-bound}, rather than an isolated numerical
coincidence.

The scale-invariant contrast pencil does not require this overlap as a
divisor and does not require identification of the arithmetic ground vector
with the interpolation vector.  Nevertheless, the scalar estimate
\cref{eq:least-eigenvalue-exact-Rayleigh} controls only one trial direction.
Convergence of the generalized spectrum requires control of the entire tail
relative to the finite-zero contrast metric.  In the notation of
\cref{subsec:siq-transfer-problem}, the natural target is the relative pencil
estimate \cref{eq:siq-relative-transfer-target}.  Uniform lower bounds or
preconditioned estimates for the compressed metric are also needed; the
intrinsic relation between a small overlap and metric degeneration is made
explicit in \cref{prop:siq-intrinsic-conditioning}.

The truncation-plus-tail decomposition
\cref{eq:least-eigenvalue-S-decomposition} is analogous in spirit to the
finite-Weil-matrix strategy of Alp\"oge and Furman
\cite{AlpogeFurman2026}.  Their application controls rank, trace,
Hilbert--Schmidt norm, and inertia in a high-energy zero-counting problem.  In
the present setting, rational interpolation annihilates the finite block
exactly, so the least-eigenvalue theorem reduces first to a scalar positive
tail estimate; recovery of individual ordinates remains the separate
relative-pencil problem.

\section{Off-critical-line conjugate pairs and complex quotient spectra}
\label{sec:off-critical-zero-side-spectrum}

We now apply the contrast-space construction to hypothetical off-critical-line
zeros.  Let
\begin{equation}\label{eq:off-line-z-pairs}
  z_k:=\gamma_k+i\eta_k,
  \qquad
  \overline{z_k}=\gamma_k-i\eta_k,
  \qquad
  \gamma_k>0,
  \qquad
  0<\eta_k<\frac12.
\end{equation}
These correspond to the positive-ordinate zeta zeros
$\frac12-\eta_k+i\gamma_k$ and
$\frac12+\eta_k+i\gamma_k$.  Evenness and conjugation generate the quartet
$\{\pm z_k,\pm\overline{z_k}\}$.  Define
\begin{equation}\label{eq:zero-side-finite-matrix-definition-2}
  \mathbf S_{N,L}^{(T,\mathrm{off})}
  :=2\sum_{k=1}^{K}\left(
       \mathbf H_L^G(z_k)+\mathbf H_L^G(\overline{z_k})
     \right).
\end{equation}
For complex $z$, the notation $\mathbf v_L^{\pm}(z)$ denotes the analytic
continuation of the component formula in
\cref{eq:Gmn-transform-vectors}.

\subsection{Reality, inertia, and the correct dimension count}

Since
$\mathbf H_L^G(\overline z)=\overline{\mathbf H_L^G(z)}$,
$\mathbf S_{N,L}^{(T,\mathrm{off})}$ is real symmetric.  It is generally
indefinite.  For a real vector $\mathbf u$,
\begin{align}\label{eq:off-line-indefinite-quadratic-form}
  \mathbf u^{\mathsf T}
  \mathbf S_{N,L}^{(T,\mathrm{off})}\mathbf u
  =4\sum_{k=1}^{K}\RePart\!\left[
       \bigl((\mathbf v_L^+(z_k))^{\mathsf T}\mathbf u\bigr)^2
       +\bigl((\mathbf v_L^-(z_k))^{\mathsf T}\mathbf u\bigr)^2
     \right],
\end{align}
which has no fixed sign.

Each conjugate pair contributes four Cauchy directions.  Therefore
\begin{equation}\label{eq:off-line-rank-bound}
  \operatorname{rank}\mathbf S_{N,L}^{(T,\mathrm{off})}
  \leq\min\{2N+1,4K\},
  \qquad
  \dim\ker\mathbf S_{N,L}^{(T,\mathrm{off})}
  \geq\max\{0,2N+1-4K\}.
\end{equation}
A forced one-dimensional nullspace requires
\begin{equation}\label{eq:off-line-dimension-balance}
  2N+1=4K+1,
  \qquad\text{that is,}\qquad N=2K.
\end{equation}

\begin{proposition}[Rank and inertia of the paired off-line matrix]
\label{prop:off-line-rank-nullity}
Assume that the $4K$ points
$\{\pm z_k,\pm\overline{z_k}:1\leq k\leq K\}$ are distinct.  If $N\geq2K$,
then
\begin{equation}\label{eq:off-line-rank-exact}
  \operatorname{rank}\mathbf S_{N,L}^{(T,\mathrm{off})}=4K,
  \qquad
  \dim\ker\mathbf S_{N,L}^{(T,\mathrm{off})}=2N+1-4K.
\end{equation}
In the dimension-matched case $N=2K$, its inertia is
\begin{equation}\label{eq:off-line-inertia}
  \operatorname{In}
  \bigl(\mathbf S_{2K,L}^{(T,\mathrm{off})}\bigr)
  =(2K,2K,1).
\end{equation}
\end{proposition}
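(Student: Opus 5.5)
The plan is to factor $\mathbf S_{N,L}^{(T,\mathrm{off})}$ through a rectangular Cauchy matrix, exactly as in the proof of \cref{prop:zero-side-rank-nullity}, and then to read off the inertia by Sylvester's law after rewriting each conjugate pair as a difference of real squares.

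\emph{Step 1 (factorization).} By \cref{eq:Gmn-transform-Gram-form}, continued analytically to complex arguments, and by \cref{eq:zero-side-finite-matrix-definition-2},
\[
  \mathbf S_{N,L}^{(T,\mathrm{off})}
  =2\sum_{k=1}^{K}\sum_{\sigma=\pm}\Bigl(
     \mathbf v_L^{\sigma}(z_k)\mathbf v_L^{\sigma}(z_k)^{\mathsf T}
     +\overline{\mathbf v_L^{\sigma}(z_k)}\,
      \overline{\mathbf v_L^{\sigma}(z_k)}^{\mathsf T}\Bigr),
\]
using $\mathbf v_L^{\sigma}(\overline z)=\overline{\mathbf v_L^{\sigma}(z)}$. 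Each column has components
\[
  \sqrt2\sin(Lz/2)\,L^{-1}(z\pm\nu_{n,L})^{-1}.
\]
The scalar factor $\sin(Lz/2)$ is nonzero because $\operatorname{Im}z=\eta_k\neq0$ and the zeros of $\sin$ are real. After removing these scalars, the $4K$ columns are Cauchy columns $\bigl((w-\nu_{n,L})^{-1}\bigr)_{n\in I_N}$ at the points $w\in\{\pm z_k,\pm\overline{z_k}\}$. These points are distinct by hypothesis and nonreal, so they avoid the real nodes $\nu_{n,L}$.

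\emph{Step 2 (rank).} Let $\mathbf V$ be the $(2N+1)\times4K$ complex matrix of these columns. If $N\geq2K$, then $4K\leq2N+1$, and any $4K\times4K$ row-minor of $\mathbf V$ is a nonzero multiple of a Cauchy determinant with distinct points and distinct nodes, hence nonzero. So $\mathbf V$ has full column rank. Since $\mathbf S=2\mathbf V\mathbf V^{\mathsf T}$ with $\mathbf V$ injective and $\mathbf V^{\mathsf T}$ surjective onto $\mathbb C^{4K}$, we get $\operatorname{rank}\mathbf S=4K$. The kernel dimension $2N+1-4K$ follows, which gives \cref{eq:off-line-rank-exact}.

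\emph{Step 3 (inertia).} For each of the $2K$ complex vectors $\mathbf w=\mathbf v_L^{\sigma}(z_k)$, write $\mathbf w=\mathbf a+i\mathbf b$ with $\mathbf a,\mathbf b$ real. Then
\[
  \mathbf w\mathbf w^{\mathsf T}+\overline{\mathbf w}\,\overline{\mathbf w}^{\mathsf T}
  =2(\mathbf a\mathbf a^{\mathsf T}-\mathbf b\mathbf b^{\mathsf T}),
\]
and therefore
\[
  \mathbf S=4\,\mathbf R\operatorname{diag}(\mathbf I_{2K},-\mathbf I_{2K})\mathbf R^{\mathsf T},
\]
where $\mathbf R$ is the real matrix with columns the $2K$ vectors $\mathbf a$ followed by the $2K$ vectors $\mathbf b$. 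The pair $(\mathbf a,\mathbf b)$ spans the same complex space as $(\mathbf w,\overline{\mathbf w})$, so Step 2 shows that $\mathbf R$ has full column rank $4K$. Extend $\mathbf R$ to an invertible real matrix by adding $2N+1-4K$ further columns. Then $\mathbf S$ is congruent to $\operatorname{diag}(\mathbf I_{2K},-\mathbf I_{2K},\mathbf 0)$, and Sylvester's law of inertia gives $(2K,2K,2N+1-4K)$. Setting $N=2K$ yields \cref{eq:off-line-inertia}.

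The main point needing care is Step 2: the column scalars must be nonzero, the $4K$ Cauchy points must be distinct from one another and from the nodes, and the passage from complex full column rank of $\mathbf V$ to full real rank of $\mathbf R$ must be justified. Nonreality of the $z_k$, together with the distinctness hypothesis, settles all three.
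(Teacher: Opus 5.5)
Your proof is correct and follows essentially the same route as the paper: you write $\mathbf S_{N,L}^{(T,\mathrm{off})}=4\mathbf R\mathbf J\mathbf R^{\mathsf T}$ with $2K$ signs of each kind in $\mathbf J$, deduce full column rank of $\mathbf R$ from the linear independence of the complex Cauchy columns at $\{\pm z_k,\pm\overline{z_k}\}$, and then apply Sylvester's law. You also spell out two points the paper leaves implicit, namely $\sin(Lz_k/2)\neq0$ and the passage from complex to real column rank, and you obtain the general inertia $(2K,2K,2N+1-4K)$ for every $N\geq2K$.
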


\begin{proof}
Write
$\mathbf v_L^\pm(z_k)=\mathbf a_k^\pm+i\mathbf b_k^\pm$ and let
$\mathbf R$ contain the four real columns
$\mathbf a_k^+,\mathbf b_k^+,\mathbf a_k^-,\mathbf b_k^-$ for each $k$.
Then
\begin{equation}\label{eq:off-line-real-indefinite-factorization}
  \mathbf S_{N,L}^{(T,\mathrm{off})}
  =4\mathbf R\mathbf J\mathbf R^{\mathsf T},
  \qquad
  \mathbf J=\operatorname{diag}(1,-1,1,-1,\ldots).
\end{equation}
The corresponding complex Cauchy columns are linearly independent, so
$\mathbf R$ has full column rank $4K$.  This gives the rank and nullity.
Sylvester inertia is preserved on the range of a full-column-rank congruence,
which proves \cref{eq:off-line-inertia}.
\end{proof}

Keeping the critical-line choice $K=N$ gives no forced nullity.  For example,
with $N=K=2$, $L=3$, $z_1=5+0.2i$, and $z_2=8+0.3i$, one obtains
\begin{align}\label{eq:off-line-NK2-no-null-example}
  \operatorname{spec}\mathbf S_{2,3}^{(T,\mathrm{off})}
  \approx\{&-0.0030113153,-0.0008375509,0.0000442287,\notag\\
            &1.0187426489,1.7303127802\}.
\end{align}

\subsection{Exact interpolation in the dimension-matched case}

Assume $N=2K$.  Define
\begin{align}\label{eq:off-line-PQ-polynomials}
  \mathsf P_{2K,L}(w)
  &:=\prod_{n=-2K}^{2K}(w-\nu_{n,L}),
  \\
  \mathsf Q_K^{\mathrm{off}}(w)
  &:=\prod_{k=1}^{K}
       (w^2-z_k^2)(w^2-\overline{z_k}^{\,2}).
\end{align}
The second polynomial is monic, real, and even, with factors
\begin{equation}\label{eq:off-line-Q-real-factor}
  (w^2-z_k^2)(w^2-\overline{z_k}^{\,2})
  =w^4-2(\gamma_k^2-\eta_k^2)w^2
       +(\gamma_k^2+\eta_k^2)^2.
\end{equation}
Put
\begin{equation}\label{eq:off-line-barycentric-null-vector}
  c_{n,2K,L}^{\mathrm{off}}
  :=\frac{\mathsf Q_K^{\mathrm{off}}(\nu_{n,L})}
          {\mathsf P_{2K,L}'(\nu_{n,L})},
  \qquad
  e_{n,2K,L}^{\mathrm{off}}
  :=\sqrt L\,c_{n,2K,L}^{\mathrm{off}}.
\end{equation}
Let $\mathbf e_{2K,L}^{\mathrm{off}}$ be the resulting vector and let
$\mathbf C_{2K,L}$ satisfy \cref{eq:siq-contrast-matrix}.  Define the
indefinite contrast pencil
\begin{align}\label{eq:off-line-contrast-pencil}
  \mathbf G_{2K,L}^{\mathrm{off},0}
  &:=\mathbf C_{2K,L}^{*}
      \mathbf S_{2K,L}^{(T,\mathrm{off})}\mathbf C_{2K,L},
  \\
  \mathbf K_{2K,L}^{\mathrm{off},0}
  &:=\mathbf C_{2K,L}^{*}
      \mathbf S_{2K,L}^{(T,\mathrm{off})}
      \mathbf D_{L,2K}\mathbf C_{2K,L}.
\end{align}

\begin{theorem}[Exact reconstruction of off-line quartets]
\label{thm:exact-off-line-reconstruction}
Under the assumptions of \cref{prop:off-line-rank-nullity}, with $N=2K$:

\begin{enumerate}[label=\textup{(\roman*)}]
  \item The vector $\mathbf e_{2K,L}^{\mathrm{off}}$ is real and even, and
        \begin{equation}\label{eq:off-line-null-normalization}
          \boldsymbol\delta_{2K,L}^{\mathsf T}
          \mathbf e_{2K,L}^{\mathrm{off}}=1,
          \qquad
          \ker\mathbf S_{2K,L}^{(T,\mathrm{off})}
          =\operatorname{span}\{\mathbf e_{2K,L}^{\mathrm{off}}\}.
        \end{equation}

  \item The matrix $\mathbf G_{2K,L}^{\mathrm{off},0}$ is nonsingular with
        inertia $(2K,2K,0)$, the matrix
        $\mathbf K_{2K,L}^{\mathrm{off},0}$ is real symmetric, and
        \begin{align}\label{eq:off-line-pencil-characteristic-polynomial}
          \det\!\left(
            w\mathbf G_{2K,L}^{\mathrm{off},0}
            -\mathbf K_{2K,L}^{\mathrm{off},0}
          \right)
          ={}&\det(\mathbf G_{2K,L}^{\mathrm{off},0})
             \mathsf Q_K^{\mathrm{off}}(w).
        \end{align}
        Hence the generalized spectrum is
        \begin{equation}\label{eq:off-line-quotient-spectrum-exact}
          \bigcup_{k=1}^{K}
          \{\pm z_k,\pm\overline{z_k}\}.
        \end{equation}

  \item The even-parity square has spectrum
        \begin{equation}\label{eq:off-line-even-square-spectrum}
          \{z_1^2,\overline{z_1}^{\,2},\ldots,
             z_K^2,\overline{z_K}^{\,2}\}.
        \end{equation}
        It is not a positive operator.
\end{enumerate}
\end{theorem}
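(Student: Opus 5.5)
The plan is to transplant the proof of \cref{thm:exact-zero-side-reconstruction} verbatim, replacing $\mathsf Q_N$ by $\mathsf Q_K^{\mathrm{off}}$ and the $2N$ real points $\pm\gamma_k$ by the $4K$ complex points $\mathcal Z_K^{\mathrm{off}}:=\{\pm z_k,\pm\overline{z_k}\}$. The interpolation argument never used reality of the interpolation points, only the partial-fraction identity. The only genuinely new ingredient is inertia, which is needed because $\mathbf G^{\mathrm{off},0}$ is no longer positive.

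\textbf{Part (i).} Since $\mathsf Q_K^{\mathrm{off}}$ is real and even and the nodes are symmetric, $c^{\mathrm{off}}_{n,2K,L}$ is real. We have $\mathsf P_{2K,L}'(-\nu)=\mathsf P_{2K,L}'(\nu)$ because $\mathsf P_{2K,L}$ is odd of even-plus-one degree, so the vector is even. The partial fraction
\[
  \mathsf Q_K^{\mathrm{off}}/\mathsf P_{2K,L}=\sum_n c^{\mathrm{off}}_n/(w-\nu_{n,L})
\]
involves monic polynomials of degrees $4K$ and $4K+1$. Comparing leading coefficients gives $\sum_n c_n^{\mathrm{off}}=1$, hence $\boldsymbol\delta^{\mathsf T}\mathbf e^{\mathrm{off}}=1$. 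Evaluating the partial fraction at each $w\in\mathcal Z_K^{\mathrm{off}}$ gives
\[
  \mathbf v_L^{\pm}(z)^{\mathsf T}\mathbf e^{\mathrm{off}}=0,\qquad \mathbf v_L^{\pm}(\overline z)^{\mathsf T}\mathbf e^{\mathrm{off}}=0 .
\]
Both overlaps are needed because the analytically continued vectors are not conjugated in the bilinear form; this is the same computation as \cref{eq:least-eigenvalue-vector-overlap}. Hence all real columns of $\mathbf R$ in \cref{eq:off-line-real-indefinite-factorization} are annihilated by $\mathbf e^{\mathrm{off}}$, so $\mathbf S\mathbf e^{\mathrm{off}}=0$. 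Kernel dimension one then follows from \cref{prop:off-line-rank-nullity}.

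\textbf{Part (ii): inertia.} Nonsingularity and inertia of $\mathbf G^{\mathrm{off},0}$ are obtained as follows.
\begin{enumerate}[label=\textup{(\alph*)}]
  \item Since $\boldsymbol\delta^{\mathsf T}\mathbf e^{\mathrm{off}}=1\neq0$, we have $\mathbb R^{4K+1}=\mathcal V^0\oplus\operatorname{span}\{\mathbf e^{\mathrm{off}}\}$.
  \item The form $\mathbf S$ vanishes on the second summand and on cross terms with it.
  \item Therefore $\mathbf S$ restricted to $\mathcal V^0$ is congruent to $\mathbf S$ on the quotient by its radical, which is nondegenerate.
  \item By \cref{eq:off-line-inertia}, this nondegenerate form has inertia $(2K,2K)$, giving inertia $(2K,2K,0)$ for $\mathbf G^{\mathrm{off},0}$.
\end{enumerate}
Symmetry of $\mathbf K^{\mathrm{off},0}$ follows as in \cref{thm:scale-invariant-quotient}(i). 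The displacement identity \cref{eq:siq-displacement-identity} holds for this $\mathbf S$ because it is an entrywise transform-side sum of divided-difference atoms, and that identity is purely algebraic.

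\textbf{Part (ii): characteristic polynomial.} Work in the difference basis $\mathbf q_n$ and define
\[
  (\mathbf A^{\mathrm{off}})_{mn}:=\nu_{n,L}\bigl(\delta_{mn}-c^{\mathrm{off}}_m\bigr),\qquad m,n\in I_{2K}^{\times}.
\]
Then $\mathbf D\mathbf q_n-\sum_m\mathbf A_{mn}\mathbf q_m=\nu_{n,L}L^{-1/2}\mathbf e^{\mathrm{off}}$, which lies in the kernel, so $\mathbf G^\Delta\mathbf A=\mathbf K^\Delta$. The matrix determinant lemma then gives $\det(w\mathbf I-\mathbf A)=\mathsf Q_K^{\mathrm{off}}(w)$ by the same residue computation as before. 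Since $\mathbf G^\Delta$ is invertible,
\[
  \det(w\mathbf G-\mathbf K)=\det\mathbf G\cdot\det(w\mathbf I-\mathbf A).
\]
Congruence to orthonormal contrast coordinates multiplies both determinants by the same squared factor, and the spectrum \cref{eq:off-line-quotient-spectrum-exact} follows.

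\textbf{Part (iii).} Use the parity block form \cref{eq:siq-parity-pencil-block}. Here $\mathbf G^{\pm}$ are invertible (possibly indefinite), so Cholesky is replaced by the operator
\[
  \mathbf A=\mathbf G^{-1}\mathbf K=\begin{pmatrix}0&(\mathbf G^+)^{-1}\mathbf B^*\\ (\mathbf G^-)^{-1}\mathbf B&0\end{pmatrix}.
\]
Its square is block diagonal, and the even block $\mathbf M^+:=(\mathbf G^+)^{-1}\mathbf B^*(\mathbf G^-)^{-1}\mathbf B$ is the even-parity square. Because $\mathbf A$ anticommutes with the reflection and has spectrum $\mathcal Z_K^{\mathrm{off}}$ (simple, since the points are distinct), $\mathbf A^2$ has eigenvalues $z_k^2,\overline{z_k}^2$, each with multiplicity two. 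The vectors
\[
  \mathbf r(w)\mp\mathbf r(-w),\qquad \mathbf r(w)_n=e^{\mathrm{off}}_n/(\nu_{n,L}-w),
\]
split each such pair into one even and one odd eigenvector, exactly as in \cref{eq:zero-side-parity-intertwining}. This gives \cref{eq:off-line-even-square-spectrum}. Since $\eta_k\neq0$ and $\gamma_k>0$, $z_k^2$ is nonreal, so $\mathbf M^+$ is not a positive operator. I expect this last step to be the main subtlety: one must check that $\mathbf r(w)\neq\mathbf r(-w)$ up to parity, which holds because $\mathbf r(w)$ has distinct poles, and that the even square is interpreted as $\mathbf M^+$ rather than a Cholesky-whitened matrix, which does not exist here.
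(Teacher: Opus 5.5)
Your proposal is correct and follows the paper's proof. Both use the partial-fraction null vector, the difference-basis matrix $\nu_{n,L}(\delta_{mn}-c_{m})$ with $\mathbf G^{\Delta}\mathbf A=\mathbf K^{\Delta}$ and the matrix determinant lemma, inertia via transversality of the contrast space to the one-dimensional kernel, and the parity eigenvectors $\mathbf r(w)\mp\mathbf r(-w)$ for the even square. You go further than the paper's one-line ``reflection gives the even-square statement'' by reading the even square as the unwhitened block $(\mathbf G^{+})^{-1}\mathbf B^{*}(\mathbf G^{-})^{-1}\mathbf B$, since Cholesky whitening is unavailable for indefinite $\mathbf G^{\pm}$. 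The nonvanishing of $\mathbf r(w)\mp\mathbf r(-w)$ is better justified by noting that $\mathbf r(\pm w)$ are eigenvectors for the distinct eigenvalues $\pm w$, rather than by ``distinct poles''.
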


\begin{proof}
The proof repeats the partial-fraction argument of
\cref{thm:exact-zero-side-reconstruction} with
$\mathsf Q_N$ replaced by $\mathsf Q_K^{\mathrm{off}}$.  In the difference
basis, the explicit quotient matrix is
\begin{equation}\label{eq:off-line-explicit-quotient-matrix}
  (\mathbf A_{2K,L}^{\mathrm{off},0})_{mn}
  =\nu_{n,L}
   \bigl(\delta_{mn}-c_{m,2K,L}^{\mathrm{off}}\bigr),
  \qquad m,n\in I_{2K}^{\times}.
\end{equation}
The null relation gives
$\mathbf G^{\mathrm{off},0}\mathbf A^{\mathrm{off},0}
 =\mathbf K^{\mathrm{off},0}$, while the matrix determinant lemma gives
\[
  \det(w\mathbf I_{4K}-\mathbf A_{2K,L}^{\mathrm{off},0})
  =\mathsf Q_K^{\mathrm{off}}(w).
\]
This proves the generalized spectrum.  The inertia statement follows from
\cref{eq:off-line-inertia} because the contrast space is transverse to the
one-dimensional nullspace.  Reflection gives the even-square statement.
\end{proof}

Equivalently, the characteristic polynomial of the even-parity square is
\begin{equation}\label{eq:off-line-even-square-characteristic-polynomial}
  \prod_{k=1}^{K}
  \left[
    u^2-2(\gamma_k^2-\eta_k^2)u
       +(\gamma_k^2+\eta_k^2)^2
  \right].
\end{equation}

\subsection{Why the radial spectrum does not occur}

The quotient spectrum is not
$\{\pm\sqrt{\gamma_k^2+\eta_k^2}\}$.  The modulus
$|z_k|$ discards the complex phase, whereas the exact quotient retains the
quartic factor in \cref{eq:off-line-Q-real-factor}.  The dimension count also
rules out the radial proposal: the quotient has dimension $4K$, not $2K$.
Only when $\eta_k=0$ do the conjugate points coalesce and the construction
reduce to the critical-line pairs $\{\pm\gamma_k\}$.

The null quotient in \cref{thm:exact-off-line-reconstruction} is an exact
interpolation problem with an indefinite metric.  Shifting the algebraically
smallest eigenvalue instead creates a positive metric and therefore a real
spectrum.  The two pencils answer different questions; the positive shifted
pencil is treated in \cref{sec:off-critical-shifted-ground-state}.

\subsection{Symbolic and numerical check for a five-dimensional model}
\label{subsec:off-line-N2-symbolic}

Let $K=1$, $N=2$, $z_0=\gamma+i\eta$, and
$\alpha=2\pi/L$.  Then
\begin{align}\label{eq:off-line-N2-PQ}
  \mathsf P_{2,L}(w)
  &=w(w^2-\alpha^2)(w^2-4\alpha^2),
  \\
  \mathsf Q_1^{\mathrm{off}}(w)
  &=w^4-2(\gamma^2-\eta^2)w^2
    +(\gamma^2+\eta^2)^2.
\end{align}
The null vector is
\begin{equation}\label{eq:off-line-N2-null-vector}
  \mathbf e_{2,L}^{\mathrm{off}}
  =\sqrt L(c_2,c_1,c_0,c_1,c_2)^{\mathsf T},
\end{equation}
where
\begin{align}\label{eq:off-line-N2-null-coefficients}
  c_2
  &:=\frac{
     (4\alpha^2-(\gamma^2-\eta^2))^2+4\gamma^2\eta^2}
     {24\alpha^4},
  \\
  c_1
  &:=-\frac{
     (\alpha^2-(\gamma^2-\eta^2))^2+4\gamma^2\eta^2}
     {6\alpha^4},
  \\
  c_0
  &:=\frac{(\gamma^2+\eta^2)^2}{4\alpha^4}.
\end{align}
One has $2c_2+2c_1+c_0=1$.

The quotient matrix is obtained directly from
\cref{eq:off-line-explicit-quotient-matrix}; its characteristic polynomial is
\[
  (w^2-(\gamma+i\eta)^2)(w^2-(\gamma-i\eta)^2).
\]
For $L=3$, $\gamma=5$, and $\eta=0.2$,
\begin{align}\label{eq:off-line-N2-numerical-spectra}
  \operatorname{spec}\mathbf S_{2,3}^{(T,\mathrm{off})}
  \approx{}&
  \{-0.0067786872,-0.0009888654,0,
     1.0155972361,1.7057342987\},
  \\
  \operatorname{spec}
  (\mathbf K_{2,3}^{\mathrm{off},0},
   \mathbf G_{2,3}^{\mathrm{off},0})
  ={}&\{-5-0.2i,-5+0.2i,5-0.2i,5+0.2i\}.
\end{align}
The first line exhibits indefiniteness; the second confirms the exact quartet
without introducing an auxiliary zero eigenvalue.

\section{Operator realization of the transform-side Weil kernel}
\label{sec:hilbert-polya-operator}

The matrix $\mathbf H_L^G(z)$ in \cref{eq:Gmn-transform} is the
transform-side contribution of one spectral parameter to the Weil matrix.  It
is useful to identify the operator represented by these matrix elements and
to pass, with $L$ fixed, from the truncated Fourier space to the full periodic
Fourier space.  This operator should not be confused with the scale-invariant
contrast pencil of \cref{thm:scale-invariant-quotient}.  The latter is the
finite Hilbert--P\'olya candidate; $\mathbf H_L^G(z)$ is a rank-two atom
entering its Weil metric.

\subsection{The canonical finite-rank operator}

Let
\[
  \mathbb T_L:=\mathbb R/L\mathbb Z,
  \qquad
  \langle f,g\rangle_L
  :=\int_0^L\overline{f(x)}g(x)\dd x,
\]
and put
\begin{equation}\label{eq:operator-realization-Fourier-basis}
  \psi_{n,L}(x):=L^{-1/2}e^{i\nu_{n,L}x},
  \qquad
  \nu_{n,L}:=\frac{2\pi n}{L}.
\end{equation}
Then $\{\psi_{n,L}:n\in\mathbb Z\}$ is the standard orthonormal Fourier
basis of $L^2(\mathbb T_L)$.  For $N\in\mathbb N_0$, define
\begin{equation}\label{eq:operator-realization-VNL}
  \mathcal V_{N,L}
  :=\operatorname{span}\{\psi_{n,L}:n\in I_N\},
  \qquad I_N:=\{-N,\ldots,N\}.
\end{equation}
The finite matrix $\mathbf H_L^G(z)$ determines a unique operator on
$\mathcal V_{N,L}$ by
\begin{equation}\label{eq:operator-realization-definition}
  \mathsf H_{N,L}^G(z)
  :=\sum_{m,n\in I_N}
     H_{mn,L}^G(z)
     |\psi_{m,L}\rangle\langle\psi_{n,L}|.
\end{equation}
When it is regarded as an operator on all of $L^2(\mathbb T_L)$, we use the
canonical zero extension on $\mathcal V_{N,L}^{\perp}$.  Without specifying
such an extension, finitely many matrix elements do not determine a unique
operator on the full Hilbert space.

\begin{proposition}[Adjoint, positivity, and rank]
\label{prop:operator-realization-basic-properties}
For every $z\in\mathbb C$, the operator-valued function
$z\mapsto\mathsf H_{N,L}^G(z)$ is entire and
\begin{equation}\label{eq:operator-realization-adjoint}
  \bigl(\mathsf H_{N,L}^G(z)\bigr)^*
  =\mathsf H_{N,L}^G(\overline z).
\end{equation}
For $t\in\mathbb R$, $\mathsf H_{N,L}^G(t)$ is positive semidefinite and
has rank at most two.  For arbitrary $z$, the paired operator
\begin{equation}\label{eq:operator-realization-paired-definition}
  \mathsf K_{N,L}^G(z)
  :=\mathsf H_{N,L}^G(z)+\mathsf H_{N,L}^G(\overline z)
  =\mathsf H_{N,L}^G(z)+\bigl(\mathsf H_{N,L}^G(z)\bigr)^*
\end{equation}
is self-adjoint, is represented by a real-symmetric matrix in the Fourier
basis, and has rank at most four.  It need not be positive semidefinite when
$z\notin\mathbb R$.
\end{proposition}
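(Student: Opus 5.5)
The plan is to work at the level of matrices in the orthonormal Fourier basis. The map $\mathbf M\mapsto\sum_{m,n}M_{mn}|\psi_{m,L}\rangle\langle\psi_{n,L}|$, followed by zero extension, is a $*$-isomorphism from $M\times M$ matrices onto operators on $L^2(\mathbb T_L)$ that vanish on $\mathcal V_{N,L}^{\perp}$ and take values in $\mathcal V_{N,L}$. It preserves adjoints, positivity and rank. Every claim therefore reduces to a statement about the matrix $\mathbf H_L^G(z)$.

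\emph{Entireness.} By \cref{eq:Gmn-transform}, each entry $H_{mn,L}^G(z)$ is a Fourier transform of the compactly supported continuous kernel $G_{mn,L}$. It is therefore entire, and all apparent poles are removable. A finite sum of entire scalar functions times fixed rank-one operators is entire in operator norm.

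\emph{Adjoint relation.} Since $G_{mn,L}$ is real-valued, $\overline{H_{mn,L}^G(z)}=H_{mn,L}^G(\overline z)$. Since $G_{mn,L}=G_{nm,L}$, the matrix is symmetric. Hence
$$\bigl(\mathbf H_L^G(z)\bigr)^*=\overline{\mathbf H_L^G(z)}^{\mathsf T}=\mathbf H_L^G(\overline z),$$
and transporting through the isomorphism gives \cref{eq:operator-realization-adjoint}.

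\emph{Real $t$.} Here I invoke the Gram form \cref{eq:Gmn-transform-Gram-form}. The vectors $\mathbf v_L^\pm(t)$ are real, so
$$\mathsf H_{N,L}^G(t)=|w^+\rangle\langle w^+|+|w^-\rangle\langle w^-|,\qquad w^\pm=\sum_n v^\pm_n\psi_{n,L}.$$
This operator is positive semidefinite with rank at most two, matching \cref{eq:Gmn-transform-eigenvalues}.

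\emph{Paired operator.} Self-adjointness of $\mathsf K_{N,L}^G(z)$ follows immediately from the adjoint relation. Its matrix is $\mathbf H_L^G(z)+\overline{\mathbf H_L^G(z)}=2\RePart\mathbf H_L^G(z)$, which is real symmetric.

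\emph{Rank at most four.} I continue the Gram identity analytically. Both sides of
$$\mathbf H_L^G(z)=\mathbf v_L^+(z)\mathbf v_L^+(z)^{\mathsf T}+\mathbf v_L^-(z)\mathbf v_L^-(z)^{\mathsf T}$$
are entire and agree on $\mathbb R$, so they agree everywhere. This is the step needing the most care: the transpose, not the conjugate transpose, appears. Consequently $\mathbf H_L^G(\overline z)=\overline{\mathbf v^+}\,\overline{\mathbf v^+}^{\mathsf T}+\overline{\mathbf v^-}\,\overline{\mathbf v^-}^{\mathsf T}$, and the range of $\mathsf K_{N,L}^G(z)$ lies in the span of the four vectors $\mathbf v^\pm(z)$ and $\overline{\mathbf v^\pm(z)}$. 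Equivalently, it lies in the span of the real vectors $\mathbf a^\pm$ and $\mathbf b^\pm$ in the real factorization \cref{eq:off-line-real-indefinite-factorization}, giving rank at most four.

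\emph{Failure of positivity.} No new argument is needed: the quadratic form \cref{eq:off-line-indefinite-quadratic-form} is a real part of squares. The explicit indefinite spectrum in \cref{eq:off-line-N2-numerical-spectra}, whose matrix is a positive multiple of a sum of such paired operators, already exhibits a negative eigenvalue, so positivity can fail.
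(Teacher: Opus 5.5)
Your proof is correct and follows the paper's outline: entrywise entireness, conjugation symmetry together with $H_{mn,L}^G=H_{nm,L}^G$ for the adjoint identity, the Gram form for real $t$, and the off-line quadratic form for indefiniteness. It is also more complete than the paper on the rank-four claim. The paper derives every property of $\mathsf K_{N,L}^G(z)$ from the adjoint identity, which by itself does not bound $\operatorname{rank}\mathsf H_{N,L}^G(z)$ for nonreal $z$. Your bilinear (transpose, not conjugate-transpose) continuation of \cref{eq:Gmn-transform-Gram-form} supplies exactly that bound.

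One correction: realness of $G_{mn,L}$ alone gives only $\overline{H_{mn,L}^G(z)}=H_{mn,L}^G(-\overline z)$. To get $\overline{H_{mn,L}^G(z)}=H_{mn,L}^G(\overline z)$ you must also use that $G_{mn,L}$ is even, or equivalently that the closed form \cref{eq:Gmn-transform} has real coefficients.

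One optional improvement: the numerical witness for indefiniteness can be replaced by the exact inertia $(2,2,1)$ from \cref{prop:off-line-rank-nullity} with $K=1$, $N=2$. This applies because $\mathbf S_{2,L}^{(T,\mathrm{off})}$ represents $2\mathsf K_{2,L}^G(z_1)$ in the Fourier basis.
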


\begin{proof}
Each entry in \cref{eq:Gmn-transform} is entire after the removable
singularities are filled in, and it satisfies
$\overline{H_{mn,L}^G(z)}=H_{mn,L}^G(\overline z)$ and
$H_{mn,L}^G(z)=H_{nm,L}^G(z)$.  These identities give
\cref{eq:operator-realization-adjoint}.  For real $t$, the Gram decomposition
\cref{eq:Gmn-transform-Gram-form} proves positivity and the rank bound.  The
claims for $\mathsf K_{N,L}^G(z)$ follow from
\cref{eq:operator-realization-adjoint}; loss of positivity for a nonreal
parameter is the operator form of
\cref{eq:off-line-indefinite-quadratic-form}.
\end{proof}

\subsection{Position-space kernel and resolvent factorization}

For $z\notin\{\nu_{n,L}:n\in I_N\}$, set
\begin{equation}\label{eq:operator-realization-FN-definition}
  F_{N,L}(x;z)
  :=\sum_{n=-N}^{N}
    \frac{e^{i\nu_{n,L}x}}{z^2-\nu_{n,L}^2},
  \qquad
  \mathfrak c_L(z):=\frac{4\sin^2(Lz/2)}{L^3}.
\end{equation}
Because the index set is symmetric,
$F_{N,L}(-x;z)=F_{N,L}(x;z)$ on $\mathbb T_L$.  The factorization below has
apparent poles at the Fourier nodes, although the operator itself is entire;
at such nodes it is understood by continuous extension in $z$.

\begin{proposition}[Integral kernel]
\label{prop:operator-realization-position-kernel}
The position-space kernel of $\mathsf H_{N,L}^G(z)$ is
\begin{align}\label{eq:operator-realization-position-kernel}
  \mathcal K_{N,L}^G(x,y;z)
  &:=\sum_{m,n\in I_N}
       \psi_{m,L}(x)H_{mn,L}^G(z)
       \overline{\psi_{n,L}(y)}
  \notag\\
  &=\mathfrak c_L(z)\left[
       z^2F_{N,L}(x;z)F_{N,L}(y;z)
       +\partial_xF_{N,L}(x;z)\partial_yF_{N,L}(y;z)
     \right].
\end{align}
Moreover,
\begin{equation}\label{eq:operator-realization-FN-equation}
  (-\partial_x^2-z^2)F_{N,L}(x;z)
  =-D_{N,L}(x),
\end{equation}
where
\begin{equation}\label{eq:operator-realization-Dirichlet-kernel}
  D_{N,L}(x)
  :=\sum_{n=-N}^{N}e^{i\nu_{n,L}x}
  =\frac{\sin((N+\tfrac12)2\pi x/L)}{\sin(\pi x/L)}
\end{equation}
is the periodic Dirichlet kernel, with its values at $x\in L\mathbb Z$
understood by continuity.
\end{proposition}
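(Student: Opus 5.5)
The plan is to work from the partial-fraction form of the transform in \cref{eq:Gmn-transform}. I will write
\[
  H_{mn,L}^G(z)=\frac{2\sin^2(Lz/2)}{L^2}\Bigl[\frac{1}{(z+\nu_m)(z+\nu_n)}+\frac{1}{(z-\nu_m)(z-\nu_n)}\Bigr],
\]
and then recombine the two terms so that the factors $z^2-\nu_m^2$ and $z^2-\nu_n^2$ appear separately. This recombination is exactly the last line of \cref{eq:Gmn-transform}:
\[
  \frac{2(z^2+\nu_m\nu_n)}{(z^2-\nu_m^2)(z^2-\nu_n^2)}.
\]
The numerator splits as $z^2\cdot 1+\nu_m\nu_n$, so the bracket is a sum of two separable rank-one pieces. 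The first is $z^2$ times the product $\frac{1}{z^2-\nu_m^2}\cdot\frac{1}{z^2-\nu_n^2}$. The second is the product $\frac{\nu_m}{z^2-\nu_m^2}\cdot\frac{\nu_n}{z^2-\nu_n^2}$.

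Next I insert this into the defining sum $\sum_{m,n}\psi_m(x)H_{mn}\overline{\psi_n(y)}$, using $\psi_n(x)=L^{-1/2}e^{i\nu_nx}$. The first piece factorizes as
\[
  L^{-1}z^2F_{N,L}(x;z)\,\overline{F_{N,L}}(y),
\]
where $\overline{F}(y)$ means $\sum_n e^{-i\nu_ny}/(z^2-\nu_n^2)$ with $z$ not conjugated. Reindexing $n\mapsto-n$ over the symmetric set $I_N$ turns this sum into $F_{N,L}(y;z)$. For the second piece, the $x$-factor $\sum_m \nu_m e^{i\nu_mx}/(z^2-\nu_m^2)$ equals $-i\,\partial_xF_{N,L}$. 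The $y$-factor $\sum_n\nu_n e^{-i\nu_ny}/(z^2-\nu_n^2)$ equals $i\,\partial_yF_{N,L}(y;z)$, since $\partial_y e^{-i\nu_n y}=-i\nu_ne^{-i\nu_ny}$. Their product is $(-i)(i)\,\partial_xF\,\partial_yF=\partial_xF\,\partial_yF$. Collecting the prefactor, $\frac{2\sin^2}{L^2}\cdot 2\cdot L^{-1}=\mathfrak c_L(z)$, which gives \cref{eq:operator-realization-position-kernel}.

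For \cref{eq:operator-realization-FN-equation} I differentiate termwise. Applying $-\partial_x^2-z^2$ to $e^{i\nu_nx}$ produces the factor $(\nu_n^2-z^2)$, which cancels the denominator and leaves $-e^{i\nu_nx}$. Summing over $n$ gives $-D_{N,L}(x)$. The closed form of the Dirichlet kernel follows from the usual geometric sum with ratio $e^{2\pi ix/L}$.

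At the nodes $z=\pm\nu_n$, and at $z=0$ where $\nu_0=0$ forces $z^2-\nu_0^2=z^2$, the kernel identity is first proved off the nodes. It then extends to the nodes by continuity, because both sides are restrictions of entire functions of $z$; the zero of $\sin^2$ cancels the poles. The only point needing care is this removable-singularity bookkeeping and the sign conventions in the derivative factors. Everything else is routine algebra.
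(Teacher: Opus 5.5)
Your proposal is correct and follows the paper's own argument: insert the last line of \eqref{eq:Gmn-transform} into the Fourier reconstruction, split the numerator $z^2+\nu_{m,L}\nu_{n,L}$ into the two separable pieces, and use the evenness of $F_{N,L}$ (which you carry out as a reindexing $n\mapsto-n$). The kernel equation then follows by termwise differentiation. Your signs $(-i)(i)=1$ in the derivative factors and the prefactor $4\sin^2(Lz/2)/L^3=\mathfrak c_L(z)$ check out. Your removable-singularity remark matches the paper's convention of continuous extension in $z$ at the Fourier nodes.
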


\begin{proof}
Insert \cref{eq:Gmn-transform} into the Fourier reconstruction in the first
line of \cref{eq:operator-realization-position-kernel}.  The $z^2$ term
factors through $F_{N,L}(x;z)F_{N,L}(-y;z)$, while the
$\nu_{m,L}\nu_{n,L}$ term factors through the two derivatives.  Evenness of
$F_{N,L}$ gives the displayed formula.  Applying
$-\partial_x^2-z^2$ termwise in
\cref{eq:operator-realization-FN-definition} gives
\cref{eq:operator-realization-FN-equation}.
\end{proof}

For a rank-one operator we use the standard convention
\[
  |u\rangle\langle v|f:=u\,\langle v,f\rangle_L.
\]
If $t\in\mathbb R$, then $F_{N,L}(\cdot;t)$ and its derivative are real,
and \cref{eq:operator-realization-position-kernel} becomes
\begin{equation}\label{eq:operator-realization-real-rank-two}
  \mathsf H_{N,L}^G(t)
  =\mathfrak c_L(t)\left[
      t^2|F_{N,L}(\cdot;t)\rangle
          \langle F_{N,L}(\cdot;t)|
      +|\partial_xF_{N,L}(\cdot;t)\rangle
          \langle\partial_xF_{N,L}(\cdot;t)|
    \right].
\end{equation}
The first function is even and the second is odd with respect to reflection
about $L/2$, so they are orthogonal.  Hence the two possibly nonzero
eigenvalues are
\begin{align}\label{eq:operator-realization-finite-eigenvalues}
  \mu_{N,L}^{\mathrm e}(t)
  &=\frac{4t^2\sin^2(Lt/2)}{L^2}
    \sum_{n=-N}^{N}\frac{1}{(t^2-\nu_{n,L}^2)^2},
  \\
  \mu_{N,L}^{\mathrm o}(t)
  &=\frac{4\sin^2(Lt/2)}{L^2}
    \sum_{n=-N}^{N}
       \frac{\nu_{n,L}^2}{(t^2-\nu_{n,L}^2)^2}.
\end{align}
The corresponding eigenfunctions are the normalized versions of
$F_{N,L}(\cdot;t)$ and $\partial_xF_{N,L}(\cdot;t)$ whenever these functions
are nonzero.  Formula \cref{eq:operator-realization-finite-eigenvalues} is
again interpreted by continuity at the Fourier nodes.

\begin{remark}[Nonlocality and the meaning of reconstruction]
\label{rem:operator-realization-nonlocality}
The zero extension of $\mathsf H_{N,L}^G(t)$ is a nonzero finite-rank integral
operator and therefore is not a finite-order local differential operator on
$L^2(\mathbb T_L)$.  The factor $F_{N,L}$ is a truncated resolvent vector for
$-\partial_x^2-t^2$, forced by the Dirichlet kernel.  Thus
\cref{eq:operator-realization-position-kernel} reconstructs the canonical
operator represented by the matrix atom; it does not reconstruct the
quotient Hilbert--P\'olya operator from the Weil matrix.
\end{remark}

\subsection{Fixed-length Fourier completion}

We next let $N\to\infty$ while $L$ and $z$ remain fixed.  This is a Fourier
completion on one circle.  It is not the diagonal limit $L=N\to\infty$ used
in \cref{sec:least-arithmetic-Weil-eigenvalue}, and it does not by itself
construct a limiting Hilbert--P\'olya operator.

On the circle $\mathbb T_L$ one has
\begin{equation}\label{eq:operator-realization-Dirichlet-distribution-limit}
  D_{N,L}\longrightarrow L\delta_0
  \qquad\text{in }\mathcal D'(\mathbb T_L).
\end{equation}
Also, for
$z\notin(2\pi/L)\mathbb Z$, the functions in
\cref{eq:operator-realization-FN-definition} converge in
$H^1(\mathbb T_L)$ to the periodic Green function
\begin{equation}\label{eq:operator-realization-Finfty}
  F_{\infty,L}(x;z)
  =\frac{L}{2z}
   \frac{\cos(z(L/2-x))}{\sin(Lz/2)},
  \qquad 0<x<L,
\end{equation}
extended periodically.  It satisfies
\begin{equation}\label{eq:operator-realization-Finfty-equation}
  (-\partial_x^2-z^2)F_{\infty,L}(x;z)
  =-L\delta_0
  \qquad\text{in }\mathcal D'(\mathbb T_L).
\end{equation}
Indeed, the Fourier coefficients are
$(z^2-\nu_{n,L}^2)^{-1}$, and the coefficient tail is summable in the
$H^1$ norm.  Consequently,
\begin{equation}\label{eq:operator-realization-operator-limit}
  \mathsf H_{N,L}^G(z)
  \longrightarrow\mathsf H_{\infty,L}^G(z)
  \qquad\text{in operator norm},
\end{equation}
where the limiting kernel is obtained from
\cref{eq:operator-realization-position-kernel} by replacing $F_{N,L}$ with
$F_{\infty,L}$.  The same conclusion holds in Hilbert--Schmidt norm.

For $t>0$ real, direct integration gives
\begin{align}\label{eq:operator-realization-Finfty-norms}
  \norm{F_{\infty,L}(\cdot;t)}_2^2
  &=\frac{L^2}{8t^3\sin^2(Lt/2)}
    \bigl(tL+\sin(Lt)\bigr),
  \\
  \norm{\partial_xF_{\infty,L}(\cdot;t)}_2^2
  &=\frac{L^2}{8t\sin^2(Lt/2)}
    \bigl(tL-\sin(Lt)\bigr).
\end{align}
Therefore $\mathsf H_{\infty,L}^G(t)$ has the two possibly nonzero
eigenvalues
\begin{equation}\label{eq:operator-realization-Finfty-eigenvalues}
  \mu_{\mathrm e}(t)
  =\frac12\left(1+\frac{\sin(Lt)}{Lt}\right),
  \qquad
  \mu_{\mathrm o}(t)
  =\frac12\left(1-\frac{\sin(Lt)}{Lt}\right),
\end{equation}
and all remaining eigenvalues are zero.  In particular,
\begin{equation}\label{eq:operator-realization-Finfty-trace}
  \operatorname{tr}\mathsf H_{\infty,L}^G(t)=1.
\end{equation}
For $t>0$ away from a removable degeneracy, unit eigenfunctions are
\begin{align}\label{eq:operator-realization-Finfty-eigenfunctions}
  \phi_{\mathrm e,t}(x)
  &:=\left(\frac{2t}{tL+\sin(Lt)}\right)^{1/2}
      \cos(t(L/2-x)),
  \\
  \phi_{\mathrm o,t}(x)
  &:=\left(\frac{2t}{tL-\sin(Lt)}\right)^{1/2}
      \sin(t(L/2-x)).
\end{align}
The formulas extend continuously to the nonzero Fourier nodes.  As $t\to0$,
the even eigenvalue tends to $1$ and the odd eigenvalue tends to $0$.

\subsection{Paired complex parameters and the correct spectral reduction}

Let $z\in\mathbb C\setminus(2\pi/L)\mathbb Z$, and abbreviate
\[
  E_z:=F_{\infty,L}(\cdot;z),
  \qquad
  O_z:=\partial_xF_{\infty,L}(\cdot;z),
  \qquad
  a_{\mathrm e}(z):=\mathfrak c_L(z)z^2,
  \qquad
  a_{\mathrm o}(z):=\mathfrak c_L(z).
\]
Since $E_{\overline z}=\overline{E_z}$ and
$O_{\overline z}=\overline{O_z}$, the non-self-adjoint atom has the standard
Hilbert-space representation
\begin{equation}\label{eq:operator-realization-complex-rank-two}
  \mathsf H_{\infty,L}^G(z)
  =a_{\mathrm e}(z)|E_z\rangle\langle E_{\overline z}|
   +a_{\mathrm o}(z)|O_z\rangle\langle O_{\overline z}|.
\end{equation}
The use of $\langle E_{\overline z}|$, rather than
$\langle E_z|$, is essential: the kernel in
\cref{eq:operator-realization-position-kernel} is bilinear in the two copies
of $F_{\infty,L}(\cdot;z)$, not sesquilinear.  Formula
\cref{eq:operator-realization-complex-rank-two} also makes
$(\mathsf H_{\infty,L}^G(z))^*=\mathsf H_{\infty,L}^G(\overline z)$
transparent.

The even and odd functions are mutually orthogonal.  Hence the paired
self-adjoint operator
\begin{equation}\label{eq:operator-realization-complex-pair}
  \mathsf K_{\infty,L}^G(z)
  :=\mathsf H_{\infty,L}^G(z)
    +\mathsf H_{\infty,L}^G(\overline z)
\end{equation}
reduces to one two-dimensional problem in
$\operatorname{span}\{E_z,E_{\overline z}\}$ and one in
$\operatorname{span}\{O_z,O_{\overline z}\}$.  It is therefore unnecessary
to solve an undifferentiated quartic problem.

\begin{theorem}[Two quadratic spectral problems]
\label{thm:operator-realization-complex-pair-spectrum}
For $p\in\{\mathrm e,\mathrm o\}$, put
\[
  u_{\mathrm e}:=E_z,
  \qquad
  u_{\mathrm o}:=O_z,
  \qquad
  \alpha_p:=\norm{u_p}_2^2,
  \qquad
  \beta_p:=\langle u_p,\overline{u_p}\rangle_L,
\]
and define
\begin{equation}\label{eq:operator-realization-complex-Delta-r}
  \Delta_p:=\alpha_p^2-|\beta_p|^2\geq0,
  \qquad
  r_p:=\RePart\!\left(a_p(z)\overline{\beta_p}\right).
\end{equation}
The two eigenvalues contributed by the parity-$p$ subspace are
\begin{equation}\label{eq:operator-realization-complex-pair-eigenvalues}
  \kappa_{p,\pm}(z)
  =r_p\mathbin{\pm}
   \sqrt{r_p^2+|a_p(z)|^2\Delta_p}.
\end{equation}
Thus
\begin{equation}\label{eq:operator-realization-complex-pair-sum-product}
  \kappa_{p,+}+\kappa_{p,-}=2r_p,
  \qquad
  \kappa_{p,+}\kappa_{p,-}=-|a_p(z)|^2\Delta_p.
\end{equation}
If $\Delta_p>0$, the parity-$p$ block has one positive and one negative
eigenvalue.
\end{theorem}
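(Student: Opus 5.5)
Restrict the paired operator to the parity-$p$ subspace $\mathcal U_p:=\operatorname{span}\{u_p,\overline{u_p}\}$ and compute a $2\times2$ matrix. By \cref{eq:operator-realization-complex-rank-two}, and since $E_{\overline z}=\overline{E_z}$ and $O_{\overline z}=\overline{O_z}$, the parity-$p$ part of $\mathsf K_{\infty,L}^G(z)$ is
\[
  \mathsf K_p=a_p(z)\,|u_p\rangle\langle\overline{u_p}|
   +\overline{a_p(z)}\,|\overline{u_p}\rangle\langle u_p|,
\]
using $a_p(\overline z)=\overline{a_p(z)}$, which holds because $\mathfrak c_L$ is real on the real axis. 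The even and odd functions are orthogonal, and each even function is orthogonal to each odd one by reflection about $L/2$. Hence $\mathsf K_{\infty,L}^G(z)$ is the orthogonal direct sum of $\mathsf K_{\mathrm e}$ and $\mathsf K_{\mathrm o}$, and is zero on the complement of $\mathcal U_{\mathrm e}\oplus\mathcal U_{\mathrm o}$. It therefore suffices to find the two eigenvalues of each self-adjoint rank-$\le2$ operator $\mathsf K_p$.

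Since $\mathsf K_p$ has rank at most two, its nonzero eigenvalues $\kappa$ satisfy $\det(\kappa I_2-\mathbf M_p)=0$, where $\mathbf M_p$ is the matrix of $\mathsf K_p$ acting on $\mathcal U_p$ in the (possibly nonorthogonal) spanning pair $(u_p,\overline{u_p})$. We have
\[
  \mathsf K_p u_p=a_p\beta_p'\,u_p+\overline{a_p}\alpha_p\,\overline{u_p},
  \qquad
  \mathsf K_p\overline{u_p}=a_p\alpha_p\,u_p+\overline{a_p}\,\overline{\beta_p'}\,\overline{u_p},
\]
where $\beta_p'=\langle\overline{u_p},u_p\rangle_L=\overline{\beta_p}$. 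So
\[
  \operatorname{tr}\mathbf M_p=a_p\overline{\beta_p}+\overline{a_p}\beta_p=2r_p,
  \qquad
  \det\mathbf M_p=|a_p|^2\bigl(|\beta_p|^2-\alpha_p^2\bigr)=-|a_p|^2\Delta_p.
\]
Solving the quadratic $\kappa^2-2r_p\kappa-|a_p|^2\Delta_p=0$ gives \cref{eq:operator-realization-complex-pair-eigenvalues} and the sum and product relations \cref{eq:operator-realization-complex-pair-sum-product}. $\Delta_p\ge0$ is Cauchy--Schwarz, $|\beta_p|\le\norm{u_p}\norm{\overline{u_p}}=\alpha_p$. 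If $\Delta_p>0$, the product of the two eigenvalues is negative (with $a_p\neq0$, which holds off the Fourier nodes because $\sin(Lz/2)\neq0$), so one eigenvalue is positive and one negative.

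The main obstacle is the degenerate case where $u_p$ and $\overline{u_p}$ are linearly dependent, which happens exactly when $\Delta_p=0$. There the matrix computation in a spanning pair is still valid for the characteristic polynomial of the operator restricted to $\mathcal U_p$ only up to a zero eigenvalue. Two checks handle this: the trace and determinant formulas are continuous, and the rank of $\mathsf K_p$ drops, giving eigenvalues $2r_p$ and $0$, consistent with the formula. I would treat this case by continuity in $z$ or by a direct one-dimensional computation.
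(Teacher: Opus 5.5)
Your proposal is correct and follows the paper's proof: your matrix $\mathbf M_p$ in the spanning pair $(u_p,\overline{u_p})$ is exactly the paper's $C_pG_p$ with $G_p=U_p^*U_p$, and its trace $2r_p$ and determinant $-|a_p(z)|^2\Delta_p$ give the quadratic, with Cauchy--Schwarz supplying $\Delta_p\ge0$. Your separate treatment of the degenerate case $\Delta_p=0$ is valid but not needed, because the paper's observation that the nonzero eigenvalues of $U_pC_pU_p^*$ coincide with those of $C_pU_p^*U_p$ holds whether or not $u_p$ and $\overline{u_p}$ are linearly independent.
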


\begin{proof}
Let $U_p:\mathbb C^2\to L^2(\mathbb T_L)$ have columns
$u_p,\overline{u_p}$.  On its range the parity-$p$ part of
\cref{eq:operator-realization-complex-pair} is
$U_pC_pU_p^*$, where
\[
  C_p=
  \begin{pmatrix}
    0&a_p(z)\\
    \overline{a_p(z)}&0
  \end{pmatrix},
  \qquad
  G_p:=U_p^*U_p=
  \begin{pmatrix}
    \alpha_p&\beta_p\\
    \overline{\beta_p}&\alpha_p
  \end{pmatrix}.
\]
Its nonzero eigenvalues are the eigenvalues of $C_pG_p$.  The characteristic
polynomial is
\[
  \kappa^2-2r_p\kappa-|a_p(z)|^2\Delta_p,
\]
which proves \cref{eq:operator-realization-complex-pair-eigenvalues}.
Cauchy--Schwarz gives $\Delta_p\geq0$.
\end{proof}

The same two-block reduction holds at finite $N$ after making the
replacements
\[
  E_z\mapsto F_{N,L}(\cdot;z),
  \qquad
  O_z\mapsto\partial_xF_{N,L}(\cdot;z).
\]
The four Gram quantities are then finite Fourier sums.

All quantities in this theorem are explicit.  Put
\begin{equation}\label{eq:operator-realization-Sigma-pm}
  \Sigma_{\pm}(z)
  :=\frac{\sin(\tfrac L2(z-\overline z))}{z-\overline z}
    \mathbin{\pm}
    \frac{\sin(\tfrac L2(z+\overline z))}{z+\overline z},
\end{equation}
with the quotients interpreted continuously when a denominator vanishes.
Then
\begin{align}\label{eq:operator-realization-complex-alpha}
  \alpha_{\mathrm e}
  &=\frac{L^2}{4|z\sin(Lz/2)|^2}\Sigma_+(z),
  \\
  \alpha_{\mathrm o}
  &=\frac{L^2}{4|\sin(Lz/2)|^2}\Sigma_-(z),
\end{align}
while
\begin{align}\label{eq:operator-realization-complex-beta}
  \beta_{\mathrm e}
  &=\overline{
      \frac{L^2(zL+\sin(Lz))}
           {8z^3\sin^2(Lz/2)}},
  \\
  \beta_{\mathrm o}
  &=\overline{
      \frac{L^2(zL-\sin(Lz))}
           {8z\sin^2(Lz/2)}}.
\end{align}
For $z=\gamma+i\eta$ with $\gamma,\eta>0$,
\begin{equation}\label{eq:operator-realization-Sigma-offline}
  \Sigma_{\pm}(z)
  =\frac{\sinh(\eta L)}{2\eta}
   \mathbin{\pm}\frac{\sin(\gamma L)}{2\gamma}.
\end{equation}

The two right eigenvalues of the unpaired, generally nonnormal operator
$\mathsf H_{\infty,L}^G(z)$ are
\begin{equation}\label{eq:operator-realization-complex-unpaired-eigenvalues}
  \theta_{\mathrm e}(z)
  =\frac12\left(1+\frac{\sin(Lz)}{Lz}\right),
  \qquad
  \theta_{\mathrm o}(z)
  =\frac12\left(1-\frac{\sin(Lz)}{Lz}\right),
\end{equation}
with right eigenfunctions $E_z$ and $O_z$.  Since
$\overline{\beta_p}=\int_0^L u_p(x)^2\dd x$, one has
$r_p=\RePart\theta_p(z)$.  Nevertheless, the eigenvalues in
\cref{eq:operator-realization-complex-pair-eigenvalues} are not obtained by
simply replacing $\theta_p(z)$ with
$\theta_p(z)+\overline{\theta_p(z)}$.  Only their sum equals
$2\RePart\theta_p(z)$; the additional Gram determinant $\Delta_p$ controls
the splitting.

If $z=\gamma+i\eta$ with $\gamma\eta\neq0$, then $E_z$ is not proportional
to $E_{\overline z}$ and $O_z$ is not proportional to
$O_{\overline z}$.  Hence $\Delta_{\mathrm e},\Delta_{\mathrm o}>0$, and the
paired fixed-$L$ operator has inertia $(2,2)$ and rank four.
The complete quartet contribution in
\cref{eq:zero-side-finite-matrix-definition-2} is twice this paired atom, so
the scalar factor does not change its rank or inertia.  This gives an
operator-level explanation of the indefiniteness of the paired
off-critical-line matrix.  In the real case the two conjugate
parameters coalesce, $\Delta_p=0$, and
$\mathsf K_{\infty,L}^G(t)=2\mathsf H_{\infty,L}^G(t)$ has nonzero
eigenvalues
\begin{equation}\label{eq:operator-realization-real-pair-eigenvalues}
  1+\frac{\sin(Lt)}{Lt},
  \qquad
  1-\frac{\sin(Lt)}{Lt}.
\end{equation}
The purely imaginary case is also exceptional because
$\overline z=-z$ and $\mathbf H_L^G$ is even in $z$; the paired operator then
collapses to twice one rank-two atom and is generally indefinite rather than
positive.

\subsection{Relation to the finite Hilbert--P\'olya construction}

Under RH, a positive zero ordinate $\gamma$ contributes the positive rank-two
atom $\mathsf H_{N,L}^G(\gamma)$ to the zero-side Weil metric.  After the
contributions from all relevant ordinates are summed, the resulting matrix is
compressed to the zero-mean contrast space to form the Hermitian pencil
$(\mathbf K_{N,L},\mathbf G_{N,L})$ of
\cref{sec:scale-invariant-quotient}.  It is the generalized spectrum of this
pencil, not an individual atom and not the raw Weil matrix itself, that is
intended to recover the ordinates.

Accordingly, the fixed-$L$ completion
\cref{eq:operator-realization-operator-limit} supplies an exact operator
interpretation of the matrix entries and a useful local spectral calculation,
but it does not settle the joint arithmetic limit.  A limiting
Hilbert--P\'olya theorem must still control the sum over spectral atoms through
the prime-side formula, the compressed quotient metric, and the relative
perturbation of the entire generalized pencil as $N$ and $L$ tend to infinity
in a coupled manner; the scale-free target is
\cref{eq:siq-relative-transfer-target}.

\section{Positive shifted pencils for paired off-line zeros}
\label{sec:off-critical-shifted-ground-state}

The interpolation pencil of
\cref{thm:exact-off-line-reconstruction} is nonsingular but indefinite, and
its generalized spectrum is the prescribed nonreal quartet.  A different
construction begins by subtracting the algebraically smallest eigenvalue of
the real symmetric paired matrix.  The resulting contrast pencil is positive
definite and has a real sign-paired spectrum.  This operation changes the
spectral problem; it does not replace each nonreal zero by its modulus.

\subsection{The shifted contrast pencil}

Write
\[
  \mathbf S^{\mathrm{off}}
  :=\mathbf S_{N,L}^{(T,\mathrm{off})}
\]
and arrange its eigenvalues as
\begin{equation}\label{eq:off-shift-ordered-eigenvalues}
  \varepsilon_1^{\mathrm{off}}
  \leq\varepsilon_2^{\mathrm{off}}
  \leq\cdots\leq\varepsilon_{2N+1}^{\mathrm{off}}.
\end{equation}
Put
\begin{equation}\label{eq:off-shift-W-definition}
  \varepsilon_{N,L}^{\mathrm{off}}
  :=\varepsilon_1^{\mathrm{off}},
  \qquad
  \mathbf W_{N,L}^{\mathrm{off}}
  :=\mathbf S^{\mathrm{off}}
    -\varepsilon_{N,L}^{\mathrm{off}}\mathbf I_{2N+1}.
\end{equation}
Then $\mathbf W_{N,L}^{\mathrm{off}}\succeq0$.  If the least eigenvalue is
simple, its kernel is one-dimensional.  The gap-normalized version is
\begin{equation}\label{eq:off-shift-gap-normalization}
  \widetilde{\mathbf W}_{N,L}^{\mathrm{off}}
  :=\frac{\mathbf W_{N,L}^{\mathrm{off}}}
          {\varepsilon_2^{\mathrm{off}}
           -\varepsilon_1^{\mathrm{off}}}.
\end{equation}
This normalization is invariant under every positive affine transformation of
$\mathbf S^{\mathrm{off}}$ and leaves the generalized quotient spectrum
unchanged.

Let $\mathbf C_{N,L}$ be the fixed zero-mean contrast matrix in
\cref{eq:siq-contrast-matrix}.  Define
\begin{align}\label{eq:off-shift-GK-pencil}
  \mathbf G_{N,L}^{\mathrm{off}}
  &:=\mathbf C_{N,L}^{*}
      \mathbf W_{N,L}^{\mathrm{off}}\mathbf C_{N,L},
  \\
  \mathbf K_{N,L}^{\mathrm{off}}
  &:=\mathbf C_{N,L}^{*}
      \mathbf W_{N,L}^{\mathrm{off}}
      \mathbf D_{L,N}\mathbf C_{N,L}.
\end{align}
The divided-difference structure is preserved by the scalar shift, so
\begin{equation}\label{eq:off-shift-rank-two-commutator}
  [\mathbf D_{L,N},\mathbf W_{N,L}^{\mathrm{off}}]
  =\alpha_L\left(
      \boldsymbol\beta_{N,L}^{\mathrm{off}}
      \boldsymbol\delta_{N,L}^{\mathsf T}
      -\boldsymbol\delta_{N,L}
       (\boldsymbol\beta_{N,L}^{\mathrm{off}})^{\mathsf T}
    \right)
\end{equation}
for the corresponding off-line divided-difference sequence.  Hence
$\mathbf K_{N,L}^{\mathrm{off}}$ is real symmetric.

\begin{theorem}[Positive shifted off-line pencil]
\label{thm:off-shifted-metric-self-adjointness}
Assume
\begin{equation}\label{eq:off-shift-pencil-hypotheses}
  \varepsilon_1^{\mathrm{off}}
  \text{ is simple},
  \qquad
  \mathbf G_{N,L}^{\mathrm{off}}\succ0.
\end{equation}
Then the generalized eigenproblem
\begin{equation}\label{eq:off-shift-generalized-eigenproblem}
  \mathbf K_{N,L}^{\mathrm{off}}\mathbf y
  =\mu\,\mathbf G_{N,L}^{\mathrm{off}}\mathbf y
\end{equation}
is Hermitian definite and all of its eigenvalues are real.  If the paired
matrix is centrosymmetric, the spectrum is invariant under
$\mu\mapsto-\mu$.  In the full-rank parity case there are numbers
$0<\mu_{1,N,L}^{\mathrm{off}}\leq\cdots\leq
\mu_{N,N,L}^{\mathrm{off}}$ such that
\begin{equation}\label{eq:off-shift-real-sign-paired-spectrum}
  \operatorname{spec}
  (\mathbf K_{N,L}^{\mathrm{off}},
   \mathbf G_{N,L}^{\mathrm{off}})
  =\{-\mu_{N,N,L}^{\mathrm{off}},\ldots,
      -\mu_{1,N,L}^{\mathrm{off}},
       \mu_{1,N,L}^{\mathrm{off}},\ldots,
       \mu_{N,N,L}^{\mathrm{off}}\}.
\end{equation}
The positive-parity squared matrix obtained from the whitened off-diagonal
block has spectrum
\begin{equation}\label{eq:off-shift-positive-square-spectrum}
  \{(\mu_{1,N,L}^{\mathrm{off}})^2,\ldots,
    (\mu_{N,N,L}^{\mathrm{off}})^2\}.
\end{equation}
\end{theorem}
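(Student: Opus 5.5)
The plan is to follow the proof of \cref{thm:scale-invariant-quotient}, replacing $\mathbf W_{N,L}$ by $\mathbf W_{N,L}^{\mathrm{off}}$, and then reuse the parity reduction of \cref{subsec:siq-parity-reduction}.

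First, the pencil is Hermitian definite. Taking adjoints gives
\[
  (\mathbf K_{N,L}^{\mathrm{off}})^{*}
  =\mathbf C_{N,L}^{*}\mathbf D_{L,N}\mathbf W_{N,L}^{\mathrm{off}}\mathbf C_{N,L},
\]
so $\mathbf K^{\mathrm{off}}-(\mathbf K^{\mathrm{off}})^{*}$ equals $\mathbf C^{*}[\mathbf W^{\mathrm{off}},\mathbf D]\mathbf C$. By \cref{eq:off-shift-rank-two-commutator}, every term of this commutator contains a factor $\mathbf C_{N,L}^{*}\boldsymbol\delta_{N,L}$ or $\boldsymbol\delta_{N,L}^{\mathsf T}\mathbf C_{N,L}$. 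Both vanish by \cref{eq:siq-contrast-matrix}, so $\mathbf K^{\mathrm{off}}$ is real symmetric.

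The displacement identity itself needs a short check. The paired matrix is $2\sum_k(\mathbf H_L^G(z_k)+\mathbf H_L^G(\overline{z_k}))$, and each atom satisfies the transform-side divided-difference identity \cref{eq:Gmn-transform-divided-difference} at complex $z$. That identity is an algebraic identity of entire functions, so the off-line matrix inherits the form \cref{eq:S-divided-difference-structure} with a real odd sequence $b^{\mathrm{off}}$. The scalar shift by $\varepsilon_1^{\mathrm{off}}$ does not change the commutator. Given the hypothesis $\mathbf G^{\mathrm{off}}\succ0$, the pencil is Hermitian definite. Cholesky whitening $\mathbf L^{-*}\mathbf K^{\mathrm{off}}\mathbf L^{-1}$ then gives a Hermitian matrix with the same generalized spectrum, so all eigenvalues are real.

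Second, the sign symmetry. I will show $\mathbf R_N\mathbf S^{\mathrm{off}}\mathbf R_N=\mathbf S^{\mathrm{off}}$ directly from $H_{-m,-n}^G(z)=H_{mn}^G(z)$, which follows from evenness in \cref{eq:Gmn-transform}. Since the shift is scalar, $\mathbf W^{\mathrm{off}}$ is also reflection invariant, and $\mathbf R_N\mathbf D_{L,N}\mathbf R_N=-\mathbf D_{L,N}$, $\mathbf R_N\boldsymbol\delta=\boldsymbol\delta$ hold as before. The contrast space therefore splits into $N$-dimensional even and odd subspaces. With parity-adapted orthonormal contrast columns, the pencil takes the block form \cref{eq:siq-parity-pencil-block}.
\begin{itemize}
\item $\mathbf G^{\mathrm{off}}$ is block diagonal: a reflection-invariant form couples an even and an odd vector only through a term that changes sign under $\mathbf R_N$, so it vanishes.
\item $\mathbf K^{\mathrm{off}}$ is block off-diagonal: $\mathbf W\mathbf D$ anticommutes with $\mathbf R_N$, so it vanishes between vectors of equal parity.
\end{itemize}
Each diagonal block $\mathbf G^{\pm}$ is a principal compression of $\mathbf G^{\mathrm{off}}\succ0$, hence positive definite. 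Whitening gives $\widehat{\mathbf A}=\begin{pmatrix}0&\mathbf A^{*}\\ \mathbf A&0\end{pmatrix}$ with $\mathbf A=\mathbf L_-^{-*}\mathbf B\mathbf L_+^{-1}$. Its eigenvalues are $\pm$ the singular values of $\mathbf A$, which gives invariance under $\mu\mapsto-\mu$.

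The full-rank parity case means $\mathbf A$ is invertible, so all $N$ singular values are positive. That gives \cref{eq:off-shift-real-sign-paired-spectrum}, and the spectrum of $\mathbf A^{*}\mathbf A$ gives \cref{eq:off-shift-positive-square-spectrum}.

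The main obstacle is not the algebra but checking that reflection invariance and the divided-difference structure survive for complex parameters. This requires the entrywise realness $\overline{H^G_{mn}(z)}=H^G_{mn}(\overline z)$ and the evenness of the index set, both of which are already available from \cref{prop:operator-realization-basic-properties} and \cref{eq:An-Bn-parity}. Once these are in hand, the argument is identical to that of \cref{thm:scale-invariant-quotient} together with \cref{subsec:siq-parity-reduction}.
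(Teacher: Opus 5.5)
Your proposal is correct and follows essentially the same route as the paper's proof. Both use the contrast-space argument of \cref{thm:scale-invariant-quotient}: the shifted displacement identity \cref{eq:off-shift-rank-two-commutator} annihilates $\mathbf K^{\mathrm{off}}-(\mathbf K^{\mathrm{off}})^{*}$ on the zero-mean space, and $\mathbf G^{\mathrm{off}}\succ0$ makes the pencil definite. Both then use the reflection splitting into a block-diagonal metric and a block-off-diagonal differentiation form, whose whitened eigenvalues are the signed singular values of the parity-changing block.

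Your extra checks go beyond what the paper writes out. You verify that the divided-difference structure holds for the paired off-line matrix, via the transform-side identity at the complex parameters $z_k,\overline{z_k}$. You also verify centrosymmetry directly. These fill in details the paper only asserts, and they show in passing that the centrosymmetry hypothesis is automatically satisfied here.
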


\begin{proof}
The proof is the same contrast-space argument as
\cref{thm:scale-invariant-quotient}.  Positivity of
$\mathbf G_{N,L}^{\mathrm{off}}$ makes the pencil definite, while
\cref{eq:off-shift-rank-two-commutator} makes its differentiation form
symmetric.  Reflection preserves the zero-mean contrast space, commutes with
the metric, and anticommutes with the frequency matrix.  A parity-adapted
whitening therefore gives an off-diagonal Hermitian matrix, whose eigenvalues
are the signed singular values of its parity-changing block.
\end{proof}

\subsection{Comparison with the interpolation-null pencil}

In the dimension-matched case $N=2K$, the two constructions use the same
fixed contrast space but different metrics:
\begin{align}\label{eq:off-shift-two-pencils}
  (\mathbf G_{N,L}^{\mathrm{off},0},
   \mathbf K_{N,L}^{\mathrm{off},0})
  &\quad\text{uses the unshifted interpolation matrix},
  \\
  (\mathbf G_{N,L}^{\mathrm{off}},
   \mathbf K_{N,L}^{\mathrm{off}})
  &\quad\text{uses the algebraically shifted positive matrix}.
\end{align}
The first pencil has inertia $(2K,2K)$ and reconstructs
$\{\pm z_k,\pm\overline z_k\}$.  The second is positive definite and hence
has a real spectrum.  Unless the least eigenvalue is zero and the two metrics
coincide, the pencils are different and need not be similar.

In particular, the shifted eigenvalues depend on the complete paired matrix;
there is no universal formula reducing them to
$\sqrt{\gamma_k^2+\eta_k^2}$.  The reality of
\cref{eq:off-shift-real-sign-paired-spectrum} comes from the newly created
positive metric, not from radialization of the nonreal zeros.

\subsection{Five-dimensional numerical example}

For the example
\[
  N=2,
  \qquad L=3,
  \qquad z_1=5+0.2i,
\]
one has
\begin{align}\label{eq:off-shift-N2-S-spectrum}
  \operatorname{spec}\mathbf S_{2,3}^{(T,\mathrm{off})}
  \approx\{&-0.00677868721833,-0.000988865435683,0,\notag\\
            &1.01559723607200,1.70573429868628\}.
\end{align}
Thus
\begin{equation}\label{eq:off-shift-N2-ground-energy}
  \varepsilon_{2,3}^{\mathrm{off}}
  \approx-0.00677868721833123.
\end{equation}
The positive shifted contrast pencil gives
\begin{align}\label{eq:off-shift-N2-real-quotient-spectrum}
  \operatorname{spec}
  (\mathbf K_{2,3}^{\mathrm{off}},
   \mathbf G_{2,3}^{\mathrm{off}})
  \approx\{&-5.056960388720,-1.217895631397,\notag\\
            & 1.217895631397,5.056960388720\},
\end{align}
and the positive squared values are
\begin{equation}\label{eq:off-shift-N2-positive-square-numerical}
  \{1.483269768977,25.572848373080\}.
\end{equation}
By comparison,
$|5+0.2i|\approx5.003998401279$ and $|5+0.2i|^2=25.04$.
The shifted pencil restores positivity and real spectral symmetry, but it does
so by changing the metric rather than by replacing the quartet with its
radial modulus.

\section{Numerical verification protocol}
\label{sec:numerical-verification-protocol}

This section gives a reproducible implementation of the arithmetic Weil
matrix and the scale-invariant quotient pencil.  The central numerical change from Lemke's ground-vector formulation
\cite{Lemke2026} is that the primary calculation forms no normalized ground
state and no oblique projection.  The quotient is computed from fixed
zero-mean contrast matrices and a Hermitian definite generalized eigenproblem.

\subsection{Condensed procedure}

Given $L>0$ and $N\in\mathbb N$, perform the following steps.

\begin{enumerate}
  \item Set $T=e^L$, $I_N=\{-N,\ldots,N\}$, and
        $\nu_{n,L}=2\pi n/L$.
  \item Generate all prime powers $q=p^r\leq T$, storing
        $\Lambda(q)=\log p$.
  \item Evaluate $a_{n,L}$ and $b_{n,L}$ from the compensated formulas
        \cref{eq:an-bn-compensated-Weil-formula}, using the closed
        archimedean expressions and certified Lerch-series truncation bounds.
  \item Construct $\mathbf S_{N,L}$ from
        \cref{eq:S-divided-difference-structure} and verify symmetry,
        centrosymmetry, and the rank-two displacement identity.
  \item Compute the two least eigenvalues
        $\lambda_1<\lambda_2$ and form the gap-normalized metric
        $\widetilde{\mathbf W}_{N,L}$ in
        \cref{eq:siq-gap-normalization}.
  \item Construct Euclidean-orthonormal even and odd zero-mean contrast
        matrices $\mathbf C_{N,L}^{+}$ and $\mathbf C_{N,L}^{-}$.
  \item Form the parity metrics $\mathbf G_{N,L}^{\pm}$ and the
        parity-changing block $\mathbf B_{N,L}$ in
        \cref{eq:siq-parity-G-B}.  Verify positive definiteness and
        conditioning of both metric blocks.
  \item Cholesky-whiten the block as in
        \cref{eq:siq-parity-whitened-block}, compute its singular values
        $\mu_{j,N,L}$, and compare the first selected values with the zeta
        ordinates.
  \item For benchmark parameters, independently compute Lemke's normalized
        ground-state quotient \cite{Lemke2026} and verify that its positive
        spectrum agrees with the contrast-pencil spectrum to the resolved
        precision.
  \item Verify the generalized-eigenpair, Hermiticity, sign-pairing, and
        positive-square residuals.  Optionally repeat the calculation for the
        unshifted pencil in \cref{eq:siq-unshifted-GK}.
  \item Repeat for a prescribed sequence of $(N,L)$ and report every failed
        test together with the working precision.
\end{enumerate}

\subsection{Precision and scale conventions}

Let $\mathrm{wp}$ be the working precision, and choose accuracy and precision
goals below $\mathrm{wp}$ by a substantial guard margin.  All matrix
residuals should be normalized.  For a matrix $X$, write
\begin{equation}\label{eq:num-normalized-matrix-scale}
  d(X):=\max\{1,\norm{X}_{\mathrm F}\}.
\end{equation}
A convenient residual threshold is
\begin{equation}\label{eq:num-global-residual-threshold}
  \tau_{\mathrm{res}}
  :=10^{-\min\{25,\lfloor\mathrm{wp}/4\rfloor\}}.
\end{equation}
Eigenvalue-positivity and separation tests must instead use scale-aware
thresholds based on backward error.  No fixed decimal cutoff should be used
for quantities whose natural scale changes rapidly with $N$.

The quotient spectrum is unchanged when
$\mathbf S\mapsto a\mathbf S+b\mathbf I$ with $a>0$.  Therefore all quotient
calculations should use the gap-normalized metric
\begin{equation}\label{eq:num-gap-normalized-W}
  \widetilde{\mathbf W}
  :=\frac{\mathbf S-\lambda_1\mathbf I}{\lambda_2-\lambda_1}.
\end{equation}
This prevents an arbitrary matrix scale from entering the conditioning report.

\subsection{Construction and first diagnostics for the Weil matrix}

The prime-power, pole, and archimedean computations are performed exactly as
specified in
\cref{subsec:closed-archimedean-evaluation,subsec:pole-prime-compensation}.
The matrix is
\begin{equation}\label{eq:num-S-construction-v7}
  S_{mn,L}
  =\begin{cases}
     a_{n,L},&m=n,\\[1mm]
     \dfrac{b_{m,L}-b_{n,L}}{m-n},&m\neq n.
   \end{cases}
\end{equation}
Let $\mathbf R_N$ be the reversal matrix and put
$\mathbf D_{L,N}=\operatorname{diag}(\nu_{n,L})$.  The first residuals are
\begin{align}\label{eq:num-S-structural-residuals-v7}
  r_{\mathrm{sym}}
  &:=\frac{\norm{\mathbf S-\mathbf S^{\mathsf T}}_{\mathrm F}}
           {d(\mathbf S)},
  \\
  r_{\mathrm{cent}}
  &:=\frac{\norm{\mathbf R_N\mathbf S\mathbf R_N-\mathbf S}_{\mathrm F}}
           {d(\mathbf S)},
  \\
  r_{\mathrm{comm}}
  &:=\frac{\norm{
      [\mathbf D_{L,N},\mathbf S]
      -\alpha_L(
        \boldsymbol\beta\boldsymbol\delta^{\mathsf T}
        -\boldsymbol\delta\boldsymbol\beta^{\mathsf T})
      }_{\mathrm F}}
      {d(\mathbf S)}.
\end{align}
All three should be below $\tau_{\mathrm{res}}$.

Let $\lambda_1\leq\lambda_2\leq\cdots$ be the computed eigenvalues.  Put
\begin{equation}\label{eq:num-ground-gap-v7}
  g_\lambda:=\lambda_2-\lambda_1,
  \qquad
  u_\lambda:=C_{\mathrm{eig}}\,\varepsilon_{\mathrm{wp}}
     \max\{1,\norm{\mathbf S}_{\mathrm{op}}\},
  \qquad
  q_\lambda:=\frac{g_\lambda}{u_\lambda},
\end{equation}
where $\varepsilon_{\mathrm{wp}}$ is the arithmetic unit roundoff and
$C_{\mathrm{eig}}$ is a stated safety factor.  Report these quantities
together with the backward residuals of the first two eigenpairs, and require
$q_\lambda>1$.  The scale-invariant quotient requires a numerically resolved
simple least eigenvalue, not a prescribed lower bound on $\lambda_1$ itself.
In particular, the proved limit $\lambda_1(N,N)\to0$ under RH is compatible
with a stable quotient as long as the gap and compressed metric remain
resolved.

\subsection{Parity-adapted contrast matrices}

The odd contrast space has the explicit orthonormal basis
\begin{equation}\label{eq:num-odd-contrast-basis}
  \mathbf c_k^{-}
  :=\frac{\mathbf u_k-\mathbf u_{-k}}{\sqrt2},
  \qquad 1\leq k\leq N.
\end{equation}
For the even sector, first use
\begin{align}\label{eq:num-even-ambient-basis}
  \mathbf h_0&:=\mathbf u_0,\\
  \mathbf h_k&:=\frac{\mathbf u_k+\mathbf u_{-k}}{\sqrt2},
  \qquad1\leq k\leq N.
\end{align}
Let
\[
  \mathcal E_N:=\operatorname{span}
  \{\mathbf h_0,\ldots,\mathbf h_N\}.
\]
Then compute an orthonormal basis of the $N$-dimensional subspace of
$\mathcal E_N$ orthogonal to $\boldsymbol\delta_{N,L}$.  Assemble these bases into matrices
$\mathbf C_+$ and $\mathbf C_-$.  Verify
\begin{align}\label{eq:num-contrast-residuals}
  r_{C,\pm}
  &:=\norm{\mathbf C_\pm^{*}\mathbf C_\pm-\mathbf I}_{\mathrm F},
  \\
  r_{\delta,\pm}
  &:=\norm{\mathbf C_\pm^{*}\boldsymbol\delta}_{2},
  \\
  r_{R,+}
  &:=\norm{\mathbf R_N\mathbf C_+-\mathbf C_+}_{\mathrm F},
  \\
  r_{R,-}
  &:=\norm{\mathbf R_N\mathbf C_-+\mathbf C_-}_{\mathrm F}.
\end{align}

\subsection{The shifted scale-invariant pencil}

Using $\widetilde{\mathbf W}$ from \cref{eq:num-gap-normalized-W}, form
\begin{align}\label{eq:num-parity-pencil-v7}
  \mathbf G_+
  &:=\mathbf C_+^{*}\widetilde{\mathbf W}\mathbf C_+,
  &
  \mathbf G_-
  &:=\mathbf C_-^{*}\widetilde{\mathbf W}\mathbf C_-,
  \\
  \mathbf B
  &:=\mathbf C_-^{*}\widetilde{\mathbf W}
      \mathbf D_{L,N}\mathbf C_+.
\end{align}
At working precision, replace each metric block by its Hermitian part.  Report
\begin{equation}\label{eq:num-metric-conditioning-v7}
  \lambda_{\min}(\mathbf G_+),
  \quad
  \lambda_{\min}(\mathbf G_-),
  \quad
  \kappa(\mathbf G_+),
  \quad
  \kappa(\mathbf G_-).
\end{equation}
The quotient passes the definiteness test only when both least eigenvalues are
positive by more than their estimated backward errors.

Let
$\mathbf G_+=\mathbf L_+^{*}\mathbf L_+$ and
$\mathbf G_-=\mathbf L_-^{*}\mathbf L_-$ be Cholesky factorizations.  Define
\begin{equation}\label{eq:num-whitened-parity-block-v7}
  \mathbf A
  :=\mathbf L_-^{-*}\mathbf B\mathbf L_+^{-1}.
\end{equation}
Its singular values, arranged increasingly, are
$\mu_{1,N,L},\ldots,\mu_{N,N,L}$.  Equivalently, the whitened full quotient
matrix is
\begin{equation}\label{eq:num-whitened-full-v7}
  \widehat{\mathbf A}
  :=\begin{pmatrix}0&\mathbf A^{*}\\\mathbf A&0\end{pmatrix}.
\end{equation}
The positive square is
\begin{equation}\label{eq:num-positive-square-v7}
  \mathbf S_{N,L}^{+,\mathrm{siq}}:=\mathbf A^{*}\mathbf A.
\end{equation}
No lowest squared state is deleted in the primary comparison with the zeta
ordinates.

\subsection{Pencil and spectral residuals}

For each generalized eigenpair $(\mu_j,\mathbf y_j)$ of the full parity
pencil, use the relative residual
\begin{equation}\label{eq:num-generalized-eigen-residual}
  r_{\mathrm{gev},j}
  :=\frac{
    \norm{\mathbf K\mathbf y_j-
           \mu_j\mathbf G\mathbf y_j}_2}
   {\bigl(\norm{\mathbf K}_{\mathrm{op}}
      +|\mu_j|\norm{\mathbf G}_{\mathrm{op}}\bigr)
      \norm{\mathbf y_j}_2}.
\end{equation}
Here $\mathbf G=\operatorname{diag}(\mathbf G_+,\mathbf G_-)$ and
$\mathbf K=\bigl(\begin{smallmatrix}0&\mathbf B^{*}\\
\mathbf B&0\end{smallmatrix}\bigr)$.  Also report
\begin{align}\label{eq:num-pencil-residuals-v7}
  r_{K}
  &:=\frac{\norm{\mathbf K-\mathbf K^{*}}_{\mathrm F}}{d(\mathbf K)},
  \\
  r_{\mathrm{sign}}
  &:=\max_{1\leq j\leq N}
    \frac{|\mu_j^{+}+\mu_j^{-}|}
         {\max\{1,|\mu_j^{+}|,|\mu_j^{-}|\}},
  \\
  r_{\mathrm{sq}}
  &:=\max_j
    \frac{|\sigma_j(\mathbf A)^2-
            \lambda_j(\mathbf A^{*}\mathbf A)|}
         {\max\{1,\sigma_j(\mathbf A)^2\}}.
\end{align}
These quantities test the quotient algebra independently of agreement with
zeta zeros.

For selected positive eigenvalues, report
\begin{equation}\label{eq:num-zeta-comparison-v7}
  e_j^{\mathrm{abs}}:=|\mu_{j,N,L}-\gamma_j|,
  \qquad
  e_j^{\mathrm{rel}}:=\frac{|\mu_{j,N,L}-\gamma_j|}{\gamma_j}.
\end{equation}
Use full-precision values of $\gamma_j$, not ordinates rounded to four decimal
places.

\subsection{Optional unshifted and zero-side checks}

The unshifted arithmetic regularization is obtained by replacing
$\widetilde{\mathbf W}$ in \cref{eq:num-parity-pencil-v7} with
$\mathbf S_{N,L}$.  Report its positive generalized eigenvalues alongside the
shifted values.  Agreement between the two as $N,L$ grow is a numerical test
of the perturbative comparison in
\cref{eq:siq-shifted-unshifted-difference}; it is not assumed a priori.

For an exact zero-side check, choose $K=N$ real ordinates, construct
\begin{equation}\label{eq:num-zero-side-v7}
  \mathbf S_{N,L}^{[N]}
  :=2\sum_{k=1}^{N}\mathbf H_L^G(\gamma_k),
\end{equation}
and apply the same contrast-pencil procedure with no scalar shift.  Verify
\begin{equation}\label{eq:num-zero-side-spectrum-v7}
  \operatorname{spec}
  (\mathbf K_{N,L}^{(0)},\mathbf G_{N,L}^{(0)})
  =\{\pm\gamma_1,\ldots,\pm\gamma_N\}
\end{equation}
and the null-vector residual in
\cref{eq:zero-side-ground-normalization-nullity}.  This tests the entire
implementation against the exact reconstruction theorem.

\subsection{Moving-dimension table}

Repeat the calculation for the prescribed pairs $(N,L)$, for example
$N=L\in\{2,5,7,11,13\}$.  The table should contain at least
\begin{equation}\label{eq:num-moving-table-columns-v7}
\begin{gathered}
  N,\quad L,\quad \lambda_1,\quad \lambda_2-\lambda_1,\quad
  \lambda_{\min}(\mathbf G_+),\quad
  \lambda_{\min}(\mathbf G_-),\\
  \kappa(\mathbf G_+),\quad \kappa(\mathbf G_-),\quad
  \mu_1,\quad\mu_2,\quad\mu_3.
\end{gathered}
\end{equation}
The decrease of $\lambda_1$ is expected under RH and is not itself a failure.
The decisive quantities are the resolved gap, definiteness and conditioning
of the contrast metrics, and the generalized-eigenpair residuals.

\subsection{Required pass--fail report}

Every run should report:
\begin{enumerate}
  \item $L,N,T$, working precision, accuracy goal, precision goal, and the
        number of retained prime powers;
  \item $r_{\mathrm{sym}}$, $r_{\mathrm{cent}}$, and
        $r_{\mathrm{comm}}$;
  \item $\lambda_1$, $\lambda_2$, the gap, and the first two eigenpair
        residuals;
  \item all contrast-basis residuals in
        \cref{eq:num-contrast-residuals};
  \item the least eigenvalues and condition numbers of
        $\mathbf G_+$ and $\mathbf G_-$;
  \item $r_K$, all requested generalized-eigenpair residuals,
        $r_{\mathrm{sign}}$, and $r_{\mathrm{sq}}$;
  \item the positive quotient values and their absolute and relative
        differences from the selected zeta ordinates;
  \item when Lemke's quotient is used as a benchmark, the maximum absolute and
        relative differences between its positive eigenvalues and those of the
        contrast pencil;
  \item the corresponding unshifted-pencil values, when that optional test is
        performed;
  \item the exact zero-side residuals, when the optional calibration is
        performed.
\end{enumerate}
A calculation should not be reported as a verification if a failed
definiteness test or a large residual has merely been suppressed.  Every
failure must remain in the output together with the parameters and precision
that produced it.

\section{Conclusion}\label{sec:conclusion}

This paper develops a finite-dimensional Hilbert--P\'olya--Weil framework
starting from the Riemann--$\Xi$ specialization of Weil's explicit formula.
Real polarizations of logarithmic Fourier autocorrelations give a
real-symmetric arithmetic matrix $\mathbf S_{N,L}$ whose entries are computed
from the pole, archimedean, and finite prime-power terms.  Its off-diagonal
entries are divided differences of one generating sequence, its commutator
with the diagonal frequency matrix has rank at most two, and it is invariant
under Fourier reflection.

The principal structural improvement is the scale-invariant quotient
formulation.  After subtracting the algebraically smallest eigenvalue, the
metric and differentiation form are compressed to the fixed zero-mean
contrast space:
\[
  \mathbf G_{N,L}=\mathbf C_{N,L}^{*}\mathbf W_{N,L}\mathbf C_{N,L},
  \qquad
  \mathbf K_{N,L}=\mathbf C_{N,L}^{*}\mathbf W_{N,L}
                   \mathbf D_{L,N}\mathbf C_{N,L}.
\]
The rank-two commutator makes $\mathbf K_{N,L}$ Hermitian.  Whenever the least
eigenvalue is simple and $\mathbf G_{N,L}\succ0$, the generalized eigenproblem
$\mathbf K_{N,L}\mathbf y=\mu\mathbf G_{N,L}\mathbf y$ is Hermitian definite.
Reflection gives a real opposite-sign spectrum and reduces the positive
square to the singular values of one parity block.  Unlike Lemke's
ground-state oblique-projection realization \cite{Lemke2026}, this construction
does not require a ground-state
eigenvector, does not divide by a small evaluation overlap, and does not form
a large-norm projector.  It is exactly invariant under every positive affine
rescaling $\mathbf S\mapsto a\mathbf S+b\mathbf I$.  The reformulation removes
an avoidable instability while displaying genuine metric degeneration through
the spectrum of $\mathbf G_{N,L}$ itself.

At $N=L=13$, the updated arithmetic computation confirms that Lemke's
quotient and the contrast pencil give the same first three positive eigenvalues
to at least sixty-two decimal places, namely the values in
\cref{eq:intro-Lemke-N13-values}.  Their errors against the first three zeta
ordinates are approximately $1.54\times10^{-27}$,
$1.52\times10^{-16}$, and $1.72\times10^{-11}$.  This agreement verifies the
finite-dimensional equivalence of the two realizations; it is not a substitute
for an asymptotic convergence theorem.

The dimension-matched zero-side problem supplies an exact calibration.  For
$N$ distinct positive real ordinates
$0<\gamma_1<\cdots<\gamma_N$, rational interpolation determines the unique
null vector of the $(2N+1)$-dimensional Gram matrix, and the contrast pencil
satisfies
\begin{equation}\label{eq:conclusion-zero-side-characteristic-polynomial}
  \det\!\left(
    z\mathbf G_{N,L}^{(0)}-\mathbf K_{N,L}^{(0)}
  \right)
  =\det(\mathbf G_{N,L}^{(0)})
   \prod_{k=1}^{N}(z^2-\gamma_k^2).
\end{equation}
Consequently, the generalized quotient spectrum is
$\{\pm\gamma_1,\ldots,\pm\gamma_N\}$ and the positive-parity square has the
exact spectrum
\begin{equation}\label{eq:conclusion-zero-side-square-spectrum}
  \{\gamma_1^2,\ldots,\gamma_N^2\}.
\end{equation}
The auxiliary zero mode is absent because the pencil is already written on
the contrast space.  The state with squared energy $\gamma_1^2$ is retained.
Since the ordinates are inputs, this is a reconstruction and consistency
theorem rather than an independent proof of RH.

The interpolation null vector also yields a genuinely arithmetic asymptotic
statement.  Under RH, it annihilates the first $N$ positive zero atoms and
leaves only the positive zero tail.  The resulting Rayleigh identity proves
$\lambda_1(N,N)\to0$, while the product estimate
\cref{eq:least-eigenvalue-product-bound} explains the very rapid decrease in
the moving-dimension computations.  The new quotient algorithm does not
amplify this small eigenvalue by an explicit overlap denominator.  The
remaining issue is instead the relative size of the entire tail in the
compressed metric.

Hypothetical off-critical-line zeros distinguish exact interpolation from
positive-metric spectral theory.  In the dimension-matched indefinite pencil,
a conjugate pair $\gamma_k\pm i\eta_k$ produces the full quartet
\[
  \{\pm(\gamma_k+i\eta_k),\ \pm(\gamma_k-i\eta_k)\},
\]
not the radial values $\pm\sqrt{\gamma_k^2+\eta_k^2}$.  Shifting the paired
matrix by its algebraically smallest eigenvalue creates a positive definite
contrast pencil and hence a real sign-paired spectrum, but it changes the
spectral problem.  At the atom level, the fixed-$L$ Fourier completion gives a
complementary operator interpretation: a real spectral parameter produces a
positive rank-two operator of trace one, whereas a genuine nonreal conjugate
pair produces an indefinite rank-four self-adjoint operator.

The analytic and computational parts of the paper support the finite
construction.  Simultaneous prime-power and zero-tail estimates justify fixed
and uniformly norm-bounded test families.  The archimedean integrals admit
closed digamma--polygamma--Lerch evaluations with certified term-count bounds,
and compensated formulas expose the cancellation of the large pole and prime
contributions.  The revised numerical protocol works directly with
parity-adapted contrast pencils, gap-normalized metrics, Cholesky whitening,
generalized residuals, condition numbers, and a benchmark comparison with
Lemke's finite quotient.  Failed positivity or poor
conditioning is retained in the output rather than hidden by numerical
chopping.

The main unresolved problem is now precise.  One must prove, for a controlled
coupling of $N$ and $L$, that the full arithmetic pencil is a small relative
perturbation of the exact finite-zero pencil.  A natural target is
\cref{eq:siq-relative-transfer-target}, which controls
$\Delta\mathbf K-z\Delta\mathbf G$ in the finite-zero quotient-energy norm.
Together with separation of the reference generalized eigenvalues and uniform
control of the compressed metric, such an estimate would transfer each fixed
low ordinate to the arithmetic side and would provide the correct starting
point for a limiting operator or determinant.  Entrywise convergence of the
uncompressed Weil matrices, or smallness of one Rayleigh quotient, is not
sufficient.

Accordingly, the present work establishes a rigorous finite arithmetic
matrix, a projector-free and affine-scale-invariant quotient mechanism, an
exact zero-side reconstruction theorem, and a reproducible numerical
framework.  It isolates the remaining bottleneck as a relative
prime-to-zero spectral transfer theorem followed by a limiting construction
that preserves multiplicities and excludes spectral pollution.  No proof of
the Riemann hypothesis is claimed.

\section{Acknowledgements and contribution statement}
\label{sec:acknowledgements}

The author thanks S\"oren Lemke for the private communication
\emph{From Truncated Weil Arithmetic to a Finite Real Spectrum}, for the
original ground-state removal and oblique-quotient construction, and for the
numerical implementation that first exhibited the finite real spectrum used
in this manuscript.  The author also thanks the developers of Mathematica and
of the open-source numerical and \LaTeX{} tools used for independent checks.

This manuscript was developed through an extended collaboration between the
author and OpenAI's ChatGPT, GPT--5.6 Pro, during August--September 2026.
ChatGPT contributed materially to algebraic verification, identification and
correction of inconsistent formulas, derivation of the scale-invariant
contrast-pencil realization and several zero-side identities, formulation of
error and conditioning estimates, translation and independent validation of
numerical algorithms, organization of the argument, and drafting and editing
of the \LaTeX{} source.  ChatGPT is not an author and cannot accept
responsibility for the mathematical claims.

The author's principal contribution was conceptual and directive: he selected
the Hilbert--P\'olya--Weil problem, proposed the test-function and matrix
program, supplied heuristic mechanisms and conjectural interpretations, posed
the questions that drove each calculation, specified the numerical
experiments, evaluated competing formulations, and made the final decisions
about the contents of the paper.  Many initial proposals were deliberately
heuristic and question-driven rather than complete proofs.  The author has
reviewed the resulting statements and assumes full responsibility for the
correctness, interpretation, and submission of the manuscript.

\section{Appendix A: Sample test functions}\label{sec:test-functions}
Here we analyze several test functions and the resulting Weil explicit
formulas.  The transform and autocorrelation conventions are recorded first.

\subsection{Transform convention and Weil autocorrelations}\label{subsec:explicit-test-functions}

For all examples below, write
\begin{equation}\label{eq:Fhat-sample-definition}
	\widehat F(t):=\int_{-\infty}^{\infty}F(x)e^{itx}\dd x
	=\Phi_F\!\left(\frac12+it\right).
\end{equation}
This is the Fourier-transform convention used throughout the
Riemann--$\Xi$ specialization.  We also put
\begin{equation}\label{eq:z-shift-test-functions}
	z:=s-\frac12,
	\qquad
	\Phi_F(s)=\int_{\mathbb R}F(x)e^{zx}\dd x.
\end{equation}

It is useful to distinguish two roles for a function $f$.  It may be used
\emph{directly} as the test function in the explicit formula, or it may be
used as the generating function in Weil's autocorrelation construction.  With
\begin{equation}\label{eq:sample-convolution-definition}
	(f*g)(x):=\int_{\mathbb R}f(x-y)g(y)\dd y,
	\qquad
	\widetilde f(x):=\overline{f(-x)},
\end{equation}
set
\begin{equation}\label{eq:sample-autocorrelation-conversion}
	A_f:=f*\widetilde f.
\end{equation}
Then
\begin{align}\label{eq:sample-autocorrelation-transform}
	A_f(x)
	&=\int_{\mathbb R}f(x+y)\overline{f(y)}\dd y,
	\\
	\widehat{A_f}(t)
	&=\abs{\widehat f(t)}^2\geq0,
	\\
	\Phi_{A_f}(s)
	&=\Phi_f(s)\,
	\overline{\Phi_f(1-\overline s)}.
\end{align}
On the critical line $s=\frac12+it$, the last expression is precisely a
squared modulus.  In particular, a direct test function need not have a
nonnegative Fourier transform, whereas an autocorrelation always does.

\subsection{Laguerre functions}\label{subsec:Laguerre-functions}

\subsubsection{The basic half-line Laguerre system}

Let $L_n=L_n^{(0)}$ be the ordinary Laguerre polynomial and let $a>0$.
To incorporate Weil's midpoint convention, put
\[
\mathbf 1_+^*(x)
:=\begin{cases}
	1,&x>0,\\
	\frac12,&x=0,\\
	0,&x<0.
\end{cases}
\]
The correctly scaled version of the proposed function
``$e^{-x}$ times a Laguerre polynomial'' is
\begin{equation}\label{eq:basic-Laguerre-functions}
	u_{n,a}^{+}(x)
	:=\sqrt{2a}\,e^{-ax}L_n(2ax)\,\mathbf 1_+^*(x),
	\qquad n\in\mathbb N_0.
\end{equation}
For each fixed $a$, the family $\{u_{n,a}^{+}\}_{n\geq0}$ is an
orthonormal basis of $L^2(0,\infty)$.  Its transform is the elementary
rational function
\begin{align}\label{eq:basic-Laguerre-transform}
	\Phi_{u_{n,a}^{+}}(s)
	&=\sqrt{2a}\,(-1)^n
	\frac{(a+z)^n}{(a-z)^{n+1}},
	\qquad \RePart z<a,
	\\
	\widehat{u_{n,a}^{+}}(t)
	&=\sqrt{2a}\,(-1)^n
	\frac{(a+it)^n}{(a-it)^{n+1}}.
\end{align}
The reflected functions
$u_{n,a}^{-}(x):=u_{n,a}^{+}(-x)$, together with the positive-half-line
functions, form an orthonormal basis of $L^2(\mathbb R)$.

There are two qualifications.  First, $e^{-x}L_n(x)$ by itself grows as
$x\to-\infty$, so it is not a test function on the whole real line without
a half-line restriction, reflection, or replacement of $x$ by $\abs{x}$.
Second, because $L_n(0)=1$, the half-line extension in
\cref{eq:basic-Laguerre-functions} has a jump at the origin.  The value
$\mathbf 1_+^*(0)=1/2$ makes Weil's midpoint convention explicit.  The function
is admissible under Weil's original piecewise-smooth hypotheses when
$a>\frac12+b$, but it does not belong to
$W_{\mathrm{loc}}^{2,1}(\mathbb R)$ and therefore is not covered by
\cref{thm:uniform-Xi-truncation}.

\subsubsection{A $W_{\mathrm{loc}}^{2,1}$-compatible Laguerre basis}

Let $L_n^{(4)}$ be the generalized Laguerre polynomial of parameter $4$ and
define
\begin{equation}\label{eq:regularized-Laguerre-functions}
	\ell_{n,a}^{+}(x)
	:=\left(\frac{2a\,n!}{(n+4)!}\right)^{1/2}
	(2ax)^2e^{-ax}L_n^{(4)}(2ax)\,
	\mathbf 1_{[0,\infty)}(x).
\end{equation}
The factor $x^2$ makes both the function and its first derivative vanish at
the origin.  Hence its zero extension is $C^1$, has a locally integrable
second weak derivative, and belongs to
$W_{\mathrm{loc}}^{2,1}(\mathbb R)$.  The generalized Laguerre
orthogonality relation gives
\begin{equation}\label{eq:regularized-Laguerre-orthogonality}
	\int_0^\infty
	\ell_{n,a}^{+}(x)\ell_{m,a}^{+}(x)\dd x
	=\delta_{nm}.
\end{equation}
For every $b>0$, $\ell_{n,a}^{+}\in\mathcal A_b^2$ whenever $a>\frac12+b$.
Its Fourier transform is the finite rational sum
\begin{align}\label{eq:regularized-Laguerre-transform}
	\widehat{\ell_{n,a}^{+}}(t)
	={}&\left(\frac{2a\,n!}{(n+4)!}\right)^{1/2}
	\sum_{k=0}^{n}(-1)^k
	\binom{n+4}{n-k}(k+1)(k+2)
	\frac{(2a)^{k+2}}{(a-it)^{k+3}}.
\end{align}
The corresponding formula for $\Phi_{\ell_{n,a}^{+}}(s)$ is obtained by
replacing $it$ with $z=s-\frac12$.  Let
\begin{equation}\label{eq:regularized-Laguerre-reflections}
	\ell_{n,a}^{-}(x):=\ell_{n,a}^{+}(-x),
	\qquad
	\ell_{n,a}^{\mathrm e}
	:=\frac{\ell_{n,a}^{+}+\ell_{n,a}^{-}}{\sqrt2},
	\qquad
	\ell_{n,a}^{\mathrm o}
	:=\frac{\ell_{n,a}^{+}-\ell_{n,a}^{-}}{\sqrt2}.
\end{equation}
Then
\begin{equation}\label{eq:regularized-Laguerre-full-basis}
	\left\{
	\ell_{n,a}^{\mathrm e},\ell_{n,a}^{\mathrm o}:n\geq0
	\right\}
\end{equation}
is an orthonormal basis of $L^2(\mathbb R)$ consisting of functions in
$\mathcal A_b^2$ whenever $a>\frac12+b$.  The even subfamily is an
orthonormal basis of $L^2_{\mathrm{even}}(\mathbb R)$ and is especially
natural for the Riemann $\Xi$-function.  Since
$\widehat{\ell_{n,a}^{-}}(t)
=\widehat{\ell_{n,a}^{+}}(-t)$, its even and odd transforms are obtained
from \cref{eq:regularized-Laguerre-transform} by taking, respectively,
$\sqrt2$ times the real part and $i\sqrt2$ times the imaginary part.

The elementary identities used here are
\begin{equation}\label{eq:Laguerre-Laplace-identity}
	\int_0^\infty e^{-qx}L_n(cx)\dd x
	=\frac{(q-c)^n}{q^{n+1}},
	\qquad \RePart q>0,
\end{equation}
together with the finite expansion
\begin{equation}\label{eq:generalized-Laguerre-finite-expansion}
	L_n^{(4)}(y)
	=\sum_{k=0}^{n}(-1)^k
	\binom{n+4}{n-k}\frac{y^k}{k!}.
\end{equation}

\subsection{Hermite functions}\label{subsec:Hermite-functions}

\subsubsection{The exact $e^{-x^2}$ polynomial--Gaussian family}

Let $\mathsf H_n$ denote the physicists' Hermite polynomial.  The identity
\begin{equation}\label{eq:Hermite-derivative-Gaussian}
	q_n(x):=\mathsf H_n(x)e^{-x^2}
	=(-1)^n\frac{\dd^n}{\dd x^n}e^{-x^2}
\end{equation}
shows immediately that
\begin{equation}\label{eq:Hermite-exact-Gaussian-transform}
	\widehat q_n(t)
	=\sqrt\pi\,(it)^n e^{-t^2/4}.
\end{equation}
In particular, $q_{2k}$ is even and
\begin{equation}\label{eq:Hermite-exact-even-transform}
	\widehat q_{2k}(t)
	=\sqrt\pi\,(-1)^k t^{2k}e^{-t^2/4}.
\end{equation}
This is one of the simplest explicit transforms available for a
polynomial--Gaussian test function.  The family $\{q_n\}$ is not orthogonal
in unweighted $L^2(\mathbb R)$, because the product $q_nq_m$ carries the
weight $e^{-2x^2}$ rather than the Hermite weight $e^{-x^2}$.

The autocorrelation is also explicit.  Fourier inversion applied to
\cref{eq:Hermite-exact-Gaussian-transform} gives
\begin{align}\label{eq:Hermite-exact-Gaussian-autocorrelation}
	(q_n*\widetilde q_n)(x)
	&=\sqrt{\frac{\pi}{2}}\,
	\frac{(-1)^n}{2^n}
	\mathsf H_{2n}\!\left(\frac{x}{\sqrt2}\right)e^{-x^2/2},
	\\
	\widehat{q_n*\widetilde q_n}(t)
	&=\pi t^{2n}e^{-t^2/2}\geq0.
\end{align}
For the even functions $q_{2k}$, one has
$\widetilde q_{2k}=q_{2k}$, and hence
\begin{equation}\label{eq:Hermite-exact-even-convolution}
	(q_{2k}*q_{2k})(x)
	=\sqrt{\frac{\pi}{2}}\,
	2^{-2k}\mathsf H_{4k}\!\left(\frac{x}{\sqrt2}\right)e^{-x^2/2}.
\end{equation}

\subsubsection{The orthonormal Hermite basis}

Define the normalized Hermite functions by
\begin{equation}\label{eq:Hermite-functions}
	h_n(x)
	:=\frac{\mathsf H_n(x)e^{-x^2/2}}
	{\pi^{1/4}\sqrt{2^n n!}},
	\qquad
	h_{n,a}(x):=a^{1/2}h_n(ax),
	\qquad a>0.
\end{equation}
For every fixed $a>0$, $\{h_{n,a}\}_{n\geq0}$ is an orthonormal basis of
$L^2(\mathbb R)$, and
\begin{equation}\label{eq:Hermite-Fourier-transform}
	\widehat{h_{n,a}}(t)
	=\sqrt{2\pi}\,i^n h_{n,1/a}(t)
	=\sqrt{2\pi}\,i^n a^{-1/2}h_n(t/a).
\end{equation}
Moreover, $h_{2k,a}$ is even and $h_{2k+1,a}$ is odd.  Consequently,
$\{h_{2k,a}\}_{k\geq0}$ is an orthonormal basis of
$L^2_{\mathrm{even}}(\mathbb R)$.

Set
\begin{equation}\label{eq:Hermite-autocorrelation-definition}
	\mathcal H_{n,a}:=h_{n,a}*\widetilde h_{n,a}.
\end{equation}
The Hermite correlation integral, equivalently the Fourier eigenfunction
identity followed by inversion, gives
\begin{equation}\label{eq:Hermite-autocorrelation-physical}
	\boxed{
		\mathcal H_{n,a}(x)
		=e^{-a^2x^2/4}
		L_n\!\left(\frac{a^2x^2}{2}\right).}
\end{equation}
Here $L_n=L_n^{(0)}$ is the ordinary Laguerre polynomial.  Since
$L_n(0)=1$,
\begin{equation}\label{eq:Hermite-autocorrelation-at-zero}
	\mathcal H_{n,a}(0)
	=\norm{h_{n,a}}_2^2=1.
\end{equation}
For $n=2k$, the generator is real and even, so
$\mathcal H_{2k,a}=h_{2k,a}*h_{2k,a}$ is an ordinary self-convolution.

The Fourier and bilateral Laplace transforms are
\begin{align}\label{eq:Hermite-autocorrelation-transform}
	\widehat{\mathcal H_{n,a}}(t)
	&=2\pi\abs{h_{n,1/a}(t)}^2
	\\
	&=\frac{2\sqrt\pi}{a\,2^n n!}
	\mathsf H_n\!\left(\frac{t}{a}\right)^2e^{-t^2/a^2}
	\geq0,
	\\
	\Phi_{\mathcal H_{n,a}}(s)
	&=\frac{2\sqrt\pi}{a\,2^n n!}
	\mathsf H_n\!\left(-\frac{iz}{a}\right)^2e^{z^2/a^2}.
\end{align}
Thus the autocorrelation remains a polynomial times a Gaussian on both the
physical and transform sides.

Two scalings are especially convenient.  Taking $a=\sqrt2$ places the
factor $e^{-x^2}$ directly in the orthonormal generating function:
\begin{equation}\label{eq:Hermite-sqrt-two-scaling}
	h_{2k,\sqrt2}(x)
	=\frac{2^{1/4}}
	{\pi^{1/4}\sqrt{2^{2k}(2k)!}}
	\mathsf H_{2k}(\sqrt2\,x)e^{-x^2},
\end{equation}
and
\begin{equation}\label{eq:Hermite-sqrt-two-autocorrelation}
	\mathcal H_{2k,\sqrt2}(x)
	=e^{-x^2/2}L_{2k}(x^2).
\end{equation}
Taking $a=2$ instead places $e^{-x^2}$ in the autocorrelation itself:
\begin{equation}\label{eq:Hermite-two-autocorrelation}
	\boxed{
		\mathcal H_{2k,2}(x)
		=e^{-x^2}L_{2k}(2x^2),}
\end{equation}
with
\begin{equation}\label{eq:Hermite-two-autocorrelation-transform}
	\widehat{\mathcal H_{2k,2}}(t)
	=\frac{\sqrt\pi}{2^{2k}(2k)!}
	\mathsf H_{2k}\!\left(\frac t2\right)^2e^{-t^2/4}
	\geq0.
\end{equation}

\begin{lemma}[Polynomial--Gaussian admissibility]\label{lem:polynomial-Gaussian-admissibility}
	Let $P$ be a polynomial and let $c>0$.  Then
	\begin{equation}\label{eq:polynomial-Gaussian-general}
		G(x):=P(x)e^{-cx^2}
	\end{equation}
	belongs to $\mathcal A_b^2$ for every $b>0$.
\end{lemma}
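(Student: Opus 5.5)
We verify the defining conditions of $\mathcal A_b^2$ one by one for $G(x)=P(x)e^{-cx^2}$. No finite-part or midpoint subtlety arises, because $G$ is entire, hence $C^\infty$. In particular it is piecewise $C^1$ with no jumps, so Weil's midpoint convention holds trivially. The canonical representative in \cref{eq:Wloc21-AC-characterization} is $G$ itself. Membership in $W_{\mathrm{loc}}^{2,1}(\mathbb R)$ is immediate, since $G'$ and $G''$ are continuous and therefore locally integrable.

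The key observation is that every derivative of $G$ has the same shape. Differentiating gives
\[
  G'(x)=\bigl(P'(x)-2cxP(x)\bigr)e^{-cx^2},
  \qquad
  G''(x)=P_2(x)e^{-cx^2},
\]
where $P_2:=P''-4cxP'-2cP+4c^2x^2P$ is again a polynomial. So it suffices to prove one uniform statement. \emph{For every polynomial $Q$ and every $\kappa>0$, the function $|Q(x)|e^{\kappa|x|-cx^2}$ is bounded on $\mathbb R$ and integrable over $\mathbb R$.}

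To prove it, complete the square: $\kappa|x|-cx^2=-c(|x|-\kappa/(2c))^2+\kappa^2/(4c)$. This gives
\[
  |Q(x)|e^{\kappa|x|-cx^2}\le e^{\kappa^2/(4c)}\,|Q(x)|\,e^{-c(|x|-\kappa/(2c))^2}.
\]
The right-hand side is continuous. For $|x|\ge\kappa/c$ we have $(|x|-\kappa/(2c))^2\ge x^2/4$, so the right-hand side is dominated by $C|Q(x)|e^{-cx^2/4}$. Polynomial times Gaussian decay is bounded and integrable, which proves the claim.

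The claim then delivers each remaining condition.
\begin{enumerate}
  \item Taking $Q=P$ and $\kappa=\tfrac12+b$ gives $M_b(G)<\infty$.
  \item Taking $Q=P$ and $Q=P'-2cxP$ with the same $\kappa$ gives the decay $G(x),G'(x)=O(e^{-(\frac12+b)|x|})$.
  \item Taking $\kappa=\tfrac12$ with $Q\in\{P,\ P'-2cxP,\ P_2\}$ and integrating gives $J_2(G)<\infty$.
\end{enumerate}
Since $b>0$ was arbitrary, $G\in\mathcal A_b^2$ for every $b>0$. As a consequence, $\|G\|_{\mathcal A_b^2}$ is controlled by finitely many explicit Gaussian moments of the coefficients of $P$.

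The argument has no genuine obstacle. The only step needing care is the exponential-versus-Gaussian domination, which the completed-square bound handles. That bound also keeps the dependence on $b$, $c$ and $\deg P$ explicit, if uniformity over a family of such $G$ is later needed, for example the Hermite autocorrelations $\mathcal H_{n,a}$.
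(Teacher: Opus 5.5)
Your proof is correct and follows essentially the same route as the paper: write $G^{(j)}=P_je^{-cx^2}$ for $j=0,1,2$, then absorb the exponential weight into part of the Gaussian to get boundedness and integrability. The only difference is that the paper does the absorption in one global step via Young's inequality $d_b\abs{x}\le\frac c2x^2+\frac{d_b^2}{2c}$, which is your completed square written with half the Gaussian retained, so no split into a compact set and a tail is needed.
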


\begin{proof}
	Put $d_b:=\frac12+b$.  Young's inequality gives
	\begin{equation}\label{eq:Gaussian-linear-absorption}
		d_b\abs{x}
		\leq\frac c2x^2+\frac{d_b^2}{2c}.
	\end{equation}
	Consequently,
	\[
	e^{d_b\abs{x}}\abs{G(x)}
	\leq e^{d_b^2/(2c)}\abs{P(x)}e^{-cx^2/2},
	\]
	and the right-hand side is bounded.  Furthermore, for $j=0,1,2$ there is
	a polynomial $P_j$ such that
	\[
	G^{(j)}(x)=P_j(x)e^{-cx^2}.
	\]
	Using
	\[
	\frac{\abs{x}}2
	\leq\frac c2x^2+\frac{1}{8c},
	\]
	we obtain
	\[
	e^{\abs{x}/2}\abs{G^{(j)}(x)}
	\leq e^{1/(8c)}\abs{P_j(x)}e^{-cx^2/2}\in L^1(\mathbb R).
	\]
	Thus both quantities in \cref{eq:Mb-J2-definitions} are finite.  Since $G$ is
	smooth and both $G$ and $G'$ decrease faster than every fixed exponential,
	Weil's admissibility hypotheses are also satisfied.
\end{proof}

It follows that $q_n$, $q_n*\widetilde q_n$, $h_{n,a}$, and
$\mathcal H_{n,a}$ belong to $\mathcal A_b^2$ for every $a>0$, every
$b>0$, and every fixed $n$.  No condition such as
$a>\frac12+b$ is needed: Gaussian decay dominates every fixed exponential
weight.  This is stronger than the corresponding assertion for merely
exponentially decreasing test functions.

There is one uniformity qualification.  Membership of every individual
$\mathcal H_{n,a}$ does not imply that
$\{\mathcal H_{n,a}:n\geq0\}$ is norm-bounded in $\mathcal A_b^2$.
Uniform application of \cref{thm:uniform-Xi-truncation} while $n\to\infty$
therefore requires separate bounds for
$M_b(\mathcal H_{n,a})+J_2(\mathcal H_{n,a})$.
Standard Hermite and Laguerre identities may be found in \cite{Szego1975}.

\subsection{The \texorpdfstring{$E_a$}{Ea} functions}\label{subsec:Ea-functions}

For $a>0$, define
\begin{equation}\label{eq:smooth-bilateral-exponential}
	g_a(x):=\sqrt a\,e^{-a\abs{x}},
	\qquad
	E_a(x):=(1+a\abs{x})e^{-a\abs{x}}.
\end{equation}
The generator is $L^2$-normalized:
\begin{equation}\label{eq:ga-L2-normalization}
	\norm{g_a}_2^2
	=2a\int_0^\infty e^{-2ax}\dd x=1.
\end{equation}
Because $g_a$ is real and even, $\widetilde g_a=g_a$.  Splitting the
convolution integral at $0$ and at $x$ gives
\begin{equation}\label{eq:smooth-bilateral-autocorrelation}
	(g_a*\widetilde g_a)(x)
	=a\int_{\mathbb R}e^{-a\abs{x-y}}e^{-a\abs{y}}\dd y
	=(1+a\abs{x})e^{-a\abs{x}}
	=E_a(x).
\end{equation}
Thus $E_a(0)=1$ and $E_a$ is already a normalized Weil autocorrelation.
The cusp of $g_a$ is removed by this convolution.  More precisely, the
second distributional derivative of $g_a$ contains a Dirac mass,
\begin{equation}\label{eq:ga-distributional-second-derivative}
	D^2g_a
	=a^{5/2}e^{-a\abs{x}}-2a^{3/2}\delta_0,
\end{equation}
so $g_a\notin W_{\mathrm{loc}}^{2,1}(\mathbb R)$, whereas
$E_a\in C^2(\mathbb R)\subset W_{\mathrm{loc}}^{2,1}(\mathbb R)$.

The transforms of the generator and of its autocorrelation are
\begin{align}\label{eq:ga-and-Ea-transforms}
	\Phi_{g_a}(s)
	&=\frac{2a^{3/2}}{a^2-z^2},
	&
	\widehat g_a(t)
	&=\frac{2a^{3/2}}{a^2+t^2},
	\\
	\Phi_{E_a}(s)
	&=\frac{4a^3}{(a^2-z^2)^2},
	&
	\widehat E_a(t)
	&=\frac{4a^3}{(a^2+t^2)^2}\geq0,
\end{align}
valid for $\abs{\RePart z}<a$ on the bilateral-Laplace side.
For $x\neq0$ one has
\begin{equation}\label{eq:Ea-first-two-derivatives}
	E_a'(x)=-a^2x e^{-a\abs{x}},
	\qquad
	E_a''(x)=a^2(a\abs{x}-1)e^{-a\abs{x}},
\end{equation}
and the second formula extends continuously to $E_a''(0)=-a^2$.
It follows directly from \cref{eq:Ea-first-two-derivatives} that
\begin{equation}\label{eq:Ea-Ab-admissibility}
	E_a\in\mathcal A_b^2
	\qquad\text{whenever}\qquad
	a>\frac12+b.
\end{equation}
The strict inequality is necessary for $M_b(E_a)<\infty$, because of the
linear factor $1+a\abs{x}$.

One may also use $E_a$ itself as a generating function.  A direct integral
gives
\begin{equation}\label{eq:Ea-L2-norm}
	\norm{E_a}_2^2=\frac{5}{2a}.
\end{equation}
Define the normalized function and its autocorrelation by
\begin{equation}\label{eq:Ea-second-generator}
	v_a(x):=\sqrt{\frac{2a}{5}}\,E_a(x),
	\qquad
	K_a:=v_a*\widetilde v_a.
\end{equation}
Squaring the rational transform in \cref{eq:ga-and-Ea-transforms}, or
computing the convolution directly, yields
\begin{align}\label{eq:Ea-self-convolution}
	(E_a*E_a)(x)
	&=\frac{e^{-a\abs{x}}}{6a}
	\left(
	a^3\abs{x}^3+6a^2\abs{x}^2+15a\abs{x}+15
	\right),
	\\
	K_a(x)
	&=\frac{e^{-a\abs{x}}}{15}
	\left(
	a^3\abs{x}^3+6a^2\abs{x}^2+15a\abs{x}+15
	\right).
\end{align}
In particular, $K_a(0)=1$.  The transforms are
\begin{align}\label{eq:Ea-second-autocorrelation-transforms}
	\Phi_{E_a*E_a}(s)
	&=\frac{16a^6}{(a^2-z^2)^4},
	&
	\widehat{E_a*E_a}(t)
	&=\frac{16a^6}{(a^2+t^2)^4},
	\\
	\Phi_{K_a}(s)
	&=\frac{32a^7}{5(a^2-z^2)^4},
	&
	\widehat K_a(t)
	&=\frac{32a^7}{5(a^2+t^2)^4}\geq0.
\end{align}
For example, the first formula in \cref{eq:Ea-self-convolution} follows
from the inverse-transform recurrence
\begin{equation}\label{eq:rational-inverse-transform-recurrence}
	I_1(a,x)=\frac{\pi}{a}e^{-a\abs{x}},
	\qquad
	I_{m+1}(a,x)
	=-\frac{1}{2ma}\frac{\partial}{\partial a}I_m(a,x),
\end{equation}
where
$I_m(a,x)=\int_{\mathbb R}e^{-itx}(a^2+t^2)^{-m}\dd t$.
For every $b>0$, both $E_a*E_a$ and $K_a$ belong to
$\mathcal A_b^2$ whenever $a>\frac12+b$, because their first two derivatives
are again polynomials in $\abs{x}$ times $e^{-a\abs{x}}$.

\subsection{Hyperbolic-secant functions}\label{subsec:sech-functions}

For $a>0$, let
\begin{equation}\label{eq:sech-test-function}
	S_a(x):=\operatorname{sech}(ax).
\end{equation}
The beta integral gives
\begin{align}\label{eq:sech-transform}
	\Phi_{S_a}(s)
	&=\frac{\pi}{a}
	\sec\!\left(\frac{\pi z}{2a}\right),
	\qquad \abs{\RePart z}<a,
	\\
	\widehat S_a(t)
	&=\frac{\pi}{a}
	\operatorname{sech}\!\left(\frac{\pi t}{2a}\right)>0.
\end{align}
Thus $S_a$ may be used directly as a positive-type test function.  Its first
two derivatives are
\begin{align}\label{eq:sech-first-two-derivatives}
	S_a'(x)
	&=-aS_a(x)\tanh(ax),
	\\
	S_a''(x)
	&=a^2S_a(x)\bigl(1-2S_a(x)^2\bigr).
\end{align}
Since $\abs{S_a'}\leq aS_a$ and
$\abs{S_a''}\leq a^2S_a$, these formulas show that
\begin{equation}\label{eq:sech-Ab-admissibility}
	S_a\in\mathcal A_b^2
	\qquad\text{for every $b>0$ such that}\qquad
	a\geq\frac12+b.
\end{equation}
At the endpoint $a=\frac12+b$, the weighted supremum remains finite because
$e^{a\abs{x}}\operatorname{sech}(ax)$ is bounded.

For the normalized autocorrelation calculation, put
\begin{equation}\label{eq:normalized-sech-generator}
	\sigma_a(x):=\sqrt{\frac a2}\,S_a(x).
\end{equation}
Since
\begin{equation}\label{eq:sech-L2-normalization}
	\norm{S_a}_2^2=\frac2a,
\end{equation}
one has $\norm{\sigma_a}_2=1$.  The elementary identity
\begin{equation}\label{eq:tanh-difference-identity}
	\tanh u-\tanh v
	=\frac{\sinh(u-v)}{\cosh u\cosh v}
\end{equation}
gives, for $x\neq0$,
\begin{align}\label{eq:sech-unnormalized-convolution}
	(S_a*S_a)(x)
	&=\int_{\mathbb R}
	\frac{\dd y}{\cosh(a(x+y))\cosh(ay)}
	\\
	&=\frac{2x}{\sinh(ax)}.
\end{align}
The right-hand side extends continuously to the value $2/a$ at $x=0$.
Consequently, the normalized autocorrelation is
\begin{equation}\label{eq:sech-normalized-autocorrelation}
	C_a(x):=(\sigma_a*\widetilde\sigma_a)(x)
	=\begin{cases}
		\displaystyle\frac{ax}{\sinh(ax)},&x\neq0,\\[2mm]
		1,&x=0.
	\end{cases}
\end{equation}
It is a real analytic even function on $\mathbb R$.  The convolution theorem
and \cref{eq:sech-transform} give
\begin{align}\label{eq:sech-autocorrelation-transforms}
	\Phi_{C_a}(s)
	&=\frac{\pi^2}{2a}
	\sec^2\!\left(\frac{\pi z}{2a}\right),
	\qquad \abs{\RePart z}<a,
	\\
	\widehat C_a(t)
	&=\frac{\pi^2}{2a}
	\operatorname{sech}^2\!\left(\frac{\pi t}{2a}\right)
	\geq0.
\end{align}
Because
\begin{equation}\label{eq:sech-autocorrelation-asymptotic}
	C_a(x)=2a\abs{x}e^{-a\abs{x}}
	\left(1+O(e^{-2a\abs{x}})\right)
	\qquad (\abs{x}\to\infty),
\end{equation}
and the same type of bound holds for its first two derivatives,
\begin{equation}\label{eq:sech-autocorrelation-Ab}
	C_a\in\mathcal A_b^2
	\qquad\text{for every $b>0$ such that}\qquad
	a>\frac12+b.
\end{equation}
Here the inequality must be strict because of the factor $\abs{x}$ in
\cref{eq:sech-autocorrelation-asymptotic}.

\subsection{Compactly supported cardinal \texorpdfstring{$B$}{B}-splines}\label{subsec:Bspline-functions}

Let $\delta>0$ and define
\begin{equation}\label{eq:box-and-Bspline-definition}
	q_\delta(x):=\frac{1}{\delta}
	\mathbf 1_{[-\delta/2,\delta/2]}(x),
	\qquad
	B_{r,\delta}:=\underbrace{q_\delta*\cdots*q_\delta}_{r\text{ factors}}.
\end{equation}
For every integer $r\geq3$, the centered $B$-spline $B_{r,\delta}$ has
support in $[-r\delta/2,r\delta/2]$.  It is $C^{r-2}$ and piecewise
polynomial, and it belongs to $W_{\mathrm{loc}}^{2,1}(\mathbb R)$.  The
convolution theorem gives
\begin{align}\label{eq:Bspline-transforms}
	\Phi_{B_{r,\delta}}(s)
	&=\left(
	\frac{\sinh(\delta z/2)}{\delta z/2}
	\right)^r,
	\\
	\widehat{B_{r,\delta}}(t)
	&=\left(
	\frac{\sin(\delta t/2)}{\delta t/2}
	\right)^r,
\end{align}
with the values at $z=0$ and $t=0$ understood by continuity.  When
$r=2k\geq4$,
\begin{equation}\label{eq:Bspline-autocorrelation}
	B_{2k,\delta}
	=B_{k,\delta}*\widetilde{B_{k,\delta}},
	\qquad
	\widehat{B_{2k,\delta}}(t)\geq0.
\end{equation}
Because $B_{r,\delta}$ has compact support, it belongs to $\mathcal A_b^2$
for every $b>0$.  Its prime-power sum is exactly finite once
$T\geq\exp(r\delta/2)$, while its Fourier transform is
$O((1+\abs{t})^{-r})$.  These properties make even-order $B$-splines
particularly convenient for simultaneous prime and zero truncation; see
\cite{deBoor2001} for general spline theory.

\subsection{Finite Fourier modes on a logarithmic interval}
\label{subsec:finite-logarithmic-Fourier-modes}

\subsubsection{Sharp cutoff modes and their orthogonality}

Let $T\geq3$ and put 
\begin{equation}\label{eq:finite-Fourier-L-definition}
	L:=\log T,
\end{equation}
and choose an integer $N$ such that
$0\leq N\leq\lfloor L\rfloor$.  For $\abs{n}\leq N$, set
\begin{equation}\label{eq:finite-Fourier-frequency}
	\nu_{n,L}:=\frac{2\pi n}{L}.
\end{equation}
To implement Weil's midpoint convention at the two discontinuities, define
\begin{equation}\label{eq:midpoint-box-definition}
	\chi_L^{*}(x)
	:=\begin{cases}
		1,&\abs{x}<L,\\
		\frac12,&\abs{x}=L,\\
		0,&\abs{x}>L,
	\end{cases}
\end{equation}
and consider the sharp logarithmic Fourier modes
\begin{equation}\label{eq:sharp-logarithmic-Fourier-modes}
	f_{n,L}(x)
	:=\frac1L e^{i\nu_{n,L}x}\chi_L^{*}(x),
	\qquad -N\leq n\leq N.
\end{equation}
The endpoint values in \cref{eq:midpoint-box-definition} do not affect any
Lebesgue integral or Fourier transform.  They matter only when $T$ itself is
a prime power; the midpoint convention then assigns one-half of the boundary
prime-power term.

A direct calculation gives
\begin{equation}\label{eq:sharp-Fourier-orthogonality}
	\left\langle f_{n,L},f_{m,L}\right\rangle_{L^2(\mathbb R)}
	=\frac{2}{L}\,\delta_{nm},
	\qquad
	\norm{f_{n,L}}_1=2,
	\qquad
	\norm{f_{n,L}}_2^2=\frac2L.
\end{equation}
Thus
\begin{equation}\label{eq:sharp-Fourier-normalized-modes}
	e_{n,L}(x)
	:=\sqrt{\frac L2}\,f_{n,L}(x)
	=\frac{1}{\sqrt{2L}}
	e^{2\pi i n x/L}\chi_L^{*}(x)
\end{equation}
is an orthonormal set in $L^2(-L,L)$.  It is not a complete orthonormal
basis there: the complete complex Fourier basis on the interval of length
$2L$ is
\begin{equation}\label{eq:full-Fourier-basis-minus-L-L}
	\left\{
	\frac{1}{\sqrt{2L}}e^{i\pi kx/L}:k\in\mathbb Z
	\right\},
\end{equation}
and \cref{eq:sharp-Fourier-normalized-modes} consists of its even-indexed
members $k=2n$.  The corresponding real orthonormal set is
\begin{equation}\label{eq:finite-real-Fourier-system}
	\frac{\chi_L^{*}(x)}{\sqrt{2L}},
	\qquad
	\frac{\cos(2\pi nx/L)}{\sqrt L}\chi_L^{*}(x),
	\qquad
	\frac{\sin(2\pi nx/L)}{\sqrt L}\chi_L^{*}(x),
	\quad 1\leq n\leq N.
\end{equation}
The restriction $N\leq\lfloor L\rfloor$ implies the useful uniform frequency
bound
\begin{equation}\label{eq:finite-Fourier-frequency-bound}
	\abs{\nu_{n,L}}\leq2\pi,
\end{equation}
but it does not by itself provide uniform control of the weighted Sobolev
norms as $L\to\infty$.

\subsubsection{Explicit Fourier and bilateral Laplace transforms}

Write $z=s-\frac12$.  Since $L\nu_{n,L}=2\pi n$, direct integration yields
\begin{align}\label{eq:sharp-Fourier-transforms}
	\Phi_{f_{n,L}}(s)
	&=\int_{\mathbb R}f_{n,L}(x)e^{zx}\dd x
	=\frac{2\sinh(Lz)}{Lz+2\pi i n},
	\\
	H_{n,L}(w)
	&:=\int_{\mathbb R}f_{n,L}(x)e^{iwx}\dd x
	=\frac{2\sin(Lw)}{Lw+2\pi n}.
\end{align}
All apparent singularities in \cref{eq:sharp-Fourier-transforms} are
removable; for example, $H_{n,L}(-2\pi n/L)=2$.  On the real Fourier axis,
\begin{equation}\label{eq:sharp-Fourier-real-axis}
	\widehat{f_{n,L}}(t)
	=\frac{2\sin(Lt)}{Lt+2\pi n}.
\end{equation}

For the Riemann $\Xi$-function, only the even part of the test function is
needed.  Define
\begin{equation}\label{eq:sharp-cosine-mode}
	c_{n,L}^{\sharp}(x)
	:=\frac{f_{n,L}(x)+f_{-n,L}(x)}2
	=\frac1L\cos(\nu_{n,L}x)\chi_L^{*}(x),
	\qquad 0\leq n\leq N.
\end{equation}
Its transform is the even entire function
\begin{equation}\label{eq:sharp-cosine-transform}
	H_{n,L}^{\sharp}(w)
	=\frac{2Lw\sin(Lw)}{L^2w^2-(2\pi n)^2},
\end{equation}
again with the removable values understood by continuity.  The odd sine
part contributes zero to every term of the specialized $\Xi$ explicit
formula: its values at $x$ and $-x$ cancel on the prime side, the
archimedean kernel is even, and its two pole terms cancel.  Consequently,
$f_{n,L}$ and $c_{n,L}^{\sharp}$ give the same scalar explicit formula.

\subsubsection{An exact finite prime-power formula}

Because $f_{n,L}$ is supported in $[-L,L]$ and $L=\log T$, its full
prime-power contribution is already finite:
\begin{align}\label{eq:sharp-Fourier-prime-side}
	\mathcal P(f_{n,L})
	&=\frac2L
	\sum_{\substack{p,m\geq1\\p^m\leq T}}^{\prime}
	\frac{\log p}{p^{m/2}}
	\cos\!\left(\frac{2\pi n\,m\log p}{L}\right)
	\\
	&=\frac2L
	\sum_{2\leq q\leq T}^{\prime}
	\frac{\Lambda(q)}{\sqrt q}
	\cos\!\left(\frac{2\pi n\log q}{L}\right).
\end{align}
Here the prime on the summation sign means that the term $q=T$ receives
weight $1/2$ if $T$ is a prime power.  If $T$ is not a prime power, the
prime may be omitted and \cref{eq:sharp-Fourier-prime-side} is exactly the
sharp truncation $q\leq T$ requested above.  Thus these functions perform a
finite Fourier analysis of the prime-power measure on the logarithmic
interval $[0,\log T]$.

Substitution into \cref{eq:Xi-Weil-even} gives the following completely
explicit identity.

\begin{proposition}[Exact sharp logarithmic Fourier-mode formula]
	\label{prop:sharp-logarithmic-Fourier-explicit-formula}
	For every fixed $T\geq3$ and $0\leq n\leq N$, one has
	\begin{align}\label{eq:sharp-logarithmic-Fourier-explicit-formula}
		\sum_{\rho}^{*}
		\frac{2Lz_\rho\sin(Lz_\rho)}
		{L^2z_\rho^2-(2\pi n)^2}
		={}&
		\frac{2L\sinh(L/2)}{(2\pi n)^2+L^2/4}
		-\frac{\log(2\pi)}{L}
		\notag\\
		&-\frac2L\PF\!\int_0^L
		\cos\!\left(\frac{2\pi nx}{L}\right)
		\frac{e^{x/2}}{e^x-e^{-x}}\dd x
		\notag\\
		&-\frac2L
		\sum_{2\leq q\leq T}^{\prime}
		\frac{\Lambda(q)}{\sqrt q}
		\cos\!\left(\frac{2\pi n\log q}{L}\right).
	\end{align}
	All removable values on the zero side are interpreted by continuity, and the
	star denotes the symmetric limit in the zero ordinates.  The one-sided
	finite part in \cref{eq:sharp-logarithmic-Fourier-explicit-formula} means one
	half of the symmetric finite part of the corresponding even integrand on
	$[-L,L]$.  Under the Riemann hypothesis, $z_\rho=\gamma\in\mathbb R$, and the
	zero weight becomes
	\begin{equation}\label{eq:sharp-Fourier-zero-weight-RH}
		\frac{2L\gamma\sin(L\gamma)}
		{L^2\gamma^2-(2\pi n)^2}.
	\end{equation}
\end{proposition}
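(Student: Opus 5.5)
The plan is to substitute $F=c_{n,L}^{\sharp}$ into the even explicit formula \cref{eq:Xi-Weil-even} and evaluate each term. Since $f_{n,L}$ and $c_{n,L}^{\sharp}$ give the same scalar formula (the odd sine part cancels term by term), it suffices to work with the even cosine mode. First I will check admissibility. The function $c_{n,L}^{\sharp}$ is compactly supported and piecewise smooth, with jumps only at $x=\pm L$, where the midpoint convention $\chi_L^{*}$ is built in. So it satisfies Weil's original piecewise-$C^1$ hypotheses, although it is not in $W^{2,1}_{\mathrm{loc}}$.

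On the zero side, $H(z)=H_{n,L}^{\sharp}(z)$ from \cref{eq:sharp-cosine-transform} decays only like $|z|^{-1}$. The zero sum therefore converges only in the symmetric starred sense, which is exactly what Weil's theorem provides for such discontinuous test functions. Under RH, $z_\rho=\gamma$ gives the displayed weight \cref{eq:sharp-Fourier-zero-weight-RH} directly.

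Next I will compute the remaining terms one at a time.

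\emph{Pole term.} Evaluating at $w=i/2$ gives $\sin(iL/2)=i\sinh(L/2)$. Hence
\[
  2H^{\sharp}(i/2)=\frac{2\cdot2L(i/2)\,i\sinh(L/2)}{-L^2/4-(2\pi n)^2}
  =\frac{2L\sinh(L/2)}{(2\pi n)^2+L^2/4}.
\]
One must check that the sign is positive; it is, since the two factors of $i$ give $-1$ and the denominator is negative. This term is entire in $n$, so no removable-value issue arises except at trivial points.

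\emph{Point term.} $F(0)=1/L$, which yields $-\log(2\pi)/L$.

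\emph{Archimedean term.} By \cref{eq:archimedean-even-half-line}, $\mathcal A_\infty(F)=2\PF\int_0^L L^{-1}\cos(\nu_{n,L}x)K(x)\,dx$, with the finite part at $0$ inherited from \cref{eq:A-infinity-zeta}. Equivalently, this is half of the symmetric finite part on $[-L,L]$.

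\emph{Prime term.} Only $q\le T$ contributes. At $q=T$ we have $\log q=L$, where $\chi^{*}_L=1/2$, so the boundary term carries half weight. The factor $2\Lambda(q)q^{-1/2}F(\log q)$ becomes $\frac{2}{L}\Lambda(q)q^{-1/2}\cos(2\pi n\log q/L)$, which matches \cref{eq:sharp-Fourier-prime-side}.

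The main obstacle is justifying the applicability of Weil's formula with only symmetric zero convergence and the midpoint convention at the jump at $\log T$. Theorem~\ref{thm:uniform-Xi-truncation} does not cover this case, so I would rely on Weil's original piecewise-smooth hypotheses rather than the $\mathcal A_b^2$ theory. The second delicate point is matching the finite-part normalization: the one-sided $\PF$ must be read as half of the symmetric one, so that no stray multiple of $F(0)$ appears. I will confirm this against \cref{prop:archimedean-Fourier-equivalence}.
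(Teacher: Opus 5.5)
Your proposal is correct and follows essentially the same route as the paper: substitute the even mode $c_{n,L}^{\sharp}$ into \cref{eq:Xi-Weil-even}, evaluate the pole pair $2H_{n,L}^{\sharp}(i/2)$, use $c_{n,L}^{\sharp}(0)=1/L$, write the archimedean finite part as twice the one-sided part by evenness, and read off the finite prime sum with half weight at $q=T$. Your reliance on Weil's original piecewise-$C^1$ midpoint hypotheses rather than $\mathcal A_b^2$ matches the paper's closing remark that no quantitative zero cutoff is available for the sharp mode. The planned cross-check against \cref{prop:archimedean-Fourier-equivalence} is unnecessary, and that proposition assumes $C_c^2$ in any case, because the half-line identity uses only the evenness of the test function and kernel.
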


\begin{proof}
	For the even test function $c_{n,L}^{\sharp}$, one has
	$c_{n,L}^{\sharp}(0)=1/L$.  Moreover,
	\begin{equation}\label{eq:sharp-Fourier-pole-pair}
		2H_{n,L}^{\sharp}\!\left(\frac i2\right)
		=\frac{2L\sinh(L/2)}{(2\pi n)^2+L^2/4}.
	\end{equation}
	The archimedean kernel in \cref{eq:A-infinity-zeta} is even and the test
	function is supported in $[-L,L]$, so its finite-part integral is twice the
	one-sided finite part over $[0,L]$.  Finally,
	\cref{eq:sharp-Fourier-prime-side} gives the exact finite prime-power term.
	Substitution into the starred identity \cref{eq:Xi-Weil-even} proves
	\cref{eq:sharp-logarithmic-Fourier-explicit-formula}.  No quantitative finite
	zero cutoff is asserted here, because the sharp mode is not in
	$\mathcal A_b^2$.
\end{proof}

\subsubsection{Autocorrelation and the obstruction to
	\texorpdfstring{$W_{\mathrm{loc}}^{2,1}$}{Wloc21} admissibility}

The sharp modes are compactly supported and are admissible under Weil's
original piecewise-smooth hypotheses, but they do not belong to the stronger
class $\mathcal A_b^2$.  Indeed, in the distributional sense,
\begin{equation}\label{eq:sharp-Fourier-distributional-derivative}
	Df_{n,L}
	=i\nu_{n,L}f_{n,L}
	+\frac1L\bigl(\delta_{-L}-\delta_L\bigr),
\end{equation}
because $e^{\pm i\nu_{n,L}L}=1$.  Thus
$f_{n,L}\notin W_{\mathrm{loc}}^{1,1}(\mathbb R)$ and, a fortiori,
$f_{n,L}\notin W_{\mathrm{loc}}^{2,1}(\mathbb R)$.  Although
\begin{equation}\label{eq:sharp-Fourier-Mb}
	M_b(f_{n,L})
	=\frac{e^{(1/2+b)L}}{L}
	=\frac{T^{1/2+b}}{\log T},
\end{equation}
the quantity $J_2(f_{n,L})$ in \cref{eq:Mb-J2-definitions} is not finite in
the required weak-derivative sense.  Correspondingly,
$H_{n,L}(t)=O(\abs{t}^{-1})$, and the zero sum is generally only a starred,
symmetric sum rather than an absolutely convergent series.  Therefore
\cref{thm:uniform-Xi-truncation} cannot be applied directly to the sharp
modes.

The Weil autocorrelation is nevertheless elementary.  Since
$\widetilde f_{n,L}=f_{n,L}$,
\begin{equation}\label{eq:sharp-Fourier-autocorrelation}
	A_{n,L}(x)
	:=\bigl(f_{n,L}*\widetilde f_{n,L}\bigr)(x)
	=\frac{(2L-\abs{x})_+}{L^2}
	e^{i\nu_{n,L}x},
\end{equation}
where $(y)_+:=\max\{y,0\}$.  Its transforms are
\begin{align}\label{eq:sharp-Fourier-autocorrelation-transform}
	\Phi_{A_{n,L}}(s)
	&=\left(
	\frac{2\sinh(Lz)}{Lz+2\pi i n}
	\right)^2,
	\\
	\widehat A_{n,L}(t)
	&=\left(
	\frac{2\sin(Lt)}{Lt+2\pi n}
	\right)^2\geq0.
\end{align}
Thus the autocorrelation has a nonnegative transform and its zero series is
absolutely convergent for each fixed $L$ and $n$, since its transform is
$O(\abs{t}^{-2})$.  However, the triangular factor in
\cref{eq:sharp-Fourier-autocorrelation} has corners at $0$ and at
$\pm2L$.  Hence $A_{n,L}\notin W_{\mathrm{loc}}^{2,1}(\mathbb R)$ either.
Its support is $[-2L,2L]$, so its prime-power side extends to $q\leq T^2$.

\subsubsection{A simple Sobolev-compatible raised-cosine window}

A minimal smoothing that preserves compact support and an elementary
transform is obtained with the raised-cosine, or Hann, window
\begin{equation}\label{eq:raised-cosine-Fourier-mode}
	\psi_{n,L}(x)
	:=\frac1L e^{i\nu_{n,L}x}
	\cos^2\!\left(\frac{\pi x}{2L}\right)
	\mathbf 1_{[-L,L]}(x).
\end{equation}
Both the function and its first derivative vanish at $x=\pm L$.  Its zero
extension is therefore $C^1$, has a locally integrable second weak
derivative, and belongs to $\mathcal A_b^2$ for every $b>0$.  Put
\begin{equation}\label{eq:raised-cosine-u-definition}
	u_{n,L}(z):=z+i\nu_{n,L}.
\end{equation}
Expanding the cosine squared into three exponentials gives
\begin{align}\label{eq:raised-cosine-transforms}
	\Phi_{\psi_{n,L}}(s)
	&=\frac{\pi^2\sinh(Lz)}
	{L^3u_{n,L}(z)
		\left(u_{n,L}(z)^2+\pi^2/L^2\right)},
	\\
	\widehat{\psi_{n,L}}(t)
	&=-\frac{\pi^2\sin(Lt)}
	{L^3(t+\nu_{n,L})
		\left((t+\nu_{n,L})^2-\pi^2/L^2\right)}.
\end{align}
Again all apparent singularities are removable.  In particular,
$\widehat{\psi_{n,L}}(t)=O(\abs{t}^{-3})$ for fixed $L$ and $n$, so the
zero series is absolutely convergent with a stronger tail than that of the
sharp mode.

The prime-power side remains exactly finite, but now has a smooth endpoint
weight:
\begin{align}\label{eq:raised-cosine-prime-side}
	\mathcal P(\psi_{n,L})
	=\frac2L\sum_{2\leq q\leq T}
	\frac{\Lambda(q)}{\sqrt q}
	\cos\!\left(\frac{2\pi n\log q}{L}\right)
	\cos^2\!\left(\frac{\pi\log q}{2L}\right).
\end{align}
The boundary term at $q=T$ vanishes.  Its autocorrelation
\begin{equation}\label{eq:raised-cosine-autocorrelation}
	\Psi_{n,L}:=\psi_{n,L}*\widetilde\psi_{n,L}
\end{equation}
is supported in $[-2L,2L]$, belongs to $\mathcal A_b^2$ for every $b>0$,
and satisfies
\begin{equation}\label{eq:raised-cosine-autocorrelation-transform}
	\widehat\Psi_{n,L}(t)
	=\abs{\widehat{\psi_{n,L}}(t)}^2\geq0.
\end{equation}
Thus \cref{eq:raised-cosine-Fourier-mode} is a direct
$W_{\mathrm{loc}}^{2,1}$-compatible replacement for the sharp cutoff.

\subsubsection{A positive-type spline-windowed Fourier mode}

The spline family of \cref{subsec:Bspline-functions} gives a positive-type
regularization whose support remains exactly $[-L,L]$.  Let $k\geq2$,
put $\delta=L/k$, and define
\begin{equation}\label{eq:spline-windowed-Fourier-mode}
	G_{n,L,k}(x)
	:=e^{i\nu_{n,L}x}B_{k,\delta}(x),
	\qquad
	F_{n,L,k}(x)
	:=G_{n,L,k}*\widetilde G_{n,L,k}(x).
\end{equation}
Because the centered splines are even,
\begin{equation}\label{eq:spline-windowed-Fourier-mode-physical}
	F_{n,L,k}(x)
	=e^{i\nu_{n,L}x}B_{2k,L/k}(x),
	\qquad
	\operatorname{supp}F_{n,L,k}\subseteq[-L,L].
\end{equation}
It belongs to $\mathcal A_b^2$ for every $b>0$, and
\begin{align}\label{eq:spline-windowed-Fourier-mode-transforms}
	\Phi_{F_{n,L,k}}(s)
	&=\left(
	\frac{
		\sinh\!\left(\frac{L}{2k}(z+i\nu_{n,L})\right)}
	{\frac{L}{2k}(z+i\nu_{n,L})}
	\right)^{2k},
	\\
	\widehat F_{n,L,k}(t)
	&=\left(
	\frac{
		\sin\!\left(\frac{L}{2k}(t+\nu_{n,L})\right)}
	{\frac{L}{2k}(t+\nu_{n,L})}
	\right)^{2k}\geq0.
\end{align}
The real even average
\begin{equation}\label{eq:spline-windowed-cosine-mode}
	C_{n,L,k}(x)
	:=\frac{F_{n,L,k}(x)+F_{-n,L,k}(x)}2
	=B_{2k,L/k}(x)\cos(\nu_{n,L}x)
\end{equation}
also has a nonnegative Fourier transform, namely the average of the two
nonnegative shifted even powers in
\cref{eq:spline-windowed-Fourier-mode-transforms}.  Its prime-power term is
the finite weighted cosine sum
\begin{equation}\label{eq:spline-windowed-cosine-prime-side}
	\mathcal P(C_{n,L,k})
	=2\sum_{2\leq q\leq T}
	\frac{\Lambda(q)}{\sqrt q}
	B_{2k,L/k}(\log q)
	\cos\!\left(\frac{2\pi n\log q}{L}\right).
\end{equation}
Among the Fourier-mode constructions in this subsection,
\cref{eq:spline-windowed-cosine-mode} simultaneously provides compact
support in $[-\log T,\log T]$, $W_{\mathrm{loc}}^{2,1}$ admissibility, an
explicit transform, and positive type.

\begin{remark}[Dependence on the moving cutoff]
	\label{rem:Fourier-mode-moving-cutoff}
	For each fixed integer $T\geq3$, the raised-cosine and spline-windowed modes lie in
	$\mathcal A_1^2$.  However, the test function itself depends on the same
	parameter $T$ through $L=\log T$.  Thus membership for each fixed $T$ does not
	by itself justify substituting this moving family into the diagonal limit of
	\cref{cor:parameter-uniformity}.  A sufficient condition for a family
	$F_T\in\mathcal A_1^2$ is
	\begin{equation}\label{eq:moving-family-sufficient-condition}
		\norm{F_T}_{\mathcal A_1^2}
		=o\!\left(\frac{T}{\log(T+2)+1}\right),
	\end{equation}
	because \cref{eq:single-parameter-truncation-bound} then gives
	$\mathcal E_{T,T}(F_T)\to0$.  Uniformity for
	$\{F_{n,T}:\abs{n}\leq N(T)\}$ follows from the corresponding condition with
	$\sup_{\abs{n}\leq N(T)}$ in front of the norm.  The restriction
	$N\leq\lfloor L\rfloor$ controls the carrier frequencies through
	\cref{eq:finite-Fourier-frequency-bound}, but it does not by itself control
	these weighted Sobolev norms.  The sharp modes are excluded altogether from
	\cref{cor:parameter-uniformity}, since they do not belong to
	$W_{\mathrm{loc}}^{2,1}(\mathbb R)$.
\end{remark}

\subsection{Hermitian matrices from raised-cosine modes}
\label{subsec:raised-cosine-Hermitian-matrix}

Retain $L=\log T$, the index set
\begin{equation}\label{eq:Qmn-index-set}
	I_N:=\{-N,-N+1,\ldots,N-1,N\},
	\qquad M:=\abs{I_N}=2N+1,
\end{equation}
and the raised-cosine functions $\psi_{n,L}$ from
\cref{eq:raised-cosine-Fourier-mode}.  For $m,n\in I_N$, define
\begin{equation}\label{eq:Qmn-definition}
	Q_{mn,L}(x)
	:=\frac12\left(
	\psi_{n,L}(x)\psi_{-m,L}(x)
	+\psi_{-n,L}(x)\psi_{m,L}(x)
	\right),
\end{equation}
and let
\begin{equation}\label{eq:Q-matrix-definition}
	\mathbf Q_{N,L}(x)
	:=\bigl(Q_{mn,L}(x)\bigr)_{m,n\in I_N}.
\end{equation}
Since $\psi_{-k,L}=\overline{\psi_{k,L}}$ on $\mathbb R$, one obtains the
real even test function
\begin{equation}\label{eq:Qmn-physical-form}
	Q_{mn,L}(x)
	=\frac{1}{L^2}
	\cos\!\left(\nu_{n-m,L}x\right)
	\cos^4\!\left(\frac{\pi x}{2L}\right)
	\mathbf 1_{[-L,L]}(x).
\end{equation}
Thus $Q_{mn,L}=Q_{nm,L}\in\mathbb R$, and
$\mathbf Q_{N,L}(x)$ is a real symmetric Toeplitz matrix for every fixed
$x$.

\subsubsection{Gram representation and explicit eigenvalues}

Put
\begin{equation}\label{eq:Qmn-kappa-theta-definition}
	\kappa_L(x)
	:=\frac1L\cos^2\!\left(\frac{\pi x}{2L}\right)
	\mathbf 1_{[-L,L]}(x),
	\qquad
	\theta_L(x):=\frac{2\pi x}{L},
\end{equation}
and introduce the real vectors
\begin{equation}\label{eq:Qmn-cos-sin-vectors}
	\mathbf c(x)
	:=\bigl(\cos(n\theta_L(x))\bigr)_{n\in I_N},
	\qquad
	\mathbf s(x)
	:=\bigl(\sin(n\theta_L(x))\bigr)_{n\in I_N}.
\end{equation}
The identity
$\cos((n-m)\theta)=\cos(n\theta)\cos(m\theta)
+\sin(n\theta)\sin(m\theta)$ gives
\begin{equation}\label{eq:Qmn-Gram-representation}
	\mathbf Q_{N,L}(x)
	=\kappa_L(x)^2
	\left(
	\mathbf c(x)\mathbf c(x)^{\mathsf T}
	+\mathbf s(x)\mathbf s(x)^{\mathsf T}
	\right).
\end{equation}
Consequently,
\begin{equation}\label{eq:Qmn-positive-semidefinite}
	\mathbf u^*\mathbf Q_{N,L}(x)\mathbf u
	=\kappa_L(x)^2
	\left(
	\abs{\mathbf c(x)^{\mathsf T}\mathbf u}^2
	+\abs{\mathbf s(x)^{\mathsf T}\mathbf u}^2
	\right)
	\geq0
\end{equation}
for every $\mathbf u\in\mathbb C^M$.  Hence the matrix is not merely
Hermitian: it is positive semidefinite and has rank at most two.

The nonzero eigenvalues can also be written explicitly.  Recall that
$M=\abs{I_N}=2N+1$, and let
\begin{equation}\label{eq:Dirichlet-kernel-Qmn}
	D_N(u)
	:=\sum_{k=-N}^{N}e^{iku}
	=1+2\sum_{k=1}^{N}\cos(ku)
	=\frac{\sin((N+\tfrac12)u)}{\sin(u/2)},
\end{equation}
where the last quotient is interpreted by continuity.  Symmetry of $I_N$
gives $\mathbf c(x)^{\mathsf T}\mathbf s(x)=0$ and
\begin{align}\label{eq:Qmn-cos-sin-norms}
	\norm{\mathbf c(x)}_2^2
	&=\frac12\left(
	M+D_N\!\left(\frac{4\pi x}{L}\right)
	\right),
	\\
	\norm{\mathbf s(x)}_2^2
	&=\frac12\left(
	M-D_N\!\left(\frac{4\pi x}{L}\right)
	\right).
\end{align}
For $N\geq1$, the spectrum therefore consists of $M-2$ zero eigenvalues and
the two eigenvalues
\begin{equation}\label{eq:Qmn-explicit-eigenvalues}
	\lambda_{1,2}(x)
	=\frac{\kappa_L(x)^2}{2}
	\left(
	M\mathbin{\pm}
	\abs{D_N\!\left(\frac{4\pi x}{L}\right)}
	\right).
\end{equation}
Indeed, the finite-sum representation and the triangle inequality give
\[
\abs{D_N(u)}
=\left|\sum_{k=-N}^{N}e^{iku}\right|
\leq\sum_{k=-N}^{N}\abs{e^{iku}}
=2N+1=M.
\]
Consequently, the two eigenvalues are real and nonnegative.  When $N=0$,
the only eigenvalue is $\kappa_L(x)^2$.  At exceptional points one of the
two quantities in \cref{eq:Qmn-explicit-eigenvalues} may vanish, so the rank
can drop from two to one or zero.

\subsubsection{Regularity and explicit transform}

Writing $r:=n-m$, one has $\abs{r}\leq2N$ and therefore
\begin{equation}\label{eq:Qmn-frequency-bound}
	\abs{\nu_{r,L}}\leq4\pi
\end{equation}
under the standing assumption $N\leq\lfloor L\rfloor$.  The fourth-power
window in \cref{eq:Qmn-physical-form} vanishes to order four at $x=\pm L$.
Its zero extension is $C^3$, and
\begin{equation}\label{eq:Qmn-regularity}
	Q_{mn,L}\in
	C^3(\mathbb R)\cap W_{\mathrm{loc}}^{4,1}(\mathbb R)
	\subset\mathcal A_b^2
	\qquad\text{for every }b>0
\end{equation}
when $L$, $m$, and $n$ are fixed.

For $w\in\mathbb C$, define the entire function
\begin{align}\label{eq:fourth-power-cosine-transform}
	\mathcal J_L(w)
	&:=\int_{-L}^{L}
	\cos^4\!\left(\frac{\pi x}{2L}\right)e^{iwx}\dd x
	\\
	&=\frac{3(\pi/L)^4\sin(Lw)}
	{w\left(w^2-\pi^2/L^2\right)
		\left(w^2-4\pi^2/L^2\right)}.
\end{align}
All apparent poles in the second line are removable; in particular,
$\mathcal J_L(0)=3L/4$.  The transform of $Q_{mn,L}$ is
\begin{equation}\label{eq:Qmn-Fourier-transform}
	H_{mn,L}^{Q}(z)
	:=\int_{\mathbb R}Q_{mn,L}(x)e^{izx}\dd x
	=\frac{1}{2L^2}
	\left(
	\mathcal J_L(z+\nu_{r,L})
	+\mathcal J_L(z-\nu_{r,L})
	\right).
\end{equation}
It is even and entire in $z$, is real on the real axis, and satisfies
\begin{equation}\label{eq:Qmn-transform-decay}
	H_{mn,L}^{Q}(t)=O_{L,m,n}\!\left((1+\abs{t})^{-5}\right)
	\qquad(t\in\mathbb R).
\end{equation}
Thus the corresponding zero series is absolutely convergent; the starred
summation convention is unnecessary for these test functions.

\subsubsection{The entrywise Weil explicit formula}

Set
\begin{equation}\label{eq:Qmn-alpha-nu-definition}
	\alpha_L:=\frac{\pi}{L},
	\qquad
	\nu:=\nu_{r,L}=\frac{2\pi r}{L},
\end{equation}
and define the explicit pole contribution
\begin{align}\label{eq:Qmn-pole-contribution}
	\mathcal B_{r,L}
	:=\frac{\sinh(L/2)}{4L^2}
	\Bigg[{}&
	\frac{3}{\nu^2+\tfrac14}
	-\frac{2}{(\nu+\alpha_L)^2+\tfrac14}
	-\frac{2}{(\nu-\alpha_L)^2+\tfrac14}
	\notag\\
	&+\frac{1}{2\bigl((\nu+2\alpha_L)^2+\tfrac14\bigr)}
	+\frac{1}{2\bigl((\nu-2\alpha_L)^2+\tfrac14\bigr)}
	\Bigg].
\end{align}

\begin{proposition}[Weil formula for the Hermitian raised-cosine matrix]
	\label{prop:Qmn-Weil-explicit-formula}
	For every fixed $T\geq3$, $L=\log T$, and $m,n\in I_N$, one has the
	absolutely convergent identity
	\begin{align}\label{eq:Qmn-Weil-explicit-formula}
		\sum_{\rho}H_{mn,L}^{Q}(z_\rho)
		={}&\mathcal B_{r,L}
		-\frac{\log(2\pi)}{L^2}
		\notag\\
		&-\frac{2}{L^2}\PF\!\int_0^L
		\cos(\nu_{r,L}x)
		\cos^4\!\left(\frac{\pi x}{2L}\right)
		\frac{e^{x/2}}{e^x-e^{-x}}\dd x
		\notag\\
		&-\frac{2}{L^2}
		\sum_{2\leq q\leq T}
		\frac{\Lambda(q)}{\sqrt q}
		\cos\!\left(\frac{2\pi r\log q}{L}\right)
		\cos^4\!\left(\frac{\pi\log q}{2L}\right),
		\qquad r=n-m.
	\end{align}
	The term $q=T$ is automatically zero, even when $T$ is a prime power, so no
	endpoint half weight is required.  Under the Riemann hypothesis,
	$z_\rho=\gamma\in\mathbb R$, and the zero weight in
	\cref{eq:Qmn-Weil-explicit-formula} is
	\begin{equation}\label{eq:Qmn-zero-weight-RH}
		H_{mn,L}^{Q}(\gamma)
		=\frac{1}{2L^2}
		\left(
		\mathcal J_L(\gamma+\nu_{r,L})
		+\mathcal J_L(\gamma-\nu_{r,L})
		\right).
	\end{equation}
\end{proposition}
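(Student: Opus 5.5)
The plan is to apply the even explicit formula \cref{eq:Xi-Weil-even} entrywise to the real even test function $Q_{mn,L}$ from \cref{eq:Qmn-physical-form}. Admissibility is supplied by \cref{eq:Qmn-regularity}: $Q_{mn,L}$ is $C^3$, compactly supported in $[-L,L]$, and lies in $\mathcal A_b^2$. Its transform $H^Q_{mn,L}$ is entire, even, and decays as in \cref{eq:Qmn-transform-decay} on the strip $|\ImPart z|\le\frac12+\delta$. That strip decay follows from \cref{eq:fourth-power-cosine-transform}, because $|\sin(Lw)|\ll e^{L|\ImPart w|}$ there while the denominator has degree five. Hence the hypotheses of \cref{eq:Xi-Weil-explicit} hold. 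By \cref{eq:Phi-zero-decay}, or directly from the $O(|t|^{-5})$ bound together with zero counting, the zero sum is absolutely convergent, so no starred limit is needed.

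The terms are then computed one at a time.

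First, $Q_{mn,L}(0)=1/L^2$, which gives the term $-\log(2\pi)/L^2$.

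Second, the prime side. In \cref{eq:Xi-Weil-even} the prime term is $-2\sum_q\Lambda(q)q^{-1/2}Q_{mn,L}(\log q)$. Support in $[-L,L]$ truncates it at $q\le T$. The factor $\cos^4(\pi\log q/2L)$ vanishes at $q=T$, so the midpoint convention is irrelevant. Substituting \cref{eq:Qmn-physical-form} gives the last line.

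Third, the archimedean term. By \cref{eq:archimedean-even-half-line}, $\mathcal A_\infty(Q_{mn,L})=2\PF\int_0^L Q_{mn,L}(x)K(x)\dd x$, which is the displayed finite-part integral with prefactor $2/L^2$.

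The main computation, and the only place I expect any friction, is the pole term $2H^Q_{mn,L}(i/2)$. I will avoid inserting $w=\pm\nu+i/2$ into the rational form of $\mathcal J_L$, since that is messy near the removable poles. Instead I will expand
\[
\cos^4\!\left(\frac{\pi x}{2L}\right)=\frac38+\frac12\cos(2\alpha_L x)+\frac18\cos(4\alpha_L x)\quad\text{with }\alpha_L=\pi/L
\]
— or, matching the displayed coefficients, write it in terms of $\cos(\alpha_L x)$ shifts — and compute elementary integrals of the form
\[
\int_{-L}^{L}\cos(\omega x)e^{-x/2}\dd x=\RePart\frac{2\sinh\bigl(L(\tfrac12-i\omega)\bigr)}{\tfrac12-i\omega}.
\]
The frequencies occurring are $\omega=\nu$, $\nu\pm\alpha_L$ and $\nu\pm2\alpha_L$. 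Since $L\omega\in\pi\mathbb Z$, we get $\sinh(L/2-iL\omega)=\pm\sinh(L/2)$, with the sign $(-1)^{k}$ attached to the shift $k\alpha_L$. Each real part then reduces to $\pm\frac{\sinh(L/2)}{2}\cdot\frac{1}{\omega^2+1/4}$ times a constant.

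Combining the coefficients should reproduce the bracket $3,-2,-2,\tfrac12,\tfrac12$ and the prefactor $\sinh(L/2)/(4L^2)$ in \cref{eq:Qmn-pole-contribution}. I would check this against the $r=0$ case and against $\mathcal J_L(0)=3L/4$ as the $L\to$ limit sanity check. The alternating signs from $e^{\pm ik\pi}$ are the point to track carefully: they are what produce the minus signs on the $\pm\alpha_L$ terms. The precise bookkeeping of $\cos^4=\frac18(3+4\cos(2\theta)+\cos(4\theta))$ with $\theta=\pi x/2L$ is what fixes the constants.

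Finally, under RH $z_\rho=\gamma$ is real, and \cref{eq:Qmn-Fourier-transform} gives \cref{eq:Qmn-zero-weight-RH} immediately.
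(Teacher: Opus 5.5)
Your proposal is correct and follows the paper's proof essentially line by line. You apply \cref{eq:Xi-Weil-even} to the real even function $Q_{mn,L}$ and read off $-\log(2\pi)/L^2$, the one-sided finite-part integral, and the finite prime sum, whose $q=T$ term vanishes because $\cos^4(\pi/2)=0$. You then obtain $\mathcal B_{r,L}$ from the five-term expansion of $\cos(\nu x)\cos^4(\alpha_Lx/2)$, using the endpoint signs $\cos((\nu+k\alpha_L)L)=(-1)^k$.

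Two small corrections are needed. With $\alpha_L=\pi/L$, your first expansion should read $\tfrac38+\tfrac12\cos(\alpha_Lx)+\tfrac18\cos(2\alpha_Lx)$; your later $\theta$-form and the frequency list $\nu,\nu\pm\alpha_L,\nu\pm2\alpha_L$ are the right ones. Also, each elementary integral equals exactly $(-1)^k\sinh(L/2)/(\omega^2+\tfrac14)$, with no extra factor $\tfrac12$, and this is what yields the prefactor $\sinh(L/2)/(4L^2)$.
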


\begin{proof}
	The function $Q_{mn,L}$ is real and even, and
	$Q_{mn,L}(0)=L^{-2}$.  Expanding the window gives
	\begin{align*}
		\cos(\nu x)\cos^4(\alpha_Lx/2)
		=\frac18\Bigl[{}&3\cos(\nu x)
		+2\cos((\nu+\alpha_L)x)
		+2\cos((\nu-\alpha_L)x)
		\\
		&+\frac12\cos((\nu+2\alpha_L)x)
		+\frac12\cos((\nu-2\alpha_L)x)
		\Bigr].
	\end{align*}
	If $cL\in\pi\mathbb Z$, direct integration gives
	\begin{equation}\label{eq:Qmn-pole-integral-identity}
		\int_{-L}^{L}\cos(cx)e^{-x/2}\dd x
		=\frac{\sinh(L/2)\cos(cL)}{c^2+\tfrac14}.
	\end{equation}
	Since $\nu L=2\pi r$, the five cosine frequencies in the preceding
	expansion have endpoint signs $+,-,-,+,+$, respectively.  Hence
	$2H_{mn,L}^{Q}(i/2)=\mathcal B_{r,L}$.  The archimedean kernel is even, so
	its contribution is twice the one-sided finite part displayed in
	\cref{eq:Qmn-Weil-explicit-formula}.  The support is $[-L,L]$, and the
	fourth-power window vanishes at the endpoints; consequently, the complete
	prime-power side is exactly the finite sum $q\leq T$ in that formula.
	Substitution into \cref{eq:Xi-Weil-even} proves the identity.  Finally,
	\cref{eq:Qmn-transform-decay} together with the Riemann--von Mangoldt
	estimate shows that the zero series converges absolutely.
\end{proof}

\subsubsection{Matrix form and a uniform finite-zero approximation}

For $r\in\{-2N,\ldots,2N\}$, let $\mathcal A_{r,L}$ and
$\mathcal P_{r,L}$ denote, respectively, the archimedean integral and the
finite prime-power sum on the last two lines of
\cref{eq:Qmn-Weil-explicit-formula}, including their displayed positive
prefactors.  Define the real symmetric Toeplitz matrices
\begin{align}\label{eq:Qmn-explicit-matrices}
	\mathbf Z_L^{(\infty)}
	&:=\left(
	\sum_\rho H_{mn,L}^{Q}(z_\rho)
	\right)_{m,n\in I_N},
	&
	\mathbf B_L&:=\bigl(\mathcal B_{n-m,L}\bigr)_{m,n\in I_N},
	\\
	\mathbf A_L&:=\bigl(\mathcal A_{n-m,L}\bigr)_{m,n\in I_N},
	&
	\mathbf P_L&:=\bigl(\mathcal P_{n-m,L}\bigr)_{m,n\in I_N}.
\end{align}
If $\mathbf 1_M=(1,\ldots,1)^{\mathsf T}\in\mathbb R^M$, the entrywise
formula \cref{eq:Qmn-Weil-explicit-formula} is the Hermitian matrix identity
\begin{equation}\label{eq:Qmn-Weil-matrix-form}
	\mathbf Z_L^{(\infty)}
	=\mathbf B_L
	-\frac{\log(2\pi)}{L^2}\mathbf 1_M\mathbf 1_M^{\mathsf T}
	-\mathbf A_L-\mathbf P_L.
\end{equation}
Every matrix in \cref{eq:Qmn-Weil-matrix-form} is real symmetric, and hence
all of its eigenvalues are real.  This Hermitianity should not be confused
with positivity: the pointwise matrix $\mathbf Q_{N,L}(x)$ is positive
semidefinite by \cref{eq:Qmn-positive-semidefinite}, whereas the oscillatory
zero, archimedean, and prime matrices in
\cref{eq:Qmn-Weil-matrix-form} need not be positive semidefinite separately.

The smooth fourth-power window also permits a uniform finite-zero cutoff,
even though the family moves with $T$.  Define
\begin{equation}\label{eq:Qmn-finite-zero-matrix}
	\mathbf Z_L^{(T)}
	:=\left(
	\sum_{\substack{\rho=\beta+i\gamma\\\abs{\gamma}<T}}
	H_{mn,L}^{Q}(z_\rho)
	\right)_{m,n\in I_N}.
\end{equation}
The functional equation and complex conjugation pair the zeros inside the
symmetric cutoff in \cref{eq:Qmn-finite-zero-matrix}; together with the even
real transform \cref{eq:Qmn-Fourier-transform}, this shows that
$\mathbf Z_L^{(T)}$ is real symmetric.  There is an absolute constant $C>0$
such that, uniformly for $m,n\in I_N$,
\begin{equation}\label{eq:Qmn-J2-uniform-bound}
	J_2(Q_{mn,L})
	\leq C\frac{T^{1/2}}{L^2}.
\end{equation}
Indeed, \cref{eq:Qmn-frequency-bound} and direct differentiation of
\cref{eq:Qmn-physical-form} give
\[
\abs{Q_{mn,L}(x)}
+\abs{Q_{mn,L}'(x)}
+\abs{Q_{mn,L}''(x)}
\leq \frac{C}{L^2}\mathbf 1_{[-L,L]}(x),
\]
and integration against $e^{\abs{x}/2}$ proves
\cref{eq:Qmn-J2-uniform-bound}.  Since the prime-power tail is exactly zero,
\cref{eq:zero-tail-bound} yields the entrywise estimate
\begin{equation}\label{eq:Qmn-entrywise-zero-tail}
	\max_{m,n\in I_N}
	\abs{
		\left(\mathbf Z_L^{(T)}-\mathbf Z_L^{(\infty)}\right)_{mn}
	}
	\leq
	C\frac{\log(T+2)+1}{\sqrt T\,L^2}.
\end{equation}
Because $M=2N+1\leq2L+1$, the matrix operator norm satisfies
\begin{equation}\label{eq:Qmn-operator-zero-tail}
	\norm{
		\mathbf Z_L^{(T)}-\mathbf Z_L^{(\infty)}
	}_{\mathrm{op}}
	\leq\frac{C}{\sqrt T}.
\end{equation}
Consequently, Weyl's eigenvalue inequality gives
\begin{equation}\label{eq:Qmn-eigenvalue-zero-tail}
	\max_{1\leq j\leq M}
	\abs{
		\lambda_j\!\left(\mathbf Z_L^{(T)}\right)
		-\lambda_j\!\left(\mathbf Z_L^{(\infty)}\right)
	}
	\leq\frac{C}{\sqrt T},
\end{equation}
when the real eigenvalues are arranged in nondecreasing order.  Thus the
finite-zero matrix approximates the full Weil matrix uniformly in operator
norm for the entire range $\abs{m},\abs{n}\leq N\leq\lfloor\log T\rfloor$.

\subsection{Hermitian convolution matrices from raised-cosine modes}
\label{subsec:raised-cosine-convolution-matrix}

Retain the notation of
\cref{subsec:raised-cosine-Hermitian-matrix}.  For $m,n\in I_N$, define
\begin{equation}\label{eq:Rmn-definition}
	R_{mn,L}(x)
	:=\frac12\left(
	(\psi_{n,L}*\psi_{-m,L})(x)
	+(\psi_{-n,L}*\psi_{m,L})(x)
	\right),
\end{equation}
and let
\begin{equation}\label{eq:R-matrix-definition}
	\mathbf R_{N,L}(x)
	:=\bigl(R_{mn,L}(x)\bigr)_{m,n\in I_N}.
\end{equation}
For real $x$, the two convolutions in \cref{eq:Rmn-definition} are complex
conjugates.  Hence $R_{mn,L}$ is real.  Commutativity of convolution also
gives
\begin{equation}\label{eq:Rmn-symmetries}
	R_{mn,L}(x)=R_{nm,L}(x)=R_{-m,-n,L}(x),
\end{equation}
so $\mathbf R_{N,L}(x)$ is a real symmetric, centrosymmetric matrix and all
of its eigenvalues are real.  Unlike the product matrix
$\mathbf Q_{N,L}(x)$, however, it is not generally positive semidefinite.

\subsubsection{Explicit physical-space convolution}

Put
\begin{equation}\label{eq:Rmn-r-s-a-definition}
	r:=n-m,
	\qquad
	s:=n+m,
	\qquad
	\alpha_L:=\frac{\pi}{L},
	\qquad
	a_L(x):=L-\frac{\abs{x}}2.
\end{equation}
For $a\geq0$ define the entire sinc-type function
\begin{equation}\label{eq:Rmn-S-function}
	\mathcal S_a(w)
	:=
	\begin{cases}
		\dfrac{2\sin(aw)}{w},&w\neq0,\\[2mm]
		2a,&w=0.
	\end{cases}
\end{equation}
For $\abs{x}\leq2L$, set
\begin{align}\label{eq:Rmn-K-kernel}
	\mathcal K_{s,L}(x)
	:=\frac1{L^2}\Bigg[{}&
	\frac{2+\cos(\alpha_Lx)}8
	\mathcal S_{a_L(x)}(\nu_{s,L})
	\notag\\
	&+\frac{\cos(\alpha_Lx/2)}4
	\left(
	\mathcal S_{a_L(x)}(\nu_{s,L}+\alpha_L)
	+\mathcal S_{a_L(x)}(\nu_{s,L}-\alpha_L)
	\right)
	\notag\\
	&+\frac1{16}
	\left(
	\mathcal S_{a_L(x)}(\nu_{s,L}+2\alpha_L)
	+\mathcal S_{a_L(x)}(\nu_{s,L}-2\alpha_L)
	\right)
	\Bigg],
\end{align}
and put $\mathcal K_{s,L}(x)=0$ for $\abs{x}>2L$.  Then
\begin{equation}\label{eq:Rmn-physical-form}
	R_{mn,L}(x)
	=\cos\!\left(\frac{\nu_{r,L}x}{2}\right)
	\mathcal K_{s,L}(x),
	\qquad r=n-m,\quad s=n+m.
\end{equation}
In particular, $R_{mn,L}$ is real, even, and supported in $[-2L,2L]$.

To prove \cref{eq:Rmn-physical-form}, write $y=x/2+u$.  On the overlap of
the two intervals $[-L,L]$ one has
$\abs{u}\leq a_L(x)$, and
\begin{align*}
	(\psi_{n,L}*\psi_{-m,L})(x)
	={}&e^{i\nu_{r,L}x/2}\frac1{L^2}
	\int_{-a_L(x)}^{a_L(x)}
	\cos^2\!\left(\frac{\alpha_Lx}{4}
	+\frac{\alpha_Lu}{2}\right)
	\cos^2\!\left(\frac{\alpha_Lx}{4}
	-\frac{\alpha_Lu}{2}\right)
	e^{i\nu_{s,L}u}\dd u.
\end{align*}
The product of the two windows equals
\begin{equation}\label{eq:Rmn-window-product-identity}
	\frac{2+\cos(\alpha_Lx)}8
	+\frac12\cos\!\left(\frac{\alpha_Lx}{2}\right)
	\cos(\alpha_Lu)
	+\frac18\cos(2\alpha_Lu).
\end{equation}
It is even in $u$, so the integral is real.  Product-to-sum identities then
give \cref{eq:Rmn-K-kernel}, while averaging with the complex conjugate
gives \cref{eq:Rmn-physical-form}.

At the origin the values collapse to three Fourier coefficients of the
fourth-power cosine window:
\begin{equation}\label{eq:Rmn-origin-value}
	R_{mn,L}(0)
	=\frac1{L^2}\mathcal J_L(\nu_{s,L})
	=:\eta_{s,L}
	=\frac1L
	\begin{cases}
		\dfrac34,&s=0,\\[1mm]
		\dfrac18,&\abs{s}=1,\\[1mm]
		0,&\abs{s}\geq2.
	\end{cases}
\end{equation}
Consequently, when $N\geq1$, the principal submatrix at $x=0$ indexed by
$\{-1,1\}$ is
\[
\begin{pmatrix}
	0&3/(4L)\\
	3/(4L)&0
\end{pmatrix},
\]
which has eigenvalues $\pm3/(4L)$.  Thus the Hermitian matrix
$\mathbf R_{N,L}(x)$ is generally indefinite.

\subsubsection{Regularity, transform, and transform-side spectrum}

Define the even entire function
\begin{equation}\label{eq:Rmn-W-window-transform}
	\mathcal W_L(z)
	:=-\frac{\pi^2\sin(Lz)}
	{L^3z\left(z^2-\alpha_L^2\right)},
\end{equation}
where the apparent singularities at $z=0$ and $z=\pm\alpha_L$ are removed
by continuity.  Thus $\mathcal W_L=\widehat{\psi_{0,L}}$ and
\begin{equation}\label{eq:Rmn-shifted-window-transform}
	\widehat{\psi_{n,L}}(z)
	=\mathcal W_L(z+\nu_{n,L}).
\end{equation}
The convolution theorem gives
\begin{align}\label{eq:Rmn-Fourier-transform}
	H_{mn,L}^{R}(z)
	&:=\int_{\mathbb R}R_{mn,L}(x)e^{izx}\dd x
	\notag\\
	&=\frac12\Bigl[
	\mathcal W_L(z+\nu_{n,L})
	\mathcal W_L(z-\nu_{m,L})
	\notag\\
	&\hspace{29mm}
	+\mathcal W_L(z-\nu_{n,L})
	\mathcal W_L(z+\nu_{m,L})
	\Bigr].
\end{align}
This transform is even and entire in $z$, is real on the real axis, and
satisfies
\begin{equation}\label{eq:Rmn-transform-decay}
	H_{mn,L}^{R}(z)
	=O_{L,m,n}\!\left((1+\abs{\RePart z})^{-6}\right)
	\qquad(\abs{\ImPart z}\leq\tfrac12).
\end{equation}
Moreover, since each $\psi_{n,L}$ lies in
$C^1_c(\mathbb R)\cap W^{2,1}(\mathbb R)$,
\begin{equation}\label{eq:Rmn-regularity}
	R_{mn,L}\in
	C_c^4(\mathbb R)\cap W^{4,1}(\mathbb R)
	\subset\mathcal A_b^2
	\qquad\text{for every }b>0
\end{equation}
for fixed $L,m,n$.

The transform matrix has a particularly simple rank-two representation.
For $t\in\mathbb R$, define
\begin{equation}\label{eq:Rmn-transform-vectors}
	\mathbf a_L(t)
	:=\bigl(\mathcal W_L(t+\nu_{n,L})\bigr)_{n\in I_N},
	\qquad
	\mathbf b_L(t)
	:=\bigl(\mathcal W_L(t-\nu_{n,L})\bigr)_{n\in I_N}.
\end{equation}
Then
\begin{equation}\label{eq:Rmn-transform-matrix-rank-two}
	\widehat{\mathbf R}_{N,L}(t)
	:=\bigl(H_{mn,L}^{R}(t)\bigr)_{m,n\in I_N}
	=\frac12\left(
	\mathbf a_L(t)\mathbf b_L(t)^{\mathsf T}
	+\mathbf b_L(t)\mathbf a_L(t)^{\mathsf T}
	\right).
\end{equation}
Because $I_N$ is symmetric and $\mathcal W_L$ is even,
$\norm{\mathbf a_L(t)}_2=\norm{\mathbf b_L(t)}_2$.  Put
\begin{equation}\label{eq:Rmn-transform-c-d-definition}
	c_L(t):=\mathbf a_L(t)^{\mathsf T}\mathbf b_L(t),
	\qquad
	d_L(t):=\norm{\mathbf a_L(t)}_2^2.
\end{equation}
For $N\geq1$, the spectrum of
\cref{eq:Rmn-transform-matrix-rank-two} consists of $M-2$ zeros and the two
eigenvalues
\begin{equation}\label{eq:Rmn-transform-eigenvalues}
	\mu_{\pm}(t)
	=\frac12\bigl(c_L(t)\mathbin{\pm}d_L(t)\bigr),
\end{equation}
with the usual reduction when the rank is less than two.  When $N=0$, the
only eigenvalue is $H_{00,L}^{R}(t)$.  The Cauchy--Schwarz
inequality gives $\abs{c_L(t)}\leq d_L(t)$, and therefore
\begin{equation}\label{eq:Rmn-transform-eigenvalue-signs}
	\mu_+(t)\geq0,
	\qquad
	\mu_-(t)\leq0.
\end{equation}
Thus the transform matrix is real symmetric and of rank at most two, but it
is generally indefinite rather than positive semidefinite.

\subsubsection{The entrywise Weil explicit formula}

Define
\begin{equation}\label{eq:Rmn-pole-vector}
	\mathfrak w_{k,L}
	:=\mathcal W_L\!\left(\nu_{k,L}+\frac{i}{2}\right)
	=-\frac{i\pi^2\sinh(L/2)}
	{L^3(\nu_{k,L}+i/2)
		\left((\nu_{k,L}+i/2)^2-\alpha_L^2\right)}
\end{equation}
and the real pole contribution
\begin{equation}\label{eq:Rmn-pole-contribution}
	\mathcal B_{mn,L}^{R}
	:=2\RePart\!\left(
	\mathfrak w_{n,L}\overline{\mathfrak w_{m,L}}
	\right).
\end{equation}
Since $\mathcal W_L$ is even and real on the real axis,
\begin{equation}\label{eq:Rmn-pole-identity}
	2H_{mn,L}^{R}\!\left(\frac{i}{2}\right)
	=\mathcal B_{mn,L}^{R}.
\end{equation}

\begin{proposition}[Weil formula for the Hermitian raised-cosine convolution matrix]
	\label{prop:Rmn-Weil-explicit-formula}
	For every fixed $T\geq3$, $L=\log T$, and $m,n\in I_N$, one has the
	absolutely convergent identity
	\begin{align}\label{eq:Rmn-Weil-explicit-formula}
		\sum_{\rho}H_{mn,L}^{R}(z_\rho)
		={}&\mathcal B_{mn,L}^{R}
		-\eta_{n+m,L}\log(2\pi)
		\notag\\
		&-2\PF\!\int_0^{2L}
		\cos\!\left(\frac{\nu_{n-m,L}x}{2}\right)
		\mathcal K_{n+m,L}(x)
		\frac{e^{x/2}}{e^x-e^{-x}}\dd x
		\notag\\
		&-2\sum_{2\leq q\leq T^2}
		\frac{\Lambda(q)}{\sqrt q}
		\cos\!\left(
		\frac{\pi(n-m)\log q}{L}
		\right)
		\mathcal K_{n+m,L}(\log q).
	\end{align}
	The endpoint $q=T^2$ contributes zero, even when $T^2$ is a prime power, so
	no endpoint half weight is required.  Under the Riemann hypothesis,
	$z_\rho=\gamma\in\mathbb R$, and the zero weight is the real quantity
	\begin{align}\label{eq:Rmn-zero-weight-RH}
		H_{mn,L}^{R}(\gamma)
		=\frac12\Bigl[{}&
		\mathcal W_L(\gamma+\nu_{n,L})
		\mathcal W_L(\gamma-\nu_{m,L})
		\notag\\
		&+\mathcal W_L(\gamma-\nu_{n,L})
		\mathcal W_L(\gamma+\nu_{m,L})
		\Bigr].
	\end{align}
\end{proposition}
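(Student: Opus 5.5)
The plan is to apply the even specialization \cref{eq:Xi-Weil-even} to the real even test function $R_{mn,L}$. This is legitimate because \cref{eq:Rmn-regularity} places $R_{mn,L}$ in $\mathcal A_b^2$, and \cref{eq:Rmn-transform-decay} together with the Riemann--von Mangoldt bound \cref{eq:N-upper-bound} makes the zero series absolutely convergent, so no star is needed. We then identify the four terms one at a time.

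\textbf{Pole term.} By the convolution transform \cref{eq:Rmn-Fourier-transform}, $2H_{mn,L}^{R}(i/2)=\mathcal W_L(i/2+\nu_n)\mathcal W_L(i/2-\nu_m)+\mathcal W_L(i/2-\nu_n)\mathcal W_L(i/2+\nu_m)$. Since $\mathcal W_L$ is even and real on the real axis, $\mathcal W_L(\overline w)=\overline{\mathcal W_L(w)}$ and $\mathcal W_L(i/2-\nu)=\mathcal W_L(\nu-i/2)=\overline{\mathfrak w_{\nu}}$. The sum therefore equals $\mathfrak w_n\overline{\mathfrak w_m}+\overline{\mathfrak w_n}\mathfrak w_m=\mathcal B^{R}_{mn,L}$, which is \cref{eq:Rmn-pole-identity}. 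The closed form of $\mathfrak w_{k,L}$ follows by substituting into \cref{eq:Rmn-W-window-transform}, using $\sin(L(\nu_k+i/2))=i\sinh(L/2)$ because $L\nu_k=2\pi k$.

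\textbf{Origin, archimedean and prime terms.} The value $R_{mn,L}(0)=\eta_{n+m,L}$ is \cref{eq:Rmn-origin-value}, which gives the $-\eta_{n+m,L}\log(2\pi)$ term. For the archimedean term, \cref{eq:archimedean-even-half-line} gives twice the one-sided finite part. Since $R_{mn,L}$ is supported in $[-2L,2L]$ and has the physical form \cref{eq:Rmn-physical-form}, the integral truncates to $[0,2L]$ with integrand $\cos(\nu_{n-m}x/2)\mathcal K_{n+m,L}(x)$. For the prime side, evenness gives $F(\log q)+F(-\log q)=2R_{mn,L}(\log q)$, the support cuts the sum at $q\le T^2$, and $\nu_{n-m}\log q/2=\pi(n-m)\log q/L$.

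\textbf{Endpoint.} The remaining point is that the $q=T^2$ term vanishes, i.e.\ $\mathcal K_{s,L}(2L)=0$. At $x=2L$ one has $a_L=0$, so every $\mathcal S_0(w)$ in \cref{eq:Rmn-K-kernel} vanishes, including the value $2a=0$ at $w=0$. Alternatively, $R_{mn,L}\in C^4_c$ vanishes at the boundary of its support.

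\textbf{Under RH.} The final statement follows by substituting $z_\rho=\gamma$ into \cref{eq:Rmn-Fourier-transform}. Reality comes from the symmetry of the two products under $\gamma\mapsto\gamma$, combined with $\mathcal W_L$ being real on $\mathbb R$.

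I expect the main obstacle to be careful bookkeeping in the pole identity, namely the conjugation and evenness manipulations that show the pole contribution is exactly $2\RePart(\mathfrak w_n\overline{\mathfrak w_m})$ with the right signs. A second point needing care is the finite-part convention: the integrand of the archimedean term is regular enough at $0$ that \cref{eq:archimedean-even-half-line} applies, and the finite part is needed only because $\mathcal K_{s,L}(0)$ may be nonzero.
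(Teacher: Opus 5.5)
Your proposal is correct and follows the same route as the paper: apply the even explicit formula \eqref{eq:Xi-Weil-even} to $R_{mn,L}$, then read off the pole, conductor, archimedean and prime terms from \eqref{eq:Rmn-Fourier-transform}, \eqref{eq:Rmn-origin-value} and \eqref{eq:Rmn-physical-form}. The endpoint term vanishes because $a_L(2L)=0$, and absolute convergence comes from the transform decay. The only difference is that you verify $2H^R_{mn,L}(i/2)=2\RePart(\mathfrak w_{n,L}\overline{\mathfrak w_{m,L}})$ explicitly through evenness and Schwarz reflection, whereas the paper simply cites \eqref{eq:Rmn-pole-identity}.
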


\begin{proof}
	The physical formula \cref{eq:Rmn-physical-form} shows that $R_{mn,L}$ is
	real and even.  Hence \cref{eq:Xi-Weil-even} applies.  Its pole term is
	\cref{eq:Rmn-pole-identity}, and its conductor term is obtained from
	\cref{eq:Rmn-origin-value}.  The support is $[-2L,2L]$, so the prime-power
	sum stops at $q=e^{2L}=T^2$; the endpoint vanishes because the overlap of the
	two raised-cosine windows has zero length there.  Substitution of
	\cref{eq:Rmn-physical-form} into the archimedean and prime-power terms of
	\cref{eq:Xi-Weil-even} proves \cref{eq:Rmn-Weil-explicit-formula}.
	Finally, \cref{eq:Rmn-transform-decay} and the Riemann--von Mangoldt estimate
	show that the zero series converges absolutely, so no starred summation is
	needed.
\end{proof}

\subsubsection{Matrix form}

Let $\mathcal A_{mn,L}^{R}$ and $\mathcal P_{mn,L}^{R}$ denote the
archimedean and prime-power expressions on the last two lines of
\cref{eq:Rmn-Weil-explicit-formula}.  Define
\begin{align}\label{eq:Rmn-explicit-matrices}
	\mathbf Z_{R,L}^{(\infty)}
	&:=\left(
	\sum_\rho H_{mn,L}^{R}(z_\rho)
	\right)_{m,n\in I_N},
	&
	\mathbf B_{R,L}
	&:=\bigl(\mathcal B_{mn,L}^{R}\bigr)_{m,n\in I_N},
	\\
	\mathbf C_{R,L}
	&:=\bigl(\eta_{n+m,L}\bigr)_{m,n\in I_N},
	&
	\mathbf A_{R,L}
	&:=\bigl(\mathcal A_{mn,L}^{R}\bigr)_{m,n\in I_N},
	\\
	\mathbf P_{R,L}
	&:=\bigl(\mathcal P_{mn,L}^{R}\bigr)_{m,n\in I_N}.
\end{align}
Then the entrywise identities assemble into
\begin{equation}\label{eq:Rmn-Weil-matrix-form}
	\mathbf Z_{R,L}^{(\infty)}
	=\mathbf B_{R,L}
	-\log(2\pi)\mathbf C_{R,L}
	-\mathbf A_{R,L}
	-\mathbf P_{R,L}.
\end{equation}
Every matrix in \cref{eq:Rmn-Weil-matrix-form} is real symmetric.  Moreover,
if
$\boldsymbol{\mathfrak w}_L
:=(\mathfrak w_{n,L})_{n\in I_N}$,
then
\begin{equation}\label{eq:Rmn-pole-matrix-Gram-form}
	\mathbf B_{R,L}
	=2\RePart\!\left(
	\boldsymbol{\mathfrak w}_L
	\boldsymbol{\mathfrak w}_L^*
	\right)
	=2\left(
	\RePart\boldsymbol{\mathfrak w}_L
	(\RePart\boldsymbol{\mathfrak w}_L)^{\mathsf T}
	+\ImPart\boldsymbol{\mathfrak w}_L
	(\ImPart\boldsymbol{\mathfrak w}_L)^{\mathsf T}
	\right),
\end{equation}
so the pole matrix is positive semidefinite and has rank at most two.  The
full matrix in \cref{eq:Rmn-Weil-matrix-form}, as well as its separate
archimedean and prime components, need not be positive semidefinite.

\subsection{Comparison of the explicit test families}\label{subsec:test-family-comparison}

\begin{remark}[Choice of a practical test family]\label{rem:choice-test-family}
	The Hermite functions give the simplest smooth orthonormal basis on
	$\mathbb R$, and their autocorrelations are explicit Gaussian--Laguerre
	functions.  The regularized Laguerre functions retain the half-line and
	rational-transform structure suggested by $e^{-x}L_n(x)$.  The function
	$E_a$ is a normalized rational autocorrelation, while its second
	autocorrelation is again an elementary polynomial times $e^{-a\abs{x}}$.
	The hyperbolic-secant family is self-reciprocal in form, and its normalized
	autocorrelation is $ax/\sinh(ax)$.  Even-order $B$-splines make the prime
	side exactly finite.  Sharp logarithmic Fourier modes turn the prime-power
	side into a finite cosine transform, while raised-cosine and spline windows
	restore $W_{\mathrm{loc}}^{2,1}$ regularity and, in the spline case, positive
	type.  The Hermitian raised-cosine product matrices are pointwise
	positive semidefinite of rank at most two; their spectra, transforms, and
	entrywise Weil formulae are all explicit.  The corresponding symmetrized
	convolution matrices are real symmetric but generally indefinite; their
	physical kernels, rank-two transform matrices, and entrywise Weil formulae
	are likewise explicit, with support $[-2\log T,2\log T]$.  The one-sided
	Fourier autocorrelation matrix has the divided-difference structure
	$S_{mn,L}=(b_{m,L}-b_{n,L})/(m-n)$ off the diagonal, a rank-two commutator
	with the index matrix, and is positive semidefinite under the Riemann
	hypothesis.  Hence the most
	useful family depends on whether orthogonality, Gaussian decay, rationality,
	positivity, matrix structure, a sharp logarithmic cutoff, or compact support
	is the principal objective.
\end{remark}

All transforms in these subsections are elementary and explicit.  The
Laguerre formulas follow from \cref{eq:Laguerre-Laplace-identity}, the
Hermite formulas from differentiation of a Gaussian and the Fourier
eigenfunction identity, the $E_a$ formulas from rational transforms, the
hyperbolic-secant formulas from the beta integral, the spline formulas from
the convolution theorem, the product-matrix formulas from the explicit
transform of the fourth-power cosine window, and the convolution-matrix
formulas from the shifted raised-cosine transform.  No limiting or implicit
transform is required.

\end{document}